\newtheorem{proposition}{Proposition}
\newtheorem{corollary}[proposition]{Corollary}
\newtheorem{lemma}[proposition]{Lemma}
\newtheorem{theorem}[proposition]{Theorem}
\newtheorem{meta-theorem}[proposition]{Meta-Theorem}
\newtheorem{meta-corollary}[proposition]{Meta-Corollary}
\newtheorem*{conjecture*}{Conjecture}
\newtheorem*{theorem*}{Theorem}
\newtheorem*{corollary*}{Corollary}
\newtheorem*{proposition*}{Proposition}
\newtheorem*{lemma*}{Lemma}
\theoremstyle{definition}
\newtheorem{definition}[proposition]{Definition}
\newtheorem{notation}[proposition]{Notation}
\newtheorem*{definition*}{Definition}
\newtheorem*{construction*}{Construction}
\theoremstyle{remark}
\newtheorem{remark}[proposition]{Remark}
\newtheorem{warning}[proposition]{Warning}
\newtheorem*{remark*}{Remark}
\newtheorem{question}[proposition]{Question}
\newtheorem{example}[proposition]{Example}
\newtheorem*{example*}{Example}
\numberwithin{proposition}{section}
\newcommand{\ideg}{\mathrm{deg}}
\newcommand{\unit}{\mathrm{unit}}
\newcommand{\counit}{\mathrm{counit}}
\newcommand{\Ex}{\mathrm{Ex}}
\def\L{\mathrm{L}}
\newcommand{\purity}{\mathfrak{p}}
\def\GL{\mathrm{GL}}
\def\SL{\mathrm{SL}}
\def\Sp{\mathrm{Sp}}
\def\M{\mathrm{M}}
\newcommand{\CHt}{\widetilde{\mathrm{CH}}}
\newcommand{\CH}{\mathrm{CH}}
\def\adj{\leftrightarrows}
\newcommand{\wequi}{\simeq}
\DeclareRobustCommand{\ul}{\underline}
\newcommand{\iHom}{\ul{\operatorname{Hom}}}
\newcommand{\Hom}{\operatorname{Hom}}
\newcommand{\Ext}{\operatorname{Ext}}
\newcommand{\Spec}{\operatorname{Spec}}
\newcommand{\Proj}{\operatorname{Proj}}
\newcommand{\MW}{\mathrm{MW}}
\newcommand{\Cor}{\mathrm{Corr}}
\def\fr{\mathrm{fr}}
\def\op{\mathrm{op}}
\newcommand{\id}{\operatorname{id}}
\newcommand{\RR}{\mathbb{R}}
\newcommand{\CC}{\mathbb{C}}
\newcommand{\Z}{\mathbb{Z}}
\newcommand{\Q}{\mathbb{Q}}
\newcommand{\N}{\mathbb{N}}
\newcommand{\red}{\mathrm{red}}
\newcommand{\calO}{\mathcal{O}}
\def\PSh{\mathcal{P}}
\let\scr=\mathcal
\let\bb=\mathbb
\def\A{\bb A}
\def\P{\bb P}
\newcommand{\1}{\mathbbm{1}}
\newcommand{\SH}{\mathcal{SH}}
\def\ph{\mathord-}
\let\cat=\mathrm
\def\Sm{{\cat{S}\mathrm{m}}}
\def\Aff{{\cat{A}\mathrm{ff}}}
\def\Sch{\cat{S}\mathrm{ch}{}}
\newcommand{\tr}{\mathrm{tr}}
\newcommand{\perf}{\mathrm{perf}}
\newcommand{\KO}{\mathrm{KO}}
\newcommand{\KGL}{\mathrm{KGL}}
\newcommand{\MSL}{\mathrm{MSL}}
\newcommand{\kgl}{\mathrm{kgl}}
\newcommand{\KW}{\mathrm{KW}}
\newcommand{\GW}{\mathrm{GW}}
\newcommand{\W}{\mathrm{W}}
\newcommand{\emW}{\mathrm{EM}(\mathrm{W})}
\newcommand{\K}{\mathrm{K}}
\newcommand{\BL}{\mathrm{BL}}
\newcommand{\Vect}{\mathrm{Vect}}
\newcommand{\Bil}{\mathrm{Bil}}
\newcommand{\GS}{\mathrm{GS}}
\newcommand{\PH}{\mathrm{PH}}
\newcommand{\BM}{\mathrm{BM}}
\newcommand{\SSform}{\langle - \vert - \rangle^{\mathrm{SS}}}
\newcommand{\image}{\mathrm{Im}}
\newcommand{\ind}{\mathrm{ind}}
\def\H{\mathrm{H}}
\newcommand{\naive}{\mathrm{naive}}
\newcommand{\lra}[1]{\langle #1 \rangle}
\newcommand{\eff}{\mathrm{eff}}
\def\CAlg{\mathrm{CAlg}}
\newcommand{\gr}{\mathrm{gr}}
\newcommand{\fil}{\mathrm{fil}}
\newcommand{\FSynor}{\mathrm{FSyn^{or}}}
\newcommand{\Rsignature}{\operatorname{sign}}
\newcommand{\Rrank}{\operatorname{rank}}
\newcommand{\disc}{\operatorname{disc}}
\newcommand{\Sym}{\operatorname{Sym}}
\newcommand{\Gr}{\operatorname{Gr}}
\newcommand{\tautbun}{\mathcal{S}}
\newcommand{\Stautbun}{S}
\newcommand{\Gtautbun}{\mathcal{S}}
\newcommand{\Fun}{\mathrm{Fun}}
\newcommand{\Pic}{\mathrm{Pic}}
\newcommand{\Jac}{\mathrm{Jac}}
\newcommand{\FF}{\mathbb{F}}
\newcommand{\tO}{\tilde{O}}
\newcommand{\tOpom}{\tilde{O}^{(-)}}
\newcommand{\qcoh}{\mathrm{qcoh}}
\newcommand{\Nis}{\mathrm{Nis}}
\def\dual{*}
\newcommand{\NB}[1]{\todo[color=gray!40]{#1}}
\newcommand{\tom}[1]{\todo[color=green]{#1}}
\newcommand{\kirsten}[1]{\todo[color=yellow]{#1}}
\newcommand{\NB}[1]{}
\newcommand{\tom}[1]{}
\newcommand{\kirsten}[1]{}
\renewcommand{\todo}[1]{}
\title[$\A^1$-Euler classes]{$\A^1$-Euler classes: six functors formalisms, dualities, integrality and linear subspaces of complete intersections}
\date{\today}
\author{Tom Bachmann}
\address{Department of Mathematics, Massachusetts Institute of Technology,
Cambridge, MA, USA}
\email{\href{mailto:tom.bachmann@zoho.com}{tom.bachmann@zoho.com}}
\author{Kirsten Wickelgren}
\address{Current: K.~Wickelgren, Department of Mathematics, Duke University, Durham, NC, USA }
\email{\href{kirsten.wickelgren@duke.edu}{kirsten.wickelgren@duke.edu}}
\begin{document}

\maketitle

\begin{abstract}
We equate various Euler classes of algebraic vector bundles, including those of \cite{Barge_Morel-Euler_classes}, \cite{CubicSurface}, \cite{DJK}, and one suggested by M.J. Hopkins, A. Raksit, and J.-P. Serre. We establish integrality results for this Euler class, and give formulas for local indices at isolated zeros, both in terms of 6-functor formalism of coherent sheaves and as an explicit recipe in commutative algebra of Scheja and Storch. As an application, we compute the Euler classes enriched in bilinear forms associated to arithmetic counts of $d$-planes on complete intersections in $\P^n$ in terms of topological Euler numbers over $\RR$ and $\CC$. 
\end{abstract}

\tableofcontents

\section{Introduction}


For algebraic vector bundles with an appropriate orientation, there are Euler classes and numbers enriched in bilinear forms. We will start over a field $k$, and then discuss more general base schemes, obtaining integrality results. Let  $\GW(k)$ denote the Grothendieck--Witt group of $k$, defined to be the group completion of the semi-ring of non-degenerate, symmetric, $k$-valued, bilinear forms, see e.g. \cite{Lam-intro_quadratic_forms_over_fields}. Let $\langle a \rangle$ in $\GW(k)$ denote the class of the rank $1$ bilinear form $(x,y) \mapsto axy$ for $a$ in $k^*$.

For a smooth, proper $k$-scheme $f: X \to \Spec k $ of dimension $n$, coherent duality defines a trace map $\eta_f: \H^n(X, \omega_{X/k}) \to k$, which can be used to construct the following Euler number in $\GW(k)$. Let $V \to X$ be a rank $n$ vector bundle equipped with a relative orientation, meaning  a line bundle $\scr L$ on $X$ and an isomorphism $$\rho: \det V \otimes \omega_{X/k} \to \scr L^{\otimes 2}. $$ For $0 \leq i, j \leq n$, let $\beta_{i,j}$ denote the perfect pairing \begin{equation}\label{beta_ij_intro_def} \beta_{i,j}: \H^i(X, \wedge^j V^\dual \otimes \scr L) \otimes \H^{n-i}(X, \wedge^{n-j} V^\dual \otimes \scr L)\to k\end{equation} given by the composition $$ \H^i(X, \wedge^j V^\dual \otimes \scr L) \otimes \H^{n-i}(X, \wedge^{n-j} V^\dual \otimes \scr L) \stackrel{\cup}{\to} \H^n(X, \wedge^n V^\dual \otimes \scr L^{\otimes 2}) \stackrel{\rho}{\to}  \H^n(X, \omega_{X/k}) \stackrel{\eta_f}{\to} k.$$ For $i = n-i$ and $j=n-j$,   note that $\beta_{i,j}$ is a bilinear form on $ \H^i(X, \wedge^j V^\dual \otimes \scr L)$. Otherwise, $\beta_{i,j} \oplus \beta_{n-i,n-j}$ determines the bilinear form on $\H^i(X, \wedge^j V^\dual \otimes \scr L) \oplus \H^{n-i}(X, \wedge^{n-j} V^\dual \otimes \scr L)$.  The alternating sum $$n^{\GS}(V) : = \sum_{0 \leq i, j \leq n} (-1)^{i+j}\beta_{i,j} $$ thus determines an element of $\GW(k)$, which we will call the Grothendieck--Serre duality or coherent duality Euler number. Note that $\beta_{i,j} \oplus \beta_{n-i,n-j}$  in $\GW(k)$ is an integer multiple of $h$ where $h$ denotes the hyperbolic form $h = \langle 1 \rangle + \langle -1 \rangle$, with Gram matrix $$
   h=
  \left[ {\begin{array}{cc}
   0 & 1 \\
   1 & 0 \\
  \end{array} } \right].$$ This notion of Euler number was suggested by M.J. Hopkins, J.-P. Serre and A. Raksit, and developed by M. Levine and Raksit for the tangent bundle in \cite{levine2018motivic}.
  
For a relatively oriented vector bundle $V$ equipped with a section $\sigma$ with only isolated zeros, an Euler number $n^{\PH}(V, \sigma)$ was defined in \cite[Section 4]{CubicSurface} as a sum of local indices $$n^{\PH}(V, \sigma) = \sum_{x : \sigma(x) = 0} \ind^{\PH}_x \sigma .$$ The index $\ind^{\PH}_x \sigma$ can be computed explicitly with a formula of Scheja--Storch \cite{scheja} or Eisenbud--Levine/Khimshiashvili \cite{eisenbud77} \cite{khimshiashvili} (see Sections \ref{subsec:poincare-hopf-euler-number} and \ref{Scheja_Storch_Serre_Duality_section}) and is also a local degree \cite{kass2016class} \cite{Brazeltontrace} (this is discussed further in Section \ref{sec:A1-degrees}). For example, when $x$ is a simple zero of $\sigma$ with $k(x)=k$, the index is given by a well-defined Jacobian $\Jac~\sigma$ of $\sigma$, $$\ind^{\PH}_x \sigma= \langle \Jac~\sigma(x) \rangle,$$ illustrating the relation with the Poincar\'e--Hopf formula for topological vector bundles. (For the definition of the Jacobian, see the beginning of Section \ref{subsection:arithmetic_count_d-planes_ci}.) In \cite[Section 4, Corollary 36]{CubicSurface}, it was shown that  $n^{\PH}(V, \sigma) = n^{\PH}(V, \sigma')$ when $\sigma$ and $\sigma'$ are in a family over $\bb A^1_L$ of sections with only isolated zeros, where $L$ is a field extension with $[L:k]$ odd. We strengthen this result by equating $n^{\PH}(V, \sigma)$ and $n^{\GS}(V)$; this is the main result of \S\ref{sec:GS-PH}.

\begin{theorem}[see \S\ref{subsec:poincare-hopf-euler-number}]\label{thm:introd-ePH=eGS}
Let $k$ be a field, and $V \to X$ be a relatively oriented, rank $n$ vector bundle on a smooth, proper $k$-scheme of dimension $n$. Suppose $V$ has a section $\sigma$ with only isolated zeros. Then $$n^{\PH}(V, \sigma) = n^{\GS}(V).$$ In particular, $n^{\PH}(V, \sigma)$ is independent of the choice of $\sigma$.
\end{theorem}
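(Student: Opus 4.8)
The plan is to identify both $n^{\PH}(V,\sigma)$ and $n^{\GS}(V)$ with a single, section-independent invariant coming from a six-functor formalism — concretely, the $\A^1$-Euler class of $V$ living in a bivariant/cohomology group of $X$, pushed forward to $\GW(k)$ along the structure map $f\colon X\to\Spec k$. The key input, which I would establish (or import from the earlier sections of the paper referenced in the statement), is a \emph{local-to-global} principle: for the relatively oriented bundle $V$ the global Euler class $e(V)$ in $\widetilde{\CH}{}^n(X,\det V^\vee)$ (or the appropriate twisted Chow–Witt / cohomology group) is independent of $\sigma$ by homotopy invariance, while its pushforward decomposes as a sum of local contributions supported at the zeros of any chosen section. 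This gives $f_*e(V)=\sum_{x:\sigma(x)=0}\ind_x\sigma$ for \emph{whatever} local index is compatible with the formalism; so the two theorems to prove are (a) $n^{\PH}(V,\sigma)=f_*e(V)$, i.e.\ the Scheja–Storch / Eisenbud–Levine local index agrees with the formalism's local index, and (b) $n^{\GS}(V)=f_*e(V)$, i.e.\ the coherent-duality alternating sum computes the same global pushforward.

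For step (a) I would argue locally: shrink to an affine $\Spec R$ around an isolated zero $x$, trivialize $V$ and $\scr L$ compatibly with $\rho$, so that $\sigma$ becomes a sequence $(f_1,\dots,f_n)$ in $R$ with $R/(f_1,\dots,f_n)$ finite over $k$. On the one hand this data defines $\ind^{\PH}_x\sigma$ via the Scheja–Storch bilinear form on the complete intersection algebra $Q=R/(f)$, using the socle/Jacobian element; on the other hand the six-functor Euler class restricted to the support $x$ is, by the purity/deformation-to-the-normal-cone machinery, exactly the class of the Koszul complex $\mathrm{Kos}(f_1,\dots,f_n)$ under the local coherent-duality trace $\H^n_x(X,\omega)\to k$. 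The identification of the Scheja–Storch form with this Koszul/residue pairing is classical commutative algebra (the Scheja–Storch construction \emph{is} a model for the coherent residue pairing), so (a) reduces to that known comparison; I expect this to be essentially bookkeeping once the conventions (orientations, signs, the twist by $\scr L$) are pinned down.

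For step (b), the point is that $\bigoplus_{i,j}\H^i(X,\wedge^j V^\vee\otimes\scr L)$ with the pairings $\beta_{i,j}$ is precisely the hypercohomology of the Koszul complex $\mathrm{Kos}(\sigma)=\bigl[\wedge^n V^\vee\to\cdots\to\wedge^1 V^\vee\to\calO_X\bigr]\otimes\scr L$ equipped with its coherent-duality self-pairing into $\H^n(X,\omega_{X/k})$, and the alternating sum $n^{\GS}(V)=\sum(-1)^{i+j}\beta_{i,j}$ is the resulting symmetric bilinear form on the Euler characteristic of this complex in the derived category of $k$-vector spaces with duality. When $\sigma$ has isolated zeros the Koszul complex is a resolution of (a twist of) $\calO_Z$ for $Z=\{\sigma=0\}$ finite over $k$, so this hypercohomology collapses to $\H^0$ of a finite $k$-algebra with its coherent-duality form — which is exactly $\sum_x\ind_x$ computed via coherent duality, hence equals $f_*e(V)$. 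The main obstacle, and where I would spend the most care, is \textbf{matching the signs, twists, and orientation data across the three descriptions} — relative orientation $\rho$, the shift conventions in coherent duality, and the sign $(-1)^{i+j}$ versus the sign in the symmetric-monoidal structure on complexes with duality — so that the isomorphisms are not merely up to a unit but literally identities in $\GW(k)$; handling the degenerate ($i\ne n-i$) summands, which only contribute hyperbolic forms, is a minor additional check.
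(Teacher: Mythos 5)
Your proposal takes a genuinely different route from the paper's proof of this theorem (which lives in \S2), and it has two substantive gaps.

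\emph{First gap: the passage from the $E_1$ page to the hypercohomology.} In step (b) you assert that $\bigoplus_{i,j}\H^i(X,\wedge^j V^\vee\otimes\scr L)$ with the pairings $\beta_{i,j}$ ``is precisely the hypercohomology of the Koszul complex.'' It is not: this direct sum is the $E_1$ page of the hypercohomology spectral sequence for the stupid filtration on $K(\sigma)^\bullet\otimes\scr L$, whereas the coherent-duality form relevant to the local-index decomposition lives on $R^*f_*(K(\sigma)^\bullet\otimes\scr L)$, i.e.\ on the abutment. By definition $n^{\GS}(V)=\sum(-1)^{i+j}[\beta_{i,j}]$ is an alternating sum over the $E_1$ page, while the sum of Scheja--Storch local indices computes an alternating sum over the abutment. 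Showing these agree in $\GW(k)$ is a nontrivial step. It is exactly Proposition~\ref{pr:eGSVsigmarho=eGSVrho} in the paper, which relies on Lemmas~\ref{associated_gr_same_Euler_char} and~\ref{Euler_char_unchanged_homology}: one needs to prove that the $\GW$-class of a perfect graded symmetric pairing is unchanged first when passing to homology of a self-dual complex and then when passing to the associated graded of a compatible filtration. You have not recognized this as a separate step, and it is not covered by the observation that the $i\ne n-i$ summands are hyperbolic (that handles the grading, not the filtration/spectral-sequence comparison).

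\emph{Second gap: the reliance on homotopy invariance restricts the field.} Your section-independence argument invokes homotopy invariance of the Chow--Witt-type Euler class in a six-functor formalism. The paper deliberately avoids this route for Theorem~\ref{thm:introd-ePH=eGS}: as Remark~\ref{rmk:eGSVsigma-alternative} explains, an $\A^1$-invariant Hermitian $K$-theory with proper pushforwards was not available in characteristic $2$ at the time, and Chow--Witt groups as used in e.g.\ Corollary~\ref{co:pieBM=nPH} require $\operatorname{char} k\ne 2$. The theorem, however, is stated and proved for an arbitrary field $k$. Your proposal therefore proves only a weaker statement; the paper's direct spectral-sequence argument exists precisely to close this characteristic-$2$ gap. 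If you are willing to assume $\operatorname{char} k\ne 2$, your approach is essentially the one the paper pursues in \S\S\ref{sec:coh-theories}--\ref{sec:Euler-KO} (and Corollary~\ref{co:pieBM=nPH} is in effect the statement you sketch), but then you should be explicit that you are replacing the paper's elementary argument by machinery that currently buys less generality.

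Step (a) of your proposal --- identifying the Scheja--Storch form with the local coherent-duality/residue pairing --- is correct in spirit and corresponds to Theorem~\ref{thm:introd-2} and Proposition~\ref{prop:meta-compute}; as you say, it is careful bookkeeping once the orientation conventions are pinned down.
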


\begin{remark}
Theorem \ref{thm:introd-ePH=eGS} strengthens Theorem 1 of \cite{Bethea}, removing hypothesis (2) entirely. It also simplifies the proofs of \cite[Theorem 1]{CubicSurface} and \cite[Theorems 1 and 2]{FourLines}: it is no longer necessary to show that the sections of certain vector bundles with non-isolated isolated zeros are codimension $2$, as in \cite[Lemmas 54,56,57]{CubicSurface}, and \cite[Lemma 1]{FourLines}, because $n^{\PH}(V, \sigma)$ is independent of $\sigma$.  
\end{remark}

\subsection{Sketch proof and generalizations}
The proof of the above theorem proceeds in three steps.
\begin{enumerate}
  \setcounter{enumi}{-1}
\item For a section $\sigma$ of $V$, we define an Euler number relative to the section using coherent duality and denote it by $n^\GS(V, \sigma, \rho)$.
  If $\sigma = 0$, we recover the absolute Euler number $n^\GS(V, \rho)$, essentially by construction.
\item For two sections $\sigma_1, \sigma_2$, we show that $n^\GS(V, \sigma_1, \rho) = n^\GS(V, \sigma_2, \rho)$. To prove this, one can use homotopy invariance of Hermitian K-theory, or show that $n^\GS(V, \sigma_1, \rho) = n^\GS(V, \rho)$ by showing an instance of the principle that alternating sums, like Euler characteristics, are unchanged by passing to the homology of a complex. 
\item If a section $\sigma$ has isolated zeros, then $n^\GS(V, \sigma, \rho)$ can be expressed as a sum of local indices $\ind_{Z/S}(\sigma)$, where $Z$ is (a clopen component of) the zero scheme of $\sigma$.
\item For $Z$ a local complete intersection in affine space, i.e. in the presence of coordinates, we compute the local degree explicitly, and identify it with the \emph{Scheja--Storch} form \cite[3]{scheja}.
\end{enumerate}
Taken together, these steps show that $n^\GS(V, \rho)$ is a sum of local contributions given by Scheja--Storch forms, which is essentially the definition of $n^\PH(V, \rho)$.

These arguments can be generalized considerably, replacing the Grothendieck--Witt group $\GW$ by a more general \emph{cohomology theory} $E$. We need $E$ to admit transfers along proper lci morphisms of schemes, and an $\SL^c$-orientation (see \S\ref{sec:coh-theories} for more details).
Then for step (0) one can define an Euler class $e(V, \sigma, \rho)$ as $z^*\sigma_*(1)$, where $z$ is the zero section.
Step (2) is essentially formal; the main content is in steps (1) and (3).
Step (1) becomes formal if we assume that $E$ is \emph{$\A^1$-invariant}.
In particular, steps (0)-(2) can be performed for $\SL$-oriented cohomology theories represented by motivic spectra; this is explained in \S\S\ref{sec:coh-theories},\ref{sec:representable-cohomology},\ref{ref:representable-theory-euler-classes}.

It remains to find a replacement for step (3).
We offer two possibilities: in \S\ref{sec:A1-degrees} we show that, again in the presence of coordinates, the local indices can be identified with appropriate $\A^1$-degrees.
On the other hand, in \S\ref{sec:Euler-KO} we show that for $E = \KO$ the motivic spectrum corresponding to Hermitian $K$-theory, the local indices are again given by Scheja--Storch forms.
This implies the following.

\begin{corollary}[see Corollary \ref{cor:KO-scheja} and Definition \ref{def:index-general}]
Let $S=Spec(k)$, where $k$ is a field of characteristic $\ne 2$.\footnote{Here and many times in the text, we restrict to bases $S$ with $1/2 \in \calO_S$ in order for the classical constructions of Hermitian $K$-theory to be well-behaved. Forthcoming work by other authors is expected to produce well-behaved Hermitian $K$-theory spectra in all characteristics, and then all our assumptions on the characteristic can be removed.}
Let $\pi: X \to k$ be smooth and $V/X$ a relatively oriented vector bundle with a non-degenerate section $\sigma$.
Write $\varpi: Z=Z(\sigma) \to k$ for the vanishing scheme (which need not be smooth).
Then \[ n^\PH(V, \sigma) = \varpi_*(1) \in \KO^0(k) = \GW(k). \]
\end{corollary}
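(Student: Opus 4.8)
The plan is to obtain the identity by specializing the general $\SL^c$-oriented formalism of \S\S\ref{sec:coh-theories}--\ref{ref:representable-theory-euler-classes} to $E=\KO$ and then feeding in the local computation of \S\ref{sec:Euler-KO}. Recall the relevant properties of $\KO$ over a field of characteristic $\neq 2$: it is $\A^1$-invariant, carries an $\SL^c$-orientation, admits pushforwards along proper lci morphisms, and satisfies $\KO^0(k)=\GW(k)$. Since $\sigma$ is non-degenerate, its vanishing scheme $Z=Z(\sigma)$ is a local complete intersection of the expected codimension $\operatorname{rank}V$ in $X$; in particular $\varpi\colon Z\to\Spec k$ is (finite) lci, so the transfer $\varpi_*$ is defined, and the relative orientation $\rho$ trivializes the line-bundle twist entering the source, so that $\varpi_*(1)\in\KO^0(k)=\GW(k)$ makes sense. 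This is the class on the right-hand side; what has to be shown is that it decomposes as a sum of Scheja--Storch forms matching the local Poincar\'e--Hopf indices.

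To that end I would write $Z=\coprod_{x}Z_x$ for the decomposition into clopen local pieces, one for each point of the finite set $\{\sigma=0\}$. By additivity of the transfer, $\varpi_*(1)=\sum_x(\varpi|_{Z_x})_*(1)$, and by construction (Definition~\ref{def:index-general}) each summand is the local index $\ind_{Z_x/k}(\sigma)$. Each such index may be computed after choosing \'etale local coordinates around $x$, chosen compatibly with the relative orientation as in step~(3) of the sketch above; in the presence of such coordinates, Corollary~\ref{cor:KO-scheja} identifies $\ind_{Z_x/k}(\sigma)$ with the Scheja--Storch bilinear form attached to the component functions of $\sigma$.

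It remains to match this with $n^\PH$. By the very definitions recalled in \S\S\ref{subsec:poincare-hopf-euler-number},\ref{Scheja_Storch_Serre_Duality_section}, the local index $\ind^\PH_x\sigma$ is that same Scheja--Storch form, and $n^\PH(V,\sigma)=\sum_{x\colon\sigma(x)=0}\ind^\PH_x\sigma$. Chaining the equalities gives $n^\PH(V,\sigma)=\sum_x\ind_{Z_x/k}(\sigma)=\varpi_*(1)$. As a consistency check one can instead route through Theorem~\ref{thm:introd-ePH=eGS}, which gives $n^\PH(V,\sigma)=n^\GS(V)$, together with the identification $n^\GS(V)=\varpi_*(1)$ furnished by the representable formalism for $\KO$; but the direct route above is what "Corollary~\ref{cor:KO-scheja} plus Definition~\ref{def:index-general}" amounts to.

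The substance of the argument is concentrated entirely in Corollary~\ref{cor:KO-scheja}: the concrete identification of the Hermitian $K$-theory transfer along a finite lci morphism with the classical Scheja--Storch (residue) pairing. Proving it requires a hands-on model for the $\KO$-pushforward: via a localization/d\'evissage-type reduction for Hermitian $K$-theory one first reduces to $Z$ a complete intersection in affine space, and then the coherent-duality trace underlying the transfer must be matched with the Scheja--Storch construction, with careful tracking of the unit and sign ambiguities and of how $\rho$ together with the coordinate choice trivializes the dualizing line; one also needs compatibility with base change, after which coordinate-independence of the result follows from the homotopy invariance of $\KO$. Everything else in the argument is formal bookkeeping within the six-functor/orientation formalism, so I expect this local comparison to be the main obstacle.
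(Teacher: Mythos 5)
Your argument matches the paper's intended route: decompose $Z$ into clopen components, use additivity of the $\KO$-transfer to write $\varpi_*(1)$ as the sum of local indices of Definition~\ref{def:index-general}, and identify each one via Corollary~\ref{cor:KO-scheja} with the Scheja--Storch form that defines $\ind_x^\PH\sigma$. One small correction to your closing speculation: the paper proves Corollary~\ref{cor:KO-scheja} not by a d\'evissage argument for Hermitian $K$-theory, but by identifying the $\KO$-transfer along finite syntomic morphisms with the coherent-duality trace form through the framed-correspondence model for $\KO$ (Corollary~\ref{cor:KO-transfer} in Appendix~\ref{app:KO}), after which Theorem~\ref{thm:introd-2} matches that trace form with the Scheja--Storch pairing.
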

Here we have used the lci pushforward \[ \varpi_*: \KO^0(Z) \stackrel{\rho}{\wequi} \KO^{L_\varpi}(Z) \to \KO^0(k) \] of D\'eglise--Jin--Khan \cite{DJK}.
If moreover $X$ is proper then $\varpi_*(1)$ also coincides with $\pi_*z^*z_*(1)$, where $z: X \to V$ is the zero section (see Corollary \ref{cor:meta-rep}, Corollary \ref{cor:forget-number} and Proposition \ref{prop:euler-class-six-functors}).
This provides an alternative proof that $n^\PH(V,\sigma)$ is independent of the choice of $\sigma$ (under our assumption on $k$).

Another important example is when $E$ is taken to be the motivic cohomology theory representing Chow--Witt groups. This recovers the Barge--Morel Euler class \cite{Barge_Morel-Euler_classes} $e^{\BM}(V)$ in $\CHt^r(X, \det V^*)$, which is defined for a base field of characteristic not $2$. Suppose that $\rho$ is a relative orientation of $V$ and $\pi: X \to \Spec k$ is the structure map. 

\begin{corollary}\label{co:pieBM=nPH}
Let $k$ be a field of  characteristic $\ne 2$. Then $\pi_* e^{\BM}(V, \rho) = n^{\GS}(V, \rho)$ in $\GW(k)$.
\end{corollary}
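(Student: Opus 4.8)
The plan is to realize $\pi_* e^{\BM}(V,\rho)$ as the pushforward to $\Spec k$ of the \emph{representable} Euler class attached to the motivic ring spectrum $\H\widetilde{\Z}$ that represents Chow--Witt groups, and then to transport this identity along a map of $\SL^c$-oriented ring spectra to Hermitian $K$-theory, where \S\ref{sec:Euler-KO} has already identified the pushforward along smooth proper morphisms with coherent (Grothendieck--Serre) duality. First I would record that $\H\widetilde{\Z}$ is $\SL^c$-oriented and $\A^1$-invariant, so that the representable Euler class $e^{\H\widetilde{\Z}}(V,\rho) = z^* z_*(1)$ of \S\ref{ref:representable-theory-euler-classes} makes sense, where $z \colon X \to V$ is the zero section; and that, under the identification of the relevant twisted $\H\widetilde{\Z}$-cohomology of $X$ with $\CHt^{\bullet}(X,-)$, it coincides with the Barge--Morel Euler class $e^{\BM}(V,\rho) \in \CHt^n(X, \det V^\dual)$. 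Both classes are pulled back from a Thom class, so this comparison --- the Chow--Witt instance of \S\ref{ref:representable-theory-euler-classes} --- reduces to bookkeeping of Thom isomorphisms together with the relative orientation $\rho$; it is here that Barge--Morel's cocycle in the Rost--Schmid complex has to be reconciled with $z^* z_*(1)$. Since $\rho$ makes the smooth proper pushforward $\pi_*$ land in $\CHt^0(\Spec k) = \GW(k)$, this already gives $\pi_* e^{\BM}(V,\rho) = \pi_*^{\H\widetilde{\Z}}\, z^* z_*(1)$ in $\GW(k)$.

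Next I would invoke naturality of the six-functors Euler class and of the lci pushforwards in the representing $\SL^c$-oriented ring spectrum (Proposition \ref{prop:euler-class-six-functors}, Corollary \ref{cor:meta-rep}). The effective cover $\ko$ of Hermitian $K$-theory is $\SL^c$-oriented, and there are ring maps $\ko \to \KO$ and $\ko \to \H\widetilde{\Z}$ --- the counit of the effective cover and the truncation in the homotopy $t$-structure --- both compatible with $\SL^c$-orientations and both inducing the identity of $\GW(k)$ on degree-$0$ cohomology of $\Spec k$. Since $z_*$, $z^*$, $\pi_*$ and the units are all preserved by maps of $\SL^c$-oriented ring spectra, the $\ko$-Euler number $\pi_*^{\ko}\, z^* z_*(1) \in \ko^0(k) = \GW(k)$ maps to $\pi_*^{\H\widetilde{\Z}}\, z^* z_*(1)$ on the one side and to $\pi_*^{\KO}\, z^* z_*(1)$ on the other; hence these two coincide in $\GW(k)$, and in particular $\pi_* e^{\BM}(V,\rho) = \pi_*^{\KO}\, z^* z_*(1)$.

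Finally, by the identification of the $\KO$-theoretic pushforward along smooth proper morphisms with coherent duality carried out in \S\ref{sec:Euler-KO} --- which, together with Proposition \ref{prop:euler-class-six-functors}, yields $\pi_*^{\KO}\, z^* z_*(1) = \pi_*^{\KO} e^{\KO}(V,\rho) = n^{\GS}(V,\rho)$ --- the right-hand side is exactly the coherent duality Euler number, and chaining the three identities gives $\pi_* e^{\BM}(V,\rho) = n^{\GS}(V,\rho)$. The step I expect to be the main obstacle is the Thom-class comparison of the first paragraph, reconciling Barge--Morel's explicit construction with the purity-theoretic class $z^* z_*(1)$; everything after it is formal manipulation with twists, units and pushforwards, the substantive duality content having been isolated in \S\ref{sec:Euler-KO}. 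A secondary point is to check that the D\'eglise--Jin--Khan pushforwards are compatible with the ring maps $\ko \to \KO$ and $\ko \to \H\widetilde{\Z}$ and restrict to the identity on $\GW(k)$ over $\Spec k$, which follows from functoriality of the purity transformation in the represented spectrum.
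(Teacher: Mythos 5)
Your proposal has exactly the same three-part architecture as the paper's proof: (i) identify $e^{\BM}(V,\rho)$ with the representable Euler class $e(V,\rho,\H\tilde\Z) = z^*z_*(1)$ via Proposition~\ref{prop:euler-class-six-functors} and the Barge--Morel definition; (ii) transport along the span of $\SL^c$-oriented ring spectra $\H\tilde\Z \leftarrow \tilde f_0\KO \to \KO$ (your $\ko$ is precisely $\tilde f_0\KO$), using that it induces isomorphisms on $\pi_0(\ph)(k) \cong \GW(k)$ and that Euler numbers are natural in the spectrum; (iii) identify $n(V,\rho,\KO)$ with $n^{\GS}(V,\rho)$. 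So the approach is not new relative to the paper.

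The one place where your writeup overstates what is available is step (iii). You cite \S\ref{sec:Euler-KO} as having "already identified the pushforward along smooth proper morphisms with coherent duality," but that section only proves the $\GW \to \KO$ comparison for \emph{finite syntomic} morphisms (Corollary~\ref{cor:KO-transfer}); for smooth proper maps, the paper records the identification only at the level of $\W$/$\KW$, via Levine--Raksit, and explicitly flags the $\GW$/$\KO$ version as merely expected. The paper's own proof of this corollary instead gets $n^{\GS}(V,\rho) = n(V,\rho,\KO)$ from Example~\ref{KO-Euler_class_Koszul} (the $\KO$-Euler class is the Koszul complex, citing Levine--Raksit's Thom-class computation) plus the observation that pushing forward the Koszul form is "essentially the definition" of $n^{\GS}$ — and they add a footnote acknowledging that this step is less fully justified than the Witt-group variant, for which they spell out the alternative span $\ul{W}[\eta^{\pm}] \leftarrow \KW_{\ge 0} \to \KW$, $\H\Z \leftarrow \kgl \to \KGL$. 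In short: your skeleton is the paper's skeleton, but the load-bearing reference for (iii) should be Example~\ref{KO-Euler_class_Koszul} (or the $\KW$ alternative), not \S\ref{sec:Euler-KO}, since the latter does not cover the smooth proper $\KO$ case you need.
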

\begin{proof}
We have $e^{\BM}(V, \rho) = e(V, \rho, H\tilde\Z)$; indeed by Proposition \ref{prop:euler-class-six-functors} $e(V, \rho, H\tilde\Z)$ can be computed in terms of pushforward along the zero section of $V$, and the exactly the same is true for $e^{\BM}$ by definition \cite[\S2.1]{Barge_Morel-Euler_classes}.
We also have $n^\GS(V,\rho) = n(V, \rho, \KO)$; indeed the right hand side is represented by the natural symmetric bilinear form on the cohomology of the Koszul complex by Example \ref{KO-Euler_class_Koszul}, and this is essentially the definition of $n^\GS(V,\rho)$.

It thus suffices to prove that $n(V, \rho, H\tilde\Z) = n(V, \rho, \KO) \in \GW(k)$.
Consider the span of ring spectra $H\tilde\Z \leftarrow \tilde f_0 \KO \to \KO$ as in the proof of Proposition \ref{prop:application-bc}.
It induces an isomorphism on $\pi_0(\ph)(k)$, namely with $\GW(k)$ in all cases.
The desired equality follows from naturality of the Euler numbers.

(An alternative argument proceeds as follows.
It suffices to prove that $\pi_* e^\BM(V,\rho)$ and $n^\GS(V, \rho)$ have the same image in $\W(k)$ and $\Z$.
The image of $n^\GS(V, \rho)$ in $\W(k$) is given by $n(V, \rho, \KW)$; for this we need only show that $e(V,\rho,\KW)$ is represented by the Koszul complex, which is Example \ref{KW-Euler_class_Koszul}.
It will thus be enough to show that $n(V, \rho, H\Z) = n(V, \rho, \KGL)$ and $n(V, \rho, \ul{W}[\eta^{\pm}]) = n(V, \rho, \KW)$; this follows as before by considering the spans $H\Z \leftarrow \kgl \to \KGL$ and $\ul{W}[\eta^{\pm}] \leftarrow \KW_{\ge 0} \to \KW$.)\footnote{We include this alternative argument because we feel that Example \ref{KW-Euler_class_Koszul} is more fully justified in this paper than Example \ref{KO-Euler_class_Koszul}.}
\end{proof}

The left hand side is the Euler class studied by M. Levine in \cite{Levine-EC}. We do not compare these Euler classes with the obstruction theoretic Euler class of \cite[Chapter 8]{A1-alg-top}. Asok and Fasel show that the latter agrees with $\pi_* e^{\BM}(V, \rho)$ up to a unit in $\GW(k)$ \cite{Asok-Fasel_comparing_Euler_classes}.

\subsection{Applications} 

It is straightforward to see that Euler numbers for cohomology theories are stable under base change (see Corollary \ref{cor:Euler-number-base-change}). This implies that when considering vector bundles on varieties which are already defined over e.g. $\Spec(\Z[1/2])$, then the possible Euler numbers are constrained to live in $\GW(\Z[1/2]) = \Z[\lra{-1}, \lra{2}] \subset \GW(\Q)$. Using novel results on Hermitian K-theory \cite{CDHHLMNNS-3} allows one to use the base scheme $\Spec \Z$ as well. Proposition \ref{prop:application-bc} contains both of these cases, and the $\Z[1/2]$ case is independent of \cite{CDHHLMNNS-3}. It follows that for relatively oriented bundles over $\Z$ the Euler numbers can be read off from topological computations (Proposition \ref{nRRnCCaretopEnums}). Over $\Z[1/2]$ the topological Euler numbers of the associated real and complex vector bundles together with one further algebraic computation over some field in which $2$ is not a square determine the Euler number (and this is again independent of \cite{CDHHLMNNS-3}). See Theorem~\ref{two_values_e(bundlesoverZ[1/2])}. 

We use this to compute a weighted count of $d$-dimensional hyperplanes in a general complete intersection $$\{f_1 = \ldots = f_j \} \hookrightarrow \P^n_k$$ over a field $k$. This count depends only on the degrees of the $f_i$ and not the polynomials $f_i$ themselves: it is determined by associated real and complex counts, for any $d$ and degrees so that the expected variety of $d$-planes is $0$-dimensional and the associated real count is defined. This is Corollary \ref{arithmetic_count_d_planes_complete_intersection}. For example, combining with results of Finashin--Kharlamov over $\mathbb{R}$, we have that $160,839 \lra{1}+ 160,650 \lra{-1}$ and $$32063862647475902965720976420325 \lra{1}+ 32063862647475902965683320692800 \lra{-1}$$ are arithmetic counts of the $3$-planes in a $7$-dimensional cubic hypersurface and in a $16$-dimensional degree $5$-hypersurface respectively.  See Example \ref{3-plane-counts}. This builds on results of Finahin--Kharlamov \cite{finashin13}, J.L. Kass and the second-named author \cite{CubicSurface}, M. Levine \cite{Levine-Witt} \cite{Levine-EC}, S. McKean  \cite{McKean-Bezout}, Okonek--Teleman \cite{okonek14}, S. Pauli \cite{Pauli-Quintic_3_fold}, J. Solomon \cite{solomon06}, P.Srinivasan and the second-named author \cite{FourLines}, and M. Wendt \cite{Wendt-oriented_schubert}. 

\subsection{Acknowledgements} We warmly thank M. J. Hopkins for suggesting the definition of the Euler class using coherent duality. We likewise wish to thank A. Ananyevskiy and I. Panin for the reference to \cite{knus} giving the existence of Nisnevich coordinates, as well as A. Ananyevskiy, M. Hoyois and M. Levine for useful discussions. 

Kirsten Wickelgren was partially supported by National Science Foundation Awards DMS-1552730 and 2001890.

\subsection{Notation and conventions}
\subsubsection*{Grothendieck duality}
We believe that if $f: X \to Y$ is a morphism of schemes which is locally of finite presentation, then there is a well-behaved adjunction \[ f_!: D_\qcoh(X) \adj D_\qcoh(Y): f^! \] between the associated derived ($\infty$-)categories of unbounded complexes of $\scr O_X$-modules with quasi-coherent homology sheaves.
Unfortunately we are not aware of any references in this generality.
Instead, whenever mentioning a functor $f^!$, \emph{we implicitly assume that $X, Y$ are separated and of finite type over some noetherian scheme $S$}.
In this situation, the functor $f^!$ is constructed for homologically bounded above complexes in \cite[Tag 0A9Y]{stacks-project} (see also \cite{hartshorne1966residues,conrad2000grothendieck}), and this is all we will use.

\subsubsection*{Vector bundles}
We identify locally free sheaves and vector bundles \emph{covariantly}, via the assignment \[ \scr E \leftrightarrow Spec(Sym(\scr E^\dual)). \]
While it can be convenient to (not) pass to duals here (as in e.g. \cite{DJK}), we do not do this, since it confuses the first named author terribly.

\subsubsection*{Regular sequences and immersions}
Following e.g. \cite{SGA6}, by a regular immersion of schemes we mean what is called a Koszul-regular immersion in \cite[Tag 0638]{stacks-project}, i.e. a morphism which is locally a closed immersion cut out by a Koszul-regular sequence.
Moreover, by a regular sequence we will always mean a Koszu-regular sequence \cite[Tag 062D]{stacks-project}, and we reserve the term \emph{strongly regular sequence} for the usual notion.
A strongly regular sequence is regular \cite[Tag 062F]{stacks-project}, whence a strongly regular immersion is regular.
In locally noetherian situations, regular immersions are strongly regular \cite[Tags 063L]{stacks-project}.

\subsubsection*{Cotangent complexes}
For a morphism $f: X \to Y$, we write $L_f$ for the cotangent complex.
Recall that if $f$ is smooth then $L_f \wequi \Omega_f$, whereas if $f$ is a regular immersion then $L_f \wequi C_f[1]$, where $C_f$ denotes the conormal bundle.

\subsubsection*{Graded determinants}
We write $\widetilde\det: K(X) \to Pic(D(X))$ for the determinant morphism from Thomason--Trobaugh $K$-theory to the groupoid of graded line bundles.
If $C$ is a perfect complex, then we write $\widetilde \det C$ for the determinant of the associated $K$-theory point.
We write $\det C \in \Pic(X)$ for the ungraded determinant.

Given an lci morphism $f$, we put $\omega_f = \det L_f$ and $\widetilde\omega_f = \widetilde\det L_f$.

We systematically use graded determinants throughout the text.
For example we have the following compact definition of a relative orientation.
\begin{definition} \label{def:relative-orientation}
Let $\pi: X \to S$ be an lci morphism and $V$ a vector bundle on $X$.
By a \emph{relative orientation} of $V/X/S$ we mean a choice of line bundle $\scr L$ on $X$ and an isomorphism \[ \rho: \iHom(\widetilde\det V^\dual, \widetilde \omega_{X/S}) \xrightarrow{\wequi} \scr L^{\otimes 2}. \]
\end{definition}
Note that if $\pi$ is smooth, this just means that the locally constant functions $x \mapsto rank(V_x)$ and $x \mapsto \dim \pi^{-1}(\pi(x))$ on $X$ agree, and that we are given an isomorphism $\scr L^{\otimes 2} \wequi \omega_{X/S} \otimes \det V$.
Hence we recover the definition from \cite[Definition 17]{CubicSurface}.

\section{Equality of coherent duality and Poincar\'e--Hopf Euler numbers} \label{sec:GS-PH}

We prove Theorem \ref{thm:introd-ePH=eGS} in this section.

\subsection{Coherent duality Euler Number} \label{subsec:serr-duality-euler-number}
Let $f:X \to \Spec k$ be a smooth proper $k$-scheme of dimension $n$, and let $V$ be a rank $n$ vector bundle, relatively oriented by the line bundle $\scr L$ on $X$ and isomorphism $\rho: \det V \otimes \omega_{X/k} \to \scr L^{\otimes 2}$. Let $\sigma: X \to V$ be a section. Let $K(\sigma)^\bullet$ denote the Koszul complex \[ 0 \to \wedge^n V^* \to  \wedge^{n-1} V^* \to \ldots \to V^* \to \calO \to 0, \] with $\calO$ in degree $0$ and differential of degree $+1$ given by $$d(v_1 \wedge v_2 \wedge \ldots \wedge v_j) =  \sum_{i=1}^j (-1)^{i-1} v_i(\sigma) v_1 \wedge \ldots \wedge v_{i-1} \wedge v_{i+1} \wedge \ldots \wedge v_j.$$
This choice of $K(\sigma)^\bullet$ is $\Hom_{\calO}(-,\calO)$ applied to the Koszul complex of \cite[17.2]{Eisenbud_CommutativeAlgebra}.
$K(\sigma)^\bullet$ carries a canonical multiplication \begin{equation}\label{defm}m: K(\sigma)^\bullet \otimes K(\sigma)^\bullet \to K(\sigma)^\bullet\end{equation} defined in degree $-p$ by $m = \oplus_{i+j = p} 1_{\wedge^i V^*} \wedge 1_{\wedge^j V^*}$. Composing $m$ with the projection $p: K(\sigma)^\bullet \to \det{V}^\dual[n]$ defines a non-degenerate bilinear form $$\beta_{(V,\sigma)}: K(V, \sigma) \otimes K(V, \sigma) \to \det{V}^\dual[n],$$ $$\beta_{(V,\sigma)} =p m .$$

Tensoring $\beta_{(V,\sigma)}$ by $\scr L^{\otimes 2}$ and reordering the tensor factors of the domain, we obtain a non-degenerate symmetric bilinear form on $K(V, \sigma) \otimes \scr L$ valued in $(\det{V}^\dual \otimes \scr L^{\otimes 2})[n]$. The orientation $\rho$ determines an isomorphism $(\det{V}^\dual \otimes \scr L^{\otimes 2})[n] \to \omega_{X/k}[n]$. Composing $\beta_{(V,\sigma)} \otimes \scr L^{\otimes 2}$ with this isomorphism produces a non-degenerate bilinear form $$ \beta_{(V,\sigma, \rho)}: (K(V, \sigma) \otimes \scr L)  \otimes (K(V, \sigma) \otimes \scr L) \to \omega_{X/k}[n].$$

Let $D(X)$ denote the derived category of quasi-coherent $\calO_X$-modules. Serre duality determines an isomorphism $R f_* \omega_{X/k}[n] \cong \calO_k$ \cite[III Corollary 7.2 and Theorem 7.6]{hartshorne2013algebraic}.
Since $Rf_*$ is lax symmetric monoidal (being right adjoint to a symmetric monoidal functor), we obtain a symmetric morphism \[ Rf_*\beta_{(V,\sigma,\rho)}: [Rf_*(K(V, \sigma) \otimes \scr L)]^{\otimes 2} \to Rf_* \omega_{X/k}[n] \wequi \scr O_k, \] in $D(k)$, which is non-degenerate by Serre duality. 

The derived category $D(k)$ is equivalent to the category of graded $k$-vector spaces, by taking cohomology\footnote{In this section, we treat all categories as $1$-categories, i.e. ignore the higher structure of $D(k)$ as an $\infty$-category.}. If $V$ is a (non-degenerate) symmetric bilinear form in graded $k$-vector spaces, denote by $V^{(n)} = V_n \oplus V_{-n}$ (for $n \ne 0$) and $V^{(0)} = V_0$ the indicated subspaces; observe that they also carry (non-degenerate) symmetric bilinear forms. 

\begin{definition} \label{def:GSwrtsigmaeuler-number}
For a relatively oriented rank $n$ vector bundle $V \to X$ with section $\sigma$ and orientation $\rho$, over a smooth and proper variety $f: X \to k$ of dimension $n$, the \emph{Grothendieck--Serre-duality Euler number with respect to $\sigma$} is \[ n^\GS(V, \sigma, \rho) = \sum_{i \ge 0} (-1)^i [(Rf_*\beta_{(V,\sigma,\rho)})^{(i)}] \in \GW(k). \]
\end{definition}
\begin{remark} \label{rmk:abuse}
In order to not clutter notation unnecessarily, we also write the above definition as \[ n^\GS(V, \sigma, \rho) = \sum_i (-1)^i [(Rf_*\beta_{(V,\sigma,\rho)})_i]. \]
We shall commit to this kind of abuse of notation from now on.
\end{remark}

Recall that $n^\GS(V, \rho) \in \GW(k)$ was defined in the introduction, in terms of the symmetric bilinear form on $\bigoplus_{i,j} H^i(X, \Lambda^j V^\dual \otimes \scr L)$.
\begin{proposition}\label{pr:eGSVsigmarho=eGSVrho}
For any section $\sigma$ we have $n^\GS(V, \sigma, \rho) = n^\GS(V, \rho) \in \GW(k)$.
\end{proposition}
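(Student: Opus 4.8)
The plan is to compare the two bilinear forms by using that $n^\GS(V,\rho)$ is, essentially by construction, the case $\sigma = 0$ of $n^\GS(V,\sigma,\rho)$, and then to show that the quantity $n^\GS(V,\sigma,\rho)$ does not depend on $\sigma$. For the first point: when $\sigma = 0$ the differential of $K(0)^\bullet$ vanishes, so $K(V,0)\otimes\scr L$ is the formal direct sum $\bigoplus_j \wedge^j V^\dual \otimes \scr L[j]$, the multiplication $m$ is the exterior product, and the pairing $\beta_{(V,0,\rho)}$ restricted to the summands $\wedge^j V^\dual\otimes\scr L[j]$ and $\wedge^{n-j}V^\dual\otimes\scr L[n-j]$ is exactly the pairing into $\omega_{X/k}[n]$ whose $Rf_*$ produces $\beta_{i,j}\oplus\beta_{n-i,n-j}$ of the introduction (with the sign bookkeeping $(-1)^{i+j}$ matching the cohomological degree $i$ of $H^i(X,-)$ together with the internal degree $j$). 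Taking $Rf_*$ and decomposing into graded pieces then gives $\sum_{i}(-1)^i[(Rf_*\beta_{(V,0,\rho)})_i] = \sum_{i,j}(-1)^{i+j}\beta_{i,j} = n^\GS(V,\rho)$. This step is bookkeeping but should be written carefully because of the sign and degree conventions.

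The real content is showing $n^\GS(V,\sigma,\rho) = n^\GS(V,0,\rho)$ for an arbitrary section $\sigma$. Here I would use the ``Euler characteristic of a filtered complex'' principle mentioned in the sketch (step (1) of the introduction): interpolate between $\sigma$ and $0$. Concretely, consider the section $t\sigma$ of the pullback of $V$ to $X\times\A^1_k$ (coordinate $t$), with its Koszul complex $K(t\sigma)^\bullet$ and the induced non-degenerate symmetric form $\beta_{(V,t\sigma,\rho)}$ valued in $\omega_{X/k}[n]$ pulled back to $X\times\A^1$. Pushing forward along $X\times\A^1 \to \A^1$ and then taking cohomology sheaves, one gets a bounded complex of coherent sheaves on $\A^1_k$ equipped with a non-degenerate symmetric bilinear form valued in $\scr O_{\A^1}$ in each degree; restricting to $t=1$ gives $\beta_{(V,\sigma,\rho)}$ and to $t=0$ gives $\beta_{(V,0,\rho)}$. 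The class $\sum_i (-1)^i[(\cdots)_i]$ lands in $\GW(\A^1_k)$, and since $\GW(\A^1_k)\xrightarrow{\sim}\GW(k)$ (homotopy invariance of $\GW$, equivalently of Hermitian $K$-theory in degree $0$), the two specializations agree in $\GW(k)$. One has to check that the relevant pushforwards along $X\times\A^1\to\A^1$ commute with the two base changes $t=0,1$ — this holds by generic flatness / cohomology-and-base-change after possibly replacing $\A^1$ by a dense open, but since we only need the value in $\GW(k)$ and $\GW$ of a field has no nilpotents issues, one can instead argue with the whole perfect complex on $\A^1$ and its duality, avoiding any flatness hypotheses. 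Alternatively, and perhaps more cleanly in the $1$-categorical setting adopted in this section, one can run the argument of step (1) directly: filter $K(\sigma)^\bullet\otimes\scr L$ and use that passing to the homology of a self-dual complex does not change the alternating sum of the (Witt classes of the) induced forms.

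The main obstacle is the second step, and within it the verification that ``alternating sum of induced forms on a complex = alternating sum of induced forms on its homology'' in the Grothendieck–Witt group, applied to $Rf_*(K(V,\sigma)^\bullet\otimes\scr L)$ with its Serre-duality-induced pairing. This is a general lemma about a bounded complex $C^\bullet$ in an abelian category of $k$-vector spaces equipped with a non-degenerate symmetric pairing $C^\bullet\otimes C^\bullet\to k[?]$ compatible with the differential; one proves it by dévissage on the length of the complex, splitting off acyclic pieces $(\cdots\to 0\to A\xrightarrow{\sim}A\to 0\to\cdots)$, each of which carries a metabolic (hence trivial-in-$\GW$, in fact hyperbolic) induced form, so contributes $0$ to the alternating sum; the care needed is that the pairing is between $C^i$ and $C^{n-i}$ rather than $C^i$ with itself, so ``acyclic summand'' must be taken in the duality-symmetric sense. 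Once this lemma is in hand, applying it with the homology of $Rf_*(K(V,\sigma)^\bullet\otimes\scr L)$ — whose cohomology sheaves and induced forms are manifestly independent of $\sigma$ only after one further identifies them, e.g. when $\sigma$ is regular $H^*(K(\sigma))$ is concentrated on $Z(\sigma)$ and when $\sigma=0$ it is $\bigoplus_j\wedge^j V^\dual[j]$ — is not quite enough by itself; the clean statement is rather that $n^\GS(V,\sigma,\rho)$ equals the alternating sum of induced forms on $Rf_*$ of the homology, and separately that $\sigma=0$ computes the introduction's form, so the homotopy-invariance route via $\A^1$ is the more robust way to conclude equality for \emph{all} $\sigma$ simultaneously. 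I would present the $\A^1$/Hermitian-$K$-theory argument as the main proof and remark that the homological argument gives an alternative under a regularity hypothesis.
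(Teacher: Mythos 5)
Your proposal takes a genuinely different route from the paper, and the route you recommend as the ``main proof'' in fact has a gap that the paper explicitly flags and avoids.

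The paper's proof does \emph{not} compare $\sigma$ with the zero section and does not use homotopy invariance. Instead it runs the hypercohomology spectral sequence $E_1^{i,j}(K^\bullet) = H^j(X, K^i) \Rightarrow R^{i+j}f_*K^\bullet$ for $K^\bullet = K(V,\sigma)^\bullet \otimes \scr L$, observing that the $E_1$ page is built out of the individual terms $K^i = \wedge^{-i}V^\dual \otimes \scr L$ and the cup-product pairing \emph{and never sees the Koszul differential}, so the $E_1$-page form is manifestly the form of the introduction defining $n^\GS(V,\rho)$, regardless of $\sigma$. The two technical inputs are then Lemma~\ref{Euler_char_unchanged_homology} (passing to homology of a self-dual complex preserves the alternating sum in $\GW$, applied page-by-page from $E_1$ to $E_\infty$) and Lemma~\ref{associated_gr_same_Euler_char} (the $E_\infty$/associated-graded form has the same alternating sum as the filtered limit). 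In particular the ``step 0'' bookkeeping with $\sigma = 0$ that you lead with, while correct, never enters the paper's argument, and your worry that the homological argument is ``not quite enough by itself'' because one ``must separately identify cohomology sheaves'' is exactly the confusion the paper's choice of the stupid filtration dissolves.

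The $\A^1$-interpolation argument you propose as primary is discussed in Remark~\ref{rmk:eGSVsigma-alternative}, where the authors note that it requires a version of Hermitian $K$-theory that is $\A^1$-invariant on regular schemes \emph{and} has proper pushforwards; at the time of writing no reference for such a theory in characteristic $2$ was available, which is why they present the spectral-sequence argument instead. Since Proposition~\ref{pr:eGSVsigmarho=eGSVrho} is claimed for an arbitrary field $k$ (and is used that way in Theorem~\ref{two_values_e(bundlesoverZ[1/2])}), presenting the $\A^1$ route as the main proof leaves a genuine gap in characteristic $2$. (There is also a smaller technical worry in your $\A^1$ argument: you want a symmetric bilinear perfect complex on $\A^1_k$ whose restrictions to $t=0,1$ give the two forms, and while this can be arranged, the ``generic flatness / cohomology-and-base-change after a dense open'' hand-waving is not adequate---you need the full perfect complex with duality over all of $\A^1$, not a dense open, for homotopy invariance of $\GW$ to apply.) Your identification of the key lemma---alternating sums are unchanged by passing to homology of a self-dual complex---is correct and is indeed Lemma~\ref{Euler_char_unchanged_homology}; the missing idea is that it should be applied to the $E_r$ pages of the hypercohomology spectral sequence with the stupid filtration, which makes the comparison with $n^\GS(V,\rho)$ automatic and removes any need for $\sigma = 0$ or $\A^1$.
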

 
To prove Proposition \ref{pr:eGSVsigmarho=eGSVrho}, we use the hypercohomology spectral sequence $E^{i,j}_r(K^{\bullet})$ associated to a complex $K^{\bullet}$ of locally free sheaves on $X$ with $$E^{i,j}_1(K^{\bullet}) : = \H^j(X, K^i) \Rightarrow R^{i+j} f_* K^{\bullet}$$ Let $F_i$ denote the resulting filtration on $R^{*} f_* K^{\bullet}$, so that $$ \ldots \supseteq F_i = \image(\H^*(X, K^{\bullet \geq i}) \to \H^*(X, K^{\bullet})) \supseteq F_{i+1} \supseteq \ldots. $$ Given a perfect symmetric pairing of chain complexes $\beta: K^{\bullet} \otimes K^{\bullet} \to \omega_{X/k} [n]$, cup product induces pairings \[ \beta': R^*f_* K^{\bullet} \otimes R^*f_* K^{\bullet} \to R^*f_* \omega_{X/k} [n] \to k \] and \[ \beta_1: E^{*,*}_1(K^{\bullet}) \otimes E^{*,*}_1(K^{\bullet}) \to k.\] The following properties hold:
\begin{enumerate}
\item Placing the $k$ in the codomain of $\beta_1$ in bidegree $(-n,n)$, $\beta_1$ is a map of bigraded vector spaces and satisfies the Leibniz rule with respect to $d_1$. It thus induces $\beta_2: E^{*,*}_2(K^{\bullet}) \otimes E^{*,*}_2(K^{\bullet}) \to k$. Then $\beta_2$ satisfies the Leibnitz rule with respect to $d_2$ and hence induces $\beta_3$, and so on.
\item \label{beta_iperfect} All the pairings $\beta_i$ are perfect.
\item \label{betacompatible} The pairing $\beta'$ is compatible with the filtration in the sense  that $\beta'(F_i,F_k) = 0$ if $i+k >-n$.
\item It follows that $\beta'$ induces a pairing on $\gr_{\bullet} R^*f_* K^{\bullet} $. Under the isomorphism $\gr_{\bullet} \wequi E_\infty$, it coincides with $\beta_\infty$.
\item \label{filtered_perfect} $\beta'$ is perfect in the filtered sense: the induced pairing $F_i \otimes  R^*f_* K^{\bullet}/F_{-n-i+1} \to k$ is perfect.  (In particular the pairing $\beta'$ is perfect.)
\end{enumerate}

\begin{remark}
We do not know a reference for these facts, and proving them would take us too far afield.
The main idea is that we have a sequence of duality-preserving functors \[ C^\perf(X) \xrightarrow{\sigma_\bullet} D(X)^\fil \xrightarrow{\pi_*} D(k)^\fil. \]
Here $C^\perf(X)$ denotes the category of bounded chain complexes of vector bundles, $D(X)^\fil$ is the filtered derived category \cite{gwilliam2018enhancing}, and $\sigma_\bullet$ is the ``stupid truncation'' functor (composed with forgetting to the filtered derived category).
The first duality is with respect to $\iHom(\ph, \omega[n])$, the second with respect to $\iHom(\ph, \sigma_\bullet(\omega[n])) = \iHom(\ph, \omega[n](-n))$, and the third with respect to $\iHom(\ph, k[0](-n))$.
There are further duality preserving functors \[ (\ph)^\gr: D(k)^\fil \to D(k)^\gr \text{ and } U: D(k)^\fil \to D(k), \] where $D(X)^\gr = \Fun(\Z, D(X))$, with $\Z$ viewed as a discrete category.
Hence any perfect pairing $C \otimes C \to k[0](-n) \in D(k)^\fil$ induces a perfect pairing on $H_*C^\gr \otimes H_*C^\gr \to k(-n,n)$, satisfying (1), and a pairing $H_*UC \otimes H_* UC \to k$, satisfying (3, 5).
Moreover there is a spectral sequence $E_1 = H_*C^\gr \Rightarrow H_* UC$, satisfying (1) and (4).
(2) is obtained from the fact that passage to homology is a duality preserving functor.

We apply this to $K^\bullet \in C^\perf(X)$; then $\gr_i \sigma_\bullet K^\bullet = K^i[i]$ and hence $\gr_i(\pi_*\sigma_\bullet K^\bullet) = \pi_* K^i[i]$.
\end{remark}

\begin{lemma} \label{associated_gr_same_Euler_char}
Let $X$ be a graded $k$-vector space with a finite decreasing filtration $$X \supset \ldots \supset X_\bullet \supset X_{\bullet +1} \supset \ldots.$$
Suppose $X \otimes X \to k$ is a perfect symmetric bilinear pairing, which is compatible with the filtration in the sense of \eqref{betacompatible} and \eqref{filtered_perfect}.
Let $X^i$ denote the $i$th graded subspace of $X$ and $X_{\bullet}^i$ denote the $i$th graded subspace of $X_{\bullet}$. Then in $\GW(k)$, there is equality \[ \sum_i (-1)^i [X^i] = \sum_i (-1)^i [\gr_{\bullet} X^i].\]
\end{lemma}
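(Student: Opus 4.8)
The plan is to reduce the statement to a purely linear-algebraic fact about a single graded vector space with a filtered perfect symmetric pairing, and then to dispose of the "hyperbolic off-diagonal" contributions by pairing up dual graded pieces. First I would fix the bookkeeping: the pairing $X\otimes X\to k$ has some total degree, say it is perfect as a map $X^a\otimes X^b\to k$ with $a+b=-n$ (this is the normalization in \eqref{betacompatible}, \eqref{filtered_perfect}); so the graded piece $X^i$ is dual to $X^{-n-i}$, and $\gr_\bullet X^i$ is dual to $\gr_\bullet X^{-n-i}$ via the induced pairing $\beta_\infty$, which is perfect by item \eqref{filtered_perfect} (equivalently, the $\beta_\infty$ obtained in the remark is perfect). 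The only self-dual graded index is $i_0=-n/2$ when $n$ is even; for all other $i$ the sum $[X^i]+[X^{-n-i}]$ counted in $\GW(k)$ already only depends on the dimension, since a vector space $W$ placed in perfect duality with $W^\vee$ and no intrinsic form contributes $\dim(W)\cdot h$ — and the same is true for $[\gr_\bullet X^i]+[\gr_\bullet X^{-n-i}]$. Since $\dim X^i=\dim\gr_\bullet X^i$ (the associated graded of a finite filtration has the same dimension in each internal degree), these off-diagonal contributions agree term by term in $\GW(k)$, and drop out of the alternating sum after the obvious sign cancellation between $i$ and $-n-i$.

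It then remains to treat the self-dual index $i=i_0$ (only when $n$ is even), where I must compare the bilinear form on $X^{i_0}$ with the form on $\gr_\bullet X^{i_0}$ in $\GW(k)$, not merely their ranks. Here I would invoke the standard fact that passing from a filtered vector space with a compatible nondegenerate symmetric form to its associated graded does not change the class in the Grothendieck--Witt group: choose a splitting of the filtration compatible with the form. Concretely, on $X^{i_0}$ the filtration $X^{i_0}\supset\cdots\supset X_\bullet^{i_0}\supset\cdots$ satisfies $\beta'(X_j^{i_0},X_k^{i_0})=0$ for $j+k>0$ (the case of \eqref{betacompatible} restricted to this internal degree) and is "perfect in the filtered sense" by \eqref{filtered_perfect}. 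A filtration on a space carrying a nondegenerate symmetric form with these two properties can be refined and split so that the form becomes block-anti-diagonal pairing $X_j^{i_0}/X_{j+1}^{i_0}$ with $X_{-j}^{i_0}/X_{-j+1}^{i_0}$ perfectly for $j>0$, leaving the middle piece $X_0^{i_0}/X_1^{i_0}=\gr_0 X^{i_0}$ with a nondegenerate form; the outer blocks again contribute $\dim\cdot h$. One checks (again by the rank argument, since $\dim X_j^{i_0}/X_{j+1}^{i_0}=\dim X_{-j}^{i_0}/X_{-j+1}^{i_0}$ by filtered perfection) that this equals $\sum_j[\gr_j X^{i_0}]$, which is exactly $[\gr_\bullet X^{i_0}]$. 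Combining with the previous paragraph gives $\sum_i(-1)^i[X^i]=\sum_i(-1)^i[\gr_\bullet X^i]$.

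The genuinely delicate step is the last one: the claim that a nondegenerate symmetric form on a finite-dimensional space, together with a finite filtration that is "self-dual" in the sense of \eqref{betacompatible}+\eqref{filtered_perfect}, admits a form-compatible splitting identifying the class of the form with that of its associated graded. This is elementary but needs care about the interaction of the filtration with its orthogonal complement filtration: one shows $X_j^\perp = X_{-j+1}$ (in the appropriate internal degree) from \eqref{betacompatible} and \eqref{filtered_perfect}, deduces that $X_1^{i_0}$ is totally isotropic with $\dim X_1^{i_0}=\tfrac12(\dim X^{i_0}-\dim\gr_0)$, and then uses that a totally isotropic subspace of half the "excess" dimension splits off a hyperbolic summand, leaving $\gr_0$ as the anisotropic-modulo-hyperbolic core; iterating on $X_1^{i_0}$ itself (which inherits the induced filtration) handles the inner layers. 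I expect no serious obstacle beyond this bit of symmetric-bilinear-form linear algebra, which is why the authors state they omit it; the rest of the argument is the sign bookkeeping above, which is entirely routine.
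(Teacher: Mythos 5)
Your proposal is correct but reaches the conclusion by a genuinely different decomposition from the paper's. Both first eliminate the internal grading by noting that for $i \ne 0$ the pieces $X^i \oplus X^{-i}$ (your writeup pairs $X^i$ with $X^{-n-i}$, but the $n$ belongs to the \emph{filtration}; the internal grading pairs degree $a$ with $-a$ since the target $k$ sits in internal degree $0$) and likewise $\gr_\bullet X^i \oplus \gr_\bullet X^{-i}$ are metabolic, so contribute equally on both sides by rank via Lemma~\ref{lemm:metabolic-witt}. From there the paper inducts on the largest index $N$ with $X_N \ne 0$: it picks a filtration-compatible complement $X'$ of $X_N$ inside $X$, splits $X = X'\oplus (X')^\perp$ by \cite[Lemma I.3.1]{milnor1973symmetric}, observes $(X')^\perp$ is metabolic of the same rank as $\gr_N X\oplus\gr_{-n-N}X$, and recurses on the strictly shorter filtration carried by $X'$. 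You instead finish in a single step: using $X_j^\perp = X_{-n-j+1}$ (from filtered perfection) you split off the unique graded piece on which $\beta_\infty$ is nondegenerate via that same Milnor lemma, leaving a metabolic complement whose Lagrangian is the isotropic half of the filtration, and then match ranks against the remaining $\gr_j$'s. This replaces the induction by one application of the splitting lemma and is arguably cleaner. Two caveats. The extra proposed step of ``iterating on $X_1^{i_0}$ itself'' is unnecessary---the rank count you already invoke closes the proof---and is ill-posed as written, since the form restricted to the totally isotropic $X_1$ is identically zero, so it is not a symmetric bilinear space on which one could iterate a $\GW$ argument. Also, your aside that the authors ``state they omit it'' misattributes the earlier remark about the spectral-sequence properties (1)--(5); the paper does prove this lemma in full.
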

\begin{proof}
Note that (5) implies that the pairing $\gr_\bullet X$ is non-degenerate, so the statement makes sense (recall Remark \ref{rmk:abuse}).
On any graded symmetric bilinear form, the degree $i$ and $-i$ part for $i\ne 0$ assemble into a metabolic space, with Grothendieck-Witt class determined by the rank (see Lemma \ref{lemm:metabolic-witt}).
It is clear that the ranks on both sides of our equation are the same; hence it suffices to prove the Lemma in the case where $X^i=0$ for $i \ne 0$.
We may thus ignore the gradings.

Let $N$ be maximal with the property that $X_N \ne 0$.
We have a perfect pairing \[ X_{N+1} \otimes X/X_{-n-N} \to k. \]
Since $X_{N+1}=0$ we deduce that $X_{-n-N} = X$ and hence $X_j = X$ for all $j \le -n-N$.
If $-n-N \ge N$ then $X = X_N(N)$ and there is nothing to prove; hence assume the opposite.

We have the perfect pairing \[ X_N/X_{N+1} \otimes X_{-n-N}/X_{-n-N+1} \wequi X_N \otimes X/X_{-n-N+1} \to k. \]
Pick a sequence of subspaces $X \supset X'_{-n-N+1} \supset \dots \supset X'_{N-1}$ such that $X'_i \subset X_i$ and the canonical projection $X'_i \to X_i/X_{N}$ is an isomorphism.
Extend the filtration $X'$ by zero on the left and constantly on the right.
By construction, $X'^\gr_i = X^\gr_i$ for $i \ne N,-n-N$, and the pairing on $X' \subset X$ is perfect in the filtered sense.
By \cite[Lemma I.3.1]{milnor1973symmetric}, we have $X = X'\oplus (X')^\perp$.
By induction on $N$, we have $[X'] = [\gr_{\bullet} X']$.
It thus suffices to show that $[(X')^\perp] = [\gr_N X \oplus \gr_{-n-N} X]$.
This holds since both sides are metabolic of the same rank: $X_{-n-N}$ is an isotropic subspace of half rank on either side (see again Lemma \ref{lemm:metabolic-witt}).
\end{proof}

\begin{lemma} \label{Euler_char_unchanged_homology}
Let $E^\bullet$ be a chain complex with a non-degenerate, symmetric bilinear form $E^\bullet \otimes E^\bullet \to k[0]$.
Then \[ \sum_i (-1)^i [H^i(E)] = \sum_i [E^i] \in \GW(k). \]
\end{lemma}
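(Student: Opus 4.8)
The plan is to isolate the contribution of cohomological degree $0$ --- the only genuinely non-metabolic piece of the data --- treat it by a sublagrangian reduction, and dispose of all the other degrees by a rank count, in the spirit of the proof of Lemma~\ref{associated_gr_same_Euler_char}. Here $E^\bullet$ is bounded with finite-dimensional terms, as in every application; write $B^i=\image(d\colon E^{i-1}\to E^i)$, $Z^i=\ker(d\colon E^i\to E^{i+1})$ and $b_i=\dim_k B^i$, so that $H^i(E)=Z^i/B^i$. Since $\beta\colon E^\bullet\otimes E^\bullet\to k[0]$ pairs $E^i$ perfectly with $E^{-i}$, for $i\neq 0$ the spaces $E^i\oplus E^{-i}$ and $H^i(E)\oplus H^{-i}(E)$ carry the induced hyperbolic forms, of classes $(\dim_k E^i)h$ and $(\dim_k H^i(E))h$ by Lemma~\ref{lemm:metabolic-witt}; this is what the symbols $[E^i]$ and $[H^i(E)]$ abbreviate on the two sides (the grouping of the $\pm i$ terms being the abuse of Remark~\ref{rmk:abuse}).

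The first and only substantial step is to read off the linear algebra forced by the non-degeneracy of $\beta$. As $\beta$ is a morphism of complexes into $k$ placed in degree $0$, the differential $d\colon E^i\to E^{i+1}$ is, up to the Koszul sign, the $\beta$-adjoint of $d\colon E^{-i-1}\to E^{-i}$. From this I would extract: (i) $\operatorname{rk}_k(d\colon E^i\to E^{i+1})=\operatorname{rk}_k(d\colon E^{-i-1}\to E^{-i})$ for all $i$, so in particular $b_1=b_0$; and (ii) $B^0$ is isotropic for $\beta_0:=\beta|_{E^0\otimes E^0}$, because $\beta_0(da,da')=\pm\beta(a,d^2a')=0$, while moreover $Z^0\subseteq (B^0)^{\perp}$ by adjointness again, and since $\dim_k(B^0)^{\perp}=\dim_k E^0-b_0=\dim_k E^0-b_1=\dim_k Z^0$ we get $(B^0)^{\perp}=Z^0$. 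Correctly reading off adjointness from the chain-map condition on $\beta$ while keeping track of signs is the main (if routine) obstacle; everything afterwards is formal.

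Granting (i) and (ii), the rest is bookkeeping. Since $B^0$ is isotropic with $(B^0)^{\perp}=Z^0$, the form $\beta_0$ descends to a non-degenerate form on $Z^0/B^0=H^0(E)$, and sublagrangian reduction (\cite[Lemma~I.3.1]{milnor1973symmetric}, used as in the proof of Lemma~\ref{associated_gr_same_Euler_char}) gives $[E^0]=[H^0(E)]+b_0\cdot h$ in $\GW(k)$. In the remaining degrees the exact sequences $0\to B^i\to Z^i\to H^i(E)\to 0$ and $0\to Z^i\to E^i\to B^{i+1}\to 0$ give $\dim_k E^i=\dim_k H^i(E)+b_i+b_{i+1}$ for every $i$. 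Substituting both facts into the asserted equality, a short computation reduces the difference of its two sides to
\[ \Big(b_0+\sum_{i\geq 1}(-1)^i(b_i+b_{i+1})\Big)\cdot h \;=\; (b_0-b_1)\cdot h \;=\; 0, \]
the sum telescoping because $E^\bullet$ is bounded and the last equality being (i). This proves the lemma.

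As an alternative to the previous two steps I would mention a more structural route. Choosing for each $i$ a complement $\mathcal H^i$ of $B^i$ inside $Z^i$, the subspace $\mathcal H^\bullet:=\bigoplus_i\mathcal H^i$ is a subcomplex with zero differential on which $\beta$ restricts non-degenerately --- via the induced isomorphisms $\mathcal H^i\xrightarrow{\sim}H^i(E)$ --- so that $E^\bullet=\mathcal H^\bullet\perp(\mathcal H^\bullet)^{\perp}$ with $(\mathcal H^\bullet)^{\perp}$ an acyclic self-dual subcomplex. One is then reduced to the claim that $\sum_i(-1)^i[\mathcal A^i]=0$ for an acyclic self-dual complex $\mathcal A^\bullet$, which is again the telescoping rank count above, now using that $B^0(\mathcal A)$ is a lagrangian subspace of $\mathcal A^0$.
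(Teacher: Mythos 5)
Your proof is correct and takes essentially the same approach as the paper's: both hinge on splitting $E^0 = H \oplus H^\perp$ via \cite[Lemma I.3.1]{milnor1973symmetric} with $H \cong H^0(E)$, and on observing that $B^0 = d(E^{-1})$ is an isotropic subspace of half rank inside $H^\perp$, while the degrees $i \ne 0$ contribute only metabolic pieces controlled by rank. The only stylistic difference is bookkeeping: the paper first reduces to $\W(k)$ by noting the two sides have equal rank (the Euler-characteristic identity) and then argues in the Witt group, whereas you stay in $\GW(k)$ and carry out the telescoping $\sum(-1)^i(b_i+b_{i+1}) = -b_1$ explicitly.
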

\begin{proof}
Since passing to homology is a duality preserving functor, the statement makes sense.
Both sides have the same rank, so it suffices to prove equality in $\W(k)$ (see Lemma \ref{lemm:metabolic-witt}).
We have a perfect pairing $C^i \otimes C^{-i} \to k$ and similarly for homology.
Both are metabolic unless $i = 0$.
We can choose a splitting \[ C^{0} = H \oplus C'; \] here $H \subset ker(C^{0} \to C^{1})$ maps isomorphically to $H^{0}(C)$.
The restriction of the pairing on $C^{0}$ to $H$ is perfect by construction, and hence $C^{0} = H \oplus H^\perp$.
It suffices to show that $H^\perp$ is metabolic.
Compatibility of the pairing with the differential shows that $d(C^{-1}) \subset C^{0}$ is an isotropic subspace.
Self-duality shows that \[ im(d: C^{-1} \to C^{0}) \wequi im(d^\vee: (C^{1})^\vee \to (C^{0})^\vee) \wequi im(d: C^{0} \to C^{1})^\vee \] which implies that $d(C^{-1}) \subset H^\perp$ is of half rank.
This concludes the proof.
\end{proof}

\begin{proof}[Proof of Proposition \ref{pr:eGSVsigmarho=eGSVrho}]
Let $K^{\bullet} =K(V, \sigma)^{\bullet} \otimes \scr L $.
We compute
\begin{align*}
n^\GS(V, \rho) &\stackrel{\text{def.}}{=} \sum (-1)^{i+j} [E^{i,j}_1(K^{\bullet})] \\
  &\stackrel{\text{L.\ref{Euler_char_unchanged_homology}}}{=} \sum (-1)^{i+j} [E^{i,j}_{\infty}(K^{\bullet})] \\
  &\stackrel{\text{L.\ref{associated_gr_same_Euler_char}}}{=} \sum_i (-1)^i [R^if_* K^{\bullet}] \\
  &\stackrel{\text{def.}}{=} n^\GS(V, \sigma, \rho).
\end{align*}
This is the desired result.
\end{proof}

\begin{remark} \label{rmk:eGSVsigma-alternative}
Admitting a version of Hermitian $K$-theory which is $\A^1$-invariant on regular schemes and has proper pushforwards, one can given an alternative proof of Proposition \ref{pr:eGSVsigmarho=eGSVrho} by considering the Koszul complex with respect to the section $t\sigma$ on $\A^1 \times X$.
While we believe such a theory exists, at the time of writing there is no reference for this in characteristic $2$, so we chose to present the above argument instead.
\end{remark}
 
\subsection{Local indices for $n^\GS(V, \sigma, \rho)$} \label{subsec:local-indices}

Suppose that $\sigma$ is a section with only isolated zeros. Let $i$ denote the closed immersion $i: Z = Z(\sigma) \hookrightarrow X$ given by the zero locus of $\sigma$. We express $n^\GS(V, \sigma, \rho)$ as a sum over the points $z$ of $Z$ of a local index at $z$. To do this, we use a pushforward in a suitable context and show that $\beta_{(V,\sigma)}$ is a pushforward from $Z$.

For a line bundle $\scr L$ on a scheme $X$, denote by $\BL_\naive(D(X), \scr L[n])$ the set of isomorphism classes of non-degenerate symmetric bilinear forms on the derived category of perfect complexes on $X$, with respect to the duality $\iHom(\ph, \scr L[n])$.
For a proper, lci map $f: X' \to X$, coherent duality supplies us with a \emph{trace map} $\eta_{f,\scr L}: f_* f^!(\scr L)\to \scr L$.
We can use this to build a pushforward (see \cite[Theorem 4.2.9]{calmes2009tensor}) \begin{gather*} f_*: \BL_\naive(D(X'), f^!\scr L) \to \BL_\naive(D(X), \scr L), \\ [E \otimes E \xrightarrow{\phi} f^! \scr L] \mapsto [f_* E \otimes f_*E \to f_*(E \otimes E) \xrightarrow{f_* \phi} f_*(f^! \scr L) \xrightarrow{\eta_{f,\scr L}}  \scr L]. \end{gather*} 

\begin{remark}\label{uppershriek_and_tensor}
There is a canonical weak equivalence $f^!\scr L \wequi f^! \scr O_X \otimes f^* \scr L$ and $\eta_{f,\scr L}$ is given by the composition $$f_*(f^!\scr L) \wequi f_*( f^! \scr O_X \otimes f^* \scr L ) \wequi f_* f^! \scr O_X \otimes \scr L \xrightarrow{\eta_f \otimes \id_{\scr L}} \scr L,$$ where $\eta_f = \eta_{f, \scr O_S}$ \cite[Lemma 47.17.8]{stacks-project}.
\end{remark}

\begin{example}\label{ex:f_*betaVsigmarho=e}
Consider the case of a relatively oriented vector bundle $V$ on a smooth proper variety $f: X \to Spec(k)$.
Note that elements of $\BL^\naive(k)$ are just isomorphism classes of symmetric bilinear forms on graded vector spaces.
The orientation supplies us with an equivalence \[ f^!(\scr O_k) \wequi \omega_{X/k}[n] \wequi \det V^* [n] \otimes \scr L^{\otimes 2}. \]
Under the induced pushforward map we have \[ f_* [\beta_{(V,\sigma,\rho)}] = n^\GS(V, \sigma, \rho) \in \BL^\naive(k), \] where $\beta_{(V,\sigma,\rho)} \in \BL^\naive(X, \det V^* [n] \otimes \scr L^{\otimes 2})$ is the form on $K(V, \sigma) \otimes \scr L$ defined in \S\ref{subsec:serr-duality-euler-number}.
\end{example}

\begin{remark} \label{rmk:non-sensible}
A symmetric bilinear form $\phi$ on the derived category $D(S)$ is usually not a very sensible notion.
We offer three ways around this.\begin{enumerate}
\item If $1/2 \in S$, we could look at the image of $\phi$ in the Balmer-Witt group of $S$.
\item If $\phi$ happens to be concentrated in degree zero, it corresponds to a symmetric bilinear form on a vector bundle on $S$, which is a sensible invariant.
\item If $S = Spec(k)$ is the spectrum of a field, then $D(S)$ is equivalent to the category of graded vector spaces, and we can split $\phi$ into components by degree and consider \[ cl(\phi) := [H^0(\phi)] + \sum_{i > 0} (-1)^i [H^i(\phi) \oplus H^{-i}(\phi)] \in \GW(k). \]
\end{enumerate}
\end{remark}

Let $1_Z$ denote the element of $\BL_\naive(D(Z), \scr O_Z[0])$ represented by $\calO_Z \otimes \calO_Z \to \calO_Z$.

\begin{proposition}\label{Koszul_form_push_forward_from_support}
Let $X$ be a scheme, $V$ a vector bundle, and $\sigma \in \Gamma(X, V)$ a section locally given by a regular sequence.
Write $i: Z = Z(\sigma) \hookrightarrow X$ for the inclusion of the zero scheme.
Proposition \ref{prop:f!-Lf} yields a canonical equivalence $i^!\det(V^\dual)[n] \wequi \scr O_Z[0]$, where $n$ is the rank of $V$; under the induced map \[ i_*: \BL_\naive(D(Z), \scr O_Z[0]) \to \BL_\naive(D(X), \det(V^\dual)[n]) \] we have $i_*(1_Z) = \beta_{(V,\sigma)}$, where \[ \beta_{(V,\sigma)}: K(V, \sigma) \otimes K(V, \sigma) \to \det(V^\dual)[n] \] is the canonical pairing on the Koszul complex as in \S\ref{subsec:serr-duality-euler-number}.
\end{proposition}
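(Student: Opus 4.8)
The plan is to reduce the statement to a local, explicit computation with the Koszul complex. Since both $1_Z$ and $\beta_{(V,\sigma)}$ are defined locally on $X$ (the pushforward $i_*$ is compatible with Zariski localization, as is the trace map $\eta_{i,\det V^\dual}$ by the functoriality of coherent duality), and since the claimed equality is an equality of isomorphism classes of symmetric bilinear forms on perfect complexes, I would first reduce to the case where $X = \Spec A$ is affine, $V = \calO^n$ is trivial, and $\sigma = (f_1,\dots,f_n)$ is a genuine regular sequence in $A$. Here $K(V,\sigma)$ becomes the usual Koszul complex $K(f_1,\dots,f_n)$ of $A$-modules, $\det(V^\dual)$ is canonically trivialized, and $i^!\det(V^\dual)[n] \wequi \scr O_Z[0]$ is the equivalence coming from Proposition \ref{prop:f!-Lf} (the cotangent-complex computation $L_i \wequi C_i[1]$ with $C_i = (f_1,\dots,f_n)/(f_1,\dots,f_n)^2$ free of rank $n$).

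Next I would identify the relevant coherent-duality data explicitly. For a Koszul-regular closed immersion $i\colon Z \hookrightarrow X$ of codimension $n$ with $Z = \Spec B$, $B = A/(f_1,\dots,f_n)$, one has $i^!\calO_X \wequi \omega_{i}[-n] = \det(C_i^\dual)[-n]$ and, unwinding the identification with the ungraded determinant, the trace map $\eta_i\colon i_* i^! \calO_X \to \calO_X$ is the classical Koszul/residue trace: under the standard free resolution of $i_*B$ by the Koszul complex $K(f_1,\dots,f_n)$, the self-duality of this complex is precisely $\beta_{(V,\sigma)}$. Concretely, $\iHom_A(K(f_\bullet), A) \wequi K(f_\bullet)[-n] \otimes \det(\calO^n)^\dual$, and chasing this quasi-isomorphism through the definition of $i_*$ (push forward the form $1_B\colon B \otimes_B B \to B$, use $i_*(E\otimes E) \leftarrow i_*E \otimes i_* E$, then the trace) yields exactly the multiplication-then-projection pairing $pm$ defining $\beta_{(V,\sigma)}$. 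So the core of the argument is: (i) the Koszul complex $K(f_\bullet)$, as a perfect complex on $X$ computing $i_* B$, is self-dual with duality valued in $\det(V^\dual)[n]$ via $\beta_{(V,\sigma)}$; (ii) this self-duality, transported to $Z$ via the equivalence $i^!\det(V^\dual)[n]\wequi \scr O_Z$, is the unit form $1_Z$; equivalently, the composite $i_*(1_Z) $ computed via the trace recovers $\beta_{(V,\sigma)}$ on the nose.

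The main obstacle — and where I would spend most of the effort — is step (i)–(ii): pinning down the precise compatibility between the abstract trace map $\eta_{i,\det V^\dual}$ furnished by Grothendieck duality (via $f^!$ for a closed immersion, as in \cite[Tag 0A9Y]{stacks-project}) and the elementary self-duality of the Koszul complex given by wedge-multiplication into the top exterior power. This is a bookkeeping problem about signs, degree shifts, and the identification $L_i \wequi C_i[1]$, and it requires being careful that the equivalence $i^!\calO_X \wequi \omega_i[-n]$ coming from Proposition \ref{prop:f!-Lf} is the one induced by the Koszul resolution rather than some other normalization. Once that compatibility is established, the conclusion $i_*(1_Z) = \beta_{(V,\sigma)}$ is immediate, since pushing forward $1_Z = [\calO_Z \otimes \calO_Z \to \calO_Z]$ along $i$ and composing with the trace is, by the very construction of $i_*$ on bilinear forms, the pairing $i_* \calO_Z \otimes i_*\calO_Z \to i_*(\calO_Z) \xrightarrow{\text{trace}} \det(V^\dual)[n]$, and the self-dual Koszul complex is a model for $i_*\calO_Z$ realizing this as $pm$. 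Finally, I would note that the affine-local identifications glue: the canonical equivalence of Proposition \ref{prop:f!-Lf} and the pairing $\beta_{(V,\sigma)}$ are both defined independently of the local trivialization of $V$ (the dependence on the chosen regular sequence cancels between the $\det$ and the Koszul differential), so the local computations patch to the asserted global equality in $\BL_\naive(D(X), \det(V^\dual)[n])$.
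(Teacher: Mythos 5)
Your proposal is correct and follows essentially the same approach as the paper's proof. Both arguments hinge on the same two ingredients: (i) the equivalence $r\colon K(V,\sigma) \wequi i_*\scr O_Z$ together with the observation that the canonical Koszul projection $K(V,\sigma)\to\det(V^\dual)[n]$ induces by adjunction a map $\scr O_Z\to i^!\det(V^\dual)[n]$ which must be checked to coincide with the equivalence of Proposition~\ref{prop:f!-Lf}; and (ii) once that identification is made, unwinding the pushforward formula to see that $i_*(1_Z)$ is the wedge-then-project pairing $pm$. You correctly pinpoint (i) as the technical crux and note it is local on $Z$ with $V$ trivialized; the paper discharges exactly this by invoking \cite[Proposition~III.7.2 and preceding pages]{hartshorne1966residues}. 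One small caveat: your opening framing, ``the claimed equality is an equality of isomorphism classes, so reduce to the affine case,'' is a bit too loose on its own --- equality of isomorphism classes does not localize in general. What makes the localization legitimate (and what the paper does explicitly) is that one exhibits the canonical global equivalence $r$ and checks locally that $r$ intertwines the two forms and that the two trivializations of $i^!\det(V^\dual)[n]$ agree, both of which \emph{are} local conditions.
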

\begin{proof}
Because $\sigma$ locally corresponds to a regular sequence, the canonical map $r:K(V, \sigma)^{\bullet} \to i_*\calO_Z$ is an equivalence in $D(X)$.
The canonical projection $i_* \scr O_Z \wequi K(V, \sigma) \to \det(V^\dual)[n]$ induces by adjunction a map $\scr O_Z \to i^!\det(V^\dual)[n]$.
We claim that this is the equivalence of Proposition \ref{prop:f!-Lf}.
The proof of said proposition shows that the problem is local on $Z$, so we may assume that $V$ is trivial.
Then this map is precisely the isomorphism constructed in \cite[Proposition III.7.2 and preceeding pages]{hartshorne1966residues}, which is also the isomorphism employed in the proof of Proposition \ref{prop:f!-Lf}.

Now we prove that $i_*(1_Z) = \beta_{(V,\sigma)}$.
Consider the following diagram
\begin{equation*}
\begin{CD}
K(V, \sigma) \otimes K(V, \sigma) @>{r \otimes r}>> i_*\scr O_Z \otimes^L i_* \scr O_Z \\
@V{m_K}VV                                           @V{m_Z}VV   \\
K(V, \sigma)                      @>{r}>>           i_* \scr O_Z \wequi i_* i^! \det(V^\dual)[n] @>{\tr}>> \det(V^\dual)[n].
\end{CD}
\end{equation*}
The map $m_K: K(V, \sigma) \otimes K(V, \sigma) \to K(V, \sigma)$ is the canonical multiplication (see \eqref{defm} \S\ref{subsec:serr-duality-euler-number}), and $m_Z: i_*\scr O_Z \otimes^L i_* \scr O_Z \to i_* \scr O_Z \otimes i_* \scr O_Z \to i_* \scr O_Z$ is equivalently given by either the multiplication in $\scr O_Z$ or the lax monoidal witness transformation of $i_*$.
The former interpretation shows that the left hand square commutes.
The pairing $i_*(1)$ is given by the composite from the top right hand corner to the bottom right hand corner.
To prove the claim it suffices to show that the bottom row composite $K(V, \sigma) \to\det(V^\dual)[n]$ is the canonical projection.
This follows by adjunction from our choice of equivalence $\scr O_Z \wequi i^! \det(V^\dual)[n]$.

This concludes the proof.
\end{proof}

Proposition \ref{Koszul_form_push_forward_from_support} is an example of a more general phenomenon given in Meta-Theorem \ref{thm:meta}.

\begin{lemma}[\cite{calmes2009tensor}, Theorem 5.1.9] \label{lemm:pushforward-functorial}
Let $g: Z \to Y$ and $f: Y \to X$ be proper maps.\footnote{Recall our convention that since we are invoking a functor $f^!$, $Z,Y,X$ are also finite type and separated over a noetherian base $S$. Without this we should add the hypothesis that $f,g$ are locally (so globally) of finite presentation.}
Given equivalences $f^! \scr L \wequi \scr M [n]$ and $g^! \scr M [n] \wequi \scr N$, the canonical equivalence $(f g)^! \wequi g^! f^!$ produces a weak equivalence $(f g)^! \scr L \wequi \scr N$, and consequently push forward maps \[g_*:  \BL_\naive(D(Z),  \scr N) \to  \BL_\naive(D(Y),  \scr M [n]) \]  \[f_*:  \BL_\naive(D(Y),  \scr M [n]) \to \BL_\naive(D(X), \scr L) \]    \[(f g)_*:  \BL_\naive(D(Z),  \scr N) \to \BL_\naive(D(X), \scr L). \]
There is a canonical equivalence $(f g)_* \wequi f_* g_*$.
\end{lemma}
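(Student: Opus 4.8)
The plan is to treat this as a bookkeeping statement in coherent duality. Well-definedness of each of the three individual pushforwards on $\BL_\naive$ is already recorded (via \cite[Theorem 4.2.9]{calmes2009tensor}, invoked just above), so the only content is to match the composite $f_* \circ g_*$ with $(fg)_*$ and, along the way, to pin down the weak equivalence $(fg)^!\scr L \wequi \scr N$ being used. First I would unwind the definition of all three pushforwards into explicit composites in $D(X)$: given $[\phi : E \otimes E \to \scr N]$, the form $f_*(g_*[\phi])$ is
\[ f_*g_*E \otimes f_*g_*E \xrightarrow{\mu_f} f_*(g_*E \otimes g_*E) \xrightarrow{f_*\mu_g} f_*g_*(E \otimes E) \xrightarrow{f_*g_*\phi} f_*g_*\scr N \wequi f_*g_*g^!f^!\scr L \xrightarrow{f_*\eta_{g}} f_*f^!\scr L \xrightarrow{\eta_{f}} \scr L, \]
while $(fg)_*[\phi]$ is the analogous shorter composite built from the lax-monoidal map $\mu_{fg}$, then $(fg)_*\phi$, then the equivalence $(fg)_*\scr N \wequi (fg)_*(fg)^!\scr L$, then the trace $\eta_{fg}$.

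The comparison then rests on two standard compatibilities, which I would isolate as the technical core. The first is lax monoidality: each $h_*$ is lax symmetric monoidal as the right adjoint of the symmetric monoidal $h^*$, and under the canonical equivalence $(fg)_* \wequi f_* g_*$ the lax structure on $(fg)_*$ is the evident composite of those of $f_*$ and $g_*$; this identifies the first two arrows of the two composites above. The second is pseudofunctoriality of the trace: the canonical equivalence $(fg)^! \wequi g^! f^!$ is compatible with the presentations of $f^!, g^!, (fg)^!$ as right adjoints of $f_* \wequi f_!$, $g_* \wequi g_!$, $(fg)_* \wequi (fg)_! = f_! g_!$, and under it the counit $\eta_{fg} : (fg)_*(fg)^! \to \id$ becomes the horizontal composite $f_*g_*g^!f^! \xrightarrow{f_*\eta_{g}f^!} f_*f^! \xrightarrow{\eta_{f}} \id$ — the general formula for the counit of a composite adjunction. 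I would cite \cite{conrad2000grothendieck, hartshorne1966residues, stacks-project} for these facts, together with Remark \ref{uppershriek_and_tensor} to reduce the twists by $\scr L$ and $\scr M$ to the case $\scr O$. Combining the two, the two composites agree arrow by arrow, with the middle identification $(fg)_*\scr N \wequi (fg)_*(fg)^!\scr L$ being precisely the one induced by applying $(fg)_*$ to the equivalence $\scr N \wequi g^!(\scr M[n]) \wequi g^! f^! \scr L \wequi (fg)^!\scr L$ of the statement.

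I expect the main obstacle to be purely organizational: verifying that the pasted diagram genuinely commutes, including the interaction of the coherence isomorphisms for $(\ph)^!$ with the lax-monoidal constraints and with the twist conventions of Remark \ref{uppershriek_and_tensor}. In the $1$-categorical framework adopted here this is a finite but bookkeeping-heavy chase rather than anything conceptually hard. Since it is carried out carefully in \cite[Theorem 5.1.9]{calmes2009tensor}, I would ultimately defer to that reference for the full verification and present only the outline above, emphasizing that the content is exactly (1) the composite adjunction formula for traces and (2) the compatibility of the lax monoidal structures under composition.
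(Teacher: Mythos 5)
Your proposal is correct and matches the paper's approach: the paper's proof is a one-liner stating that the key point is $\eta_{f,\scr L}\circ f_*(\eta_{g,\scr M[n]}) = \eta_{fg,\scr L}$ (your ``pseudofunctoriality of the trace'' via the composite-adjunction formula for counits) and then deferring the categorical bookkeeping to \cite[Theorem 5.1.9]{calmes2009tensor}, exactly as you do. The only difference is that you spell out the lax-monoidal compatibility as a second explicit ingredient, which the paper subsumes under ``the categorical details.''
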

\begin{proof}
The main point is that $\eta_{f, \scr L} \circ f_*(\eta_{g, \scr M [n]}) = \eta_{fg, \scr L}$.
The categorical details are worked out in the reference.
\end{proof}

Now we get back to our Euler numbers.
Let $X/k$ be smooth and proper, $V$ a relatively oriented vector bundle, $\sigma$ a section of $V$ with only isolated zeros.
Write $i: Z = Z(\sigma) \hookrightarrow X$ for the inclusion of the zero scheme of $\sigma$.
Let $\varpi: Z \to \Spec k$ and $f: X \to \Spec k$ denote the structure maps, so that $\varpi = f i $.

The weak equivalence $i^!\det(V^\dual)[n] \wequi \scr O_Z[0]$ of Proposition \ref{Koszul_form_push_forward_from_support} and Remark \ref{uppershriek_and_tensor} produce a weak equivalence $i^! (\det V^*[n] \otimes \scr L^{\otimes 2}) \cong i^* \scr L^{\otimes 2}$. The orientation $\rho$ gives an isomorphism $\det V^*[n] \otimes \scr L^{\otimes 2}\cong \omega_{X/k}[n]$. Combining, we have a chosen weak equivalence \[ i^! (\omega_{X/k}[n]) \cong i^* \scr L^{\otimes 2}. \]
Since also $f^! \calO_k \wequi \omega_{X/k} [n]$ (see e.g. Proposition \ref{prop:f!-Lf}), we  therefore obtain a canonical equivalence \[ \varpi^! \calO_k \wequi  i^* \scr L^{\otimes 2}. \]
We use this equivalence to define \[ \varpi_*: \BL^\naive(Z, i^* \scr L^{\otimes 2}) \to \BL^\naive(k). \]
\begin{corollary}
With this notation, we have \[ n^\GS(V, \sigma, \rho) =\varpi_* (i^* \scr L \otimes i^* \scr L \to i^* \scr L^{\otimes 2}). \]
\end{corollary}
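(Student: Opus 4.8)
The plan is to deduce the statement from the two computations already in place, glued by the functoriality of coherent-duality pushforwards. Recall that $\varpi = f i$, and that the equivalence $\varpi^!\calO_k \wequi i^*\scr L^{\otimes 2}$ used to define $\varpi_*$ was built, by construction, as the composite of $f^!\calO_k \wequi \omega_{X/k}[n]$, the orientation isomorphism $\omega_{X/k}[n]\wequi \det V^\dual[n]\otimes\scr L^{\otimes 2}$, and the equivalence $i^!(\det V^\dual[n]\otimes\scr L^{\otimes 2}) \wequi i^*\scr L^{\otimes 2}$ obtained from Proposition~\ref{Koszul_form_push_forward_from_support} together with Remark~\ref{uppershriek_and_tensor}. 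Hence Lemma~\ref{lemm:pushforward-functorial}, applied to the proper maps $i: Z\hookrightarrow X$ and $f: X\to\Spec k$, yields a factorization $\varpi_* = f_*\circ i_*$ through $\BL^\naive(X, \omega_{X/k}[n]) = \BL^\naive(X, \det V^\dual[n]\otimes\scr L^{\otimes 2})$, and the whole problem reduces to computing $i_*$ and then $f_*$.

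First I would compute $i_*(i^*\scr L\otimes i^*\scr L \to i^*\scr L^{\otimes 2})$. The form $i^*\scr L\otimes i^*\scr L\to i^*\scr L^{\otimes 2}$ is the twist of $1_Z$ by the line bundle $i^*\scr L^{\otimes 2}$, and the key point is that $i_*$ commutes with such twisting: under the equivalence $i^!(\scr N\otimes\scr L^{\otimes 2}) \wequi i^!\scr N\otimes i^*\scr L^{\otimes 2}$ of Remark~\ref{uppershriek_and_tensor}, the pushforward of a twisted form $\phi\otimes i^*\scr L^{\otimes 2}$ is the twist $(i_*\phi)\otimes\scr L^{\otimes 2}$; this is exactly the compatibility $\eta_{i,\scr N\otimes\scr L^{\otimes 2}} = \eta_{i,\scr N}\otimes\id_{\scr L^{\otimes 2}}$ recorded in that remark. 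Granting this, Proposition~\ref{Koszul_form_push_forward_from_support} gives $i_*(1_Z\otimes i^*\scr L^{\otimes 2}) = \beta_{(V,\sigma)}\otimes\scr L^{\otimes 2}$, and post-composing with the orientation isomorphism $\det V^\dual[n]\otimes\scr L^{\otimes 2}\wequi\omega_{X/k}[n]$ produces precisely $\beta_{(V,\sigma,\rho)}$, by the definition of $\beta_{(V,\sigma,\rho)}$ in \S\ref{subsec:serr-duality-euler-number}. Applying $f_*$ and invoking Example~\ref{ex:f_*betaVsigmarho=e} then gives $f_*[\beta_{(V,\sigma,\rho)}] = n^\GS(V,\sigma,\rho)$, so that $\varpi_*(i^*\scr L\otimes i^*\scr L\to i^*\scr L^{\otimes 2}) = f_* i_*(\cdots) = n^\GS(V,\sigma,\rho)$, as claimed.

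The one genuine piece of work — and the step I expect to be the main obstacle — is the twist-compatibility of $i_*$ used above: one must check that the chain of identifications defining $\varpi^!\calO_k\wequi i^*\scr L^{\otimes 2}$ agrees, as a whole, with the identifications implicit in Proposition~\ref{Koszul_form_push_forward_from_support} and Remark~\ref{uppershriek_and_tensor}, i.e. that no sign or reordering of tensor factors is lost when passing between $\BL^\naive(Z,\scr O_Z[0])$, $\BL^\naive(Z, i^*\scr L^{\otimes 2})$ and $\BL^\naive(X,\omega_{X/k}[n])$, and that the ``reordering of tensor factors'' built into $\beta_{(V,\sigma,\rho)}$ in \S\ref{subsec:serr-duality-euler-number} matches the symmetry coherence used in the definition of the pushforward of bilinear forms. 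This is routine but requires some care with the monoidal coherences of $i^!$ and with the trace maps $\eta_{i,-}$; everything else is a direct citation.
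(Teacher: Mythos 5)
Your proof is correct and follows exactly the paper's argument: decompose $\varpi_* = f_*\circ i_*$ via Lemma~\ref{lemm:pushforward-functorial}, compute $i_*$ via Proposition~\ref{Koszul_form_push_forward_from_support} together with the projection formula (your ``twist-compatibility'' step, which the paper invokes simply as ``the projection formula''), and conclude with Example~\ref{ex:f_*betaVsigmarho=e}. The caveat you flag at the end about coherences is real but is precisely the content of Remark~\ref{uppershriek_and_tensor} and the projection formula for $i_*$, which the paper treats as already available.
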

\begin{proof}
By Lemma \ref{lemm:pushforward-functorial} we have $\varpi_* = f_* i_*$.
Proposition \ref{Koszul_form_push_forward_from_support} and the projection formula imply that $i_*(i^* \scr L \otimes i^* \scr L \to i^* \scr L^{\otimes 2}) = \beta_{V,\sigma,\rho}$.
We conclude by Example~\ref{ex:f_*betaVsigmarho=e}.
\end{proof}

Suppose that $\sigma$ has isolated zeros, or in other words that the support of $\sigma$ is a disjoint union of points.
Then $n^\GS(V, \sigma, \rho)$ can be expressed as a sum of local contributions. Namely, for each point $z$ of $Z$, let $i_z: Z_z \hookrightarrow X$ denote the chosen immersion coming from the connected component of $Z$ given by $z$. Let $\varpi_z: Z_z \to \Spec k$ denote the structure map. Then $$ n^\GS(V, \sigma, \rho) = \sum_{z \in Z}\varpi_{z*} (i_z^* \scr L \otimes i_z^* \scr L \to i_z^* \scr L^{\otimes 2}) .$$ 

In light of this we propose the following.
\begin{definition} \label{def:local-index-eGS}
For a relatively oriented vector bundle with a section as above, and $z \in Z(\sigma)$, we define \[ \ind_z(\sigma) = \ind_z(V, \sigma, \rho) = \varpi_{z*}(i_z^* \scr L \otimes i_z^* \scr L \to i_z^* \scr L^{\otimes 2}) \in \BL^\naive(k). \]
\end{definition}
The above formula then reads \begin{equation} \label{eq:eGS-formula} n^\GS(V, \sigma, \rho) = \sum_{z \in Z} \ind_z(V, \sigma, \rho). \end{equation}

In the next two subsections, we compute the local contributions $\ind_z(\sigma)$ as an explicit bilinear form constructed by Scheja and Storch \cite{scheja}, appearing in the Eisenbud--Levine--Khimshiashvili signature theorem \cite{eisenbud77} \cite{khimshiashvili}, and used as the local index of the Euler class constructed in \cite[Section 4]{CubicSurface}.

\subsection{Scheja-Storch and coherent duality}\label{Scheja_Storch_Serre_Duality_section}
Let $S$ be a scheme, $\pi: X \to S$ a smooth scheme of relative dimension $n$, and $Z \subset X$ closed with $\varpi: Z \to S$ finite.
Suppose given the following data:
\begin{enumerate}
\item Sections $T_1, \dots, T_n \in \scr O(X)$ such that $T_i \otimes 1 - 1 \otimes T_i$ generate the ideal of $X \subset X \times_S X$.
\item Sections $f_1, \dots, f_n \in \scr O(X)$ such that $Z = Z(f_1, \dots, f_n)$.
\end{enumerate}
\begin{remark} \label{rmk:SS-regular}
Since $Z \to X$ is quasi-finite, Lemma \ref{lemm:automatic-regularity} shows that $f_1, \dots, f_n$ is a regular sequence and $Z \to X$ is flat, so finite locally free (being finite and finitely presented \cite[Tag 02KB]{stacks-project}).
\end{remark}
Choose $a_{ij} \in \scr O(X \times_S X)$ such that \[ f_i\otimes 1 - 1 \otimes f_i = \sum_j a_{ij}(T_j \otimes 1 - 1 \otimes T_j). \]
Let $\Delta \in \scr O(Z \times_S Z)$ be the image of the determinant of $a_{ij}$.
Since $\varpi$ is finite locally free, $\Delta$ determines an element $\tilde \Delta$ of \[ \Hom_{\scr O_S}(\scr O_S, (\varpi \times_S \varpi)_* \scr O_{Z \times_S Z}) \wequi \Hom_{\scr O_S}(\scr O_S, \varpi_* \scr O_Z \otimes \varpi_* \scr O_Z) \wequi \Hom_{\scr O_S}((\varpi_* \scr O_Z)^\dual, \varpi_* \scr O_Z). \]

\begin{remark} \label{rmk:scheja-storch-form}
We can make $\tilde \Delta$ explicit: if $\Delta = \sum_i b_i \otimes b_i'$, then \[ \tilde\Delta(\alpha) = \sum_i \alpha(b_i)b_i'. \]
\end{remark}
\begin{remark} \label{rmk:scheja-differentiation}
By construction, the pullback of $\Delta$ along the diagonal $\delta: Z \to Z \times_S Z$ is the determinant of the differentiation map $C_{Z/X} \to \Omega_X|_Z$ with respect to the canonical bases.
In other words this is the \emph{Jacobian}: \[ \delta^*(\Delta) = \Jac F := \det \left(\frac{\partial f_i}{\partial T_j}\right)_{i,j=1}^n. \]
\end{remark}

\begin{theorem} \label{thm:introd-2}
Under the above assumptions, the map \[ \tilde\Delta: (\varpi_* \scr O_Z)^\dual \to \varpi_* \scr O_Z \] is a symmetric isomorphism and hence determines a symmetric bilinear structure on $\varpi_* \scr O_Z$.
This is the same structure as $\varpi_*(1)$, i.e. \[ \varpi_*(\scr O_Z) \otimes \varpi_*(\scr O_Z) \to \varpi_*(\scr O_Z) \wequi \varpi_*(\varpi^! \scr O_S) \xrightarrow{\eta_\varpi} \scr O_S. \]
\end{theorem}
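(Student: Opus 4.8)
The plan is to reduce the statement to a single identity between two $R$-linear functionals on $C := \varpi_*\scr O_Z$, and then to recognise that identity as a six-functor repackaging of the classical computation of Scheja--Storch. Since the assertion is local on $S$, I would first take $S = \Spec R$ noetherian and $X = \Spec B$ affine; by Remark \ref{rmk:SS-regular} the sheaf $C$ is finite locally free over $R$, and $\varpi^!$ commutes with base change on $S$. The hypothesis on the $T_i$ says exactly that $dT_1 \wedge \cdots \wedge dT_n$ is an $\scr O_X$-basis of $\omega_{X/S}$; this is the trivialization of $f^!\scr O_S = \omega_{X/S}[n]$ implicit in the definition of $\varpi_* = f_* i_*$, which one also feeds through Proposition \ref{Koszul_form_push_forward_from_support} (with $V = \scr O_X^n$ and section $(f_1, \dots, f_n)$, so that $\det V^\dual = \scr O_X$ and $i^!(\scr O_X[n]) \wequi \scr O_Z$) to obtain the equivalence $\varpi^!\scr O_S \wequi \scr O_Z$ used in the theorem. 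In particular $\varpi^!\scr O_S$ is an invertible $\scr O_Z$-module, so its pushforward $\varpi_*\varpi^!\scr O_S = \Hom_R(C, R) = C^\dual$ is an invertible $C$-module, and the equivalence pushes forward to a $C$-linear isomorphism $\Theta \colon C \xrightarrow{\wequi} C^\dual$. Since the coherent trace $\eta_\varpi$ of the finite flat map $\varpi$ is evaluation at $1 \in C$ under this identification, the class $\varpi_*(1)$ is the bilinear form $(x, y) \mapsto \eta_\varpi(\Theta(xy)) = \theta_{CD}(xy)$, where $\theta_{CD} := \Theta(1) \in C^\dual$ is a $C$-module generator.

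On the Scheja--Storch side, Cramer's rule applied to $f_i \otimes 1 - 1 \otimes f_i = \sum_j a_{ij}(T_j \otimes 1 - 1 \otimes T_j)$ shows $\det(a_{ij}) \cdot (T_j \otimes 1 - 1 \otimes T_j)$ lies in the ideal $(f_1 \otimes 1, \dots, 1 \otimes f_n)$ for each $j$, so $\Delta$ is annihilated by every $c \otimes 1 - 1 \otimes c$ in $C \otimes_R C$ and $\tilde\Delta \colon C^\dual \to C$ is $C$-linear; moreover $\tilde\Delta(\alpha) = (\alpha \otimes \id_C)(\Delta)$ by Remark \ref{rmk:scheja-storch-form}. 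Hence the whole theorem reduces to the single identity $(\theta_{CD} \otimes \id_C)(\Delta) = 1 \in C$, equivalently $\tilde\Delta(\theta_{CD}) = 1$: granting it, $C$-linearity forces $\tilde\Delta^{-1}(x) = x\,\theta_{CD} = \Theta(x)$, so $\tilde\Delta$ is an isomorphism with $\tilde\Delta^{-1} = \Theta$, and the Scheja--Storch form $(x, y) \mapsto \tilde\Delta^{-1}(x)(y) = \theta_{CD}(xy)$ coincides with $\varpi_*(1)$ and is symmetric, since it has the shape $\theta(xy)$ and $C$ is commutative.

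To prove $\tilde\Delta(\theta_{CD}) = 1$ I would unwind $\theta_{CD}$ through $\varpi^! \wequi i^! f^!$ and the additivity of traces (Lemma \ref{lemm:pushforward-functorial}): it is the composite of the Koszul trace for the regular immersion $i$ (projection onto the top of the Koszul complex, as in the proof of Proposition \ref{Koszul_form_push_forward_from_support}) with the integration trace $Rf_*\omega_{X/S}[n] \to \scr O_S$ of the smooth map $f$. Computing the latter via the stable Koszul (local cohomology) complex of $(f_1, \dots, f_n)$, this composite sends the class of $g \in \scr O(X)$ in $C$ to the Grothendieck residue of $g\,dT_1 \wedge \cdots \wedge dT_n$ with respect to $f_1, \dots, f_n$. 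The defining relation then exhibits $\det(a_{ij})$ as the comparison isomorphism between the Koszul complex on $X \times_S X$ of the sequence $(f_\bullet \otimes 1 - 1 \otimes f_\bullet)$ and that of the diagonal sequence $(T_\bullet \otimes 1 - 1 \otimes T_\bullet)$; restricting to $Z \times_S Z$ and pairing against the residue is precisely the construction of \cite[\S3]{scheja}. Thus in the classical coordinate case $X = \A^n_S$ with the $f_i$ polynomials, $\tilde\Delta(\theta_{CD}) = 1$ is exactly the theorem of Scheja--Storch combined with the standard identification of the Grothendieck residue with the coherent duality trace (\cite{hartshorne1966residues}); the general case then follows by base change and by localizing and completing $\scr O_X$ along $Z$.

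The step I expect to be the main obstacle is this last compatibility: matching the six-functor $i^!$ and $f^!$ (as used here via Propositions \ref{prop:f!-Lf} and \ref{Koszul_form_push_forward_from_support}) with the explicit determinantal and residue formulas, carried out carefully enough that the Koszul projection transported through the diagonal by $\det(a_{ij})$ returns $\Delta$ on the nose rather than some unit multiple. I would isolate this as a lemma -- changing the regular sequence defining a finite lci scheme by an invertible matrix $a$ rescales the induced trivialization of the dualizing sheaf by $\det(a)$, provable from the functoriality of the fundamental local isomorphism underlying Proposition \ref{prop:f!-Lf} -- after which the overall normalization is pinned down by the trivial case $X = \A^n_S$, $f_i = T_i$, where $\Delta = 1$ (cf. Remark \ref{rmk:scheja-differentiation}) and both functionals are the identity of $R$.
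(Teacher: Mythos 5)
Your reduction of the whole theorem to the single identity $\tilde\Delta(\theta_{CD})=1$ is correct and clean, and the Cramer-rule observation that $\tilde\Delta$ is $C$-linear is a nice way to justify that reduction (the paper's proof uses this implicitly but never states it). At that level your plan and the paper's proof are in the same spirit: both reduce to comparing the coherent-duality identification $\Hom_A(B,A)\wequi B$ against the element $\tilde\Delta$.

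Where the two diverge is in how that comparison is actually carried out, and this is where your proposal has a gap. The paper proves the identity by a direct chain-level computation: it resolves $B=\scr O(Z)$ over $\scr O_X$ in two ways — by the Koszul complex $K_A(f)$ on the $f_i$, and by (the pushforward to $X$ of) the Koszul complex $K_B(T-t)$ on $T_i-t_i$ over $X\times_S Z$ — then writes down an explicit quasi-isomorphism $\zeta\colon K_A(f)\to K_B(T-t)$ with $\zeta(e_i)=\sum_j\bar a_{ij}e_j'$, and tracks the composite $\Hom_A(B,A)\to\Hom_X(K_B(T-t)^n,\scr O_X)\xrightarrow{\det(a)^*}\Hom_X(K_A(f)^n,\scr O_X)\wequi\scr O_X\to B$ element by element to see that it sends $\alpha$ to $\sum_k\alpha(b_k)b_k'=\tilde\Delta(\alpha)$. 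No residue theory or external normalization is needed; the determinant appears because $\zeta$ in top degree is multiplication by $\det(\bar a)$. You instead route through the Grothendieck residue symbol and propose to match the six-functor trace to it, but you yourself flag this matching as the ``main obstacle'' and leave it as a lemma plus a normalization-by-the-trivial-case argument. That lemma as stated (``changing the regular sequence defining a finite lci scheme by an invertible matrix $a$ rescales the trivialization of the dualizing sheaf by $\det a$'') does not quite describe the situation: $(a_{ij})$ lives on $X\times_S X$ and is not invertible there, and the comparison at stake is between a Koszul resolution of $\scr O_Z$ over $\scr O_X$ and one over $\scr O_{X\times_S Z}$ — two different ambient rings — rather than a change of regular sequence on a fixed one. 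The normalization-by-$f_i=T_i$ step also needs an argument that the discrepancy is a locally constant unit; that is exactly the content the paper obtains for free from the explicit $\zeta$. So your outline would be a correct alternative route if those compatibilities with residue theory were imported from the literature (Hartshorne or Lipman), but as written the core step is not established.
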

Here the isomorphism $\varpi^! \scr O_Z \wequi \scr O_Z$ arises from \[ \varpi^!(\scr O_Z) \wequi \widetilde\det L_\varpi \wequi \omega_{Z/X} \otimes \omega_{X/S} \wequi \scr O, \] with the first isomorphism given by Proposition \ref{prop:f!-Lf}, and the third isomorphism given by the sections $(T_i)$ and $(f_i)$.

\begin{remark}
The theorem asserts in particular that the isomorphism $\tilde\Delta$, and hence the section $\Delta$, are independent of the choice of the $a_{ij}$.
\end{remark}

We begin with some preliminary observations before delving into the proof.
The problem is local on $S$, so we may assume that $S = Spec(A)$; then $Z = Spec(B)$.
Since $\varpi$ is finite, there is a canonical isomorphism \cite[III \S 8 Theorem 8.7 (3), or Ideal Theorem (3) p. 6]{hartshorne1966residues} \[ \varpi^! \wequi \iHom_A(B, \ph): D(A) \to D(B). \]
In particular \[ \varpi^!(A) \wequi \iHom_A(B, A) \] and the trace map takes the form \cite[Ideal theorem 3) pg 7]{hartshorne1966residues} \[ \varpi_* \varpi^! A \wequi \Hom_A(B, A) \to A, \eta \mapsto \eta(1). \]

\begin{proof}[Proof of Theorem \ref{thm:introd-2}]
The isomorphisms \[  B^\dual = \iHom_A(B,A) \wequi \varpi^!(A) \wequi B \] determine an element $\Delta' \in \Hom_A(B^\dual, B)$.
The theorem is equivalent to showing that $\tilde\Delta = \Delta'$.

We thus need to make explicit the isomorphism \[ \iHom_A(B, A) \wequi \varpi^!(\scr O_A) \wequi i^! \pi^! (\scr O_A) \wequi \omega_{Z/X} \otimes \omega_{X} \wequi \scr O. \]
Tracing through the definitions (including the proof of \cite[III Proposition 8.2]{hartshorne1966residues}), one finds that this isomorphism arises by computing \[ \Ext^n_X(B, \scr O_X) \] in two ways.
One the one hand, the kernel of the surjection $\scr O_X \to B$ is generated by $f_1, \dots, f_n$, which is a regular sequence by Remark \ref{rmk:SS-regular}; let $K_A(f)^\bullet$ denote the corresponding Koszul complex.
On the other hand we can consider the embedding $Z \hookrightarrow X \times Z$; its ideal is generated by the strongly regular sequence $T_i - t_i$, where $t_i$ is the image of $T_i$ in $B$.
We thus obtain a resolution $K_B(T-t)^\bullet \to B$ over $X \times Z$.
Since $p: X \times Z \to X$ is finite, $p_*K_B(T-t)^\bullet \to p_*B=B$ is still a resolution.
We shall conflate $K_B(T-t)$ and $p_* K_B(T-t)$ notationally.
We can thus compute \[ \Ext^n_{X}(B, \scr O_X) \wequi coker(\Hom_{X}(K_B(T-t)^{n-1}, \scr O_X) \to \Hom_{X}(K_B(T-t)^{n}, \scr O_X)). \]
Since $\Hom_{X}(B \otimes \scr O_X, \scr O_X) \wequi \Hom_A(B, \scr O_X) \wequi \Hom_A(B, A) \otimes_A \scr O_X,$ there is a natural map $\xi: \Hom_A(B, A) \to \Hom_{X}(K_B(T-t)^{n}, \scr O_X)$ (sending $\alpha$ to $\alpha \otimes 1$).
One checks that this induces $\Hom_A(B, A) \wequi coker(\dots) \wequi Ext^n_{X}(B, \scr O_X)$.

We can write down a map of resolutions $\zeta: K_A(f) \to K_B(T-t)$ as follows.
The kernel of $B \otimes \scr O_X \to B$ is by construction generated by $\{T_i - t_i\}_i$, but it also contains $f_i$.
Note that $f_i = \sum_j \bar a_{ij}(T_j - t_j)$, where we write $\bar a_{ij}$ for the image of $a_{ij}$ in $\scr O_X \otimes B$.
Letting $K_A(f)$ be the exterior algebra on $\{e_1, \dots, e_n\}$ and $K_B(T-t)$ the exterior algebra on $\{e_1', \dots, e_n'\}$, the map $\zeta$ is specified by $\zeta(e_i) = \sum_j \bar a_{ij} e_j'$.
The isomorphism \[ \Hom_A(B, A) \wequi h^n \Hom_{X}(K_B(T-t)^\bullet, \scr O_X) \wequi h^n \Hom_{X}(K_A(f)^\bullet, \scr O_X) \wequi B \] is thus given by
\begin{gather*}
  \Hom_A(B, A) \xrightarrow{\xi} \Hom_{X}(K_B(T-t)^{n}, \scr O_X) \xrightarrow{\det(a_{ij})^*} \Hom_{X}(K_A(f)^{n}, \scr O_X) \\ \wequi \Hom_{X}(\scr O_X, \scr O_X) \wequi \scr O_X \to B.
\end{gather*}
Write the image of $\det(a_{ij})$ in $B \otimes B$ as $\sum_k b_k \otimes b_k'$. Tracing through the definitions, we find that the above composite sends $\alpha \in \Hom_A(B, A)$ to $\sum_k \alpha(b_k)b_k'$.
By Remark \ref{rmk:scheja-storch-form}, this is precisely $\tilde\Delta$.

This concludes the proof.
\end{proof}

\begin{definition} \label{def:SSform}
If $X=U \subset \A^n_S$, $(T_i)$ are the standard coordinates, and $F=(f_1, \dots, f_n)$, we denote the symmetric bilinear form constructed above by \[\SSform = \SSform(U, F, S). \]
\end{definition}
This form was first constructed, without explicitly using coherent duality, by Scheja and Storch \cite[3]{scheja}.

\begin{example}\label{F=0_is_one_point}
Suppose that $Z \to S$ is an isomorphism (where $Z=Z(F)$ as above), so that the diagonal $\delta: Z \to Z \times_S Z$ is also an isomorphism.
Then $\SSform$ is just the rank $1$ bilinear form corresponding to multiplication by $\delta^*(\Delta) \in \scr O_Z \wequi \scr O_S$.
In other words, using Remark \ref{rmk:scheja-differentiation}, $\SSform$ identifies with $(x,y) \mapsto (\Jac~F) xy$.
\end{example}

\subsection{The Poincar\'e-Hopf Euler number with respect to a section}
\label{subsec:poincare-hopf-euler-number}

In this subsection, we recall the Euler class defined in \cite[Section 4]{CubicSurface} and prove Theorem \ref{thm:introd-ePH=eGS}. To distinguish this Euler class from the others under consideration, here we call it the {\em Poincar\'e--Hopf Euler number}, because it is a sum of local indices as in the Poincar\'e--Hopf theorem for the Euler characteristic of a manifold. It is defined using local coordinates. 

Let $k$ be a field, and let $X$ be an $n$-dimensional smooth $k$-scheme. Let $z$ be a closed point of $X$.

\begin{definition} \label{def:NiscoordinatesBasek}
(cf. \cite[Definition 17]{CubicSurface}) By a system of \emph{Nisnevich coordinates around $z$} we mean a Zariski open neighborhood $U$ of $z$ in $X$, and an \'etale map $\varphi: U \to \A^n_k$ such that the extension of residue fields $k(\varphi(z)) \subseteq k(z)$ is an isomorphism.
\end{definition}

\begin{proposition}
When $n>0$, there exists a system of Nisnevich coordinates around every closed point $z$ of $X$.
\end{proposition}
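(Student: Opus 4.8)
The plan is to reduce the statement to the existence of a well-chosen \'etale map to affine space by using the standard trick of finding a hypersurface through $z$ that is smooth at $z$ and slicing. First I would work Zariski-locally: since smoothness is a local condition and the claim is about a neighborhood of the closed point $z$, I may replace $X$ by an affine neighborhood $U = \Spec R$ of $z$ on which $X$ is smooth of pure dimension $n$ over $k$. Let $\mathfrak m \subset R$ be the maximal ideal corresponding to $z$ and write $\kappa = k(z)$, a finite extension of $k$.

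The heart of the argument is the construction, by descending induction, of functions $t_1, \dots, t_n \in \mathfrak m$ whose images $dt_1, \dots, dt_n$ form a $\kappa$-basis of the cotangent space $\mathfrak m/\mathfrak m^2 \cong \Omega_{X/k}|_z$, and such that the residue field extension condition is met. The residue-field constraint is the delicate point: a generic choice of $t_i$ would give an \'etale map $U' \to \A^n_k$ but with $k(\varphi(z))$ only a \emph{subfield} of $k(z)$ of finite index, not an isomorphism. The fix, which is exactly the content of the reference to \cite{knus} acknowledged in the introduction, is to use the hypothesis $n > 0$: one first arranges that $k(z)$ is generated over $k(\varphi(z))$ appropriately, or rather chooses the coordinates so that one of them already ``sees'' a primitive element of $k(z)/k$. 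Concretely, pick $t \in \mathfrak m$ whose image in $\mathfrak m/\mathfrak m^2$ is nonzero and such that, after possibly shrinking $U$, the map $t: U \to \A^1_k$ is smooth at $z$ of relative dimension $n-1$ and the residue field of the scheme-theoretic fiber $t^{-1}(t(z))$ at $z$ equals $k(z)$ --- this is possible precisely because $n - 1 \geq 0$, so there is ``room'' to put all the arithmetic of $k(z)/k$ into the single coordinate direction $t$ while the transverse directions remain geometric. Then apply the inductive hypothesis (or a direct construction) to the smooth $(n-1)$-dimensional fiber, or simply complete $t$ to a system $t = t_1, t_2, \dots, t_n$ whose differentials are a basis at $z$; the map $\varphi = (t_1, \dots, t_n): U' \to \A^n_k$ on a suitable Zariski neighborhood $U'$ of $z$ is then \'etale at $z$ (its Jacobian is invertible there since $d\varphi$ is an isomorphism on cotangent spaces and source and target are smooth of the same dimension), hence \'etale on a neighborhood which we again call $U$, and by construction $k(\varphi(z)) \to k(z)$ is an isomorphism.

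I expect the main obstacle to be precisely this residue-field bookkeeping --- ensuring $k(\varphi(z)) \xrightarrow{\sim} k(z)$ rather than merely a finite extension. This is why the hypothesis $n > 0$ is needed (when $n = 0$ the only candidate is $\varphi: U \to \A^0_k = \Spec k$, which fails the residue condition unless $k(z) = k$), and it is the reason the authors cite \cite{knus} for the existence of Nisnevich coordinates rather than treating it as routine. The clean way to package the induction is: choose $f \in \mathfrak m$ with nonzero image in $\mathfrak m / \mathfrak m^2$ generating (over $k$) enough of $k(z)$, form the smooth hypersurface $H = V(f)$ through $z$ of dimension $n-1$, note $k(z)$ is unchanged as the residue field of $z \in H$, and iterate. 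All remaining steps --- that a map between smooth $k$-schemes of equal dimension inducing an isomorphism on cotangent spaces at a point is \'etale near that point, and that \'etaleness is an open condition --- are standard and I would only cite them.
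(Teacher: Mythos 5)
Your proposal takes a genuinely different route from the paper's. The paper's proof is a two-case citation: when $k$ is infinite the result is \cite[Chapter 8. Proposition 3.2.1]{knus}, and when $k(z)/k$ is separable (in particular when $k$ is finite, hence perfect) it is \cite[Lemma 18]{CubicSurface}; these two cases cover everything, so nothing further is proved in the text. You instead attempt a self-contained construction, which would be more informative if it were carried through. But there is a gap in the one place you yourself flag as the delicate point.

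Concretely: you prescribe picking $t \in \mathfrak m$ and eventually a system $t = t_1, \dots, t_n$ with $dt_i$ a basis at $z$. But if $t_i \in \mathfrak m$ for all $i$, then $\varphi(z) = 0 \in \A^n_k$, a $k$-rational point, so $k(\varphi(z)) = k$ and the residue condition fails whenever $k(z) \ne k$. The correct move is the opposite of what you wrote: at least one coordinate, say $t_1$, must \emph{not} lie in $\mathfrak m$ but rather lift a generator of $k(z)$ over $k$, so that $\varphi(z)$ is a non-rational closed point with $k(\varphi(z)) = k(z)$; the remaining $t_2, \dots, t_n$ are then chosen in $\mathfrak m$ to fill out a basis of differentials. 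Your alternative condition — that ``the residue field of the scheme-theoretic fiber $t^{-1}(t(z))$ at $z$ equals $k(z)$'' — is vacuous, since the residue field of $z$ does not change upon restriction to a subscheme containing $z$; it does not constrain anything. You also invoke a ``primitive element'' of $k(z)/k$, which only exists in general when the extension is separable; for $k$ infinite and $k(z)/k$ inseparable (the residual case, which is exactly what the citation to \cite{knus} is for, and which is why the paper splits into ``$k$ infinite'' and ``$k(z)/k$ separable'' rather than giving a uniform argument) the extension need not be simple, and a different genericity argument is needed to choose the $t_i$. So the heart of the residue-field bookkeeping, which you correctly identify as the main obstacle, remains open in your sketch.
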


\begin{proof}
When $k$ is infinite, this follows from \cite[Chapter 8. Proposition 3.2.1]{knus}. When $k(z)/k$ is separable, for instance when $k$ is finite, this is \cite[Lemma 18]{CubicSurface}.
\end{proof}

As above, let $V$ be a relatively oriented, rank $n$ vector bundle on $X$. Let $\sigma$ be a section with only isolated zeros, and let $Z \hookrightarrow X$ denote the closed subscheme given by the zero locus of $\sigma$. Let $z$ be a point of $Z$. The Poincar\'e--Hopf local index or degree $$ \ind_z^\PH \sigma \in \GW(k) $$ was defined in \cite[Definition 30]{CubicSurface} as follows. Choose a system of Nisnevich coordinates $\varphi: U \to \A^n_k$ around $z$. After possibly shrinking $U$, the restriction of $V$ to $U$ is trivial and we may choose an isomorphism $\psi: V \vert_{U} \to \calO_{U}^n$ of $V$. The local trivialization $\psi$ induces a distinguished section of $\det V(U)$. The system of local coordinates $\varphi$ induces a distinguished section of $\det T_X (U)$, and we therefore have a distinguished section of $(\det V \otimes \omega_X)(U)$. As in \cite[Definition 19]{CubicSurface}, the local coordinates $\varpi$ and local trivialization $\psi$ are said to be {\em compatible with the relative orientation} if the distinguished element is the tensor square of a section of $\scr L(U)$. By multiplying $\psi$ by a section in $\calO(U)$, we may assume this compatibility. 

Under $\psi$, the section $\sigma$ can be identified with an $n$-tuple $(f_1, \ldots, f_n)$ of regular functions, $\psi(\sigma) = (f_1, \ldots, f_n) \in \oplus_{i=1}^n \mathcal{O}_{U}$. Let $m$ denote the maximal ideal of $\mathcal{O}_{U}$ corresponding to $z$. Since $z$ is an isolated zero, there is an integer $n$ such that $m^n=0$ in $\mathcal{O}_{Z,z}$. For any $N$, it is possible to choose $(g_1, \ldots, g_n)$ in $\oplus_{i=1}^n m^{N}$ such that $(f_i + g_i)\vert_{U}$ is in the image of $\varphi^*: \mathcal{O}_{\A^r_S} \to \varphi_* \mathcal{O}_U$ after possibly shrinking $U$ \cite[Lemma 22]{CubicSurface}. For $N=2n$, choose such $(g_1, \ldots, g_n)$ and let $F_i$ in  $\mathcal{O}_{\A^d_k}(\A^d_k)$ be the functions such that $\varphi^*(F_i) = f_i + g_i$. Then $\varphi$ induces an isomorphism $\calO_{Z,z} \cong k[t_1, \dots, t_n]_{m_{\varphi(z)}}/(F_1, \dots, F_n)$ \cite[Lemma 25]{CubicSurface}, and $\ind_z^\PH \sigma$ is defined to be the associated form of Scheja--Storch $\SSform(\varphi(U), F, k)$ (see \S\ref{Scheja_Storch_Serre_Duality_section} and Definition \ref{def:SSform} for the definition of $\SSform(\varphi(U), F, k)$). The local index $\ind_z^\PH \sigma$ is well-defined by \cite[Lemma 26]{CubicSurface}. Then the Poincar\'e--Hopf Euler number is defined to be the sum of the local indices:

\begin{definition}\label{def:ePH}
The {\em Poincar\'e--Hopf Euler number} $n^{\PH}(V, \sigma)$ of $V$ with respect to $\sigma$ is $n^{\PH}(V, \sigma) = \sum_{z \text{ such that } \sigma(z) = 0} \ind_z^\PH \sigma$.
\end{definition}

\begin{proof}[Proof of Theorem \ref{thm:introd-ePH=eGS}]
By Proposition \ref{pr:eGSVsigmarho=eGSVrho} we have $n^{\GS}(V) = n^\GS(V, \sigma)$, where the orientation has been suppressed from the notation, but is indeed present. Using Formula \eqref{eq:eGS-formula}, it is thus enough to show that $\ind_z(\sigma) = \ind_z^\PH(\sigma)$.
This follows from Theorem \ref{thm:introd-2}.
One needs to be careful about the trivializations used in defining the various pushforward maps; this is ensured precisely by the condition that the tautological section is a square.
The details of this argument are spelled out more carefully in the proof in Proposition \ref{prop:meta-compute} in the next section.
\end{proof}

One can extend the comparison of local degrees $\ind_z^\PH \sigma = \ind_z \sigma$ to work over a more general base scheme $S$.
This was done for $S = \A^1_k$  with $k$ a field in \cite[Lemma 33]{CubicSurface}, but in more generality, it is useful to pick the local coordinates using knowledge of both $\sigma$ and $X$ as follows.

\begin{definition} \label{def:non-deg-section}
Let $X$ be a scheme, $V$ a vector bundle on $X$ and $\sigma$ a section of $V$.
\begin{enumerate}
\item We call $\sigma$ {\em non-degenerate} if it locally corresponds to a regular sequence.
\item Given another scheme $S$ and a morphism $\pi: X \to S$, we call $\sigma$ \emph{very non-degenerate} (with respect to $\pi$) if it non-degenerate and the zero locus $Z(\sigma)$ is finite and locally free over $S$.
\end{enumerate}
\end{definition}
\begin{remark} \label{rmk:non-degeneracy-crit}
Suppose that $X$ is smooth over $S$ and $rk(V) = \dim X/S$.
\begin{enumerate}
\item If $S=Spec(k)$ is the spectrum of a field, then $Z(\sigma) \to Spec(k)$ is quasi-finite if and only if it is finite locally free, if and only if $\sigma$ is locally given by a regular sequence.
  In other words $\sigma$ is non-degenerate if and only if it is very non-degenerate, if and only if $Z(\sigma) \to Z$ is quasi-finite.
\item In general, $\sigma$ is non-degenerate as soon as $Z(\sigma) \to S$ is quasi-finite, and very non-degenerate if and only if $Z(\sigma) \to S$ is finite.
  See Lemma \ref{lemm:automatic-regularity}.
\end{enumerate}
\end{remark}

\begin{example} \label{ex:cotangent-cx-regular-section}
If $\sigma$ is a non-degenerate section, then precomposition with $\sigma$ induces an isomorphism $\Hom(V, \scr O) \wequi C_{Z/X}$.
In particular $N_{Z/X} \wequi V$ and $L_{Z/X} \wequi V^\dual[1]$.
\end{example}

\begin{definition} \label{def:coordinates}
Let $X$ be a smooth $S$-scheme, and let $V \to X$ be a vector bundle, relatively oriented by $\rho$. Let $\sigma$ be a very non-degenerate section of $V$, and let $Z$ be a closed and open subscheme of the zero locus $Z(\sigma)$ of $\sigma$.
By a \emph{system of coordinates for $(V,X,\sigma,\rho,Z)$} we mean an open neighbourhood $U$ of $Z$ in $X$, an \'etale map $\varphi: U \to \A^n_S$, a trivialization $\psi: V\vert_U \wequi \scr O_U^n$, and a section $\sigma' \in \scr O_{\A_S^n}^n(\varphi(U))$, such that the following conditions hold:
\begin{enumerate}
\item\label{varphi_iso_Z} $Z = Z(\sigma|_U) \cong Z(\sigma')$,
\item $\det(\sigma|_Z) = \det(\varphi^* \sigma'|_Z) \in \det N_{Z/X}$, and
\item the canonical section of $\omega_{X/S} \otimes \det V|_Z \cong \scr L^{\otimes 2}|_Z$ determined by $\psi$ and $\varphi$ corresponds to the square of a section of $\scr L|_Z$.
\end{enumerate}
Here for (2) and (3) we used Example \ref{ex:cotangent-cx-regular-section}.
\end{definition}

Suppose that $X$ is dimension $n$ over $S$, so that the rank of $V$ is also $n$. Let $Z \subset Z(\sigma)$ be a clopen component and write $\varpi: Z \to S$ for the structure map.
The local index generalizes straightforwardly from Definition \ref{def:local-index-eGS}:
\begin{definition}
We call \[ \ind_{Z}(\sigma) = \ind_{Z}(V, \sigma, \rho) = \varpi_*(i^* \scr L \otimes i^* \scr L \to i^* \scr L^{\otimes 2}) \in \BL^\naive(S) \] the local index at $Z$.
\end{definition}
\begin{remark}
Since $\varpi$ is finite locally free, $\varpi_*$ preserves vector bundles.
In particular, $\ind_{Z}(\sigma) \in \BL^\naive(S)$ is a symmetric bilinear form on a vector bundle, as opposed to just on a complex up to homotopy. (See also Remark~\ref{rmk:non-sensible}.)
\end{remark}

A system of coordinates for $(V,X,\sigma,\rho,Z)$ determines a presentation $Z=Z(\sigma|_U)=Z(\sigma') \subset \A^n_S$, where $\sigma': \A^n_S \supset \phi(U) \to \A^n$.
Hence Definition \ref{def:SSform} supplies us with a symmetric bilinear form $\SSform(\varphi(U), \sigma, S) \in \BL^\naive(S)$.

\begin{proposition}
The form $\SSform(\varphi(U), \sigma', S)$ coincides (up to isomorphism) with $\ind_Z(\sigma)$.
In particular its isomorphism class is independent of the choice of coordinates.
\end{proposition}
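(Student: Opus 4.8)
The plan is to reduce this statement to Theorem \ref{thm:introd-2}, which already identifies the coherent-duality pushforward $\varpi_*(1_Z)$ with the Scheja--Storch form, once one has matched up all the trivializations and identifications entering the two constructions. The main task is bookkeeping: both sides of the claimed equality are built from the same local data $(\varphi, \psi, \sigma')$, and one must check that the canonical isomorphism $\varpi^! \scr O_S \wequi i^* \scr L^{\otimes 2}$ used to define $\ind_Z(\sigma)$ agrees, after the orientation $\rho$ is applied, with the isomorphism $\varpi^! \scr O_S \wequi \scr O_Z$ of Theorem \ref{thm:introd-2} built from the coordinate sections $(T_i)$ and the regular sequence $(F_i)$.

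First I would localize and trivialize: the assertion is local on $S$ and on $Z$, and by Definition \ref{def:coordinates} we have an \'etale neighbourhood $\varphi: U \to \A^n_S$ on which $V|_U$ is trivialized by $\psi$, so that $\sigma|_U$ becomes an $n$-tuple $(f_1, \dots, f_n)$ with $Z = Z(f_1, \dots, f_n)$, and $\varphi^*\sigma' = (F_i)$ agrees with $(f_i)$ on $Z$ in the sense of conditions \eqref{varphi_iso_Z} and (2). Since $Z \to S$ is finite locally free (Remark \ref{rmk:non-degeneracy-crit}(2) / Lemma \ref{lemm:automatic-regularity}), the constructions of \S\ref{Scheja_Storch_Serre_Duality_section} apply verbatim with the standard coordinates $T_i$ pulled back along $\varphi$. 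Next, I would invoke Proposition \ref{Koszul_form_push_forward_from_support} (applied to $V|_U$, trivialized) to identify $i_*(1_Z)$ with the Koszul form $\beta_{(V,\sigma)}$, and then apply Lemma \ref{lemm:pushforward-functorial} to factor $\varpi_* = \pi_* i_*$; this is exactly the computation carried out in the Corollary preceding Definition \ref{def:local-index-eGS} and in the proof of Theorem \ref{thm:introd-ePH=eGS}, now over the general base $S$. The upshot is that $\ind_Z(\sigma)$ is the bilinear form on $\varpi_* \scr O_Z$ (twisted by $i^*\scr L$, but condition (3) lets us untwist it to a square and thus to the untwisted form) obtained from the $\varpi^!$-pushforward of the Koszul pairing.

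Then I would cite Theorem \ref{thm:introd-2}: under precisely the identification $\varpi^!\scr O_S \wequi \scr O_Z$ coming from Proposition \ref{prop:f!-Lf}, the sections $(T_i)$ and $(f_i)$, the pushforward $\varpi_*(1_Z)$ equals the Scheja--Storch isomorphism $\tilde\Delta$, which by Definition \ref{def:SSform} is $\SSform(\varphi(U), F, S) = \SSform(\varphi(U), \sigma', S)$. The one genuinely delicate point --- and the place I expect to spend the most care --- is checking that the isomorphism $\varpi^! \scr O_S \wequi i^*\scr L^{\otimes 2}$ that defines $\ind_Z(\sigma)$, when composed with the square root provided by condition (3) of Definition \ref{def:coordinates}, coincides with the trivialization $\omega_{Z/X} \otimes \omega_{X/S} \wequi \scr O_Z$ of Theorem \ref{thm:introd-2} determined by $(T_i)$ and $(f_i)$, rather than differing by a unit. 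This is exactly the ``tautological section is a square'' compatibility flagged in the proof of Theorem \ref{thm:introd-ePH=eGS}, and it is handled in detail in the proof of Proposition \ref{prop:meta-compute}; I would simply point there, noting that condition (2) matches the $\det N_{Z/X}$ part and condition (3) matches the orientation part. Independence of the choice of coordinates is then immediate, since the left-hand side $\ind_Z(\sigma)$ was defined without reference to coordinates.
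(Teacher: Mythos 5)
Your proposal is correct and follows essentially the same route as the paper, whose own proof simply reads "The argument is the same as in the proofs of Theorem \ref{thm:introd-ePH=eGS} and Proposition \ref{prop:meta-compute}"; you correctly identify that the heart of the matter is comparing the two trivializations of $\varpi^!\scr O_S$ (one from $\rho,\sigma$ via $\scr L^{\otimes 2}$, one from $(T_i),(f_i)$), that Theorem \ref{thm:introd-2} supplies the Scheja--Storch identification, and that the square-compatibility is exactly what Proposition \ref{prop:meta-compute} checks via conditions (2) and (3) of Definition \ref{def:coordinates}. The only minor inefficiency is in your second paragraph: $\ind_Z(\sigma)$ is \emph{defined directly} as a $\varpi_*$-pushforward of a rank-one form, so the re-derivation through the Koszul complex via Proposition \ref{Koszul_form_push_forward_from_support} and $\varpi_*=\pi_*i_*$ is not needed here (that step is used to express the \emph{global} Euler number $n^{\GS}$ as a sum of local indices, not in the definition of the index itself) — but this is harmless bookkeeping and does not affect the argument.
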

\begin{proof}
The argument is the same as in the proofs of Theorem \ref{thm:introd-ePH=eGS} and Proposition \ref{prop:meta-compute}.
\end{proof}

In contrast, our proof that the Euler number (sum of indices) is independent of the choice of section (i.e. Proposition \ref{pr:eGSVsigmarho=eGSVrho}) does not generalize immediately; in fact this will not hold in $\BL^\naive(S)$ but rather in some quotient (like $\GW(k)$ in the case of fields).
As indicated in Remark \ref{rmk:eGSVsigma-alternative}, one situation in which it is easy to see this independence is if the quotient group satisfies homotopy invariance.
This suggests studying Euler numbers valued in more general homotopy invariant cohomology theories for algebraic varieties, which is what the remainder of this work is concerned with.

\section{Cohomology theories for schemes}
\label{sec:coh-theories}
\subsection{Introduction}
\label{subsec:coh-theories}
In order to generalize the results from the previous sections, we find it useful to introduce the concept of a \emph{cohomology theory twisted by $K$-theory}.
We do not seek here to axiomatize all the relevant data, but just introduce a common language for similar phenomena.

\begin{definition} \label{def:C-L}
Let $S$ be a scheme and $\scr C \subset \Sch_S$ a category of schemes.
Denote by $\scr C^L$ the category of pairs $(X, \xi)$ where $X \in \scr C$ and $\xi \in K(X)$ (i.e. a point in the $K$-theory space of $X$), and morphisms those maps of schemes compatible with the $K$-theory points.\footnote{Technically speaking, this means ``coherently compatible'', so $\scr C^L$ is an $\infty$-category. However we will only need its homotopy $1$-category, so for us compatible means ``together with a homotopy class of paths joining the two $K$-theory points''.}
By a \emph{cohomology theory} $E$ over $S$ (for schemes in $\scr C$) we mean a presheaf of sets on $\scr C^L$, i.e. a functor \[ E: (\scr C^L)^\op \to \cat{S}\mathrm{et}, (X, \xi) \mapsto E^\xi(X). \]
\end{definition}

To illustrate the flavor of cohomology theory we have in mind, we begin with two examples.
\begin{example} \label{ex:first-examples}
We can put
\begin{enumerate}
\item $E^\xi(X) = \CH^{rk(\xi)}(X)$, the Chow group of algebraic cycles up to rational equivalence of the appropriate codimension, or
\item $E^\xi(X) = \GW(X, \widetilde\det\, \xi)$, the Grothendieck-Witt group of symmetric bilinear perfect complexes for the duality $\iHom(\ph, \widetilde\det\, \xi)$ (see e.g. \cite{schlichting2010mayer}).
\end{enumerate}
\end{example}

\begin{warning} \label{warn:autom}
For cohomology theories with values in a $1$-category (like sets), in the above definition we can safely replace $K(X)$ by its trunctation $K(X)_{\le 1}$, i.e. the ordinary $1$-groupoid of virtual vector bundles.
However, we can in general \emph{not} replace it by just the set $K_0(X)$.
In other words, if (say) $V$ is a vector bundle on $X$ and $\phi$ an automorphism of $V$, then there is an induced automorphism \[ E(\phi): E^{V}(X) \to E^{V}(X) \] which may or may not be trivial.
For example, in the case $E = \GW$ as above, if $V = \scr O$ is trivial and $\phi$ corresponds to $a \in \scr O^\times(X)$, then $E(\phi)$ is given by multiplication by $\lra{a} \in \GW(X)$.
\end{warning}

\subsection{Features of cohomology theories}
\label{subsec:coh-features}
Many cohomology theories that occur in practice satisfy additional properties beyond the basic ones of the above definition, and many come with more data.
We list here some of those relevant to the current paper.
\begin{description}
\item[Morphisms of theories] Cohomology theories form a category in an evident way, with morphisms given by natural transformations.
\item[Trivial bundles] We usually abbreviate $E^{\scr O^n}(X)$ to $E^n(X)$.
\item[Additive and multiplicative structure] Often, $E$ takes values in abelian groups.
  Moreover, often $E^0(X)$ is a ring and $E^\xi(X)$ is a module over $E^0(X)$.
  Typically all of this structure is preserved by the pullback maps.
\item[Disjoint unions] Usually $E$ converts finite disjoint unions into products, i.e. $E(\emptyset) = *$ and $E(X \coprod Y) = E(X) \times E(Y)$.
  If $E$ takes values in abelian groups, this is usually written as $E(X \coprod Y) = E(X) \oplus E(Y)$.
\item[Orientations] In many cases, the cohomology theory $E$ factors through a quotient of the category $\scr C^L$, built using a quotient $q: K(X) \to K'(X)$ of the $K$-theory groupoid.
  In other words, one has canonical isomorphisms $E^\xi(X) \wequi E^{\xi'}(X)$ for certain $K$-theory points $\xi, \xi'$.
  More specifically:
\item[$\GL$-orientations] In the above situation, if $K'(X) = \Z$ and $q$ is the rank map, then we speak of a $\GL$-orientation.
  In other words, in this situation we canonically have $E^\xi(X) \wequi E^{rk(\xi)}(X)$.
  In particular, Warning \ref{warn:autom} does not apply: all automorphisms of vector bundles act trivially on $E$.
  This happens for example if $E = \mathrm{CH}$ (see Example \ref{ex:first-examples}(1)).
\item[$\SL$-orientations] If instead $K'(X) = Pic(D(X))$ via the determinant, then we speak of an $\SL$-orientation.
  In other words, in this situation $E^\xi(X)$ only depends on the rank and (ungraded) determinant of $\xi$.
  We write $E^{rk(\xi)}(X, \det(\xi))$ for this common group.
  This happens for example if $E=\GW$ (see Example \ref{ex:first-examples}(1)).
\item[$\SL^c$-orientations] This is a further strengthening of the concept of an $\SL$-orientation, where in $K'(X) = Pic(D(X))$ we mod out (in the sense of groupoids) by the squares of line bundles.
  In other words, if $\scr L_1, \scr L_2, \scr L_3$ are line bundles on $X$, then any isomorphism $\scr L_1 \wequi \scr L_2 \otimes \scr L_3^{\otimes 2}$ induces \[ E^n(X, \scr L_1) \wequi E^n(X, \scr L_2). \]
  Note that in particular then $E^n(X, \scr L) \wequi E^n(X, \scr L^\dual)$.
  This also happens for $E = \GW$, essentially by construction.
\item[Supports] Often, for $Z \subset X$ closed there is a cohomology with support, denoted $E^\xi_Z(X)$.
  It enjoys further functorialities which we do not list in detail here.
\item[Transfers] In many theories, for appropriate morphisms $p: X \to Y$ and $\xi \in K(Y)$, there exists $tw(p, \xi) \in K(X)$ and a transfer map \[ p_*: E^{tw(p, \xi)}(X) \to E^\xi(Y), \] compatible with composition.
  Typically $p$ is required to be lci, and \[ tw(p, \xi) = p^*\xi + L_p, \] where $L_p$ is the \emph{cotangent complex} \cite{illusie1971complexe}.
  Furthermore, typically $p$ is required to be proper, or else we need to fix $Z \subset X$ closed and proper over $Y$ and obtain $p_*: E^{tw(p, \xi)}_Z(X) \to E^\xi(Y)$.
  Finally, usually $E$ takes values in abelian groups and satisfies the disjoint union property, and transfer from a disjoint union is just the sum of the transfers.
\end{description}

\begin{remark}
\begin{enumerate}
\item We have defined a morphism of cohomology theories as a natural transformation of functors valued in sets.
  Whether or not such a transformation respects additional structure (abelian group structures, orientations, transfers etc.) must be investigated in each case.
\item In many cases (in particular in the presence of homotopy invariance), $\SL$-oriented theories are also canonically $\SL^c$-oriented.
  See Proposition \ref{prop:SL-oriented-trick}.
\end{enumerate}
\end{remark}

\subsection{Some cohomology theories}
\label{subsec:13}
We now introduce a number of cohomology theories that can be used in this context.
\begin{description}
\item[Hermitian $K$-theory $\GW$] This is the theory from Example \ref{ex:first-examples}(2).
  It is $\SL^c$-oriented.
  We believe that it has transfers for (at least) smooth proper morphisms and regular immersions, but we are not aware of a reference for this in adequate generality.
  If $X$ is regular and $1/2 \in X$ one can use the comparison with $\KO$-theory (see below).

\item[Naive derived bilinear forms $\BL_\naive$] See \S\ref{subsec:local-indices}.

\item[Cohomology theories represented by motivic spectra] Let $\SH(S)$ denote the motivic stable $\infty$-category.
  Then any $E \in \SH(S)$ defines a cohomology theory on $\Sch_S$, automatically satisfying many good properties; for example they always have transfers along smooth and proper morphisms, as well as regular immersions.
  For a lucid introduction, see \cite{EHKSY2}.
  We recall some of the main points in \S\ref{sec:representable-cohomology}.

\item[Orthogonal  $K$-theory spectrum $\KO$]
  This spectrum is defined and stable under arbitrary base change if $1/2 \in S$ \cite{panin2010motivic,schlichting2015geometric}.
  Over regular bases, it represents Hermitian $K$-theory $\GW$; in general it represents a homotopy invariant version.

\item[Generalized motivic cohomology $\H\tilde\Z$]
  This can be defined as $\pi_0^{\mathrm{eff}}(\1)$; see for example \cite{bachmann-very-effective}.
  Over fields (of characteristic not $2$) it represents generalized motivic cohomology in the sense of Calm\`es-Fasel \cite{calmes2014finite,calmes2017comparison,bachmann-criterion}; it is unclear if this theory is useful in this form over more general bases.
\end{description}

\subsection{The yoga of Euler numbers}
\label{subsec:yoga}
Let $E$ be a cohomology theory.
\begin{definition}
We will say that $E$ has Euler classes if, for each scheme $X$ over $S$ and each vector bundle $V$ on $X$ we are supplied with a class \[ e(V, E) \in E^{V^\dual}(X). \]
\end{definition}
\begin{remark}
The twist by $V^\dual$ (instead of $V$) in the above definition may seem peculiar.
This ultimately comes from our choice of covariant (instead of contravariant) equivalence between locally free sheaves and vector bundles, whereas a contravariant equivalence is used in the motivic Thom spectrum functor and hence in the definition of twists.
\end{remark}
Typically, the Euler classes will satisfy further properties, such as stability under base change; we do not formalize this here.

Now suppose that $\pi: X \to S$ is smooth and proper, $V$ is relatively oriented, $E$ has transfers for smooth proper maps and is $\SL^c$-oriented.
In this case we have a transfer map \[ E^{V^\dual}(X) \wequi E^n(X, \det V) \stackrel{\rho}{\wequi} E^n(X, \omega_{X/S}) \wequi E^{L_\pi}(X) \xrightarrow{\pi_*} E^0(S). \]
\begin{definition}
In the above situation, we call \[ n(V, \rho, E) = \pi_* e(V, E) \] the \emph{Euler number} of $V$ in $E$ with respect to the relative orientation $\rho$.
\end{definition}

\begin{example}
Let $E = \GW$.
We can define a family of Euler classes by \[ e(V) = [K(V, 0)] \in \GW(X, \widetilde\det V) \wequi \GW^{V^\dual}(X); \] here we use the Koszul complex from \S\ref{subsec:serr-duality-euler-number}.
This depends initially on a choice of section, but we shall show that the Grothendieck-Witt class often does not.
In any case, here we chose the $0$-section for definiteness.
Assuming that $\GW$ has transfers (of the expected form) in this context, we find that \[ n(V, \rho, \GW) = n^{\GS}(V, 0, \rho). \]
\end{example}

Now let $\sigma$ be a non-degenerate section of $V$ (in the sense of Definition \ref{def:non-deg-section}) and write $i: Z=Z(\sigma) \hookrightarrow X$ for the inclusion of the zero-scheme.
Thus $i$ is a regular immersion.
In this case one has (see Example \ref{ex:cotangent-cx-regular-section}) \[ [L_i] \wequi -[N_{Z/X}^\dual] \wequi -[V^\dual|_Z] \] and consequently, if $E$ has pushforwards along regular immersions, there is a transfer map \[ i_*: E^0(Z) \wequi E^{[V^\dual]|_Z - [V^\dual|_Z]}(Z) \wequi E^{i^*V^\dual + L_i}(Z) \to E^{V^\dual}(X). \]
The following result is true in all cases that we know of; but of course it cannot be proved from the weak axioms that we have listed.
\begin{meta-theorem} \label{thm:meta}
Let $\sigma$ be a non-degenerate section of a vector bundle $V$ over a scheme $X$.
Let $E$ be a cohomology theory with Euler classes and pushforwards along regular immersions, such that $E^0(S)$ has a distinguished element $1$ (e.g. is a ring).
Then \[ e(V, E) = i_*(1), \] where we use the identification above for the pushforward.
\end{meta-theorem}

Going back to the situation where $X$ is smooth and proper over $S$, $V$ is relatively oriented and $E$ is $\SL^c$-oriented and has transfers along proper lci morphisms, we also have the pushforward \[ \varpi_*: E^0(Z) \wequi E^0(Z, \scr L^{\otimes 2}|_Z) \stackrel{\rho}{\wequi} E^0(Z, \det V^\dual \otimes \omega_{X/S}|_Z) \wequi E^{L_\varpi}(Z) \to E^0(S). \]
More generally, if $Z' \subset Z$ is a clopen component, then we have a similar transfer originating from $E^0(Z')$.
\begin{definition} \label{def:index-general}
For $V, \sigma, X, E$ as above, for any clopen component $Z' \subset Z$ we denote by \[ \ind_{Z'}(\sigma, \rho, E) = \varpi'_*(1) \in E^0(S) \] the \emph{local index of $\sigma$ around $Z'$ in $E$}.
Here $\varpi': Z' \to S$ is the restriction of $\varpi$ to $Z'$.
\end{definition}

\begin{meta-corollary} \label{cor:meta}
Let $\sigma$ be a non-degenerate section of a relatively oriented vector bundle $V$ over $\pi: X \to S$.
Let $E$ be an $\SL^c$-oriented cohomology theory with Euler classes and pushforwards along proper lci morphisms, such that Meta-Theorem \ref{thm:meta} applies.
Then \[ n(V, \rho, E) = \sum_{Z'} \ind_{Z'}(\sigma, \rho, E). \]
\end{meta-corollary}
\begin{proof}
By assumption, transfers are compatible with composition, and additive along disjoint unions.
The result follows.
\end{proof}

\begin{example}
If $S = Spec(k)$ is the spectrum of a field, then $Z$ is zero-dimensional, and hence decomposes into a finite disjoint union of ``fat points''.
In particular, the Euler number is expressed as a sum of local indices, in bijection with the zeros of our non-degenerate section.
\end{example}

Recall the notion of coordinates from Definition \ref{def:coordinates}.
The following result states that indices may be computed in local coordinates.
\begin{proposition} \label{prop:meta-compute} 
Let $E$ be an $\SL^c$-oriented cohomology theory with Euler classes and pushforwards along proper lci morphisms.
Let $(\psi,\varphi,\sigma_2)$ be a system of coordinates for $(V,X,\sigma_1,\rho_1,Z)$.
Then \[ \ind_Z(\sigma_1, \rho_1, E) = \ind_Z(\sigma_2, \rho_2, E), \] where $\rho_2$ is the canonical relative orientation of $\scr O_{\A^n}^n/\A^n$.
\end{proposition}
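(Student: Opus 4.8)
The plan is to unwind both sides to the transfer of $1\in E^0(Z)$ along the \emph{same} structure map $\varpi\colon Z\to S$, and then to check that the two trivializations entering the two twisted pushforwards coincide. By condition (1) of Definition \ref{def:coordinates} the \'etale map $\varphi$ restricts to an isomorphism $Z=Z(\sigma_1|_U)\xrightarrow{\wequi}Z(\sigma_2)$ over $S$; I use it to identify these two finite locally free $S$-schemes, so that $\varpi\colon Z\to S$, and hence $L_\varpi$ and the transfer $\varpi_*\colon E^{L_\varpi}(Z)\to E^0(S)$, are literally the same in both pictures. With this identification, Definition \ref{def:index-general} gives $\ind_Z(\sigma_j,\rho_j,E)=\varpi_*\bigl(t_j(1)\bigr)$, where $t_j\colon E^0(Z)\xrightarrow{\wequi}E^{L_\varpi}(Z)$ is the composite isomorphism assembled from the $\SL^c$-orientation, the relative orientation $\rho_j$, the factorization $\varpi=\pi_j\circ i_j$ (so that $[L_\varpi]=[i_j^*L_{\pi_j}]+[L_{i_j}]$ as $K$-theory classes), and the identification $L_{i_j}\wequi i_j^*V_j^\dual[1]$ of Example \ref{ex:cotangent-cx-regular-section} (here $V_1=V$, $\pi_1=\pi$ and $V_2=\scr O^n_{\A^n_S}$). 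It therefore suffices to prove $t_1=t_2$.

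To compare $t_1$ and $t_2$ I would trivialize the relevant graded line bundles over $Z$ using the coordinate data: the trivialization $\psi\colon V|_U\wequi\scr O_U^n$ trivializes $\det V|_Z$; since $\varphi$ is \'etale it identifies $\Omega_{X/S}|_U$ with $\varphi^*\Omega_{\A^n_S/S}$, hence trivializes $\omega_{X/S}|_Z$; and the isomorphism $Z\wequi Z(\sigma_2)$ identifies the regular immersion $i_1$ with $i_2$. Under these identifications, the factor of $t_j$ coming from the conormal bundle of $i_j$ matches up \emph{provided} the isomorphism $N_{Z/X}\wequi V|_Z$ induced by $\sigma_1$ (Example \ref{ex:cotangent-cx-regular-section}) agrees with the one induced by $\varphi^*\sigma_2$ -- this is exactly condition (2) of Definition \ref{def:coordinates}, imposed precisely so that no stray unit is introduced at this stage (cf. Warning \ref{warn:autom}). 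It then remains to compare the two orientations. The canonical section $c$ of $\omega_{X/S}\otimes\det V|_Z$ determined by $\psi$ and $\varphi$ is by construction carried, under $\psi$ and $\varphi$, to the canonical section of $\omega_{\A^n_S/S}\otimes\det\scr O^n|_Z$, which $\rho_2$ sends to $1\in\scr O_Z$; while by condition (3) of Definition \ref{def:coordinates} we have $\rho_1(c)=s^{\otimes2}$ for a section $s$ of $\scr L|_Z$ (and $s$ then trivializes $\scr L|_Z$ since $c$ is nowhere vanishing). Consequently the $\SL^c$-orientation isomorphism $E^0(Z,\scr L^{\otimes2}|_Z)\wequi E^0(Z)$ attached to the square root $s$ of $\scr L^{\otimes 2}|_Z$ carries $\rho_1$-applied-to-$c$ to $1$, so the two orientation-induced isomorphisms $E^0(Z,\omega_{X/S}\otimes\det V|_Z)\wequi E^0(Z)$ agree on the nose. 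Assembling the three factors gives $t_1=t_2$, and hence the proposition. This is the detailed form of the argument sketched after the proof of Theorem \ref{thm:introd-ePH=eGS}.

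The main obstacle is the bookkeeping rather than any conceptual difficulty: one must organize the chain of canonical isomorphisms of graded determinants over $Z$ -- $\widetilde\omega_\varpi\wequi\widetilde\omega_{i_j}\otimes i_j^*\widetilde\omega_{\pi_j}$, the cancellation of the shift in $\widetilde\det(V_j^\dual[1])\otimes\widetilde\det\Omega_{X_j/S}\wequi\det V_j\otimes\omega_{X_j/S}$, and the several $\SL^c$-orientation identifications -- so that the points at which conditions (2) and (3) of Definition \ref{def:coordinates} enter become transparent, and so that one genuinely sees that the only automorphism of a line bundle which could a priori act nontrivially on $E$ (Warning \ref{warn:autom}) is here a square, where condition (3) forces it to act trivially. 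With this set up, the verification that $t_1=t_2$ is a diagram chase formally identical to the manipulations in the proof of Proposition \ref{Koszul_form_push_forward_from_support} and the functoriality statement of Lemma \ref{lemm:pushforward-functorial}.
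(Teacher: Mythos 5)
Your proposal is correct and follows essentially the same strategy as the paper's proof: reduce both local indices to $\varpi_*(1)$ for the same finite morphism $\varpi$, then verify that the two trivializations of $\widetilde\det L_\varpi$ differ by the square of an isomorphism of line bundles, using condition (2) of Definition \ref{def:coordinates} to match the conormal identifications and condition (3) to see that the residual unit is a tensor square. The paper's version is terser but identical in substance.
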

\begin{proof}
Let $\varpi: Z \to S$ denote the canonical map.
Then both sides are obtained as $\varpi_*(1)$, but conceivably the orientations used to define the transfer could be different; we shall show that they are not.
In other words, we are given two isomorphisms \[ \widetilde \det L_\varpi \stackrel{\alpha_i}{\wequi} \scr L_i^{\otimes 2} \] and we need to exhibit $\scr L_1 \stackrel{\beta}{\wequi} \scr L_2$ such that $\alpha_1^{-1} \alpha_2 = \beta^{\otimes 2}$.
The isomorphisms $\alpha_i$ arise as \[ \det L_\varpi \wequi \det N_{Z/X} \otimes \omega_{X/S}|_Z \stackrel{\sigma_i}{\wequi} \det V \otimes \omega_{X/S}|_Z \stackrel{\rho_i}{\wequi} \scr L_i^{\otimes 2}|_Z; \] here $\scr L_2 = \scr O$, and for $i=2$ we implicitly use $\varphi$ and $\psi$ as well.
We first check that the two isomorphisms $\det N_{Z/X} \wequi \det V|_Z$ are the same.
Indeed $V|_U \wequi \scr O^n$ via $\psi$, and up to this trivialization the isomorphism is given by the trivialization of $C_{Z/X}$ by $\sigma_i$; these are the same by assumption (2).
Now we deal with the second half of the isomorphism.
By construction we have an isomorphism $\scr O \wequi \scr O^{\otimes 2} \stackrel{\alpha_1^{-1}\alpha_2}{\wequi} \scr L_1^{\otimes 2}$; what we need to check is the corresponding global section of $\scr L_1^{\otimes 2}$ is a tensor square.
Unwinding the definitions, this follows from assumption (3).
\end{proof}

\section{Cohomology theories represented by motivic spectra}
\label{sec:representable-cohomology}
We recall some background material about motivic extraordinary cohomology theories, i.e. theories represented by motivic spectra.
We make essentially no claim to originality.
\subsection{Aspects of the six functors formalism}
We recall some aspects of the six functors formalism for the motivic stable categories $\SH(\ph)$, following the exposition in \cite{EHKSY2}.
\subsubsection{Adjunctions}
For every scheme $X$, we have a symmetric monoidal, stable category $\SH(X)$.
For every morphism $f: X \to Y$ of schemes we have an adjunction \[ f^*: \SH(Y) \adj \SH(X): f_*. \]
If no confusion can arise, we sometimes write $E_Y := f^* E$.
If $f$ is smooth, there is a further adjunction \[ f_\#: \SH(X) \adj \SH(Y): f^*. \]
If $f$ is locally of finite type, then there is the exceptional adjunction \[ f_!: \SH(X) \adj \SH(Y): f^!. \]
There is a natural transformation $\alpha: f_! \to f_*$
If $f$ is proper, then $\alpha$ is an equivalence.

The assignments $f \mapsto f^*, f_*, f^!, f_!, f_\#$ are functorial.
In particular, given composable morphisms $f, g$ of the appropriate type, we have equivalences $(fg)_* \wequi f_*g_*$, and so on.

\subsubsection{Exchange transformations}
Suppose given a commutative square of categories
\begin{equation*}
\begin{CD}
\scr C @<F<< \scr D \\
@AGAA        @AG'AA \\
\scr C' @<F'<< \scr D', \\
\end{CD}
\end{equation*}
i.e. a natural isomorphism $\gamma: FG' \wequi GF'$.
If the functors $G', G$ have right adjoints $H', H$, then we have the natural transformation \[ F' H' \xrightarrow{\unit}  HG F'H' \stackrel{\gamma}{\wequi}  HFG'H' \xrightarrow{\counit} HF \] called the associated \emph{exchange transformation}.
Similarly if $G', G$ have left adjoints $K', K$, then we have the exchange transformation \[ KF \xrightarrow{\unit} KF G'K' \stackrel{\gamma}{\wequi} KGFK' \xrightarrow{\counit} FK'. \]

Suppose we have a commutative square of schemes
\begin{equation} \label{eq:square-of-schemes}
\begin{CD}
X' @>f'>> Y' \\
@Vg'VV  @VgVV \\
X @>f>> Y. \\
\end{CD}
\end{equation}
Then we have an induced commutative square of categories 
\begin{equation*}
\begin{CD}
\SH(X') @<f'^*<< \SH(Y') \\
@Ag'^*AA         @Ag^*AA \\
\SH(X) @<f^*<< \SH(Y). \\
\end{CD}
\end{equation*}
Passing to the right adjoints, we obtain the exchange transformation \[ \Ex^*_*: f^*g_* \to g'_* f'^*. \]
Similarly there is $\Ex_\#^*: g'_\# f'^* \to f^*g_\#$ (for $g$ smooth; this is in fact an equivalence if \eqref{eq:square-of-schemes} is cartesian), and so on.

\subsubsection{Exceptional exchange transformation}
Given a cartesian square of schemes as in \eqref{eq:square-of-schemes}, with $g$ (and hence $g'$) locally of finite type, there is a canonical equivalence \[ \Ex^*_!: f^*g_! \wequi g'_! f'^*. \] Passing to right adjoints, we obtain \[ \Ex^{*!}: f'^*g^! \to g'^!f^*. \]

\subsubsection{Thom transformation}
Given a perfect complex $\scr E$ of vector bundles on $X$, the motivic J-homomorphism $K(X) \to Pic(\SH(X))$ \cite[\S16.2]{bachmann-norms} provides us with an invertible spectrum $\Sigma^\scr{E} \1 \in \SH(X)$.
We denote by $\Sigma^{\scr E}: \SH(X) \to \SH(X), E \mapsto E \wedge \Sigma^\scr{E} \1$ the associated invertible endofunctor.
If $\scr E$ is a vector bundle (concentrated in degree zero), then $\Sigma^\scr{E} \1$ is the suspension spectrum on the Thom space $\scr E^\dual/\scr E^\dual \setminus 0$.\footnote{Indeed, in \cite[\S2.1.2]{EHKSY2}, the transformation $\Sigma^{\scr E}$ is built out of $Spec(Sym(\scr E))$---which is the vector bundle corresponding to $\scr E^\dual$, in our convention.}

\begin{lemma} \label{lemm:shriek-thom}
The functor $f^!$ commutes with Thom transforms.
\end{lemma}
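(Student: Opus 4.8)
The plan is to show that for a morphism $f: X \to Y$ of schemes (locally of finite type, so that $f^!$ is defined) and a perfect complex $\scr E$ of vector bundles on $Y$, there is a natural equivalence $f^! \circ \Sigma^{\scr E} \wequi \Sigma^{f^*\scr E} \circ f^!$ of endofunctors of $\SH(Y) \to \SH(X)$. The key structural input is that the Thom transformation $\Sigma^{\scr E}$ is the endofunctor associated to an \emph{invertible} object, namely $\Sigma^{\scr E}\1 \in \Pic(\SH(Y))$, arising via the motivic J-homomorphism $K(Y) \to \Pic(\SH(Y))$. So $\Sigma^{\scr E} E = E \wedge \Sigma^{\scr E}\1$, and the claim is a compatibility of $f^!$ with $\wedge$-ing by invertible objects. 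By two-out-of-three for equivalences and the fact that $\Sigma^{f^*\scr E}\1 = f^*(\Sigma^{\scr E}\1)$ (the J-homomorphism is compatible with pullback along $f^*$, which is a symmetric monoidal functor), it suffices to produce, for any $L \in \Pic(\SH(Y))$ and any $F \in \SH(Y)$, a natural equivalence $f^!(F \wedge L) \wequi f^!(F) \wedge f^*(L)$.

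First I would reduce to the projection formula for the pair $(f_!, f^!)$. Recall (the exceptional-exchange / projection formula package of the six functors formalism, as recalled in \cite{EHKSY2}) that there is a natural equivalence $f_!(A \wedge f^*B) \wequi (f_! A) \wedge B$ for $A \in \SH(X)$, $B \in \SH(Y)$. Passing to right adjoints in the variable $A$ — using that $\wedge L$ and $\wedge f^*L$ are equivalences (hence their own adjoints up to $\wedge L^{-1}$) for $L$ invertible — this yields the dual projection formula $f^!(B \wedge L) \wequi f^!(B) \wedge f^*(L)$ for $L \in \Pic(\SH(Y))$. Concretely: for invertible $L$ one has $f_!(A \wedge f^*L) \wequi f_!(A) \wedge L$, so $f_!$ intertwines $(-)\wedge f^*L$ with $(-)\wedge L$; taking right adjoints intertwines $f^!$ with the right adjoints of these, which are $(-)\wedge f^*L^{-1}$ and $(-)\wedge L^{-1}$ respectively, giving $f^!((-)\wedge L^{-1}) \wequi f^!(-)\wedge f^*L^{-1}$, and replacing $L$ by $L^{-1}$ gives the statement.

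The main obstacle — really the only point requiring care — is bookkeeping the identification $\Sigma^{f^*\scr E}\1 \wequi f^*\Sigma^{\scr E}\1$, i.e. that the motivic J-homomorphism commutes with $f^*$; this is built into its construction in \cite[\S16.2]{bachmann-norms} (the functor $K(\ph) \to \Pic(\SH(\ph))$ is a natural transformation of presheaves, and $f^*$ is symmetric monoidal so preserves invertible objects and the Thom-space presentation), so it is a citation rather than an argument. Everything else is the standard projection-formula manipulation above, applied to an invertible twist. I would therefore present the proof as: (1) reduce to twisting by an invertible object using that $\Sigma^{\scr E}\1$ is invertible and compatible with $f^*$; (2) invoke the projection formula $f_!(A \wedge f^*B) \wequi f_!A \wedge B$; (3) take right adjoints to deduce $f^!(B \wedge L) \wequi f^!B \wedge f^*L$ for $L$ invertible; (4) specialize $L = \Sigma^{\scr E}\1$.
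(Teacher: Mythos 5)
Your proof is correct and follows essentially the same approach as the paper: the paper likewise invokes the projection formula (in the form $f^!\iHom(A,B)\wequi\iHom(f^*A,f^!B)$) together with invertibility of $\Sigma^{\scr E}\1$, writing $\Sigma^{\scr E}X \wequi \iHom(\Sigma^{-\scr E}\1, X)$ and pulling $f^*$ inside. You derive the needed $f^!$-internal-Hom compatibility from the $f_!$-smash projection formula by passing to adjoints, while the paper cites it directly, but this is a presentational difference rather than a different route.
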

\begin{proof}
This follows from the projection formula \cite[A.5.1(6)]{triangulated-mixed-motives} and invertibility of $\Sigma^{(\ph)}\1$: \[ f^!(\Sigma^{V} X) \wequi f^! \iHom(\Sigma^{-V}\1, X) \wequi \iHom(f^* \Sigma^{-V} \1, f^!X) \wequi \Sigma^{f^* V} f^! X. \]
\end{proof}

\subsubsection{Purity transformation}
Let $f: X \to Y$ be a smoothable \cite[\S2.1.21]{EHKSY2}  lci morphism.
Then the cotangent complex $\L_f$ is perfect, and there exists a canonical \emph{purity transformation} \[ \purity_f: \Sigma^{\L_f} f^* \to f^!. \]

\subsection{Cohomology groups and Gysin maps}
\label{subsec:gysin}
\subsubsection{}
Let $S$ be a scheme and $E \in \SH(S)$. Given $(\pi: X \to S) \in \Sch_S$, $i: Z \hookrightarrow X$ closed, $\xi \in K(Z)$, we define the \emph{$\xi$-twisted $E$-cohomology of $X$ with support in $Z$} as \[ E_Z^\xi(X) = [\1, \pi_* i_! \Sigma^\xi i^! \pi^* E]_{\SH(S)}. \]
This assignment forms a cohomology theory in the sense of \S\ref{subsec:coh-theories}.
It takes values in abelian groups, has supports, and satisfies the disjoint union property.
We shall see that it has transfers for proper lci maps.
It need not be orientable in general.

If $Z = X$, we may omit it from the notation and just write $E^\xi(X)$. As before if $\xi$ is a trivial virtual vector bundle of rank $n \in \Z$ then we also write $E^n_Z(X)$ instead of $E^\xi_Z(X)$.

\begin{example} \label{ex:cohomology-with-support}
Suppose that $\xi = i^*V$, where $V$ is a vector bundle on $X$.
We have \[ E_Z^\xi(X) = [\1, \pi_* i_! \Sigma^\xi i^! \pi^* E]_{\SH(S)} \wequi [\1, i^! \Sigma^\xi \pi^* E]_{\SH(Z)} \wequi [i_* \1,  \Sigma^\xi \pi^* E]_{\SH(X)}, \]
where we have used that $i_* \wequi i_!$ and Lemma \ref{lemm:shriek-thom}.
Using the localization sequence $j_\#j^* \to \id \to i_*i^*$ \cite[Theorem 6.18(4)]{hoyois-equivariant} to identify $i_* \1 \wequi X/X \setminus Z$ we find that \[ E_Z^\xi(X) \wequi [ X/(X \setminus Z), \Sigma^\xi \pi^* E]_{\SH(X)}. \]
\end{example}

\begin{remark}
The final expression in the above example only depends on $Z \subset X$ as a subset, not subscheme.
This also follows directly from the definition, since $\SH(Z) \wequi \SH(Z_\red)$; this is another consequence of localization.
\end{remark}

\subsubsection{Functoriality in $E$} \label{subsub:functoriality-E}
If $\alpha: E \to F \in \SH(S)$ is any morphism, then there is an induced morphism $\alpha_*: E^\xi_Z(X) \to F^\xi_Z(X)$.
This just follows from the fact that $\pi^*$ etc. are functors.

\subsubsection{Contravariant functoriality in $X$}
Let $f: X' \to X \in \Sch_S$.
Then there is a pullback map \[ f^*: E^\xi_Z(X) \to E^{f^* \xi}_{f^{-1}(Z)}(X'), \] coming from the morphism
\begin{gather*}
  \pi_* i_! \Sigma^\xi i^! \pi^* E \xrightarrow{\unit} \pi_* f_*f^* i_! \Sigma^\xi i^! \pi^* E \wequi \pi'_* f^* i_! \Sigma^\xi i^! \pi^* E
  \stackrel{\Ex_!^*}{\wequi} \pi'_* i'_! f'^* \Sigma^\xi i^! \pi^* E \\
  \wequi \pi'_* i'_! \Sigma^{f^*\xi} f'^* i^! \pi^* E \xrightarrow{\Ex^{*!}} \pi'_* i'_! \Sigma^{f^*\xi} i'^* f^* \pi^* E \wequi \pi'_* i'_! \Sigma^{f^*\xi} i'^* \pi'^* E.
\end{gather*}
Here the $\Ex_!^*$ and $\Ex^{*!}$ come from the cartesian square
\begin{equation*}
\begin{CD}
Z' @>f'>>    Z \\
@Vi'VV       @ViVV \\
X' @>f>>     X \\
\end{CD}
\end{equation*}

\begin{lemma}
Let $f: X' \to X \in \Sch_X$, $Z \subset X$, $\xi \in K(Z)$ and $\alpha: E \to F \in \SH(S)$.
The following square commutes
\begin{equation*}
\begin{CD}
E_{Z'}^{\xi'}(X') @>{\alpha_*}>> F_{Z'}^{\xi'}(X') \\
@A{f^*}AA                        @A{f^*}AA         \\
E_{Z}^{\xi}(X) @>{\alpha_*}>> F_{Z}^{\xi}(X'), \\
\end{CD}
\end{equation*}
where $\xi' = f^*(\xi)$ and $Z' = f^{-1}(Z)$.
\end{lemma}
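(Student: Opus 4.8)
The plan is to exploit the fact that, in the definition of these cohomology groups, both the functoriality in $E$ and the functoriality in $X$ are obtained by applying $[\1, -]_{\SH(S)}$ to morphisms coming from \emph{natural transformations of endofunctors} of $\SH(S)$; commutativity of the square is then nothing but the interchange law for natural transformations. Concretely, I would set $G := \pi_* i_! \Sigma^\xi i^! \pi^*$ and $G' := \pi'_* i'_! \Sigma^{\xi'} i'^* \pi'^*$, viewed as functors $\SH(S) \to \SH(S)$, so that $E^\xi_Z(X) = [\1, G(E)]$ and $E^{\xi'}_{Z'}(X') = [\1, G'(E)]$ by definition. By \S\ref{subsub:functoriality-E}, the horizontal map $\alpha_*$ is literally $[\1, G(\alpha)]$ on the left and $[\1, G'(\alpha)]$ on the right.

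The key step is then to observe that the vertical map $f^*$ is $[\1, \theta_E]$ for a natural transformation $\theta \colon G \Rightarrow G'$, namely the composite of morphisms spelled out just before the lemma. For this one checks that each link in that composite is natural in $E$: the map labelled $\unit$ is (the whiskering of) the unit of $(f^*, f_*)$; the isomorphisms $\pi_* f_* \wequi \pi'_*$ and $f^*\pi^* \wequi \pi'^*$ are the coherence isomorphisms of the functorialities $f \mapsto f_*$ and $f \mapsto f^*$ for the composite $\pi' = \pi f$; the isomorphism interchanging $\Sigma^\xi$ with $f'^*$ expresses that the Thom transform, being an invertible functor built from the motivic $J$-homomorphism, commutes with pullback; and the exchange transformations $\Ex^*_!$ and $\Ex^{*!}$ attached to the cartesian square with corners $Z', Z, X', X$ are by construction composites of adjunction units, counits, and the base-change datum, hence natural. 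A composite of natural transformations being natural, $\theta$ is a natural transformation $G \Rightarrow G'$. Finally, naturality of $\theta$ at $\alpha \colon E \to F$ gives the identity $\theta_F \circ G(\alpha) = G'(\alpha) \circ \theta_E$ in $\SH(S)$; applying $[\1, -]_{\SH(S)}$ yields exactly the asserted commuting square.

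The only real work is the bookkeeping in the middle step, i.e. confirming naturality link by link; since every building block is either a structural isomorphism or a unit/counit/exchange map, there is no genuine obstacle. Two small points I would flag in passing: that the square of schemes used to form $\Ex^*_!$ and $\Ex^{*!}$ is cartesian (needed so that $\Ex^*_!$ is an equivalence and the definition of $f^*$ makes sense, though irrelevant to the naturality argument), and that throughout we treat $G, G'$ as honest functors, so that no higher-coherence subtleties intervene.
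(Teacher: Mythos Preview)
Your approach is correct and is exactly the one the paper takes: the paper's proof is the single sentence ``This is just an expression of the fact that the exchange transformations used to build $f^*$ are indeed natural transformations,'' which is precisely the content of your argument, spelled out in more detail. One small typo: in your definition of $G'$ you wrote $i'^*$ where you mean $i'^!$ (compare the definition of $E^{\xi'}_{Z'}(X')$); this doesn't affect the argument.
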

\begin{proof}
This is just an expression of the fact that the exchange transformations used to build $f^*$ are indeed natural transformations.
\end{proof}

\subsubsection{Covariant functoriality in $X$} \label{sec:gysin}
Suppose given a commutative square in $\Sch_S$
\begin{equation} \label{eq:gysin-square}
\begin{CD}
Z_1 @>i>> X    \\
@VgVV    @VfVV \\
Z_2 @>k>> Y,   \\
\end{CD}
\end{equation}
where $f$ is smoothable lci, $i, k$ are closed immersions and $g$ is proper.
For every $\xi \in K(Z_2)$, there is the \emph{Gysin map} \[ f_*:E_{Z_1}^{g^* \xi + i^* \L_f}(X) \to E_{Z_2}^\xi(Y) \]
coming from the morphism
\[ f_* i_! \Sigma^{g^*\xi + i^* \L_f} i^! f^* E_Y \xrightarrow{\purity_f} f_* i_! \Sigma^{g^* \xi} i^! f^! E_Y \wequi k_! g_! \Sigma^{g^* \xi} g^!k^! E_Y \wequi k_! g_! g^!\Sigma^{\xi} k^! E_Y \xrightarrow{\counit} k_! \Sigma^\xi k^! E_Y, \]
where we used Lemma \ref{lemm:shriek-thom} to move $\Sigma^{L_f}$ through $i^!$ and $\Sigma^\xi$ through $g^!$, and also used $i_* \wequi i_!, k_* \wequi k_!, g_* \wequi g_!$.
\begin{remark}
In \cite{EHKSY2}, the Gysin map is denoted by $f_!$, to emphasize that it involves the purity transform.
We find the notation $f_*$ more convenient.
\end{remark}

\begin{lemma} \label{lemm:gysin-compat}
The following hold.
\begin{enumerate}
\item Suppose given a square as in \eqref{eq:gysin-square} and $\alpha: E \to F \in \SH(S)$.
  Then the following square commutes
  \begin{equation*}
  \begin{CD}
  E_{Z_1}^{g^* \xi + i^* \L_f}(X) @>{\alpha_*}>> F_{Z_1}^{g^* \xi + i^* \L_f}(X) \\
  @V{f_*}VV                                      @V{f_*}VV                       \\
  E_{Z_2}^\xi(Y)                  @>{\alpha_*}>> F_{Z_2}^\xi(Y).
  \end{CD}
  \end{equation*}

\item Suppose given a square as in \eqref{eq:gysin-square} and $s: Y' \to Y$ such that $s, f$ are tor-independent.
  Let $X' = X \times_Y Y'$, $Z_i' = Z_i \times_Y Y'$, $i', g', k', f'$ the induced maps, and so on.
  Then the following square commutes
  \begin{equation*}
  \begin{CD}
  E_{Z_1}^{g^* \xi + i^* \L_f}(X) @>{s^*}>> E_{Z_1'}^{g'^* \xi' + i'^* \L_{f'}}(X) \\
  @V{f_*}VV                                      @V{f'_*}VV                       \\
  E_{Z_2}^\xi(Y)                  @>{s^*}>> E_{Z_2'}^{\xi'}(Y').
  \end{CD}
  \end{equation*}

\item Suppose given a commutative diagram in $\Sch_S$ as follows
  \begin{equation*}
  \begin{CD}
  Z_1 @>i>> X    \\
  @VgVV    @VfVV \\
  Z_2 @>k>> Y    \\
  @Vg'VV    @Vf'VV \\
  Z_3 @>l>> W.   \\
  \end{CD}
  \end{equation*}
  Then given $\xi \in K(Z_3)$, we have \[ f'_* f_* = (f'f)_*: E^{(g'g)^* \xi + i^* L_{f'f}}_{Z_1}(X) \to E_{Z_3}^\xi(W). \]
  Here we use the equivalence $L_{f'f} \wequi f'^* L_f + L_{f'} \in K(X)$, coming from the cofiber sequence $f'^* L_f \to L_{f'f} \to L_{f'}$.
\end{enumerate}
\end{lemma}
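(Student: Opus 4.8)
The plan is to treat all three parts uniformly, by unwinding the definition of the Gysin map $f_*$ as the composite
\[ f_* i_! \Sigma^{g^*\xi + i^* \L_f} i^! f^* E_Y \xrightarrow{\purity_f} f_* i_! \Sigma^{g^* \xi} i^! f^! E_Y \wequi k_! g_! \Sigma^{g^* \xi} g^!k^! E_Y \wequi k_! g_! g^!\Sigma^{\xi} k^! E_Y \xrightarrow{\counit} k_! \Sigma^\xi k^! E_Y, \]
and checking that each ingredient --- the purity transformation $\purity_f$, the exchange equivalences $\Ex^*_!$ and $\Ex^{*!}$, the Thom transforms, and the counit --- behaves well with respect to the operation in question. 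For part (1) this is immediate: every functor occurring above is a functor, and $\purity_f$, the exchange transformations, and the (co)unit are natural transformations, so post-composing with $\alpha: E \to F$ commutes with the entire composite. This is the same formal argument as in the preceding lemma, and I would not say more than a sentence about it.

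For part (2) I would first base change the square \eqref{eq:gysin-square} along $s$ to obtain a cube, and then move $s^*$ through the composite one stage at a time. The pullback $s^*$ commutes with $f_!(\wequi f_*)$, $i_!$, $g_!$ and the upper-shriek functors appearing, up to the canonical exchange equivalences $\Ex^*_!$ and $\Ex^{*!}$, purely by functoriality of those exchange transformations; no hypothesis on $s$ is needed here. It commutes with the Thom transforms since $\Sigma^{(\ph)}$ is stable under pullback and $s'^*(g^*\xi + i^*\L_f) \wequi g'^*\xi' + i'^*\L_{f'}$, the identification $s'^*\L_f \wequi \L_{f'}$ being exactly where tor-independence of $s$ and $f$ enters. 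The one point requiring genuine input is the compatibility of $\purity_f$ itself with the base change $s$; I would cite this from the construction of the purity transformation in the motivic six-functor formalism (\cite{DJK}, see also \cite{EHKSY2}). Assembling these compatibilities gives the desired commuting square.

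For part (3) I would stack the two Gysin squares and splice together the composites defining $f'_*$ and $f_*$. Functoriality of $(\ph)_!$ and $(\ph)^!$ for composition lets one combine the middle stages --- e.g. $g'_! g_! \wequi (g'g)_!$ --- and the two counit steps merge into the counit for $g'g$. The heart of the matter is identifying the composite of the two purity transformations with $\purity_{f'f}$: using the decomposition $\Sigma^{\L_{f'f}} \wequi \Sigma^{\L_f}\Sigma^{f^*\L_{f'}}$ induced by the cotangent cofiber sequence recalled in the statement, together with the commutation of $f^*$ with Thom transforms, the multiplicativity of the purity transformation \cite{DJK} gives $\purity_{f'f} \wequi \purity_f \circ \Sigma^{\L_f}(f^*\purity_{f'})$ (up to the evident Thom shuffle). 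Feeding this in yields $f'_* f_* = (f'f)_*$.

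So the only non-formal ingredients are the good behaviour of $\purity_f$ under base change and composition, both standard properties of the purity transformation; I do not anticipate a conceptual obstacle. I expect the main effort to go into bookkeeping: consistently tracking the twist $g^*\xi + i^*\L_f$ and its pulled-back and composed analogues as it passes through the exchange and purity equivalences, and making sure all the Thom shuffles are inserted in the right places.
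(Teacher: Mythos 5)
Your proposal is correct and follows essentially the same route as the paper: unwinding the Gysin map into its defining composite and checking stage by stage, with the only non-formal inputs being the compatibility of the purity transformation with base change (for (2)) and with composition (for (3)), both cited from \cite[Proposition 2.2.2]{EHKSY2}. The paper records the formal steps as a pair of large commutative diagrams, but the underlying argument and the key lemmas invoked are the same as yours.
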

\begin{proof}
(1) Since $f_*: E_{Z_1}^{g^* \xi + i^* \L_f}(X) \to E_{Z_2}^\xi(Y)$ is obtained as $[1, \zeta_E]$ where $\zeta$ is a natural transformation of endofunctors of $\SH(S)$, this is clear.

(2) Consider the following diagram.
\begin{equation*}
\begin{tikzcd}
f_*i_!\Sigma^{g^*\xi+i^*L_f}i^!f^* \ar[r, "\unit_X"] \ar[d, "\wequi"]  &  f_*s_{X*}s_X^*i_!\Sigma^{g^*\xi+i^*L_f}i^!f^* \ar[r, "\Ex"] \ar[d, "\wequi"]  &  f_*s_{X*}\Sigma^{g'^*\xi'+i'^*L_{f'}}i'^!f'^*s^* \ar[d, "\wequi"] \\
f_*i_!\Sigma^{g^*\xi}i^!\Sigma^{L_f}f^* \ar[r, "\unit_X"] \ar[d, "\purity_f"]  &  f_*s_{X*}s_X^*i_!\Sigma^{g^*\xi}i^!\Sigma^{L_f}f^* \ar[r, "\Ex"] \ar[d, "\purity_f"]  &  f_*s_{X*}i'_!\Sigma^{g'^*\xi'}i'^!\Sigma^{L_{f'}}f'^*s^* \ar[d, "\purity_{f'}"] \ar[ld, phantom, "(a)"] \\
f_*i_!\Sigma^{g^*\xi}i^!f^! \ar[r, "\unit_X"] \ar[rd, "\unit"'] \ar[dd, "\wequi"]  &  f_*s_{X*}s_X^*i_!\Sigma^{g^*\xi}i^!f^! \ar[r, "\Ex"] \ar[d, phantom, "(b)"]  &  f_*s_{X*}i'_!\Sigma^{g'^*\xi'}i'^!f'^!s^* \ar[d, "\wequi"] \\
  &  s_*s^*f_*i_!\Sigma^{g^*\xi}i^!f^! \ar[r, "\Ex"] \ar[d, "\wequi"]  &  s_*f_*i'_!\Sigma^{g'^*\xi'}i'^!f'^!s^* \ar[d, "\wequi"] \\
k_!g_!\Sigma^{g^*\xi}g^!k^! \ar[r, "\unit"] \ar[d, "\wequi"]  &  s_*s^*k_!g_!\Sigma^{g^*\xi}g^!k^! \ar[d, "\wequi"] \ar[r, "\Ex"]  &  s_*k'_!g'_!\Sigma^{g'^*\xi'}g'^!k'^!s^* \ar[d, "\wequi"] \\
k_!g_!g^!\Sigma^{\xi}k^! \ar[r, "\unit"] \ar[d, "\counit"]  &  s_*s^*k_!g_!g^!\Sigma^{\xi}k^! \ar[d, "\counit"] \ar[r, "\Ex"]  &  s_*k'_!g'_!g'^!\Sigma^{\xi'}k'^!s^* \ar[d, "\counit'"] \ar[dl, phantom, "(b)"] \\
k_!\Sigma^{\xi}k^! \ar[r, "\unit"]  &  s_*s^*k_!\Sigma^{\xi}k^! \ar[r, "\Ex"]  &  s_*k'_!\Sigma^{\xi'}k'^!s^* \\
\end{tikzcd}
\end{equation*}
Here $s_X: X' \to X$ is the canonical map and $\xi' = (Z_2' \to Z_2)^* \xi$.
All the unlabelled equivalences arise from moving Thom transforms through (various) pullbacks, compatibility of pullbacks and pushforwards with composition, and equivalences of the form $p_* \wequi p_!$ for $p$ proper.
All the maps labelled $\Ex$ are exchange transformations expressing compatibility of $p_*, p_!, p^!$ with base change.
Denote the diagram by $\scr D$.
Then $[\1, \scr D E_Y]$ yields a diagram of abelian groups.
The outer square of that diagram identifies with the square we are trying to show commutes.
It hence suffices to show that $\scr D$ commutes.
All cells commute for trivial reasons, except for (a) which commutes by \cite[Proposition 2.2.2(ii)]{EHKSY2} and (b) which commute by stability of the counit transformations under base change.

(3) Consider the following diagram.
\begin{equation*}
\begin{tikzcd}
  &  f'_*f_* i_! \Sigma^{g^*g'^* \xi + i^*L_f + i^*f'^*L_{f'}} i^!f^*f'^* \ar[d, "\wequi"] \\
  & l_! g'_! g_! \Sigma^{g^*g'^*\xi} i^! \Sigma^{L_f}f^* \Sigma^{L_{f'}}f'^* \ar[r, "\wequi"] \ar[d, "\purity_f"] &  l_!(g'g)_! \Sigma^{(g'g)^* \xi} i^! \Sigma^{L_{f'f}} (f'f)^* \ar[dd, "\purity_{f'f}"] \ar[ddl, "(a)", phantom] \\
  &  l_! g'_! g_! \Sigma^{g^*g'^*\xi} i^!f^! \Sigma^{L_{f'}}f'^* \ar[ld, "\wequi"] \ar[d, "\purity_{f'}"] \\
l_! g'_! g_! g^! \Sigma^{g'^* \xi} k^! \Sigma^{L_{f'}} f'^* \ar[d, "\counit_g"] \ar[dr, "\purity_{f'}"]  &  l_! g'_! g_! \Sigma^{g^*g'^*\xi} i^!f^!f'^! \ar[r, "\wequi"] \ar[d, "\wequi"]  &  l_!(g'g)_!\Sigma^{(g'g)^*\xi}i^!(f'f)^! \ar[dd, "\wequi"] \\
l_! g'_! \Sigma^{g'^*\xi} k^! \Sigma^{L_{f'}} f'^* \ar[d, "\purity_{f'}"]  &  l_! g'_! g_! g^! \Sigma^{g'^*\xi} k^! f'^! \ar[dl, "\counit_{g}"] \ar[d, "\wequi"] \\
l_! g'_! \Sigma^{g'^*\xi} k^! f'^! \ar[d, "\wequi"]  &  l_! g'_! g_! g^! g'^! \Sigma^\xi l^! \ar[dl, "\counit_g"] \ar[r, "\wequi"]  &  l_!(g'g)_!(g'g)^! \Sigma^\xi l^! \ar[dl, "\counit_{g'g}"] \\
l_! g'_! g'^! \Sigma^\xi l^! \ar[r, "\counit_{g'}"']  &  l_! \Sigma^\xi l^! \ar[u, "(b)", phantom] \\
\end{tikzcd}
\end{equation*}
All the unlabelled equivalences arise from moving Thom transforms through (various) pullbacks, compatibility of pullbacks and pushforwards with composition, and equivalences of the form $p_* \wequi p_!$ for $p$ proper.
Denote the diagram by $\scr D$.
Then $[\1, \scr D E_W]$ yields a diagram of abelian groups.
Going from the top to the bottom middle via the leftmost past, we obtain $f'_* f_*$; going instead via the rightmost path we obtain $(f'f)_*$.
It hence suffices to show that $\scr D$ commutes.
All cells commute for trivial reasons, except for (a) which commutes by \cite[Proposition 2.2.2(i)]{EHKSY2}, and (b) which commutes by compatibility of the counit transformations with composition.
\end{proof}

\begin{example} \label{ex:forget-support-gysin}
Consider a commutative square as in \eqref{eq:gysin-square}, with $X=Y$ and $f = \id$, so that $g: Z_1 \hookrightarrow Z_2$ is a closed immersion.
Then $f_*: E^{g^* \xi}_{Z_1}(X) \to E^\xi_{Z_2}(X)$ is the ``extension of support'' map.
In particular taking $Z_2 = X$ as well, we obtain the map $E^{i^* \xi}_{Z_1}(X) \to E^\xi(X)$ ``forgetting the support''.
Lemma \ref{lemm:gysin-compat}(3) now in particular tells us that given a proper map $f: X \to Y$, a closed immersion $i: Z \hookrightarrow X$ and $\xi \in K(Y)$, the following diagram commutes
\begin{equation*}
\begin{CD}
E^{i^*f^* \xi}_Z(X) @>>> E^{f^* \xi}(X) \\
@V{f_*}VV                @V{f_*}VV      \\
E^\xi_X(X)          @=   E^\xi(X),
\end{CD}
\end{equation*}
where the upper horizontal map forgets the support.
\end{example}

\subsection{Orientations}
\subsubsection{Product structures}
By a ring spectrum (over $S$) we an object $E \in \SH(S)$ together with homotopy classes of maps $u: \1 \to E$ and $m: E \wedge E \to E$ satisfying the evident identities.
If $E$ is a ring spectrum, then there are multiplication maps \[ E^\xi_{Z_1}(X) \times E^{\xi'}_{Z_2}(X) \to E^{\xi + \xi'}_{Z_1 \cap Z_2}(X) \]  induced by \[ (\Sigma^\xi E) \wedge (\Sigma^{\xi'} E) \wequi \Sigma^{\xi + \xi'} E \wedge E \xrightarrow{m} \Sigma^{\xi + \xi'} E \] and the diagonal\footnote{One easily checks that the diagonal $X \to X \times X$ induces a map as indicated.} \[ X/X \setminus (Z_1 \cap Z_2) \to X/X \setminus Z_1 \wedge X/X \setminus Z_2. \]

\begin{lemma} \label{lemm:product-pullback-compat}
The multiplicative structure on $E$-cohomology is compatible with pullback: given $Z_1, Z_2 \subset X$, $\xi, \xi' \in K(X)$, $f: X' \to X$, the following diagram commutes
\begin{equation*}
\begin{CD}
E^\xi_{Z_1}(X) \times E^{\xi'}_{Z_2}(X) @>>> E^{\xi + \xi'}_{Z_1 \cap Z_2}(X) \\
@V{f^* \times f^*}VV                         @Vf^*VV \\
E^{f^*\xi}_{f^{-1}(Z_1)}(X') \times E^{f^*\xi'}_{f^{-1}(Z_2)}(X') @>>> E^{f^*\xi + f^*\xi'}_{f^{-1}(Z_1) \cap f^{-1}(Z_2)}(X') \\
\end{CD}
\end{equation*}
\end{lemma}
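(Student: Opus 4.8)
The plan is to turn both horizontal arrows of the square into operations internal to $\SH(X)$ and $\SH(X')$, where the compatibility becomes a consequence of the symmetric monoidality of $f^*$. First I would reduce to the case that $\xi$ and $\xi'$ come from genuine vector bundles $V,W$ on $X$: every $K$-theory point is a formal difference of such, and all the maps in the square are by construction natural in the Thom transforms $\Sigma^{(\ph)}$ (this is how the twists enter both the product, via $\Sigma^\xi E\wedge\Sigma^{\xi'}E\wequi\Sigma^{\xi+\xi'}E\wedge E$, and the pullback, via $f'^*\Sigma^\xi\wequi\Sigma^{f^*\xi}f'^*$), so twisting away a formal inverse changes nothing essential. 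Then, using Example \ref{ex:cohomology-with-support}, each cohomology-with-support group in the square becomes a hom-group, e.g.\ $E^{V}_{Z_1}(X)\wequi[X/(X\setminus Z_1),\Sigma^V\pi^*E]_{\SH(X)}$, and unwinding the definition of the product, the product of $a$ and $b$ is represented by the composite
\[ X/(X\setminus(Z_1\cap Z_2)) \xrightarrow{\Delta} X/(X\setminus Z_1)\wedge X/(X\setminus Z_2) \xrightarrow{a\wedge b} \Sigma^V\pi^*E\wedge\Sigma^W\pi^*E \wequi \Sigma^{V\oplus W}(\pi^*E\wedge\pi^*E) \xrightarrow{m} \Sigma^{V\oplus W}\pi^*E, \]
with $\Delta$ the diagonal-induced map and $m$ the multiplication of the ring spectrum $\pi^*E$.

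The second ingredient is to identify, under this same rewriting, the pullback $f^*$ defined above in \S\ref{subsec:gysin} with the functor $f^*\colon\SH(X)\to\SH(X')$ itself, together with the canonical equivalences $f^*(X/(X\setminus Z))\wequi X'/(X'\setminus f^{-1}(Z))$ --- obtained by applying $f^*$ to the localization cofiber sequence $j_\#\1\to\1\to i_*\1$ and invoking smooth base change $f^*j_\#\wequi j'_\#f'^*$ --- and $f^*\Sigma^V\pi^*E\wequi\Sigma^{f^*V}\pi'^*E$, coming from monoidality of $f^*$ and functoriality of pullback. Granting this, the lemma follows by applying the symmetric monoidal functor $f^*$ to the composite above: $f^*$ carries $X/(X\setminus Z)=i_*\1$ to $X'/(X'\setminus f^{-1}Z)=i'_*\1$ and, being monoidal and compatible with the diagonal $\delta$ of $X$, sends the diagonal-induced map for $(X;Z_1,Z_2)$ to the one for $(X';f^{-1}Z_1,f^{-1}Z_2)$; and it sends the multiplication of $\pi^*E$ to that of $f^*\pi^*E=\pi'^*E$. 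Chasing these identifications through the displayed composite yields the commuting square.

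I expect the identification in the second paragraph --- matching the composite of $\unit$, $\Ex_!^*$ and $\Ex^{*!}$ that defines $f^*$ with the naive ``apply $f^*$'' description --- to be the only genuine work; it is a formal but bookkeeping-heavy comparison of two descriptions of pullback, using only the triangle identities and naturality of the exchange transformations. An alternative, in the spirit of the proof of Lemma \ref{lemm:gysin-compat}, is to avoid Example \ref{ex:cohomology-with-support} altogether and chase one large diagram of endofunctors of $\SH(S)$ built from $\pi_*,i_!,i^!,\pi^*$, the diagonal, $m$ and the exchange transformations, every cell of which commutes for formal reasons; there the only obstacle is the size of the diagram.
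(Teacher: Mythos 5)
The paper's proof of this lemma is the single line ``Immediate from the definitions,'' and your proposal is precisely a worked-out version of that claim: you unwind both the product (via Example \ref{ex:cohomology-with-support} and the diagonal) and the $\Ex$-based definition of $f^*$, then observe that the match-up is a formal consequence of $f^*$ being symmetric monoidal and compatible with the localization cofiber sequence and Thom transforms. This is the same route as the paper, just made explicit; your identification of ``reconcile the $\Ex$-definition of $f^*$ with the naive one'' as the only nontrivial step is exactly the bookkeeping the authors are silently appealing to, and your alternative (chase one large diagram of endofunctors as in the proof of Lemma \ref{lemm:gysin-compat}) is also in the same spirit.
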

\begin{proof}
Immediate from the definitions.
\end{proof}

\subsubsection{Thom spectra} \label{sec:thom-spectra}
Let $G = (G_n)_n$ be a family of finitely presented $S$-group schemes, equipped with a morphism of associative algebras $G \to (\GL_{nk,S})_n$ (for the Day convolution symmetric monoidal structure on $\mathrm{Fun}(\N, \mathrm{Grp}(\Sch_S))$).
Then there is a notion of (stable) vector bundle with structure group $G$, the associated $K$-theory space $K^G(X)$, and the associated \emph{Thom spectrum} $\M G$, which is a ring spectrum \cite[Example 16.22]{bachmann-norms}.

\begin{example}
If $G_n = \GL_n$, then $K^G(X) = K(X)$ and $\M \GL$ is the algebraic cobordism spectrum \cite[Theorem 16.13]{bachmann-norms}.
If $G_n = \SL_n$ (respectively $\Sp_n$), then $K^G(X)$ is the $K$-theory of oriented (respectively symplectic) vector bundles in the usual sense, and $\M \SL$ (respectively $\M \Sp$) is the Thom spectrum as defined in \cite{panin2010algebraic}.
\end{example}

In order to work effectively with $\M G$, one needs to know that it is stable under base change.
This is easily seen to be true for  $\M\GL, \M\SL, \M\Sp$ \cite[Example 16.23]{bachmann-norms}.
We record the following more general result for future reference.
\begin{proposition} \label{prop:MSL-stable-under-base-change}
The Thom spectrum $\M G$ is stable under base change, provided that each $G_n$ is flat and quasi-affine.
\end{proposition}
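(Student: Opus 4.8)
\emph{Strategy.} The plan is to present $\M G$ as assembled out of the motivic classifying spaces $\mathrm{B}G_n$ by operations that are all visibly compatible with base change, thereby reducing the statement to the single assertion that $\mathrm{B}G_n$ itself is stable under base change, which is the only place the hypotheses on $G_n$ are used. Recall from \cite[Example 16.22]{bachmann-norms} (see also \cite[\S16.2]{bachmann-norms}) that $\M G$ is obtained by applying the motivic Thom spectrum functor to the $G$-$K$-theory presheaf $K^G$, equipped with its map $K^G \to \mathrm K$ induced by $G \to \GL$ and the motivic $J$-homomorphism $\mathrm K \to \mathrm{Pic}(\SH(S))$; and $K^G$ is in turn built from the spaces $\mathrm{B}G_n$ by finite coproducts, group completion, and the map to $\mathrm K = K^{\GL}$. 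The Thom spectrum functor is assembled from colimits, suspension spectra $\Sigma^\infty_+$, smooth pushforwards $p_\#$, and Thom transforms $\Sigma^{(\ph)}$. Now $f^*\colon \SH(S) \to \SH(T)$ is symmetric monoidal and colimit-preserving; it commutes with $\Sigma^\infty_+$; it commutes with $p_\#$ along the cartesian base-change square by the exchange equivalence $\Ex^*_\#$; it commutes with Thom transforms since the motivic $J$-homomorphism is stable under arbitrary pullback (compare Lemma \ref{lemm:shriek-thom}); and the $J$-homomorphism $\mathrm K \to \mathrm{Pic}(\SH(\ph))$, the classical equivalences $f^*\mathrm{B}\GL_{n,S} \simeq \mathrm{B}\GL_{n,T}$, finite coproducts, and group completion are all natural in the base. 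Granting this, $f^*\M G_S \simeq \M G_T$ follows once we establish
\[ f^*\mathrm{B}G_{n,S} \;\simeq\; \mathrm{B}G_{n,T} \qquad \text{for every } n. \]

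This is the crux, and the only step using flatness and quasi-affineness of $G_n$; everything above is formal bookkeeping with the six functors and the motivic $J$-homomorphism (and in the case $G = \GL, \SL, \Sp$ it recovers \cite[Example 16.23]{bachmann-norms}). I would prove it by exhibiting a geometric model for $\mathrm{B}G_n$ as a filtered colimit of smooth $S$-schemes, in the style of Totaro and Morel--Voevodsky: write $\mathrm{B}G_n \simeq \colim_m \mathrm{B}_m G_n$ with $\mathrm{B}_m G_n = U_m/G_n$, where $U_m$ is a $G_n$-invariant open (with complement of codimension tending to $\infty$ with $m$) of a representation-type $G_n$-scheme on which $G_n$ acts freely, the quotient being formed as a locally trivial $G_n$-torsor. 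Flatness of $G_n$ guarantees that the $U_m$ and $\mathrm{B}_m G_n$ are flat, indeed smooth, over $S$ and that their formation commutes with arbitrary base change $(\ph)\times_S T$; quasi-affineness of $G_n$ guarantees that the free quotient $U_m/G_n$ exists as a \emph{scheme} and that $U_m \to U_m/G_n$ is a torsor for a topology in which it trivializes, again stably under base change; and the codimension estimates ensuring that $\colim_m \mathrm{B}_m G_n$ computes $\mathrm{B}G_n$ are local on the base and unaffected by $f^*$. Hence
\[ f^*\mathrm{B}G_{n,S} \;\simeq\; f^*\colim_m(\mathrm{B}_m G_{n,S}) \;\simeq\; \colim_m\big((\mathrm{B}_m G_{n,S})\times_S T\big) \;\simeq\; \colim_m \mathrm{B}_m G_{n,T} \;\simeq\; \mathrm{B}G_{n,T}, \]
since $f^*$ preserves filtered colimits and is given on representables by base change. (Alternatively one can argue directly with the bar construction $\mathrm{B}G_n = L_{\mathrm{mot}}\lvert G_n^{\times_S\bullet}\rvert$, using that $f^*\circ L_{\mathrm{mot}}\simeq L_{\mathrm{mot}}\circ f^*$ on presheaves---because $(\ph)\times_S T$ sends Nisnevich distinguished squares to Nisnevich distinguished squares and is $\A^1$-equivariant---and that $f^*$ on presheaves preserves the realization colimit and sends $G_n^{\times_S m}$ to $G_{n,T}^{\times_T m}$; but even this route needs flatness and quasi-affineness in order to identify this bar construction with the classifying space entering the construction of $\M G$.)

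I expect the main obstacle to be exactly this geometric-model step for a merely flat and quasi-affine $G_n$ over a general base $S$: producing the free actions on opens of representation-type schemes whose quotients are \emph{representable by smooth $S$-schemes} with bad locus of unbounded codimension, and verifying that the whole package survives arbitrary base change. Here flatness is what keeps ``smooth/flat'', ``free quotient $=$ scheme'', and ``$G_n$-torsor'' stable under $(\ph)\times_S T$, while quasi-affineness is what upgrades the quotient from an algebraic space or stack to an honest scheme; once these are in hand, the reduction in the first paragraph closes the argument.
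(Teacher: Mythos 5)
Your reduction in the first paragraph is essentially the right one, though it is stated somewhat loosely: the precise statement one needs is not "$f^*\mathrm{B}G_{n,S}\simeq \mathrm{B}G_{n,T}$" for a fixed $f$, but the global claim that the left Kan extension $L K^G_S\to K^G$ (from $\Sm_S$ to $\Sch_S$) is a Nisnevich equivalence, which, after restricting along $\Sm_T\hookrightarrow\Sch_S$ for every $T$ and invoking \cite[Proposition 16.9]{bachmann-norms}, yields the base-change assertion. The paper makes exactly this reduction, passes to $S$ affine and $\Sigma$-presheaves on $\Aff_S$ via \cite[Lemma 3.3.9]{EHKSY3}, decomposes $K^G=(\coprod_n BG_n)^{gp}$, and then cites \cite[Proposition A.0.4 and Lemma A.0.5]{EHKSY3} for the crucial fact that the stacks $BG_n$ are Nisnevich-locally left Kan extended from smooth affine $S$-schemes. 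So the two arguments diverge precisely at the crux you flag yourself.

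Your proposed resolution of that crux — a Totaro/Morel--Voevodsky geometric model $BG_n\simeq\colim_m U_m/G_n$ with $U_m/G_n$ smooth $S$-schemes whose formation commutes with base change — is a genuinely different strategy, but it is not carried out and, as written, has real gaps. (i) The existence of such a model for a merely flat, quasi-affine, finitely presented $G_n$ over an arbitrary base $S$ is not standard: Totaro's construction is set up over a field, Morel--Voevodsky over a regular base and essentially for smooth groups, and one needs a faithful representation with the requisite codimension control, the representability of the free quotient as a \emph{scheme} (your quasi-affineness is indeed aimed at this, but a proof is needed), and all of it simultaneously compatible with arbitrary base change. You acknowledge this is "the main obstacle", but the proposal does not close it — it restates it. (ii) Even granting the geometric model, the identification $BG_n\simeq\colim_m B_m G_n$ holds in the Nisnevich-local (or $\A^1$-local) category of motivic spaces, not at the level of presheaves; whereas the statement actually needed is about the presheaf $BG_n$ on $\Sch_S$ being (Nisnevich-locally) left Kan extended from $\Sm_S$. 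Translating a motivic-homotopy colimit description into the required Kan-extension statement about the stack $BG_n$ (evaluated on arbitrary affines, say) requires extra argument — this is precisely what \cite[Proposition A.0.4 and Lemma A.0.5]{EHKSY3} supply, and your sketch does not substitute for it. (iii) Some secondary bookkeeping — that group completion and coproduct preserve the Kan-extension property, and that one can safely work with $\Sigma$-presheaves — is also glossed over but handled explicitly in the paper's proof.

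In short: the initial reduction is correct in spirit and mirrors the paper, but the substantive step is a different argument (geometric model vs.\ citing EHKSY3's Kan-extension lemma), and that step is left as a sketch with a gap you yourself identify. To complete it along your lines, you would essentially have to reprove \cite[Proposition A.0.4 and Lemma A.0.5]{EHKSY3} via geometric models, which is not obviously easier and may fail for non-smooth flat groups over a general base without additional input.
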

\begin{proof}
We have a presheaf $K^G \in \PSh(\Sch_S)$ and a map $K^G \to K$.
For $f: X \to S \in \Sch_S$, denote by $K^G_X \in \PSh(\Sm_X)$ and $j_X: K^G_X \to K|_{\Sm_X}$ the restrictions.
Then by definition $\M G_X = M_X(j_X)$, where $M_X: \PSh(\Sm_X)_{/K} \to \SH(X)$ is the motivic Thom spectrum functor \cite[\S16.1]{bachmann-norms}.
Let $L K^G_S \in \PSh(\Sch_S)$ denote the left Kan extension of $K^G_S$.
We claim that $L K^G_S \to K^G$ is a Nisnevich equivalence.
Assuming this, we deduce that $f^* K^G_S \wequi (L K^G_S)|_{\Sm_X} \to K^G_X$ is a Nisnevich equivalence.
Since $M_X$ inverts Nisnevich equivalences \cite[Proposition 16.9]{bachmann-norms}, this implies that $f^* \M G_S \wequi \M G_X$, which is the desired result.

To prove the claim, we first note that by \cite[Lemma 3.3.9]{EHKSY3}, we may assume $S$ affine, and it suffices to prove that the restriction of $K^G$ to $\Aff_S$ is left Kan extended from smooth affine $S$-schemes.
By definition $K^G = (\Vect^G)^{gp}$, where $\Vect^G = \coprod_{n \ge 0} BG_n$ (here the coproduct is as stacks, i.e. fppf sheaves).
The desired result now follows from \cite[Proposition A.0.4 and Lemma A.0.5]{EHKSY3} (noting that the coproduct of stacks is the same as coproduct of $\Sigma$-presheaves, and Kan extension preserves $\Sigma$-presheaves).
\end{proof}

Now let $\xi \in K^G(X)$.
Then there is a canonical equivalence \cite[Example 16.29]{bachmann-norms} \[ \Sigma^\xi \M G_X \wequi \Sigma^{|\xi|} \M G_X. \]
We denote by $t_\xi \in \M G^{\xi - |\xi|}(X)$ the class of the map \[ \1 \xrightarrow{u} \M G_X \wequi \Sigma^{\xi - |\xi|} \M G_X. \]

\subsubsection{Oriented ring spectra}
\label{subsub:oriented-rings}
\begin{definition} \label{def:orientation}
Let $E \in \SH(S)$ be a ring spectrum and $G = (G_n)_n$ a family of group schemes as in \S\ref{sec:thom-spectra}.
By a \emph{strong $G$-orientation} of $E$ we mean a ring map $\M G \to E$.
\end{definition}

\begin{example}
The spectrum $\KO$ is strongly $\SL$-oriented; see Corollary \ref{cor:KO-orient}.
\end{example}

Note that if $E \in \SH(S)$ is strongly $G$-oriented, then there is no reason a priori why $E_X$ should be strongly $G_X$-oriented.
This is true if $\M G$ is stable under base change, so for most reasonable $G$ by Proposition \ref{prop:MSL-stable-under-base-change}.
We will not talk about strong $G$-orientations unless $\M G$ is stable under base change, so assume this throughout.

Given $\xi \in K^G(X)$, the map $\M G \to E$ provides us with $t_\xi = t_\xi(E) \in E^{\xi - |\xi|}(X)$.

\begin{proposition} \label{prop:thom-iso-E}
Let $E$ be strongly $G$-oriented and $\xi \in K^G(X)$.
\begin{enumerate}
\item The classes $t_\xi(E)$ are stable under base change: for $f: X' \to X$ we have $t_{f^*\xi}(E) = f^*t_\xi(E)$.
\item Multiplication by $t_\xi(E)$ induces an equivalence $\Sigma^{|\xi|} E \wequi \Sigma^\xi E$ and an isomorphism $t: E^{|\xi|}_Z(X) \wequi E^\xi_Z(X)$, called the \emph{Thom isomorphism}.
\item The Thom isomorphism is compatible with base change: $f^*(t(x)) = t(f^*(x))$.
\end{enumerate}
In particular, $E$ is $G$-oriented.
\end{proposition}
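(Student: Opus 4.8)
The plan is to reduce everything to the universal case $E = \M G$, where the statement is essentially built into the construction of Thom spectra, and then to transport the conclusions along the ring map $\M G \to E$ furnished by the strong orientation.

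\textbf{Part (1).} By definition $t_\xi(E)$ is the image under the orientation $\M G_X \to E_X$ of the class $t_\xi(\M G) \in \M G^{\xi-|\xi|}(X)$ represented by the composite $\1 \xrightarrow{u} \M G_X \wequi \Sigma^{\xi-|\xi|}\M G_X$. Each ingredient is stable under base change: the unit $u$ trivially; the equivalence $\Sigma^{\eta}\M G_X \wequi \Sigma^{|\eta|}\M G_X$ of \cite[Example 16.29]{bachmann-norms} because it is natural in the base; and the orientation because $\M G$ is assumed stable under base change, so that $f^*$ of $\M G_X \to E_X$ is identified with $\M G_{X'} \to E_{X'}$. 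Chasing these identifications yields $f^* t_\xi(E) = t_{f^*\xi}(E)$.

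\textbf{Part (2).} It suffices to treat $E = \M G$ and then apply $(\ph)\wedge_{\M G_X} E_X$. On $\M G_X$, multiplication by $t_\xi(\M G)$ realizes on spectra precisely the canonical equivalence $\Sigma^{|\xi|}\M G_X \wequi \Sigma^\xi \M G_X$ (this is the content of \cite[Example 16.29]{bachmann-norms}, using that $\M G_X$ is a ring spectrum with unit $u$), hence is an equivalence. For a strongly oriented $E$, the orientation makes $E_X$ an $\M G_X$-algebra, $\Sigma^\eta E_X \wequi (\Sigma^\eta \M G_X)\wedge_{\M G_X} E_X$, and multiplication by $t_\xi(E)$ is obtained from multiplication by $t_\xi(\M G)$ by base change along $\M G_X \to E_X$; hence it is an equivalence $\Sigma^{|\xi|}E_X \wequi \Sigma^\xi E_X$. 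Applying $[X/(X\setminus Z),\pi^*(\ph)]_{\SH(X)}$ and invoking Example \ref{ex:cohomology-with-support} produces the isomorphism $t\colon E^{|\xi|}_Z(X) \wequi E^\xi_Z(X)$; a diagram chase through the diagonal $X \to X\times X$ shows this agrees with the map ``multiplication by $t_\xi(E)$'' defined via the product structure.

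\textbf{Part (3) and the orientation statement.} Part (3) is immediate from (1) and Lemma \ref{lemm:product-pullback-compat}, since $f^*(t_\xi(E)\cdot x) = f^*t_\xi(E)\cdot f^*x = t_{f^*\xi}(E)\cdot f^*x$. Finally, ``$E$ is $G$-oriented'' amounts to the assertion that the family of Thom isomorphisms, being compatible with pullback by (3) and multiplicative in $\xi$ (as $t_{\xi+\xi'}(\M G) = t_\xi(\M G)\cdot t_{\xi'}(\M G)$ by \cite[Example 16.29]{bachmann-norms}, transported to $E$ as in (2)), exhibits $E|_{\scr C^L}$ as factoring through the quotient of $\scr C^L$ cut out by the rank map $K^G(X)\to\Z$. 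The only step requiring genuine care is the diagram chase in (2) identifying the two descriptions of ``multiplication by $t_\xi$''; everything else is formal bookkeeping within the six-functors formalism and the structure of $\M G$, so I do not anticipate a serious obstacle.
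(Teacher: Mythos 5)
Your proposal follows the same overall route as the paper: reduce each statement to $\M G$, where it holds essentially by construction (\cite[Example 16.29]{bachmann-norms}), and then transport along the orientation map $\M G \to E$. Parts (1) and (3) match the paper's argument closely, with your invocation of Lemma \ref{lemm:product-pullback-compat} for (3) being a welcome bit of extra precision.

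The one place you diverge is in (2), and it is worth flagging. You assert that "the orientation makes $E_X$ an $\M G_X$-algebra" and then base-change the equivalence along $\M G_X \to E_X$ using $(\ph)\wedge_{\M G_X} E_X$. But the paper's notion of ring spectrum and of a strong $G$-orientation is a \emph{homotopy} ring spectrum and a \emph{homotopy} ring map (\S\ref{subsub:oriented-rings}): there are only homotopy classes of unit and multiplication maps. At this level, the relative smash product $\wedge_{\M G_X}$ is not available, so your reduction is not literally meaningful as stated. The paper sidesteps this by arguing at the level of Picard-graded homotopy rings: $t_\xi(\M G)$ is a unit in $\pi_\star \M G$, homotopy ring maps preserve units, so $t_\xi(E)$ is a unit in $\pi_\star E$, and multiplication by a unit is automatically an equivalence of spectra. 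Your conclusion is of course correct, and your argument could be repaired by replacing the $\wedge_{\M G}$ step with the unit argument; but as written it appeals to more structure than the definitions provide. Also note that your closing "diagram chase through the diagonal" to identify the two descriptions of multiplication by $t_\xi(E)$ is something the paper does not carry out at all (it simply says "the second half of (2) follows"), so you have if anything been more careful there.
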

\begin{proof}

(1) follows from the same statement for $\M G$, where it holds by construction.
For the first half of (2), it suffices to show that $t_\xi(E)$ is a unit in the Picard-graded homotopy ring of $E$.
This follows from the same statement for $\M G$. The second half of (2) follows.
(3) immediately follows from (1).
\end{proof}

\begin{example}
Let $E$ be strongly $\GL$-oriented.
Then for any $\xi \in K(X)$ we obtain $E^\xi(X) \wequi E^{rk(\xi)}(X)$, so $E$ is oriented in the sense of \S\ref{subsec:coh-features}
\end{example}

\begin{definition}
For $X \in \Sch_S$ and $\scr L$ a line bundle on $X$ put \[ E^n_Z(X, \scr L) = E^{n-1+\scr L}_Z(X). \]
\end{definition}
\begin{example} \label{ex:SL-thom-iso}
Let $E$ be strongly $\SL$-oriented and $\xi \in K(X)$.
Then $\xi' := \xi - (|\xi| - 1 + \det\xi) \in K(X)$ lifts canonically to $K^\SL(X)$, whence by Proposition \ref{prop:thom-iso-E} we get a canonical (Thom) isomorphism $E^\xi(X) \wequi E^{|\xi|}(X, \det\xi)$.
In particular, $E$ is $\SL$-oriented in the sense of \S\ref{subsec:coh-features}.
\end{example}

\begin{remark}
If $E$ is strongly $\SL$-oriented, then since $\det(\scr L_1 \oplus \scr L_2) \wequi \scr L_1 \otimes \scr L_2$, by Example \ref{ex:SL-thom-iso} the product structure on $E$-cohomology twisted by line bundles takes the form $E^n(X, \scr L_1) \times E^m(X, \scr L_2) \to E^{n+m}(X, \scr L_1 \otimes \scr L_2)$.
\end{remark}

\begin{remark} \label{rmk:strong-orientation-permanence}
Strong $G$-orientations have better permanence properties than ordinary ones (provided that $\M G$ is stable under base change): they are stable under base change and taking (very) effective covers, for example.
\end{remark}

\subsection{$\SL^c$-orientations}
A. Ananyevskiy has done important work on $\SL$ and $\SL^c$ orientations.
We shall make use of the following result; see e.g. \cite[Theorem 1.1]{ananyevskiy2019sl}.
\begin{proposition}[Ananyevskiy] \label{prop:SL-oriented-trick}
Let $E \in \SH(S)$ be $\SL$-oriented and $\scr L_1, \scr L_2, \scr L_3$ be line bundles on $X$.
Suppose given an isomorphism $\scr L_1 \wequi \scr L_2 \otimes \scr L_3^{\otimes 2}$.
\begin{enumerate}
\item There is a canonical equivalence $\Sigma^{\scr L_1} E \wequi \Sigma^{\scr L_2} E$, compatible with base change.
\item There is a canonical isomorphism $E^n_Z(X, \scr L_1) \wequi E^n_Z(X, \scr L_2)$, compatible with base change.
\end{enumerate}
In particular, the cohomology theory represented by $E$ is $\SL^c$-oriented.
\end{proposition}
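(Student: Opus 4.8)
The plan is: deduce the support statement~(2) and the final sentence formally from part~(1); reduce part~(1), via the $\SL$-Thom isomorphisms, to the comparison of a single line bundle with its dual; and take that comparison from Ananyevskiy's work.

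\emph{From (1) to (2) and the last sentence.} By Lemma~\ref{lemm:shriek-thom} Thom twists commute with $i^!$, so for $\pi\colon X\to S$ and a closed immersion $i\colon Z\hookrightarrow X$ we may write
\[ E^n_Z(X,\scr L)=[\1,\ \pi_*\,i_!\,\Sigma^{n-1}\,i^!(\Sigma^{\scr L}\pi^*E)]_{\SH(S)}. \]
Thus a canonical equivalence $\Sigma^{\scr L_1}\pi^*E\wequi\Sigma^{\scr L_2}\pi^*E$ in $\SH(X)$ induces a canonical isomorphism $E^n_Z(X,\scr L_1)\wequi E^n_Z(X,\scr L_2)$, compatibly with pullback since the equivalence of~(1), the functors $\pi_*,i_!,i^!$, and the identification of Lemma~\ref{lemm:shriek-thom} are all stable under base change. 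Taking $Z=X$ and matching with the definition of an $\SL^c$-orientation in \S\ref{subsec:coh-features} yields the last sentence.

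\emph{Reduction of (1).} Write $E_X=\pi^*E$. Given an isomorphism $\scr L_1\wequi\scr L_2\otimes\scr L_3^{\otimes2}$, consider the rank-two bundles $V:=(\scr L_2\otimes\scr L_3)\oplus\scr L_3^\dual$ and $W:=(\scr L_2\otimes\scr L_3)\oplus\scr L_3$, with $\det V\wequi\scr L_2$ and $\det W\wequi\scr L_1$. Since $E$ is $\SL$-oriented, Example~\ref{ex:SL-thom-iso} (together with additivity of the Thom transform on $K(X)$ for the outer identifications) gives canonical, base-change-compatible equivalences
\[ \Sigma^{\scr L_2\otimes\scr L_3}\Sigma^{\scr L_3^\dual}E_X\wequi\Sigma^{[V]}E_X\wequi\Sigma^{1}\Sigma^{\scr L_2}E_X,\qquad \Sigma^{\scr L_2\otimes\scr L_3}\Sigma^{\scr L_3}E_X\wequi\Sigma^{[W]}E_X\wequi\Sigma^{1}\Sigma^{\scr L_1}E_X. \]
Cancelling the common invertible factors $\Sigma^{\scr L_2\otimes\scr L_3}$ and $\Sigma^{1}$, part~(1) for the pair $(\scr L_1,\scr L_2)$ becomes equivalent to the existence of an equivalence $\Sigma^{\scr L_3}E_X\wequi\Sigma^{\scr L_3^\dual}E_X$. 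So it suffices to produce, for an arbitrary line bundle $\scr N$ on an arbitrary scheme $X$ over $S$, a canonical base-change-compatible equivalence $\Sigma^{\scr N}E_X\wequi\Sigma^{\scr N^\dual}E_X$ (equivalently, the special case $\scr L_2=\scr O$, $\scr L_1=\scr L_3^{\otimes2}$ of the statement).

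\emph{The main obstacle.} I expect the equivalence $\Sigma^{\scr N}E_X\wequi\Sigma^{\scr N^\dual}E_X$ to be the crux, since it is not formal: the $\SL$-Thom isomorphism for $\scr N\oplus\scr N^\dual$ (whose determinant is canonically trivial) gives only $\Sigma^{\scr N}\Sigma^{\scr N^\dual}E_X\wequi\Sigma^2E_X$, and isolating the single equivalence is tantamount to the assertion itself. The argument I would follow is Ananyevskiy's: realize the invertible objects $\Sigma^{\scr N}\1$ and $\Sigma^{\scr N^\dual}\1$, after smashing with $E_X$, as the two quotients of the common $\P^1$-bundle $\P(\scr N\oplus\scr O)=\P(\scr O\oplus\scr N^\dual)$ by its zero-section and by its section at infinity (the two sub-line-bundles involved are not interchanged by any automorphism of $\scr N\oplus\scr O$, so the orientation is genuinely used here), and then invoke the special linear projective bundle theorem to see that the two quotients become $E_X$-equivalent; this is \cite[Theorem~1.1]{ananyevskiy2019sl}. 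Finally, base-change compatibility holds because each ingredient — the Thom isomorphisms of Proposition~\ref{prop:thom-iso-E}(3), the purity and exchange transformations, and Ananyevskiy's construction — is stable under base change, hence so is the resulting composite equivalence.
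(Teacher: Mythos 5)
Your proof is correct and follows essentially the same route as the paper: deduce (2) and the closing sentence formally from (1), reduce (1) via the $\SL$-Thom isomorphism to a single non-formal comparison, and cite Ananyevskiy for that comparison. The one substantive difference is cosmetic — the paper reduces to $\Sigma^{\scr L_3^{\otimes 2}} E \wequi \Sigma^{\scr O} E$ using the auxiliary rank-two bundles $\scr L_3 \oplus \scr L_3$, $\scr O \oplus \scr L_3^{\otimes 2}$, $\scr L_3 \oplus \scr L_3^\dual$, $\scr O^2$ and cites the precise intermediate results \cite[Corollary 3.9, Lemma 4.1]{ananyevskiy2019sl}; you reduce instead to $\Sigma^{\scr L_3} E \wequi \Sigma^{\scr L_3^\dual} E$ using $(\scr L_2\otimes\scr L_3)\oplus\scr L_3^{\pm1}$ and cite \cite[Theorem 1.1]{ananyevskiy2019sl} directly (which is in fact the statement being proved, so this is a little circular as a citation, though harmless since you also sketch the underlying $\P^1$-bundle argument). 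The two reductions are interchangeable, since $\Sigma^{\scr N}E \wequi \Sigma^{\scr N^\dual}E$ is equivalent, via $\Sigma^{\scr N + \scr N^\dual}E \wequi \Sigma^{\scr O^2}E$ and $\Sigma^{\scr N + \scr N}E \wequi \Sigma^{\scr O + \scr N^{\otimes 2}}E$, to $\Sigma^{\scr N^{\otimes 2}}E \wequi \Sigma^{\scr O}E$. Your extra care on base-change compatibility and on the deduction of (2) from (1), which the paper leaves implicit, is a sensible addition.
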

\begin{proof}
Note that (2) follows from (1).
Let $\scr L = \scr L_3$.
It suffices to exhibit a canonical equivalence $\Sigma^{\scr L^{\otimes 2}} E \wequi \Sigma^{\scr O} E$.
We have canonical equivalences
\[ \Sigma^{\scr L + \scr L} E \wequi \Sigma^{\scr O + \scr L^{\otimes 2}} E, \quad  \Sigma^{\scr L + \scr L^*} E \wequi \Sigma^{\scr O^2}E, \quad \Sigma^{\scr L + \scr L} \wequi \Sigma^{\scr L + \scr L^*}, \]
by \cite[Corollary 3.9, Lemma 4.1]{ananyevskiy2019sl}.
Consequently $\Sigma^{\scr O + \scr L^{\otimes 2}} E \wequi \Sigma^{\scr O^2}E$, whence the claim.
\end{proof}

\section{Euler classes for representable theories} \label{ref:representable-theory-euler-classes}
\subsection{Tautological Euler class}\label{subsection_with_tautological_Euler_class}
Let $E \in \SH(S)$ be a ring spectrum, $X \in \Sch_S$ and $V$ a vector bundle on $X$.
\begin{definition} \label{def:taut-euler-class}
We denote by $e(V) = e(V, E) \in E^{V^\dual}(X)$ the \emph{tautological Euler class of $V$}, defined as the composite \[ \1_X \wequi \Sigma^\infty_+ V \to \Sigma^\infty V/(V \setminus 0 )\wequi \Sigma^{V^\dual} \1 \xrightarrow{u} \Sigma^{V^\dual} E|_X \in \SH(X). \]
\end{definition}

\begin{lemma} \label{lemm:Euler-class-base-change}
Let $f: X' \to X \in \Sch_S$.
Then \[ f^* e(V, E) = e(f^*V, E) \in E^{f^* V}(X'). \]
\end{lemma}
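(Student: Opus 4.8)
The plan is to unwind the definition of $e(V,E)$ as a composite of morphisms in $\SH(X)$ and to check that $f^*$ carries each constituent morphism to the corresponding one over $X'$. Concretely, after the adjunction identification $E^{V^\dual}(X) = [\1_X, \Sigma^{V^\dual} E|_X]_{\SH(X)}$, the class $e(V,E)$ is the homotopy class of
\[ c_V\colon\quad \1_X \wequi \Sigma^\infty_+ V \to \Sigma^\infty V/(V\setminus 0) \wequi \Sigma^{V^\dual}\1 \xrightarrow{u} \Sigma^{V^\dual} E|_X , \]
and pullback along $f$ in this ($Z = X$) $E$-cohomology is induced by the functor $f^*\colon\SH(X)\to\SH(X')$. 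Since $e(f^*V,E)$ is by definition the class of the analogous composite $c_{f^*V}$ over $X'$, it suffices to produce a commuting ladder identifying $f^*c_V$ with $c_{f^*V}$, which we do map by map.

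First I would treat the geometric part. The source is handled because $f^*$ is symmetric monoidal, so $f^*\1_X\wequi\1_{X'}$. Writing $p\colon V\to X$ for the projection, one has $\Sigma^\infty_+ V\wequi p_\#\1_V$; smooth base change along the cartesian square formed with $f$ then gives $f^*\Sigma^\infty_+ V\wequi p'_\#\1_{V'}\wequi\Sigma^\infty_+ V'$, where $V':=V\times_X X'\wequi f^*V$ and $p'\colon V'\to X'$. Since the projection and zero section of $V'$ are the pullbacks of those of $V$, the equivalence $\1_X\wequi\Sigma^\infty_+ V$ (which is $\Sigma^\infty_+$ of the projection, inverse to $\Sigma^\infty_+$ of the zero section) is carried by $f^*$ to $\1_{X'}\wequi\Sigma^\infty_+ V'$. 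Because $(V\setminus 0)\times_X X' = V'\setminus 0$ and $f^*$ is exact, $f^*$ takes the cofibre sequence $\Sigma^\infty_+(V\setminus 0)\to\Sigma^\infty_+ V\to\Sigma^\infty V/(V\setminus 0)$ to the one for $V'$, compatibly with the quotient maps; hence $f^*(\Sigma^\infty_+ V\to\Sigma^\infty V/(V\setminus 0))$ is identified with $\Sigma^\infty_+ V'\to\Sigma^\infty V'/(V'\setminus 0)$.

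It remains to handle the Thom equivalence and the unit. The equivalence $\Sigma^\infty V/(V\setminus 0)\wequi\Sigma^{V^\dual}\1$ is part of the data of the motivic $J$-homomorphism $K(X)\to\Pic(\SH(X))$ of \cite[\S16.2]{bachmann-norms}, which is natural in the base: for $\xi\in K(X)$ one has a canonical equivalence $f^*\Sigma^\xi\1_X\wequi\Sigma^{f^*\xi}\1_{X'}$, compatible with the identification of the Thom transform of (the dual of) a vector bundle with its Thom space. This identifies $f^*$ of $\Sigma^\infty V/(V\setminus 0)\wequi\Sigma^{V^\dual}\1$ with the corresponding equivalence for $V'$. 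For the last map, writing $\pi\colon X\to S$ and $\pi' = \pi f\colon X'\to S$, the unit $u\colon\1_X\to E|_X = \pi^*E$ is $\pi^*$ of the unit $\1_S\to E$, so $f^*u$ is $\pi'^*$ of that same unit, i.e. the unit over $X'$, and $f^*(E|_X)\wequi E|_{X'}$. Assembling these identifications yields $f^*c_V\wequi c_{f^*V}$, whence $f^*e(V,E) = e(f^*V,E)$ in $E^{f^*V^\dual}(X')$.

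I do not expect a genuine obstacle here: smooth base change for $\Sigma^\infty_+$, exactness of $f^*$, naturality of the $J$-homomorphism, and naturality of the unit are all standard and already recorded in the material recalled in \S\ref{sec:representable-cohomology} and in \cite{bachmann-norms}. The only point requiring a modicum of care is keeping the various base-change compatibilities of the Thom transform $\Sigma^{(\ph)}$ coherent with one another, and this is exactly what the cited construction of the $J$-homomorphism supplies.
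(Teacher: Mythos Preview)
Your proof is correct and is essentially the same approach as the paper's, which simply reads ``Immediate.'' You have spelled out in detail the compatibilities (symmetric monoidality of $f^*$, smooth base change for $\Sigma^\infty_+$, exactness of $f^*$ on the cofiber sequence defining the Thom space, naturality of the $J$-homomorphism, and naturality of the unit) that the authors regard as evident from the definitions.
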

\begin{proof}
Immediate.
\end{proof}

If $E$ is strongly $\SL$-oriented in the sense of \S\ref{subsub:oriented-rings}, and hence $\SL^c$-oriented in the sense of \S\ref{subsec:coh-features}, then for any relatively oriented vector bundle $V$ over a smooth and proper scheme $X/S$ we obtain an Euler number $n(V, \rho, E) \in E^0(S)$. See \S\ref{subsec:yoga}.

\subsection{Integrally defined Euler numbers} \label{subsec:integrally-defined}
\begin{corollary}[Euler numbers are stable under base change] \label{cor:Euler-number-base-change}
Let $E$ be a strongly $\SL$-oriented cohomology theory and let $V$ be vector bundle $V$ over a smooth and proper scheme $X/S$, relatively oriented by $\rho$. Let $f: S' \to S$ be a morphism of schemes.
Then \[ f^* n(V, \rho, E) = n(f^*V, f^*o, f^*E) \in E^0(S'). \]
\end{corollary}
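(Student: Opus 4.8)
The plan is to unwind the definition $n(V,\rho,E)=\pi_*e(V,E)$ into a composite of operations each compatible with pullback, and then chain these compatibilities. Write $g\colon X':=X\times_S S'\to X$ for the base change of the structure map $\pi\colon X\to S$ along $f$, and $\pi'\colon X'\to S'$ for the projection. Smoothness and properness are stable under base change, so $\pi'$ is again smooth and proper; moreover there are canonical identifications $\det g^*V\wequi g^*\det V$ and $\omega_{X'/S'}\wequi g^*\omega_{X/S}$ (the latter since $\pi$ is smooth, so $\omega_{X/S}=\det\Omega_{X/S}$ commutes with base change), under which $g^*\rho$ is the relative orientation of $g^*V$ denoted $f^*o$ in the statement. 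Hence the right-hand side is defined, and by construction $n(V,\rho,E)$ is the image of the tautological Euler class $e(V,E)\in E^{V^\dual}(X)$ under
\[ E^{V^\dual}(X)\xrightarrow{\wequi}E^{n}(X,\det V)\xrightarrow[\wequi]{\rho}E^{n}(X,\omega_{X/S})\xrightarrow{\wequi}E^{L_\pi}(X)\xrightarrow{\pi_*}E^{0}(S), \]
where the first and third maps are the Thom/$\SL^c$-reindexing isomorphisms of Example \ref{ex:SL-thom-iso} and Proposition \ref{prop:SL-oriented-trick}, and $\pi_*$ is the Gysin map of \S\ref{sec:gysin} applied to the instance of \eqref{eq:gysin-square} with $i=k=\id$ and $f=g=\pi$.

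First I would handle the isomorphism part of this composite. The tautological Euler class is stable under base change, $g^*e(V,E)=e(g^*V,f^*E)$, by Lemma \ref{lemm:Euler-class-base-change}; the $\SL$-Thom isomorphism commutes with $g^*$ by Proposition \ref{prop:thom-iso-E}(3); the $\SL^c$-reindexing isomorphism commutes with $g^*$ by Proposition \ref{prop:SL-oriented-trick}(2); and $\rho$ pulls back to $f^*\rho$ under the identifications above, essentially by the definition of $f^*\rho$. Chaining these four facts, the element of $E^{L_\pi}(X)$ obtained from $e(V,E)$ is carried by $g^*$ to the analogous element of $(f^*E)^{L_{\pi'}}(X')$ obtained from $e(g^*V,f^*E)$ and $f^*\rho$.

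It then remains to see that the Gysin map is compatible with base change along $f$, i.e. that $f^*\pi_*(\alpha)=\pi'_*(g^*\alpha)$ for all $\alpha\in E^{L_\pi}(X)$. This is exactly Lemma \ref{lemm:gysin-compat}(2), applied to the square \eqref{eq:gysin-square} with $i=k=\id$ and $f=g=\pi$, taking the base-change morphism $s$ of that lemma to be our $f\colon S'\to S$. The only hypothesis of that lemma which is not purely formal here is the tor-independence of $s$ and $\pi$, and that holds automatically because $\pi$ is smooth, hence flat. Combining this commutativity with the preceding paragraph yields $f^*n(V,\rho,E)=n(g^*V,f^*\rho,f^*E)$, which is the claim. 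I expect no real difficulty beyond this bookkeeping; the one point worth stating explicitly is that flatness of $\pi$ provides the tor-independence needed to invoke Lemma \ref{lemm:gysin-compat}(2).
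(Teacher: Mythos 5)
Your proof is correct and takes essentially the same route as the paper: you unwind $n(V,\rho,E)=\pi_* e(V,E)$ and invoke precisely the same base-change compatibilities — Lemma \ref{lemm:Euler-class-base-change} for the tautological Euler class, Proposition \ref{prop:thom-iso-E}(3) and Proposition \ref{prop:SL-oriented-trick} for the reindexing isomorphisms, and Lemma \ref{lemm:gysin-compat}(2) for the Gysin pushforward, with tor-independence supplied by smoothness of $\pi$. Your write-up simply makes explicit the chaining that the paper leaves to the reader.
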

\begin{proof}
This holds since all our constructions are stable under base change.
See in particular Lemma \ref{lemm:gysin-compat}(2) (for compatibility of Gysin maps with pullback, which applies since $X \to S$ is smooth), Proposition \ref{prop:SL-oriented-trick} and Proposition \ref{prop:thom-iso-E}(3) (ensuring that the identification $E^{V^\dual}(X) \wequi E^{L_\pi}(X)$ is compatible with base change) and Lemma \ref{lemm:Euler-class-base-change} (for compatibility of Euler classes with base change).
\end{proof}

\begin{proposition} \label{prop:application-bc}
Let $d$ be even or $d=1$, $X/\Z[1/d]$ smooth and proper and $V/X$ a relatively oriented vector bundle.
Then for any field $k$ with $2d \in k^\times$ we have \[ n(V_k, \rho, \H\tilde\Z) \in \Z[\lra{-1}, \lra{2}, \dots, \lra{d}] \subset \GW(k). \]
In fact there is a formula \[ n(V_k, \rho, \H\tilde\Z) = \sum_{a \in \Z[1/d!]^\times} n_a \lra{a} \] which holds over any such field, with the coefficients $n_a \in \Z$ independent of $k$ (and zero for all but finitely many $a$).
\end{proposition}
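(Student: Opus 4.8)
The plan is to compute the Euler number once, over the base ring $R$ — where $R = \Z[1/d]$ if $d$ is even and $R = \Z$ if $d=1$ — and then deduce everything over fields by base change. Since $X \to \Spec R$ is smooth and proper and $(V,\rho)$ is a relatively oriented bundle on it, the class $\mathfrak n := n(V,\rho,\H\tilde\Z) \in \H\tilde\Z^0(R)$ is already defined at the level of $R$. For a field $k$ with $2d \in k^\times$ we have $\operatorname{char}(k)\nmid d$, so the structure morphism factors through $g\colon \Spec k \to \Spec R$, and Corollary \ref{cor:Euler-number-base-change} gives $n(V_k,\rho,\H\tilde\Z) = g^*\mathfrak n$ under $g^*\colon \H\tilde\Z^0(R) \to \H\tilde\Z^0(k) = \GW(k)$, the last identification being the description of $\H\tilde\Z$ over fields of characteristic $\ne 2$. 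Everything thus reduces to understanding $\H\tilde\Z^0(R)$ and the map $g^*$.

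For the identification of $\H\tilde\Z^0(R)$ I would use the span of ring spectra $\H\tilde\Z \leftarrow \tilde f_0 \KO \to \KO$; this span, together with the verification that both of its legs induce isomorphisms on $\pi_0(\ph)$ over a field, is exactly what is reused in the proof of Corollary \ref{co:pieBM=nPH}. When $d$ is even we have $1/2 \in R$ and $R$ is regular, so $\KO$ is defined over $R$, stable under base change, and satisfies $\KO^0(R) = \GW(R)$; a mild variant of the computation over a field (using that $\KO$ represents $\GW$ over the regular base $R$) then shows both legs are isomorphisms on $\pi_0(\ph)(R)$, giving $\H\tilde\Z^0(R) \cong \GW(R)$ using nothing beyond classical Hermitian $K$-theory. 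When $d=1$ one instead invokes \cite{CDHHLMNNS-3} to obtain a well-behaved Grothendieck--Witt spectrum over $\Z$ with $\KO^0(\Z) = \GW(\Z)$ and the same span-of-spectra input, again yielding $\H\tilde\Z^0(\Z) \cong \GW(\Z)$. By naturality of the Euler number in the ring spectrum, $\mathfrak n$ corresponds under this isomorphism to $n(V,\rho,\KO) \in \GW(R)$, and $g^*$ becomes the extension-of-scalars map $\GW(R)\to\GW(k)$.

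To conclude: since $1/2 \in R$ for $d$ even, every symmetric bilinear form over $R$ diagonalizes, so $\GW(R)$ is generated as an abelian group by the rank-one classes $\lra a$, $a \in R^\times$; for $d=1$ one has the classical computation $\GW(\Z) = \Z\lra 1 \oplus \Z\lra{-1}$, again so generated. As $R^\times/(R^\times)^2 \cong (\Z/2)^{1+\omega(d)}$ (with $\omega(d)$ the number of prime divisors of $d$) is finite, we may fix a finite expression $\mathfrak n = \sum_a n_a \lra a$ with $n_a \in \Z$ and $a$ running over $R^\times$. Applying $g^*$, and using that each such $a$ is a unit of $k$ with $g^*\lra a = \lra a$, we obtain $n(V_k,\rho,\H\tilde\Z) = \sum_a n_a \lra a \in \GW(k)$ with the same integers $n_a$ for every admissible $k$. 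Since $R^\times \subseteq \Z[1/d!]^\times$ this is a formula of the asserted shape (with $n_a = 0$ for $a \notin R^\times$), and since each $a \in R^\times$ is $\pm$ a product of primes dividing $d$, in the ring $\GW(k)$ we have $\lra a \in \Z[\lra{-1},\lra 2,\dots,\lra d]$, whence the containment.

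The real content sits in the middle step, and specifically in the case $d=1$: there the identification $\H\tilde\Z^0(\Z) = \GW(\Z)$ genuinely requires the new Hermitian $K$-theory results of \cite{CDHHLMNNS-3}, whereas for $d$ even one gets by with the classical good behaviour of $\KO$ over regular $\Z[1/2]$-schemes. The reduction and the concluding bookkeeping are formal, the only non-trivial point among them being the compatibility of the Euler-number construction with a span of ring spectra, which follows from its naturality and its stability under base change (Corollary \ref{cor:Euler-number-base-change}).
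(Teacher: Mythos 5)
Your overall strategy (express the Euler number as the base change of a class over $R$, identify that class with a Grothendieck--Witt class, then read off the generators) is the same as the paper's, but you have rearranged the steps in a way that introduces a genuine gap. You want to work with the class $\mathfrak n = n(V,\rho,\H\tilde\Z)$ in $\H\tilde\Z^0(R)$ and therefore claim that $\H\tilde\Z^0(R)\cong\GW(R)$ via the span $\H\tilde\Z\leftarrow\tilde f_0\KO\to\KO$, asserting that "a mild variant of the computation over a field" shows both legs are isomorphisms on $\pi_0(\ph)(R)$. The right leg $\tilde f_0\KO\to\KO$ is fine (by adjunction, since $\1$ is very effective), but the left leg $\tilde f_0\KO\to\pi_0^\eff\KO\wequi\H\tilde\Z$ is a truncation in the effective homotopy $t$-structure, and whether it is an isomorphism on $[\1,\ph]$ over the non-field base $R$ is not a "mild variant" of the field case --- it would require control over the effective homotopy of $\KO$ over $\Z[1/d]$ or $\Z$, which is not available. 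Indeed the paper flags exactly this in \S\ref{subsec:13}: over fields of characteristic $\ne 2$ the spectrum $\H\tilde\Z$ represents Chow--Witt-type generalized motivic cohomology, but "it is unclear if this theory is useful in this form over more general bases."

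This is why the paper orders the steps the other way. It compares $n(V_k,\rho,\H\tilde\Z)$ with $n(V_k,\rho,\KO)$ via the chain $\KO_k\leftarrow\tilde f_0\KO_k\to\pi_0^\eff\KO_k\leftarrow\pi_0^\eff\MSL_k\wequi\H\tilde\Z_k$ \emph{over the field $k$}, where the required isomorphisms on $[\1,\ph]$ are known; and only then applies Corollary \ref{cor:Euler-number-base-change} to $\KO$ over $\Z[1/d]$ (resp.\ to the spectrum $\KO'$ over $\Z$ from \cite{bachmann-eta}, whose $\pi_0$ is identified with $\GW(\Z)$ via \cite{CDHHLMNNS-3}), for which $\KO^0(R)=\GW(R)$ is available. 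In other words, base change should be performed at the level of $\KO$, not $\H\tilde\Z$, precisely because $\KO^0(R)$ is understood and $\H\tilde\Z^0(R)$ is not. Once the argument is restructured that way, the rest of your bookkeeping about generators of $\GW(R)$ and the finiteness of $R^\times/(R^\times)^2$ matches the content of the paper's Lemma \ref{lemm:GW-generators} and is fine.
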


\begin{remark}\label{rmk:dependence_on_CDH+}
If $d=1$, Proposition \ref{prop:application-bc} relies on the novel results about Hermitian $K$-theory of the integers from \cite{CDHHLMNNS-3}. In the below proof, this is manifested in the dependence of \cite[Lemma 3.38(2)]{bachmann-eta} on these results. We will later use Proposition \ref{prop:application-bc}  for the $d=1$ case of Theorem \ref{two_values_e(bundlesoverZ[1/2])}, whence this result is also using  \cite{CDHHLMNNS-3} in an essential way. For $d \geq 2$, the proof is independent of \cite{CDHHLMNNS-3}.
\end{remark}

Note that here the assumption that the rank of $V$ equals the dimension of $X$ is included in the hypothesis that $V/X$ a relatively oriented vector bundle (see Definition \ref{def:relative-orientation}).
\begin{proof}
Recall the very effective cover functor $\tilde f_0$ and the truncation in the effective homotopy $t$-structure $\pi_0^\eff$, for example from \cite[\S\S3,4]{bachmann-very-effective}.
We have a diagram of spectra \[ \KO_k \leftarrow \tilde f_0 \KO_k \to \pi_0^\eff \KO_k \leftarrow \pi_0^\eff \MSL_k \wequi H\tilde\Z; \] see \cite[Example 16.34]{bachmann-norms} for the last equivalence.
The functors $\tilde f_0$ and $\pi_0^\eff$ are lax monoidal in an appropriate sense, so this is a diagram of ring spectra.
Moreover all of the ring spectra are strongly $\SL$-oriented (via the ring map $\MSL_k \to \pi_0^\eff \MSL_k$); see also Remark \ref{rmk:strong-orientation-permanence}.
Finally all the maps induce isomorphisms on $[\1, \ph]$, essentially by construction.
It follows that $n(V_k, \rho, \KO) = n(V_k, \rho, H\tilde\Z) \in \GW(k)$.
We may thus as well prove the result for $n(V_k, \rho, \KO)$ instead.

If $d$ is even, then we have $\KO \in \SH(\Z[1/d])$, and by Corollary \ref{cor:Euler-number-base-change} we see that $n(V_k, \rho, \KO) \in im(\GW(\Z[1/d]) \to \GW(k))$.
The result thus follows from Lemma \ref{lemm:GW-generators} below.
If $d=1$, we employ the $\SL$-oriented ring spectrum $\KO' \in \SH(\Z)$ from \cite[\S3.8.3]{bachmann-eta}.
We find that $n(V_k, \rho, \KO)$ is the image of $n(V, \rho, \KO') \in \KO'^0(\Z)$.
This latter group is isomorphic to $\GW(\Z)$ by \cite[Lemma 3.38(2)]{bachmann-eta}, whence the result.
\end{proof}

\begin{lemma}\label{lemm:GW-generators}
Let $d$ be even or $d=1$.
As a ring, $\GW(\Z[1/d])$ is generated by $\lra{-1}$ and $\lra{p}$, for the primes $p \mid d$.
\end{lemma}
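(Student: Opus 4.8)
The plan is to compute $\GW(\Z[1/d])$ explicitly using the fundamental arithmetic square (the Grothendieck--Witt analogue of the localization sequence), and then read off a generating set. First I would recall that for a Dedekind domain with $1/2$ in it (note $d$ is even or $d=1$; in the latter case $2$ is not inverted, so some care is needed, but one may instead work with the known structure of $\GW(\Z)$ from \cite[\S3]{bachmann-eta} — indeed $\GW(\Z) = \Z\lra{1} \oplus \Z\lra{-1}$ as is classical, generated as a ring by $\lra{-1}$ since $\lra{-1}^2 = 1$). For even $d$, the basic tool is the localization/dévissage exact sequence relating $\W(\Z[1/d])$, $\W(\Q)$, and the residue Witt groups $\W(\FF_p)$ for $p \mid d$, together with the rank and discriminant invariants which pin down $\GW$ from $\W$ and $\Z$.

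The key steps, in order: (1) Use the exact sequence $0 \to \W(\Z[1/d]) \to \W(\Q) \xrightarrow{\partial} \bigoplus_{p \mid d} \W(\FF_p)$, where $\partial$ is the sum of second residue homomorphisms; this identifies $\W(\Z[1/d])$ with the classes in $\W(\Q)$ unramified outside the primes dividing $d$. (2) Recall the classical computation $\W(\Q) = \Z \oplus \bigoplus_{p} \W(\FF_p)$ (sum over all primes $p$), so that $\W(\Z[1/d])$ is the subgroup $\Z \oplus \bigoplus_{p \mid d}\W(\FF_p)$. (3) Observe that the summand $\Z$ is detected by the signature and is generated as a ring by $\lra{1}$, $\lra{-1}$, while the class of $\lra{p}$ for $p \mid d$ maps to a generator of the corresponding $\W(\FF_p)$-summand (via the second residue at $p$): this is the standard fact that $\lra{p}$ is the "ramified" unit at $p$. (4) Lift from $\W$ to $\GW$: since $1/2 \in \Z[1/d]$ there is a pullback square expressing $\GW$ in terms of $\W$, the rank, and the discriminant (or simply use that $\GW(R) \to \W(R) \times_{\Z/2} \Z$ is injective with controlled cokernel for $R$ a nice ring); the extra data of rank and discriminant is generated by $\lra{1}$ and the $\lra{p}$ as well (discriminants of the $\lra{p}$ hit the classes of $p$ in $\Z[1/d]^\times/2$, which generate since every unit is $\pm$ a product of the $p \mid d$). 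Combining, $\lra{-1}$ and the $\lra{p}$ for $p \mid d$ generate $\GW(\Z[1/d])$ as a ring.

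The main obstacle I expect is bookkeeping around the $d=1$ case and, for even $d$, making the passage from $\W$ to $\GW$ precise: one must check that no further ring generators are needed beyond what is seen in $\W$ and in the rank/discriminant, i.e. that the "fundamental ideal filtration" on $\GW(\Z[1/d])$ is generated in degrees $\le 1$ by the listed elements. This is where one invokes that $I^2$ of a global/Dedekind ring of this type is controlled by the residue fields $\FF_p$ (all of which have $I^2 = 0$), so the filtration is short and the elements $\lra{-1} - 1$ and $\lra{p} - 1$ generate it. For $d=1$ one can sidestep this entirely by quoting the explicit structure of $\GW(\Z)$. I would present the even-$d$ argument as the main line and dispatch $d=1$ in a sentence by reference to \cite[Lemma 3.38]{bachmann-eta} or the classical computation of $\GW(\Z)$.
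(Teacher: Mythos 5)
Your proposal takes a genuinely different route from the paper's proof. You use the Knebusch--Milnor localization exact sequence for the Dedekind ring $\Z[1/d]$ and Milnor--Husemoller's decomposition of $\W(\Q)$, plus Lam's Dirichlet-style arguments. The paper instead computes the graded pieces of the fundamental ideal filtration as Nisnevich sheaf cohomology groups $H^0_\Nis(U, I^n/I^{n+1})$ over $U = \Spec \Z[1/d]$, invoking specialized computations of the Milnor--Witt sheaves over $\Spec \Z$ (references to \cite{bachmann-topmod}, \cite{bachmann-etaZ}, \cite{spitzweck2012commutative}), and then descends to $W(U)$ via a spectral sequence. (In fact the paper's source contains a commented-out sketch along the lines of your proposal, which was abandoned in favour of the sheaf-theoretic argument; this is perhaps worth noting as evidence that the localization route was considered and found less tractable.)

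There is a concrete gap in your step (3). The claim that ``$\lra{p}$ maps to a generator of the corresponding $\W(\FF_p)$-summand'' fails for $p \equiv 1 \pmod 4$: there $\W(\FF_p) \cong (\Z/2)^2$ is not cyclic, and $\partial^p(\lra{p}) = \lra{1}$ generates only a $\Z/2$-subgroup. To hit the non-square class $\lra{u} \in \W(\FF_p)$ using a unit form $\lra{a}$ with $a \in \Z[1/d]^\times$, you would need $a$ to be divisible by $p$ exactly once with the cofactor a non-residue mod $p$; but every such $a$ is, up to squares, $\pm p$ times a product of other primes dividing $d$, and it may happen that $-1$ and all other prime divisors of $d$ are squares mod $p$ (e.g.\ $p=17$, $d=34$: $-1$ and $2$ are both squares mod $17$). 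In that case $\partial^p$ of every element of the proposed subring lies in $\{0,\lra{1}\}$. Your appeal to Lam's Theorem VI.4.1 does not immediately close this, because Lam's argument constructs classes in $\W(\Q)$ supported at Dirichlet-chosen auxiliary primes $q$, and such $q$ need not divide $d$ — so the resulting classes, while they land in $\W(\Z[1/d])$, are not visibly in the subring generated by $\lra{-1}$ and the $\lra{p}$ with $p\mid d$. Whether and how this can be repaired (quadratic reciprocity imposes nontrivial constraints among the $\partial^p$'s which you would need to exploit) is exactly the delicate point that the paper's sheaf-cohomological argument sidesteps, and your outline does not address it. Also, as a minor point, the direct sum in your step (1) exact sequence should run over primes $p \nmid d$, not $p \mid d$; your verbal description of the kernel is correct, so this is likely a typo, but it should be fixed.
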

\begin{proof}
Let $A = \Z[1/d]$ and $U=\Spec A$.
If $d=1$, this is well-known \cite[Theorem II.4.3]{milnor1973symmetric}.
Hence from now on $1/2 \in A$.
Using Lemma \ref{lemm:metabolic-witt}, it suffices to prove the analogous statement for $W(A)$.
Let $I \subset W$ be the fundamental ideal and denote by $I^*$ its powers; we view these as presheaves on $U$.
Consider the commutative graded ring $k_* = H^*_\Nis(U, I^*/I^{*+1})$.
Of course $k_0 = \Z/2$.
It follows from \cite[Corollary 4.9]{bachmann-topmod}, \cite[Theorem 2.1, Lemma 2.7]{bachmann-etaZ} and \cite[Theorem 3.9]{spitzweck2012commutative} that $k_1 \wequi \scr O^\times(U)/2$ and $k_n \wequi \Z/2\{(-1)^n\}$ for $n \ge 2$.
We have the classes $\lra{a}-1 \in I(U)$ (for $a \in \scr O^\times(U)$) showing that $H^0_\Nis(U, I^n) \to k_n$ is surjective.
The short exact sequences \[ 0 \to H^0_\Nis(U, I^{n+1}) \to H^0_\Nis(U, I^{n}) \to k_n \] thus show that $H^0_\Nis(U, W)$ is generated by the $\lra{a}$ together with $H^0_\Nis(U, I^n)$, for any $n$.
For $n$ sufficiently large, $H^0_\Nis(U, I^n) \wequi 2^nH^0_{ret}(U,\Z) \wequi 2^n\Z$ \cite[Proposition 2.3]{bachmann-etaZ} is generated by $(\lra{-1}-1)^n$; thus $H^0_\Nis(U, W)$ is generated by the $\lra{a}$.
It remains to observe that $H^0_\Nis(U, W) = W(U)$.
This follows from the descent spectral sequence for computing $W(U) = [U, \KW]$, using that the motivic spectrum $\KW$ has $\ul\pi_* \KW = a_\Nis W$ for $* \equiv 0 \pmod{4}$ and $\ul\pi_* \KW = 0$.
\end{proof}

\begin{notation}
Let $A \hookrightarrow \RR$ and $V$ be a relatively oriented, rank $n$ vector bundle on a smooth proper $n$-dimensional scheme $X$ over $A$.
We have $\GW(\RR) \wequi \Z \oplus \Z\lra{-1}$.
There are thus unique integers $n_{\RR}, n_{\CC} \in \Z$ such that \[ n(V_\RR, \rho, H\tilde\Z) = \frac{ n_{\CC} + n_{\RR} }{2} + \frac{n_{\CC} -  n_{\RR}}{2}\lra{-1}. \]
\end{notation}

The integers $n_\RR$ and $n_\CC$ are the Euler numbers of the corresponding real and complex topological vector bundles respectively, at least when $X$ is projective, justifying the notation. To show this, consider the cycle class map $\CH^\ast (X) \to H^\ast(X(\CC),\Z)$ from the Chow ring of a smooth $\CC$-scheme $X$ to the singular cohomology of the complex manifold $X(\CC)$. See \cite[Chapter 19]{fulton-intersection}. Furthermore, there are real cycle class maps from oriented Chow of $\RR$-smooth schemes $X$ to the singular cohomology of the real manifold $X(\RR)$), discussed in \cite{jacobson-fundamental-ideal} and  \cite{Hornbostel_real_cycle_class_map} (as well as more refined real cycle class maps defined in \cite{Benoist_WittenbergI} and \cite{Hornbostel_real_cycle_class_map}): For a smooth $\RR$-scheme $X$ and a line bundle $\mathcal{L} \to X$, consider the real cycle class map $$\CHt^\ast (X, \mathcal{L})\cong H^n(X, \K^{\MW}_n(\omega_{X/k})) \cong   H\tilde\Z^{L_{\pi}}(X) \to H^\ast(X(\CC),\Z(\mathcal{L}))$$ from the oriented Chow groups twisted by $\mathcal{L}$ to the singular homology of the associated local system on $X(\RR)$. We use results on the real cycle class map due to Hornbostel, Wendt, Xie, and Zibrowius \cite{Hornbostel_real_cycle_class_map}, including compatibility with pushforwards. Since they only had need of this compatibility in the case of the pushforward by a closed immersion, we first extend this slightly.

\begin{lemma}\label{smooth_proj_pushforwards_commute_real_cycle_class_map}
Let $\pi: X \to \Spec k$ be the structure map of a smooth, projective scheme $X$ of dimension $n$ over the real numbers $\RR$. Then the following square commutes, \begin{equation*}
  \begin{CD}
  H\tilde\Z^{L_{\pi}}(X) @>{}>> H^n(X(\RR), \Z(\omega_{X/k})) \\
  @V{\pi_*}VV                                      @V{}VV                       \\
\GW(\RR)                  @>{}>> H^0(\ast,\Z).
  \end{CD} 
  \end{equation*}
  where the vertical maps are the canonical pushforwards and the horizontal maps are the real cycle class maps.
\end{lemma}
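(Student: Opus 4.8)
The plan is to factor the smooth projective morphism $\pi$ through a projective space, reduce commutativity of the square to the case $X=\P^N_\RR$, and then reduce that case to pushforwards from closed points, where the required compatibility is either trivial or is the closed-immersion case already established in \cite{Hornbostel_real_cycle_class_map}.

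Since $X$ is projective over $k=\RR$, I would fix a closed immersion $\iota: X\hookrightarrow\P^N_k$ and let $p: \P^N_k\to\Spec k$ be the structure map, so that $\pi=p\circ\iota$. Both pushforwards respect this factorization: algebraically, $\pi_*=p_*\circ\iota_*$ by compatibility of Gysin maps with composition (Lemma~\ref{lemm:gysin-compat}(3), applied to the ring spectrum $H\tilde\Z\in\SH(\RR)$), using the identification $H\tilde\Z^{L_\pi}(X)\cong\CHt^n(X,\omega_{X/k})$ recalled above and the equivalence $L_\pi\cong\iota^*L_p+L_\iota$ in $K(X)$, which on determinants is the adjunction isomorphism $\omega_{X/k}\cong\iota^*\omega_{\P^N/k}\otimes\det N_{X/\P^N}$; topologically, the composite of the Gysin maps for the closed submanifold $X(\RR)\hookrightarrow\P^N(\RR)$ and for $\P^N(\RR)\to\ast$ computes integration over $X(\RR)$, with the twisting local systems matching via the same adjunction. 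Since the real cycle class map is natural for pullback and, by \cite{Hornbostel_real_cycle_class_map}, compatible with pushforward along the closed immersion $\iota$, the square for $\iota$ commutes, so it remains to treat $p: \P^N_\RR\to\Spec\RR$.

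For $\P^N_\RR$ I would argue via zero-cycles: by the Rost--Schmid presentation, $\CHt^N(\P^N_\RR,\omega_{\P^N})$ is a quotient of $\bigoplus_x\GW(k(x),\Lambda_x)$ over the closed points $x$ of $\P^N_\RR$, hence is generated by the images of the Gysin pushforwards $(\iota_x)_*$ along $\iota_x: \Spec k(x)\hookrightarrow\P^N_\RR$. For a generator $(\iota_x)_*(\xi)$ one has $p_*(\iota_x)_*(\xi)=(q_x)_*(\xi)$, where $q_x: \Spec k(x)\to\Spec\RR$ and $k(x)\in\{\RR,\CC\}$; applying the closed-immersion compatibility of \cite{Hornbostel_real_cycle_class_map} to $\iota_x$, together with functoriality of the topological Gysin maps, the identity on $(\iota_x)_*(\xi)$ reduces to the square for the finite morphism $q_x$. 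If $k(x)=\RR$ this is trivial; if $k(x)=\CC$ then $(\Spec\CC)(\RR)=\emptyset$ so the relevant topological group vanishes, while on the algebraic side $(q_x)_*(\xi)=\operatorname{tr}_{\CC/\RR}(\xi)$ is hyperbolic and thus has signature $0$ (using that the real cycle class map $\GW(\RR)\to H^0(\ast,\Z)$ is the signature homomorphism). So both maps in the square for $p$ agree on a generating set, and the lemma follows by the reduction of the previous paragraph.

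I expect the main obstacle to be the careful bookkeeping of twists and orientations: one must verify that the two algebraic identifications of twisting line bundles used above (the adjunction isomorphism for $\iota$, and the summand inclusions at closed points) correspond, under the real cycle class map, to the analogous identifications of orientation local systems on the real loci, so that the ``compatibility with closed immersions'' result of \cite{Hornbostel_real_cycle_class_map} applies in precisely the twisted form needed here. Once this is in place, the remainder of the argument is formal.
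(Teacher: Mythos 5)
Your proposal is correct, and the first step — factoring $\pi = p \circ \iota$ through a projective embedding and invoking \cite[Theorem~4.7]{Hornbostel_real_cycle_class_map} for the closed immersion $\iota$ — is exactly what the paper does. Where you diverge is in handling $p: \P^N_\RR \to \Spec\RR$: you argue by Rost--Schmid generation of $\CHt^N(\P^N_\RR,\omega)$ by pushforwards from all closed points, reducing to $q_x:\Spec k(x) \to \Spec\RR$ with $k(x)\in\{\RR,\CC\}$, and for the $\CC$-points you check both sides vanish (the topological side because $(\Spec\CC)(\RR)=\emptyset$, the algebraic side because $\operatorname{tr}_{\CC/\RR}$ of any form is hyperbolic, hence signature $0$). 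The paper instead uses that $\pi_*$ is an isomorphism for $\P^N$ (citing the projective bundle theorem of Fasel) and then picks a single section $s:\Spec\RR\hookrightarrow\P^N_\RR$ at the origin: since $\pi s=\mathrm{id}$ and $s$ is a closed immersion of smooth schemes, the closed-immersion compatibility for $s$ together with the two-sided invertibility of $\pi_*$ forces the square for $\pi$ to commute. The paper's route is shorter and avoids the Rost--Schmid generation and the hyperbolicity calculation at complex points; your route is more hands-on and does not need the projective bundle isomorphism, at the cost of checking the $\CC$-point case separately and of verifying that the Rost--Schmid generator at $x$ coincides, as a class, with the Gysin pushforward $(\iota_x)_*$. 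Your closing caveat about matching twists/orientations under the real cycle class map is apt — that bookkeeping is indeed where the hidden work in either version of this argument lives, and both proofs rely on the Hornbostel--Wendt--Xie--Zibrowius compatibility being available in the required twisted form.
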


\begin{proof}
$\pi$ is the composition of a closed immersion $i: X \hookrightarrow \P^n_k$ and the structure map $p: \P^n_k \to \Spec k$. The algebraic pushforward $\pi_*$ is the composition $p_* \circ i_*$ and the analogous statement holds for the topological pushforward by classical algebraic topology. By \cite[Theorem 4.7]{Hornbostel_real_cycle_class_map}, the pushforward $i_*$ commutes with real realization. We may thus reduce to the case where $X = \P^n_k$ is projective $n$-space over a field $k$. In this case, $\pi_*$ is an isomorphism by \cite[Prop 6.3, Theorem 11.7]{Fasel-projective_bundle}. Let $s : \Spec K \to \P^n_k$ be the closed immersion given by the origin. Since $\pi s = 1$, and the real realization maps commute with the algebraic and topological pushforwards of $s$ by  \cite[Theorem 4.7]{Hornbostel_real_cycle_class_map}, the real realization maps also commute with the algebraic and topological pushforwards of $\pi$.

\end{proof}

\begin{proposition}\label{nRRnCCaretopEnums}
Let $A \hookrightarrow \RR$ and $V$ be a relatively oriented, rank $n$ vector bundle on a smooth projective $n$-dimensional scheme $X$ over $A$. Then $n_\RR$ and $n_\CC$ are the Euler numbers of the corresponding real and complex topological vector bundles respectively, i.e., $n_{\RR}$ is the topological Euler number of the relatively oriented topological $\RR^n$-bundle $V(\RR)$ associated to the real points of $V$ on the real $n$-manifold $X(\RR)$, and $n_{\CC}$ is the analogous topological Euler number on $V(\CC) \to X(\CC)$.
\end{proposition}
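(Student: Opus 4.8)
The plan is to prove the two equalities separately: I would reduce $n_\CC$ to a computation over $\CC$ via the classical cycle class map on Chow groups, and $n_\RR$ to a computation over $\RR$ via the real cycle class map on Chow--Witt groups together with Lemma~\ref{smooth_proj_pushforwards_commute_real_cycle_class_map}. Note first that $V(\RR)$ is relatively oriented as a topological bundle, since the square isomorphism in the relative orientation realizes on real points, so the real topological Euler number is defined; likewise $V(\CC)$ is a complex bundle on the compact manifold $X(\CC)$, so its topological Euler number is defined.

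\emph{The complex Euler number.} Base changing along $\RR \hookrightarrow \CC$ and invoking Corollary~\ref{cor:Euler-number-base-change}, the integer $n(V_\CC, \rho, H\tilde\Z) \in \GW(\CC) = \Z$ is the image of $n(V_\RR, \rho, H\tilde\Z)$ under the rank isomorphism $\GW(\RR) \to \GW(\CC)$, hence equals $n_\CC$. Next I would push this class through the ring map $H\tilde\Z \to H\Z$ to ordinary motivic cohomology: by naturality of tautological Euler classes under ring maps and of Gysin maps (Lemma~\ref{lemm:gysin-compat}(1)), and since $\GW(\CC) \to \CH^0(\Spec\CC)$ is the rank isomorphism, $n_\CC$ corresponds to $\pi_* e(V_\CC, H\Z) \in \CH^0(\Spec\CC) = \Z$, the degree of the Chow Euler class of $V_\CC$. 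Finally, the cycle class map $\CH^\ast(X_\CC) \to H^\ast(X(\CC), \Z)$ of \cite[Chapter 19]{fulton-intersection} is multiplicative, takes Chern classes to Chern classes --- hence the Chow Euler class of $V_\CC$ to the topological Euler class of $V(\CC)$ --- and commutes with proper pushforward; so this degree is the topological Euler number of $V(\CC)$.

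\emph{The real Euler number.} Here I would work over $\RR$ directly. First observe that the real realization homomorphism $\GW(\RR) \to H^0(\ast, \Z) = \Z$ appearing as the bottom arrow of the square in Lemma~\ref{smooth_proj_pushforwards_commute_real_cycle_class_map} is the signature (check $\langle a\rangle \mapsto \mathrm{sgn}(a)$ on generators), so $n_\RR = \Rsignature\, n(V_\RR, \rho, H\tilde\Z)$ is computed by it. By definition $n(V_\RR, \rho, H\tilde\Z) = \pi_* e(V_\RR, H\tilde\Z)$, and under the identification $H\tilde\Z^{V^\dual}(X) \wequi H\tilde\Z^{L_\pi}(X)$ coming from $\rho$ and $\SL^c$-orientability (Proposition~\ref{prop:SL-oriented-trick}), the class $e(V_\RR, H\tilde\Z)$ is the Barge--Morel Euler class $e^{\BM}(V_\RR) \in \CHt^n(X, \det V^\dual)$, as recalled in the proof of Corollary~\ref{co:pieBM=nPH}. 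Applying the real cycle class map and using that $\rho$ trivializes the local system $\Z(\det V^\dual)$ up to a square --- thereby identifying the target with $H^n(X(\RR), \Z(\omega_{X/\RR}))$ --- one gets that $e^{\BM}(V_\RR)$ maps to the topological Euler class of the relatively oriented real bundle $V(\RR)$, by compatibility of the real cycle class map with Euler classes and with the six-functor pushforwards of \cite{Hornbostel_real_cycle_class_map}. Lemma~\ref{smooth_proj_pushforwards_commute_real_cycle_class_map} then identifies $\Rsignature\, n(V_\RR, \rho, H\tilde\Z)$ with the pushforward to a point of this topological Euler class, i.e. with the real topological Euler number; so $n_\RR$ equals that number.

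\emph{Main obstacle.} The delicate point is the real case: one must pin down the identification of the algebraic twist $\det V^\dual$ with the orientation local system of $X(\RR)$ in a way compatible with the relative orientation, and verify that the real cycle class map of \cite{Hornbostel_real_cycle_class_map} sends the Barge--Morel Euler class to the classical topological Euler class of a relatively oriented real bundle. The compatibility with proper pushforward that the argument requires is exactly Lemma~\ref{smooth_proj_pushforwards_commute_real_cycle_class_map}, and the complex case is essentially classical.
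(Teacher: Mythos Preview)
Your proposal is correct and follows essentially the same approach as the paper: both treat the complex case via the classical cycle class map on Chow groups (using compatibility with Chern classes and proper pushforward from \cite[Chapter 19]{fulton-intersection}) and the real case via the real cycle class map of \cite{Hornbostel_real_cycle_class_map} together with Lemma~\ref{smooth_proj_pushforwards_commute_real_cycle_class_map}. The only cosmetic difference is that for $n_\CC$ you first base change to $\CC$ and then pass to $H\Z$, whereas the paper stays over $\RR$ and factors through $\CHt^\ast \to \CH^\ast \to \CH^\ast_\CC$; and for the ``main obstacle'' you flag, the paper simply cites \cite[Proposition 6.1]{Hornbostel_real_cycle_class_map} for the fact that the real cycle class map takes the algebraic Euler class to the topological one.
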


\begin{proof}
By \cite[Proposition 6.1]{Hornbostel_real_cycle_class_map}, the $\A^1$-Euler class $e(V, H\tilde\Z)$ of $V_{\RR}$ in the oriented Chow group $H\tilde\Z^{V^*}(X_{\RR}) $ maps to the topological Euler class of $V(\RR)$ under the real cycle class map. By Lemma~\ref{smooth_proj_pushforwards_commute_real_cycle_class_map},  it follows that the image of the Euler number $n(V,H\tilde\Z)$ under the real cycle class map is the topological Euler number of $V(\RR)$. Under the canonical isomorphism $H^0(\ast, \Z) \cong \Z$, the real cycle class map $\GW(\RR) \to H^0(\ast, \Z) \cong \Z$ is the map taking a bilinear form over $\RR$ to its signature. It follows that $n_{\RR}$ is the topological Euler number of $V(\RR)$.

Let $\gamma: \CHt^\ast (X, \det V^*) \cong H\tilde\Z^{V^*}(X_{\RR}) \to \CH^{\ast}(X) \to \CH^{\ast}(X_{\CC}) \to H^{\ast}(X(\CC), \Z)$ denote the composition of the canonical map to Chow followed by the (usual) cycle class map, and we similarly have $$\gamma: \GW(\RR) \cong \CHt^{0}(\Spec \RR) \to H^{0}(\Spec \RR (\CC), \Z) \cong H^0(\ast, \Z) \cong \Z,$$ which sends the class of a bilinear form in $\GW(\RR)$ to its rank. The cycle class map is compatible with Chern classes (\cite[Proposition 19.1.2]{fulton-intersection}), whence the image of $e(V, H\tilde\Z)$ under $\gamma$ is the topological Euler class of $V(\CC)$. The cycle class map  commutes with the relevant pushforwards and pullbacks \cite[Corollary 19.2, Example 19.2.1]{fulton-intersection}, so we have that $\gamma(n(V,H\tilde\Z)) = n_{\CC}$ is the topological Euler number of $V(\CC)$. 
\end{proof}

\begin{remark}
If proper pushforwards in algebra and topology commute with real realization, as predicted by \cite[4.5 Remark]{Hornbostel_real_cycle_class_map}, then Proposition \ref{nRRnCCaretopEnums} holds more generally for smooth, proper schemes. It seems that proving this would take us too far afield, however. Alternatively, if $V \otimes \RR$ admits a non-degenerate section, then the results of \S\ref{sec:Euler-KO} (showing that the Euler number can be computed in terms of Scheja-Storch forms) imply that Proposition \ref{nRRnCCaretopEnums} holds for smooth, proper schemes, arguing as in \cite[Lemma 5]{FourLines}.
\end{remark}

\begin{theorem}\label{two_values_e(bundlesoverZ[1/2])}
Suppose $X$ is smooth and proper over $\Z[1/2]$. Let $V$ be a relatively oriented vector bundle on $X$ and let $V_k$ denote the base change of $V$ to $k$ for any field $k$. Then either 
\begin{equation}\label{eoverZ[1/2]no<2>} n^{\GS}(V_k, \rho)  = \frac{ n_{\CC} +n_{\RR} }{2} + \frac{n_{\CC} -  n_{\RR}}{2}\lra{-1}\end{equation} or\begin{equation}\label{eoverZ[1/2]with<2>} n^{\GS}(V_k, \rho)  = \frac{ n_{\CC} +n_{\RR} }{2} + \frac{n_{\CC} -  n_{\RR}}{2}\lra{-1}  +  \lra{2} -1,\end{equation} where the \emph{same} formula holds for \emph{all} fields $k$ of characteristic $\ne 2$.

If instead $X$ is smooth proper over $\Z$, then \eqref{eoverZ[1/2]no<2>} holds for \emph{any} field $k$ (including fields $k$ of characteristic two).
\end{theorem}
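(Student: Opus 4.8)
The plan is to reduce the statement to identifying a single ``universal'' Grothendieck--Witt class --- one in $\GW(\Z[1/2])$ for the first assertion and one in $\GW(\Z)$ for the second --- and then to pin that class down using its rank, its signature, and the known structure of the rings $\GW(\Z[1/2])$ and $\GW(\Z)$.

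\emph{Fields of characteristic $\neq 2$, base $\Z[1/2]$.} By (the proof of) Corollary~\ref{co:pieBM=nPH} we have $n^\GS(V_k,\rho)=n(V_k,\rho,H\tilde\Z)$ for every field $k$ of characteristic $\neq 2$, so it suffices to compute the latter. Proposition~\ref{prop:application-bc} with $d=2$ produces a class $\alpha\in\GW(\Z[1/2])$, independent of $k$, with $\alpha|_k=n(V_k,\rho,H\tilde\Z)$ for all such $k$, and the elements $\lra 1,\lra{-1},\lra 2$ map to the same-named elements of $\GW(k)$. It remains to identify $\alpha$. I would use that $\GW(\Z[1/2])\cong\Z^2\oplus\Z/2$, with the extra summand generated by $t:=\lra 2-\lra 1$: this $t$ is $2$-torsion because $\lra{2,-2}$ is isotropic (at $(1,1)$) hence hyperbolic, so $\lra 2+\lra{-2}=h=\lra 1+\lra{-1}$ and $2t=0$, and it is nonzero (e.g.\ it maps to a generator of $\W(\FF_2)\cong\Z/2$ under the second residue of $\W(\Q)$). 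Base changing along $\Z[1/2]\hookrightarrow\RR$, Proposition~\ref{prop:application-bc} gives $\alpha|_\RR=n(V_\RR,\rho,H\tilde\Z)$, which by the definition of $n_\RR$ and $n_\CC$ has rank $n_\CC$ and signature $n_\RR$. Since rank and signature are a complete set of invariants on a complement of $\Z t$, we get
\[ \alpha=\tfrac{n_\CC+n_\RR}{2}\lra 1+\tfrac{n_\CC-n_\RR}{2}\lra{-1}+\epsilon\,t \]
for a single $\epsilon\in\{0,1\}$ depending only on $(X,V,\rho)$. Passing to $\GW(k)$ yields \eqref{eoverZ[1/2]no<2>} if $\epsilon=0$ and \eqref{eoverZ[1/2]with<2>} if $\epsilon=1$, with the same alternative holding for every field of characteristic $\neq 2$.

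\emph{Base $\Z$.} The identical argument with $d=1$ produces $\alpha\in\GW(\Z)\cong\Z^2$, on which (rank, signature) is injective (there is no extra torsion); hence $\alpha=\tfrac{n_\CC+n_\RR}{2}\lra 1+\tfrac{n_\CC-n_\RR}{2}\lra{-1}$, giving \eqref{eoverZ[1/2]no<2>} for fields of characteristic $\neq 2$. For characteristic $2$, Corollary~\ref{co:pieBM=nPH} is unavailable, so I would argue that $n^\GS(V_k,\rho)$ is already defined integrally: the pairing $Rf_*\beta_{(V,0,\rho)}$ on $Rf_*\!\big(K(V,0)\otimes\scr L\big)$ of \S\ref{subsec:serr-duality-euler-number} makes sense over $\Spec\Z$, and it is a perfect symmetric bilinear complex valued in $\scr O_\Z[0]$ --- the underlying complex is perfect since $f$ is proper and flat, and non-degeneracy holds because $K(V,0)\otimes\scr L$ is self-dual with respect to $\iHom(-,\omega_{X/\Z}[n])=\iHom(-,f^!\scr O_\Z)$, which $Rf_*$ intertwines with $\iHom(-,\scr O_\Z)$ by Grothendieck--Serre duality. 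Its class in the Grothendieck--Witt group of perfect complexes over $\Z$ is then defined, and since $X\to\Spec\Z$ is flat all base-change squares $\Spec k\to\Spec\Z$ are Tor-independent, so this class base-changes to $n^\GS(V_k,\rho)$ for every field $k$. Comparing over characteristic-zero fields with the previous case and using the injectivity of $\GW(\Z)\hookrightarrow\GW(\RR)$ identifies this integral class with $\alpha$, so its image in $\GW(k)$ for $k$ of characteristic $2$ is exactly \eqref{eoverZ[1/2]no<2>} (which there reads $n_\CC\lra 1$, as $\lra{-1}=\lra 1$).

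The hard part will be the characteristic-$2$ case: one needs that the pushed-forward Koszul complex over $\Z$ has a well-behaved Grothendieck--Witt class which is compatible with base change even though $2$ is not invertible. This is precisely the input flagged in Remark~\ref{rmk:dependence_on_CDH+}, relying on the recent work \cite{CDHHLMNNS-3} on Grothendieck--Witt theory of $\Z$; once it is granted, everything else --- the reductions via Corollary~\ref{co:pieBM=nPH} and Proposition~\ref{prop:application-bc}, and the bookkeeping with $\GW(\Z[1/2])$ and $\GW(\Z)$ --- is routine.
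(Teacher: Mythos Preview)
Your argument for fields of characteristic $\ne 2$ is essentially the paper's: both invoke Corollary~\ref{co:pieBM=nPH} to replace $n^{\GS}$ by $n(-,H\tilde\Z)$, then Proposition~\ref{prop:application-bc} to produce a universal class in $\GW(\Z[1/2])$ (resp.\ $\GW(\Z)$), and finally read off that class from rank and signature up to the $2$-torsion element $\lra{2}-1$. Your presentation via $\GW(\Z[1/2])\cong\Z^2\oplus\Z/2$ is equivalent to the paper's use of the relations $\lra{-2}=1+\lra{-1}-\lra{2}$ and $2\lra{2}=2$.

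The characteristic-$2$ case is where your route diverges. The paper does not attempt to lift $n^{\GS}$ itself to an integral Grothendieck--Witt class. Instead it observes that $\GW(\FF_2)\cong\Z$ via rank (Corollary~\ref{cor:GW(F2)}), so one only needs $\operatorname{rank} n^{\GS}(V_{\FF_2},\rho)=n_\CC$; it obtains this by passing to ordinary $K$-theory, using the chain $\GW(\FF_2)\to\KGL^0(\FF_2)\cong\KGL^0(\Z)\cong\KGL^0(\Z[1/2])\leftarrow\GW(\Z[1/2])$ to match the rank with $\operatorname{rk} n(V_{\Z[1/2]},\KO)=n_\CC$. This uses only $\KGL$ over $\Z$, which is unproblematic, and needs no Hermitian $K$-theory in characteristic~$2$. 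Your approach---taking the class of $Rf_*\beta_{(V,0,\rho)}$ in a derived Grothendieck--Witt group of $\Z$ and base-changing---could plausibly be made to work, but even granting \cite{CDHHLMNNS-3} you would still need to specify which Poincar\'e structure you mean, verify that the resulting $\pi_0$ is the classical $\GW(\Z)$, and check that base change to an arbitrary field $k$ recovers the alternating-sum definition of $n^{\GS}(V_k,\rho)$ from \S\ref{subsec:serr-duality-euler-number}. The paper's rank-via-$\KGL$ trick sidesteps all of this.
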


Recall that for the last claim regarding $X$ smooth and proper over $\Z$, we rely on \cite{CDHHLMNNS-3}. See Remark \ref{rmk:dependence_on_CDH+}.

\begin{proof}
First assume that $char(k) \ne 2$.  
By Corollary \ref{co:pieBM=nPH} we have $n^{\GS}(V_k, \rho) =  n(V_k, \rho, H\tilde\Z)$.
If the base is $\Z$, then we learn from Proposition \ref{prop:application-bc} that there exist $a, b \in \Z$ (independent of $k$!) such that  \[ n(V_k, \rho, \H\tilde\Z) = a + b\lra{-1}. \]
In $\GW(\Z[1/2]) \hookrightarrow \GW(\Q)$ we have the relations \[ \lra{-2} = 1 + \lra{-1} - \lra{2} \text{ and } 2\lra{2} = 2 \] (the former because $\lra{a} + \lra{-a} = \lra{1} + \lra{-1}$ for any $a$, and see e.g. \cite[Lemma 42]{bachmann-gwtimes} for the latter).
Hence if the base is $\Z[1/2]$, there exist $a, b \in \Z, c \in \{0,1\}$ (independent of the choice of field $k$!) such that \[ n(V_k, \rho, \H\tilde\Z) = a + b\lra{-1} + c \lra{2}. \]
If the base is $\Z$, let us put $c=0$.
By construction we have $$n_{\RR} = \Rsignature n(V_{\RR}, \rho, \H\tilde\Z)=(a+c)-b$$ and $$ n_{\CC} = \Rrank n(V_{\CC}, \rho, \H\tilde\Z)=(a+c)+b,$$ which determines $a+c$ and $b$, so that there only remain at most two possible values for $n(V_k,\rho,\H\tilde\Z)$.

Now suppose that $char(k) = 2$ (so that in particular the base is $\Z$).
Since $\lra{1} = \lra{-1}$ over fields of characteristic $2$, we need to show that $n^{\GS}(V_k, \rho) = n_\CC$.
We may as well assume that $k=\FF_2$.
The rank induces an isomorphism $\GW(\FF_2) \cong \Z$ (see e.g. Corollary \ref{cor:GW(F2)}).
Considering the canonical maps \[ \GW(\FF_2) \to \KGL^0(\FF_2) \leftarrow \KGL^0(\Z) \to \KGL^0(\Z[1/2]) \leftarrow \GW(\Z[1/2]) \] in which all but the right-most one are isomorphisms, we get the string of equalities \[ n(V_{\FF_2}) = n(V_{\FF_2}, \KGL) = n(V_{\Z}, \KGL) = n(V_{\Z[1/2]}, \KGL) = rk(n(V_{\Z[1/2]}, \KO)). \]
The result follows.
\end{proof}

\begin{remark}\label{rmk:distinguishing_betwee_<2>_and_0_in_Euler_classes}
The difference of the two values given in Theorem \ref{two_values_e(bundlesoverZ[1/2])} is  $\lra{2} - 1$, so the two possibilities can be distinguished by the value of $\disc n(V_k)$ in $k^*/(k^*)^2$ for any field $k$ in which $2$ is not a square, such as $\mathbb{F}_3$ or $\mathbb{F}_5$. Such discriminants can be evaluated by a computer, as shown to the second named author by Anton Leyton and Sabrina Pauli.
\end{remark}

\subsection{Refined Euler classes and numbers}
\label{subsec:refined-euler}

\begin{definition}[refined Euler class] \label{def:refined-euler-class}
Let $E \in \SH(S)$ be a homotopy ring spectrum, $X \in \Sch_S$, $V \to X$ a vector bundle and $\sigma: X \to V$ a section with zero scheme $Z = Z(\sigma)$.
We denote by $e(V, \sigma) = e(V, \sigma, E) \in E^{V^\dual}_Z(X)$ the class corresponding to the composite \[ X/X \setminus Z \xrightarrow{\sigma} V/V \setminus 0 \wequi \Sigma^{V^\dual} \1 \xrightarrow{u} \Sigma^{V^\dual} E|_X \in \SH(X); \] see Example \ref{ex:cohomology-with-support}.
\end{definition}

\begin{remark} \label{rmk:refined-euler-bc}
It is clear by construction that refined Euler classes are stable under base change.
\end{remark}

\begin{remark} \label{rmk:refined-euler-levine}
In \cite[Definition 3.9]{levine2018motivic} the authors define for an $\SL$-oriented ring spectrum $E$ and a vector bundle $p: V \to X$ the \emph{canonical Thom class} $th(V) \in E^{p^*V}_0(V)$, which one checks coincides with $e(p^*V, \sigma_0)$, where $\sigma_0$ is the tautological section of $p^*V$. Since $e(V, \sigma) = \sigma^* e(p^*V, \sigma_0)$ we deduce that \[ e(V, \sigma) = \sigma^* th(V). \]
\end{remark}

\begin{lemma} \label{lemm:forget-refinement}
The ``forgetting support'' map $E_Z^{V^\dual}(X) \to E^{V^\dual}(X)$ sends $e(V, \sigma)$ to $e(V)$.
\end{lemma}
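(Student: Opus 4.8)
The plan is to write both classes out as explicit morphisms in $\SH(X)$ and compare them. Write $\pi: X \to S$ for the structure map and $E|_X = \pi^* E$. By Example~\ref{ex:cohomology-with-support}, applied with the vector bundle $V^\dual$ on $X$, there are canonical identifications $E^{V^\dual}_Z(X) \wequi [X/(X\setminus Z), \Sigma^{V^\dual} E|_X]_{\SH(X)}$ and $E^{V^\dual}(X) = [\Sigma^\infty_+ X, \Sigma^{V^\dual} E|_X]_{\SH(X)}$; under the first, Definition~\ref{def:refined-euler-class} identifies $e(V,\sigma)$ with the morphism $X/(X\setminus Z) \xrightarrow{\bar\sigma} V/(V\setminus 0) \wequi \Sigma^{V^\dual}\1 \xrightarrow{u} \Sigma^{V^\dual} E|_X$, where $\bar\sigma$ is the map induced by $\sigma$ (which makes sense since $\sigma$ carries $X\setminus Z$ into $V\setminus 0$). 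The first step is to check that, under these identifications, the forgetting-support map $E^{V^\dual}_Z(X) \to E^{V^\dual}(X)$ of Example~\ref{ex:forget-support-gysin} is precomposition with the canonical collapse map $c_X: \Sigma^\infty_+ X \to X/(X\setminus Z)$. Indeed, that map is the Gysin map for $f = \id_X$ and $Z_2 = X$, which unwinds to the counit $i_! i^! \to \id$; tracing through the $(i_!, i^!)$- and $(i^*, i_*)$-adjunctions shows this counit induces, on cohomology groups, precomposition with $c_X$, the latter being the unit $\1_X \to i_* i^* \1_X \wequi X/(X\setminus Z)$ of the localization cofiber sequence.

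Granting this, the image of $e(V,\sigma)$ under forgetting support is the composite $\Sigma^\infty_+ X \xrightarrow{c_X} X/(X\setminus Z) \xrightarrow{\bar\sigma} V/(V\setminus 0) \wequi \Sigma^{V^\dual}\1 \xrightarrow{u} \Sigma^{V^\dual} E|_X$. By functoriality of the collapse (cofiber) construction, the square with horizontal maps $\Sigma^\infty_+\sigma: \Sigma^\infty_+ X \to \Sigma^\infty_+ V$ and $\bar\sigma: X/(X\setminus Z) \to V/(V\setminus 0)$ and vertical maps $c_X$ and $c_V: \Sigma^\infty_+ V \to V/(V\setminus 0)$ commutes, so this image equals $\Sigma^\infty_+ X \xrightarrow{\Sigma^\infty_+\sigma} \Sigma^\infty_+ V \xrightarrow{c_V} V/(V\setminus 0) \wequi \Sigma^{V^\dual}\1 \xrightarrow{u} \Sigma^{V^\dual} E|_X$. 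On the other hand, by Definition~\ref{def:taut-euler-class} the class $e(V)$ is the composite $\Sigma^\infty_+ X \xrightarrow{\Sigma^\infty_+ z} \Sigma^\infty_+ V \xrightarrow{c_V} V/(V\setminus 0) \wequi \Sigma^{V^\dual}\1 \xrightarrow{u} \Sigma^{V^\dual} E|_X$, the equivalence $\1_X \wequi \Sigma^\infty_+ V$ there being $\Sigma^\infty_+ z$ for $z: X \to V$ the zero section (it is inverse to $\Sigma^\infty_+ p$, where $p: V \to X$).

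Thus it remains to show $\Sigma^\infty_+\sigma = \Sigma^\infty_+ z$ in $[\Sigma^\infty_+ X, \Sigma^\infty_+ V]_{\SH(X)}$. This is the standard linear-homotopy argument: scalar multiplication on the vector bundle $V$ is a morphism $\A^1_X \times_X V \to V$ over $X$, and composing it with the pullback of $\sigma$ yields a morphism $h: \A^1_X \to V$ of smooth $X$-schemes with $h|_{t=1} = \sigma$ and $h|_{t=0} = z$. Hence $\sigma$ and $z$ already agree in the motivic homotopy category over $X$, so they induce the same map after applying $\Sigma^\infty_+$. Putting the above identifications together shows that the forgetting-support map sends $e(V,\sigma)$ to $e(V)$, as required.

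The main obstacle is the first step: confirming that the abstractly defined forgetting-support map (a Gysin map assembled from exchange and (co)unit transformations) really is precomposition with the collapse map $c_X$ under the identification of Example~\ref{ex:cohomology-with-support}. An alternative that avoids this bookkeeping is to invoke Remark~\ref{rmk:refined-euler-levine}: since $e(V,\sigma) = \sigma^* th(V)$ and forgetting support is natural in $X$, while $\sigma^* e(p^* V) = e(V)$ by Lemma~\ref{lemm:Euler-class-base-change} (as $p\sigma = \id_X$), the claim reduces to the universal case of the tautological section $\sigma_0$ of $p^*V$ over $V$; but proving that case still amounts to essentially the same comparison, so I would carry out the direct argument above.
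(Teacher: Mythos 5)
Your proof is correct and follows essentially the same approach as the paper's: both hinge on the commutative square
\[
\begin{CD}
V @>>> V/(V\setminus 0) @>>> \Sigma^{V^\dual}E|_X \\
@A{\sigma}AA @A{\bar\sigma}AA \\
X @>>> X/(X\setminus Z)
\end{CD}
\]
together with the fact that $\Sigma^\infty_+\sigma$ and $\Sigma^\infty_+ z$ agree in $\SH(X)$. The only difference is cosmetic: the paper disposes of the last point by noting that $\sigma$, like $z$, is a homotopy inverse to the projection $p\colon V\to X$ (so either may serve as the equivalence $\1_X\wequi\Sigma^\infty_+V$ in Definition~\ref{def:taut-euler-class}), whereas you exhibit the explicit linear homotopy $t\sigma$; and you spell out the identification of the forgetting-support Gysin map with precomposition by the collapse $\Sigma^\infty_+X\to X/(X\setminus Z)$ under Example~\ref{ex:cohomology-with-support}, which the paper's proof leaves implicit in its citation of that example.
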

\begin{proof}
This follows from the commutative diagram
\begin{equation*}
\begin{CD}
V @>>>       V/V \setminus 0 @>>> \Sigma^{V^\dual} E \\
@A{\sigma}AA @A{\sigma}AA                    \\
X @>>>       X/X\setminus Z.
\end{CD}
\end{equation*}
Indeed the composite from the bottom left to the top right along the top left represents $e(V)$, by Definition \ref{def:taut-euler-class} (note that $\sigma$ is a homotopy inverse to the projection $V \to X$), whereas the composite along the bottom right represents $e(V, \sigma)$ with support forgotten, by Definition \ref{def:refined-euler-class} and Example \ref{ex:cohomology-with-support}.
\end{proof}

\begin{definition}[refined Euler number]
Suppose that $\pi: X \to S$ is smoothable lci, $Z \to S$ is finite and $V$ is relatively oriented (so in particular $rk(V) = rk(\L_\pi)$).
Suppose further that $E$ is $\SL$-oriented.
Then we put \[ n(V, \sigma, \rho) = n(V, \sigma, \rho, E) = \pi_* e(V, \sigma) \in E^0(S). \]
\end{definition}

\begin{corollary} \label{cor:forget-number}
Suppose that additionally $\pi: X \to S$ is smooth and proper.
Then $n(V, \sigma, \rho) = n(V, \rho) \in E^0(S)$.
\end{corollary}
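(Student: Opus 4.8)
The plan is to reduce the statement to two facts already in hand: forgetting supports sends the refined Euler class $e(V,\sigma)$ to the tautological one $e(V)$ (Lemma~\ref{lemm:forget-refinement}), and the Gysin pushforward along $\pi$ is compatible with forgetting supports (Lemma~\ref{lemm:gysin-compat}(3), in the form of Example~\ref{ex:forget-support-gysin}). Unwinding the definition, $n(V,\sigma,\rho) = \pi_* e(V,\sigma)$ is the image of $e(V,\sigma) \in E^{V^\dual}_Z(X)$ under the chain of isomorphisms
\[ E^{V^\dual}_Z(X) \wequi E^n_Z(X, \det V) \stackrel{\rho}{\wequi} E^n_Z(X, \omega_{X/S}) \wequi E^{L_\pi}_Z(X), \]
built from the $\SL$- and $\SL^c$-structures (cf.\ Example~\ref{ex:SL-thom-iso}, Proposition~\ref{prop:SL-oriented-trick}) and the relative orientation $\rho$, followed by the Gysin map of \S\ref{sec:gysin} attached to the square
\begin{equation*}
\begin{CD}
Z @>i>> X \\
@V{\varpi}VV @V{\pi}VV \\
S @= S
\end{CD}
\end{equation*}
in which $\varpi\colon Z \to S$ is finite (hence proper) and $\pi$ is smooth (hence smoothable lci). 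Likewise $n(V,\rho) = \pi_* e(V)$ is the image of $e(V) \in E^{V^\dual}(X)$ under the analogous chain $E^{V^\dual}(X) \wequi E^{L_\pi}(X)$, followed by the Gysin map of the square obtained by replacing $Z$ with $X$ and $\varpi$ with $\pi$; this last square is legitimate exactly because of the added hypothesis that $\pi$ is proper.

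First I would check that the orientation identifications $E^{V^\dual}_Z(X) \wequi E^{L_\pi}_Z(X)$ and $E^{V^\dual}(X) \wequi E^{L_\pi}(X)$ intertwine the forget-support maps: both are given by multiplication with the same Thom class, which has full support, and multiplication by a full-support class commutes with forgetting supports. Combined with Lemma~\ref{lemm:forget-refinement}, this shows that the forget-support map $E^{L_\pi}_Z(X) \to E^{L_\pi}(X)$ carries the image of $e(V,\sigma)$ to the image of $e(V)$. Next I would apply Lemma~\ref{lemm:gysin-compat}(3) to the tower obtained by stacking $\id_X\colon X \to X$ (with support shrinking from $Z$ to all of $X$) above $\pi\colon X \to S$; for the trivial twist its conclusion is precisely that the composite $E^{L_\pi}_Z(X) \to E^{L_\pi}(X) \xrightarrow{\pi_*} E^0(S)$, with first arrow the forget-support map, equals the Gysin map $\pi_*\colon E^{L_\pi}_Z(X) \to E^0(S)$. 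Chaining this with the previous sentence yields $n(V,\sigma,\rho) = \pi_* e(V,\sigma) = \pi_* e(V) = n(V,\rho)$.

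The argument is purely formal given the six-functor bookkeeping of \S\ref{sec:representable-cohomology}; the one place that takes care — and the step I expect to be the main, if minor, obstacle — is the twist accounting: checking that the Thom, $\SL^c$, and relative-orientation isomorphisms defining the two transfers genuinely intertwine the forget-support maps, and that the tower fed into Lemma~\ref{lemm:gysin-compat}(3) really produces ``forget support followed by $\pi_*$'' rather than some other composite. Neither is hard, but they are where a sign or a twist could slip.
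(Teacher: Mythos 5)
Your proof is correct and is essentially the paper's own: the paper simply says ``Combine Lemma~\ref{lemm:forget-refinement} and Example~\ref{ex:forget-support-gysin},'' which are exactly the two ingredients you identify and chain together. The extra care you take about the twist/orientation identifications commuting with forget-support is a legitimate bookkeeping point, but it is not a different route.
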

\begin{proof}
Combine Lemma \ref{lemm:forget-refinement} and Example \ref{ex:forget-support-gysin}.
\end{proof}

\subsection{Refined Euler classes and the six functors formalism}
We now relate our Euler classes to the six functors formalism.
The following result shows that our refined Euler class coincides with the one defined by D\'eglise--Jin--Khan \cite[Remark 3.2.10]{DJK}.
\begin{proposition} \label{prop:euler-class-six-functors}
Let $E \in \SH(S)$ be a homotopy ring spectrum, $X \in \Sm_S$, $V$ a vector bundle over $X$ and $\sigma$ a section of $V$.
Then \[ e(V, \sigma, E) = \sigma^*z_*(1), \] where $z: X \to V$ is the zero section and we use the canonical isomorphism $N_z \wequi V$ to form the pushforward $z_*$.
\end{proposition}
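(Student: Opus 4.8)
The plan is to compute both sides as morphisms out of the Thom space $V/(V\setminus 0)$, resp.\ out of $X/(X\setminus Z)$, using the presentation of cohomology with support from Example~\ref{ex:cohomology-with-support}, and to observe that $\sigma^*$ turns the right-hand side into the defining composite of $e(V,\sigma,E)$ from Definition~\ref{def:refined-euler-class}.

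First I would record that $z: X \to V$ is a regular closed immersion with conormal bundle $V^\dual$, so that $L_z \wequi V^\dual[1]$ and $[L_z] = -[V^\dual] \in K(X)$. Hence the Gysin map of \S\ref{sec:gysin}, applied to the (degenerate) square with top-right corner $X$, bottom-right corner $V$, with $f = z$, $i = g = \id_X$, $k = z$ and twist $\xi = [V^\dual] \in K(z(X)) = K(X)$ (so that $g^*\xi + i^*L_z = 0$), produces the class $z_*(1) \in E^{V^\dual}_{z(X)}(V)$. Writing $p: V \to X$ for the projection and $\pi_V: V \to S$ for the structure map, and extending the twist to $p^*V^\dual$ on $V$, Example~\ref{ex:cohomology-with-support} identifies this group with $[V/(V\setminus 0),\,\Sigma^{p^*V^\dual}\pi_V^*E]_{\SH(V)}$; under this identification the pullback $\sigma^*: E^{V^\dual}_{z(X)}(V) \to E^{V^\dual}_Z(X)$ is precomposition with the map $X/(X\setminus Z)\xrightarrow{\sigma} V/(V\setminus 0)$ (well defined because $\sigma^{-1}(z(X)) = Z$), together with the canonical isomorphism $\sigma^*p^*V^\dual \wequi V^\dual$. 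Comparing with Definition~\ref{def:refined-euler-class}, the proposition then becomes equivalent to the statement that $z_*(1)$, regarded as a morphism $V/(V\setminus 0) \to \Sigma^{p^*V^\dual}\pi_V^*E$ in $\SH(V)$, is the composite $V/(V\setminus 0) \wequi \Sigma^{p^*V^\dual}\1_V \xrightarrow{u} \Sigma^{p^*V^\dual}\pi_V^*E$, with the first arrow the canonical trivialization of the Thom spectrum of the zero section.

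To prove this I would unwind the construction of the Gysin map: with $i = g = \id$ the intervening exchange equivalences and the counit map are structural (they become identities after the identifications of Lemma~\ref{lemm:shriek-thom}), so after pushing forward by $\pi_{V*}$ the Gysin morphism reduces to the purity transformation $\purity_z$, and hence $z_*(1)$ is the image of the unit $u$ under $\purity_z$. The crucial input is then that, for the zero section, $\purity_z$ agrees — via the localization equivalence $z_*\1_X \wequi V/(V\setminus 0)$ and the commutation $z^!\Sigma^{p^*V^\dual} \wequi \Sigma^{V^\dual}z^!$ of Lemma~\ref{lemm:shriek-thom} — with the Morel--Voevodsky homotopy purity equivalence $V/(V\setminus 0) \wequi \mathrm{Th}(V) = \Sigma^{p^*V^\dual}\1_V$. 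This is essentially how $\purity$ is constructed in \cite{EHKSY2}, so granting it one obtains $z_*(1) = u\circ(\text{Thom trivialization})$, which is exactly the desired formula.

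I expect the main obstacle to be this last step: matching the abstractly defined purity transformation $\purity_z$ with the concrete Thom-space equivalence for the zero section. One can sidestep it by invoking Remark~\ref{rmk:refined-euler-levine}: since $e(V,\sigma,E) = \sigma^*th(V)$ with $th(V) = e(p^*V,\sigma_0,E) \in E^{p^*V^\dual}_{z(X)}(V)$ the Levine--Raksit canonical Thom class, it suffices (applying $\sigma^*$) to know $th(V) = z_*(1)$ in $E^{V^\dual}_{z(X)}(V)$ — i.e.\ that the canonical Thom class is the Gysin pushforward of $1$ along the zero section. This compatibility of the Thom (fundamental) class with Gysin maps is available from \cite{DJK} (cf.\ also \cite{levine2018motivic}), and combined with Remark~\ref{rmk:refined-euler-levine} it yields the proposition.
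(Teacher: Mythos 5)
Your main argument is essentially the same as the paper's: both identify $z_*$ with the purity transformation (the paper cites the last sentence of \S2.1.1 of \cite{EHKSY2}), and both rest on the observation that for the zero section of a vector bundle the purity equivalence reduces to the tautological Thom-space identification $V/(V\setminus 0)\wequi \Sigma^{p^*V^\dual}\1_V$, which sends $1$ to the refined Euler class of the tautological section. The only structural difference is minor: the paper first invokes stability of refined Euler classes under base change (Remark~\ref{rmk:refined-euler-bc}) to reduce at the outset to comparing $z_*(1)$ with $e(p^*V,\sigma_0,E)$ in $E^{p^*V^\dual}_{z(X)}(V)$, rather than carrying $\sigma^*$ through as you do; the content is the same. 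Your fallback route via Remark~\ref{rmk:refined-euler-levine} and the DJK compatibility of the fundamental class with Gysin pushforward is a legitimate alternative, though it relies on an external comparison that the paper's self-contained computation avoids.
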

\begin{proof}
Let $p: V \to X$ be the projection and $s_0: V \to p^* V$ the canonical section.
Then $\sigma^*(p^*V, \sigma_0) = (V, s)$ and hence, using Remark \ref{rmk:refined-euler-bc}, it suffices to show that $z_*(1) = e(p^*V, \sigma_0, E)$.
By \cite[last sentence of \S2.1.1]{EHKSY2} we know that \[ z_*: E^0(X) \wequi E^{z^* p^*V^\dual + L_z} \to E^{p^*V^\dual}_X(V) \] is the purity equivalence.
In the case of the zero section of a vector bundle, it just takes the tautological form \[ [\1, E]_X \wequi [V/V \setminus 0, V/V \setminus 0 \wedge E]_X \wequi [V/V \setminus 0, p^*V/p^*V \setminus 0 \wedge E_V]_V, \] and hence indeed sends $1$ to $e(p^*V, \sigma_0)$.
\end{proof}

It follows from the above that Euler classes of vector bundles are determined by Euler classes of vector bundles with non-degenerate sections.
\begin{example}\label{KW-Euler_class_Koszul}
Let $V \to X$ be a vector bundle and $\sigma$ a section.
Suppose $1/2 \in S$.
The Euler Class of $V$ in $\KW = \KO[\eta^{-1}]$ is given by the Koszul complex with its canonical symmetric bilinear form.
Indeed we may assume that $V$ has a regular section, in which case this follows from the existence of the morphism of cohomology theories $\BL^\naive \to \KW$ and Proposition \ref{Koszul_form_push_forward_from_support}.
\end{example}

We deduce that the ``Meta-Theorem'' of \S\ref{subsec:yoga} holds in this setting, even in a slightly stronger form with supports:
\begin{corollary} \label{cor:meta-rep}
If $\sigma$ is a non-degenerate section (i.e. locally given by a regular sequence), then $i_*(1) = e(V, \sigma) \in E_Z^{V^\dual}(X)$.
In particular, forgetting supports (taking the image along $E_Z^{V^\dual}(X) \to E^{V^\dual}(X)$) we have $i_*(1) = e(V) \in E^{V^\dual}(X)$, i.e. Theorem \ref{thm:meta} holds in this situation.
\end{corollary}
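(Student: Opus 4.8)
The plan is to identify $e(V,\sigma)$ with $i_*(1)$ by reducing to the geometric picture already worked out for the zero section. By Proposition \ref{prop:euler-class-six-functors} we have $e(V,\sigma,E) = \sigma^* z_*(1)$, where $z: X \to V$ is the zero section and the pushforward $z_*: E^0(X) \to E^{V^\dual}_X(V)$ uses the canonical identification $N_z \wequi V$; this is already the refined (support-carrying) statement. So the task is purely to rewrite $\sigma^* z_*(1)$ as $i_*(1)$ in $E^{V^\dual}_Z(X)$, where $i: Z = Z(\sigma) \hookrightarrow X$ is the inclusion of the (regular) zero scheme, and the pushforward $i_*: E^0(Z) \to E^{V^\dual}_Z(X)$ uses the identification $L_i \wequi -[V^\dual|_Z]$ of Example \ref{ex:cotangent-cx-regular-section} together with the twist bookkeeping described just before Meta-Theorem \ref{thm:meta}.

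First I would set up the base-change square: since $\sigma$ is a section of $V$ with $\sigma^{-1}(X) = Z$ (here $X \subset V$ is the zero section), and $z$ and $\sigma$ are tor-independent when $\sigma$ is non-degenerate (both correspond to regular sequences, and the intersection $Z$ has the expected codimension), we get a cartesian square
\begin{equation*}
\begin{CD}
Z @>{j}>> X \\
@V{i}VV @V{z}VV \\
X @>{\sigma}>> V,
\end{CD}
\end{equation*}
where $j$ is the map $Z \hookrightarrow X$ realizing $Z$ as $\sigma^{-1}(z(X))$; but of course under the evident identifications $j = i$ (both are the inclusion of the zero scheme of $\sigma$). Now apply the compatibility of Gysin maps with tor-independent base change, Lemma \ref{lemm:gysin-compat}(2), to the square above with $s = \sigma$: this yields $\sigma^* z_*(1) = i_*(\sigma'^*(1)) = i_*(1)$, once one checks that the twist and orientation data used to define $z_*$ pull back to those used to define $i_*$. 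That compatibility of twists is exactly the statement that $L_z$ restricted along $\sigma$, i.e. along the zero section composed with $\sigma$, gives $L_i$ up to the canonical identifications, which follows from the cofiber-sequence computation $L_i \wequi \sigma^* L_z$ for this cartesian square of regular immersions together with $N_z \wequi p^* V$ and Example \ref{ex:cotangent-cx-regular-section}.

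For the second sentence of the corollary, I would simply compose with the "forget supports" map $E^{V^\dual}_Z(X) \to E^{V^\dual}(X)$: by Lemma \ref{lemm:forget-refinement} it sends $e(V,\sigma)$ to $e(V)$, and by Example \ref{ex:forget-support-gysin} (the commuting square there, applied with $f = \id_X$ and the closed immersion $i: Z \hookrightarrow X$) it sends $i_*(1) \in E^{V^\dual}_Z(X)$ to the image of $1 \in E^0(Z)$ under the composite $E^0(Z) \xrightarrow{i_*} E^{V^\dual}_Z(X) \to E^{V^\dual}(X)$, which is the unsupported pushforward. Hence $i_*(1) = e(V)$ in $E^{V^\dual}(X)$, which is precisely the assertion of Meta-Theorem \ref{thm:meta} in this representable situation. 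The main obstacle is the first step: carefully matching the purity/orientation identifications on the two sides of the base-change square so that Lemma \ref{lemm:gysin-compat}(2) applies verbatim and produces $i_*(1)$ with the twist normalization used in Definition \ref{def:index-general}; everything after that is formal.
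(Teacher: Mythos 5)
Your proof is correct and follows essentially the same route as the paper: express $e(V,\sigma)$ as $\sigma^*z_*(1)$ via Proposition \ref{prop:euler-class-six-functors}, form the tor-independent cartesian square relating $z$ and $\sigma$, and conclude via Lemma \ref{lemm:gysin-compat}(2). The only difference is cosmetic: the paper justifies tor-independence with an explicit Koszul-complex computation rather than the brief ``expected codimension'' remark, and it handles the second sentence with Lemma \ref{lemm:forget-refinement} alone; your addition of Example \ref{ex:forget-support-gysin} is a harmless elaboration of the same point.
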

\begin{proof}
The second statement follows from the first and Lemma \ref{lemm:forget-refinement}; hence we shall prove the first.
Consider the following cartesian square
\begin{equation*}
\begin{CD}
Z @>i>> X \\
@ViVV  @VzVV \\
X @>\sigma>> V.
\end{CD}
\end{equation*}
A well-known consequence of regularity of $\sigma$ is that this square is tor-independent.\footnote{To see this note that $z_* \scr O_X$ can be resolved by the Koszul complex $K(p^* V, \sigma_0)$ for the tautological section $\sigma_0$ of the pullback $p^* V$ of $V$ along $p: V \to X$.  It follows now from the projection formula that $z_* \scr O_X \otimes^L \sigma_* \scr O_X \wequi \sigma_*\sigma^*z_* \scr O_X \wequi \sigma_*\sigma^* K(p^* V, \sigma_0) \wequi \sigma_* K(V, \sigma).$ Since by definition $\sigma$ locally corresponds to a regular sequence (and $\sigma$ is affine), the claim follows.}
Since $i_*$ is compatible with tor-independent base change (Lemma \ref{lemm:gysin-compat}(2)), we deduce from Proposition \ref{prop:euler-class-six-functors} that \[ e(V, \sigma, E) = \sigma^*z_*(1) = i_*i^*(1) = i_*(1). \]
This was to be shown.
\end{proof}

It follows that, as explained in \S\ref{subsec:yoga}, both the ordinary and refined Euler numbers in $E$-cohomology can be computed as sums of local indices.
The remainder of this paper is mainly concerned with determining these indices, for certain examples of $E$.

\begin{remark}
These results can be generalized slightly.
Fix a scheme $S$ and an $\SL$-oriented ring spectrum $E \in \SH(S)$.
\begin{enumerate}
\item Let $X/S$ smoothable lci, $V/X$ a vector bundle, relatively oriented in the sense of Definition \ref{def:relative-orientation}.
  If $X/S$ is in addition proper, then we can transfer the Euler class along the structure morphism (see \S\ref{sec:gysin}) as before to obtain an Euler number $n(V, \rho, E) \in E^0(S)$.
  More generally, without assuming $X/S$ proper, given a section $\sigma$ with zero scheme $Z$ proper over $S$, we obtain the refined Euler number $n(V, \sigma, \rho, E)$.
\item Let $X/S$ be arbitrary, $V$ a vector bundle, $\sigma$ a section of $V$ with zero scheme $i: Z \to X$, and suppose that $i$ is a regular immersion (but $\sigma$ need not be a non-degenerate section, i.e. $Z$ could have higher than expected dimension).
  In this case there is an \emph{excess bundle} $\scr E = cok(N_{Z/X} \to V|_Z)$.\footnote{The map $N_{Z/X} \to V|_Z$ is always injective, and is an isomorphism precisely if the section is non-degenerate.}
  A straightforward adaptation of the proof of Corollary \ref{cor:meta-rep}, using the excess intersection formula \cite[Proposition 3.3.4]{DJK}, shows that \[ e(V, \sigma, E) = i_*(e(\scr E, E)). \]
\item Putting everything together, let $X/S$ be proper smoothable lci, $V$ a relatively oriented vector bundle and $\sigma$ a section with zero scheme $Z$ regularly immersed in $X$.
  Then \[ n(V, \rho, E) = \sum_{Z' \subset Z} n(\scr E|_{Z'}, \rho', E). \]
  Here the sum is over clopen components $Z'$ of $Z$ and $\rho'$ denotes the induced relative orientation of $\scr E$.
  Note that if $\sigma$ is non-degenerate on $Z'$, i.e. $\scr E|_{Z'} = 0$, then $e(\scr E|_{Z'}, E) = 1 \in E^0(Z')$ and $n(\scr E|_{Z'}, \rho', E) = \ind_{Z'}(\sigma)$ as before.
\end{enumerate}
\end{remark}

\section{$d$-Dimensional planes on complete intersections in projective space} \label{subsec:enumerative-app}
\subsection{Some Euler numbers of symmetric powers on Grassmannians}
Grassmannians and flag varieties are smooth and proper over $\Z$, and the Euler classes of many of their vector bundles have interesting interpretations in enumerative geometry. Computations over $\RR$ and $\CC$ are available in the literature in connection with enumerative results or accessible with localization techniques in equivariant cohomology \cite{Atiyah-Bott} \cite{GP-loc}. Integrality of Euler classes, as in Theorem \ref{two_values_e(bundlesoverZ[1/2])} and Proposition \ref{prop:application-bc}, can leverage such results to all fields. We do this now using the $\Z[1/2]$ case of Theorem \ref{two_values_e(bundlesoverZ[1/2])} and a characteristic class argument. This is independent of the recent work \cite{CDHHLMNNS-3} on Hermitian K-theory over $\Z$ and the characteristic class argument may be of some independent interest. One can alternatively deduce Corollary \ref{co:eoplussym} from the $\Z$ case of Theorem \ref{two_values_e(bundlesoverZ[1/2])} and \cite{CDHHLMNNS-3}.

\begin{remark}\label{changing_relative_orientations}
Suppose $X$ is a smooth, proper $\Z$-scheme with geometrically connected fibers and $\Pic X$ torsion free, for example, $X$ a Grassmannian or projective space. For a relatively orientable vector bundle $V \to X$ defined over $\Z$, there are at most two isomorphism classes of relative orientations. Namely, by assumption, there is a line bundle $L \to X$ and isomorphism $\rho: L^{\otimes 2} \stackrel{\cong}{\longrightarrow} \omega_{X/\Z} \otimes \det V$. Since $\Pic X$ is torsion free, any relative orientation is an isomorphism $L^{\otimes 2} \stackrel{\cong}{\longrightarrow} \omega_{X/\Z} \otimes \det V$, whence two such differ by a global section of $\Hom(L^{\otimes 2},L^{\otimes 2}) \cong \calO(X)^*$. By hypothesis on $X$, the fibers of the pushforward of $\calO_X$ all have rank $1$, whence this pushforward is $\calO_{\Z}$ and $\calO(X)^* \cong \Z^* = \{\pm 1\}$. Thus any relative orientation is isomorphic to $\rho$ or $-\rho$. We then have $n(V, \rho) = \lra{-1} n(V, -\rho)$.  

Consequently we suppress the choice of orientation and just write $n(V)$.
Beware that this does not mean that every vector bundle is relatively orientable, though!
\end{remark}

\begin{remark}\label{canonical_rel_orientation_sym}
There is a canonical relative orientation for certain classes of vector bundles on  Grassmannians $X = \Gr(d,n)$: a point $p$ of $X$ with residue field $L$ corresponds to a dimension $d+1$ subspace of $L^{n+1}$. A choice of basis $\{e_0,\ldots,e_n\}$ of $L^{n+1}$ such that the span of $\{e_0,\ldots,e_d\}$ is $p$ defines canonical local coordinates for an affine chart isomorphic to $\A^{(n-d)(d+1)}$ with $p$ as the origin (see for example \cite[Definition 42]{CubicSurface}). This defines local trivializations of the tautological and quotient bundles on $X$, and therefore also of their tensor, symmetric, exterior powers and their duals. A vector bundle $V$ formed from such operations on the tautological and quotient bundles and which is relatively orientable on $X$ inherits a canonical relative orientation $\rho$ such that the local coordinates and trivializations just described are compatible with $\rho$ in the sense of \cite[Definition 21]{CubicSurface}. This is described in \cite[Proposition 45]{CubicSurface} in a special case, but the argument holds in the stated generality. (One only needs the determinants of the clutching functions to be squares which follows from the relative orientability of $V$. Together with the explicit coordinates this gives the relative orienation.) This relative orientation has the property that it is defined over $\Z$ and for any very non-degenerate section $\sigma$, the data just described gives a system of coordinates in the sense of Definition \ref{def:coordinates}.
\end{remark}

\begin{corollary}\label{co:eoplussym}
Let $d \leq n$ be positive integers, and let $X=\Gr(d,n)$ be the Grassmannian of $d$-planes $\P^d$ in $\P^n$. Let $V = \oplus_{i=1}^j \Sym^{n_i} \tautbun^*$, where $\tautbun$ denotes the tautological bundle on $X$, and $n_1,\ldots,n_j$ are positive integers such that $\Rrank V = \dim X$ and $V$ is relatively orientable, i.e. such that $\sum_{i=1}^j {n_i + d \choose d}= (d+1)(n-d) $ and $\sum_{i=1}^j \frac{n_i}{d+1}{n_i + d \choose d} + n+1$ is even. Then $$n(V) = \frac{ n_{\CC} +n_{\RR} }{2} + \frac{ n_{\CC} -n_{\RR} }{2}\lra{-1}$$ over any ring in which $2$ is invertible (where we interpret $n(V)$ as $n(V,KO)$), or any field (where we interpret $n(V)$ as $n^{\GS}(V)$). 
\end{corollary}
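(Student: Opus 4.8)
The plan is to combine Theorem~\ref{two_values_e(bundlesoverZ[1/2])} with a characteristic class computation that excludes the extra term $\lra 2-1$ of \eqref{eoverZ[1/2]with<2>}. First I would set up the reduction. The Grassmannian $X=\Gr(d,n)$ is smooth and proper over $\Z$, hence over $\Z[1/2]$, and the numerical hypotheses say exactly that $\Rrank V=\dim X$ and that $V=\bigoplus_i\Sym^{n_i}\tautbun^*$ is relatively orientable, so $V$ carries the canonical relative orientation $\rho$ of Remark~\ref{canonical_rel_orientation_sym}, defined over $\Z$. By Corollary~\ref{co:pieBM=nPH} and the identity $n(V_k,\rho,\KO)=n^{\GS}(V_k,\rho)$ from its proof, together with base change stability (Corollary~\ref{cor:Euler-number-base-change}), both interpretations of $n(V)$ are obtained by base change from the single universal class $n(V,\rho,\KO)\in\KO^0(\Z[1/2])=\GW(\Z[1/2])\hookrightarrow\GW(\Q)$. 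Over fields of characteristic $2$ the statement is immediate: $n^{\GS}(V)$ has rank $n_\CC$ (the top Chern number of $V$), and over such fields $\GW$ is detected by rank while $\lra 1=\lra{-1}$, so $n^{\GS}(V)=n_\CC\lra 1$, which is the right-hand side of \eqref{eoverZ[1/2]no<2>}. Thus I may work over $\Q$; by Theorem~\ref{two_values_e(bundlesoverZ[1/2])} the answer there is either \eqref{eoverZ[1/2]no<2>} or \eqref{eoverZ[1/2]with<2>}, and whichever it is, it holds verbatim over every field of characteristic $\ne 2$, hence over every $\Z[1/2]$-algebra. It remains to rule out \eqref{eoverZ[1/2]with<2>}.

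Next I would reduce this to a single discriminant over $\FF_5$, as in Remark~\ref{rmk:distinguishing_betwee_<2>_and_0_in_Euler_classes}. The two candidate values have the same rank $n_\CC$ but differ by $\lra 2-\lra 1$, which is nonzero in $\GW(\FF_5)$ since $2$ is a non-square modulo $5$; as forms over $\FF_5$ are classified by rank and discriminant, the two are distinguished by their discriminant. Since $-1$ is a square in $\FF_5$, the value \eqref{eoverZ[1/2]no<2>} base-changes to $n_\CC\lra 1$, of trivial discriminant, while \eqref{eoverZ[1/2]with<2>} base-changes to $(n_\CC-1)\lra 1+\lra 2$, of nontrivial discriminant. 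So the corollary is equivalent to $\disc n(V_{\FF_5},\rho)=1$, i.e. to $n(V_{\FF_5},\rho,\KO)=n_\CC\lra 1$ in $\GW(\FF_5)$, equivalently the vanishing of the discriminant of the Witt-theoretic Euler number $n(V_{\FF_5},\rho,\KW)\in\W(\FF_5)$.

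The last step --- the heart of the argument --- is to establish this by characteristic classes. I would compute $n(V,\rho,\H\tilde\Z)=\pi_*e^{\BM}(V,\rho)$ via the splitting principle for $\SL^c$-oriented theories and the Whitney sum formula for Euler classes: in the Chow--Witt ring of $X$, with the canonical compatible orientations of Remark~\ref{canonical_rel_orientation_sym}, $e^{\BM}(V,\rho)=\prod_i e^{\BM}(\Sym^{n_i}\tautbun^*)\in\CHt^{\dim X}(X,\det V^*)$. Under the cycle class map this recovers $n_\CC$ and under the real cycle class map it recovers $n_\RR$ (Proposition~\ref{nRRnCCaretopEnums}); the new input is the image of $e^{\BM}$ in $I$-cohomology, whose pushforward to $H^0(\Spec\FF_5,I^0)=\W(\FF_5)$ carries the discriminant. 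Using the known structure of the Chow--Witt (equivalently, Witt-sheaf) cohomology of Grassmannians --- free on an explicit, $\Z$-defined basis of self-conjugate Schubert-type classes --- and the fact that $\tautbun$ and the symmetric power and direct sum operations forming $V$ are all defined over $\Z$, one should find that the expansion of $\prod_i e^{\BM}(\Sym^{n_i}\tautbun^*)$ in this basis has coefficients in $\Z\cdot\lra 1$, so that its pushforward lies in $\Z\cdot\lra 1\subseteq\W(\Z[1/2])$ and has trivial discriminant over $\FF_5$. By the previous step this excludes \eqref{eoverZ[1/2]with<2>}.

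I expect the main obstacle to be precisely this last step. The delicate points are: proving the Whitney sum formula for $\SL^c$-oriented Euler classes with the \emph{canonical} relative orientations, not merely up to an unspecified unit, so that no spurious scalar is introduced; keeping track of the determinant twists of the individual summands $\Sym^{n_i}\tautbun^*$, whose ranks $\binom{n_i+d}{d}$ and whose base bundle $\tautbun$ of rank $d+1$ may be of either parity; and extracting from the structure theory of $\CHt^*$ (equivalently $H^*(\ph,I^*)$) of Grassmannians that the discriminant of the resulting Euler number does not involve the prime $2$. The parity of $\binom{n_i+d}{d}$ is essential here: for odd-rank summands the Witt-theoretic Euler class is $2$-torsion, and it is this phenomenon that ultimately pins the discriminant down.
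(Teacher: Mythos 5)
Your reduction to a single discriminant computation over some finite field where $2$ is a nonsquare is exactly the paper's first step, and the overall strategy (a Witt-theoretic characteristic class computation) is the same. But the proposal stops short at exactly the point where the real work lies, and the step you gesture at is not correct as stated. The Witt-sheaf cohomology $H^*(\Gr(d,n),\mathbf{I}^*)$ is \emph{not} simply free on a ``$\Z$-defined basis of self-conjugate Schubert-type classes'' in the way that would let you read off the discriminant: it has a nontrivial structure including $2$-torsion pieces, and the behavior differs qualitatively according to whether the Schubert symbols admit fixed points under the relevant duality. Moreover, there is no clean ``Whitney sum formula'' for $\SL^c$-oriented Euler classes of a direct sum whose summands are individually \emph{not} relatively orientable --- which is the generic situation for $\Sym^{n_i}\tautbun^*$ when $\binom{n_i+d}{d}$ is odd --- so the factorization $e^{\BM}(V,\rho)=\prod_i e^{\BM}(\Sym^{n_i}\tautbun^*)$ you propose to use does not even make literal sense in $\CHt^*$ without fixing up the twists summand by summand.

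The paper gets around both obstacles with a case split on the parity of $d$. When $d$ is even, $\tautbun^*$ has odd rank $d+1$, and Ananyevskiy's splitting principle pulled back along $B\SL_2^{\times r}\times B\SL_1\to B\SL_{d+1}$ forces the $\emW$-Euler class of each $\Sym^{n_i}\tautbun^*$ to contain an odd-rank summand coming from $B\SL_1$, whence $e(\Sym^{n_i}\tautbun^*,\emW)=0$ outright (not merely $2$-torsion, as you suggest), and the discriminant is trivial for free. When $d$ is odd, the argument is a genuine Pontryagin-class computation: Lemma \ref{eboxtimesOa_is_in_Zei} and Lemma \ref{e(Sym^ninZ[pie])} show, using Levine's $B N$-computations and the $\tO^{(\pm)}(a)$ bundles, that $e(\Sym^{n_i}\Stautbun_{d+1}^*,\emW)$ lies in the subring $\Z[p_1,\dots,p_r,e]\subset\emW^*(B\SL_{d+1})$; Lemma \ref{monomial_top_degree_as_cen-d} converts any top-degree monomial into an integer multiple of $e^{n-d}$ by transporting a relation from $H^*(\CC\Gr(r,r+s);\Z)$; and Lemma \ref{discpi_*monomial=1} computes $\pi_*e^{n-d}=n(\bigoplus^{n-d}\tautbun^*,\emW)$ directly via the Jacobian of an explicit section over $\Z$, showing the pushforward is $\lra{\pm1}$, hence has trivial discriminant once $-1$ is a square. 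None of this is deducible from a ``$\Z$-basis'' claim, and your proposal explicitly flags that it does not carry out this part --- so what you have is the correct reduction plus an accurate diagnosis of where the difficulty is, but not a proof.
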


\begin{proof}
Let $\calO(1)$ denote the generator of $\Pic X$ given by the pullback of the tautological bundle under the Pl\"ucker embedding. Then $\omega_{X/\Z} \cong \calO(-n-1)$ and $\det \Sym^{n_i} \tautbun^* \cong \calO(\frac{n_i}{d+1}{n_i + d \choose d})$. Thus $V$ is relatively orientable as a bundle over $\mathbb{Z}$ and Remarks \ref{changing_relative_orientations} and \ref{canonical_rel_orientation_sym} apply.

By the $\Z[1/2]$ case of Theorem \ref{two_values_e(bundlesoverZ[1/2])}, it is enough to show that the discriminant of $n(V_{\mathbb{F}_p})$ is trivial for some prime $p$ congruent to $1$ mod $4$, and such that $2$ is not a square. See Remark \ref{rmk:distinguishing_betwee_<2>_and_0_in_Euler_classes}.
Let $\emW$ denote the Eilenberg--MacLane spectrum of the $\eta$-inverted Milnor--Witt sheaves $\K^{\MW}_*[\eta^{-1}]$, cf. \cite[Remark 3.1]{Levine-Witt}, and consider the associated Euler class $e(V,\emW)$.
Then $n(V,\emW)$ determines the Witt class of $n(V)$, and hence the discriminant of $n(V)$ as well.

{\bf $d$ is even:} Suppose that $d$ is even. Let $\pi$ denote the structure map of $X$. Since $n(V,\emW) = \pi_*\prod_{i=1}^j e(\Sym^{n_i} \tautbun^*, \emW)$, it is enough to show that $e(\Sym^{n_1} \tautbun^*, \emW) =0$. By the Jouanolou device, we may assume any vector bundle is pulled back from the universal bundle. It is therefore enough to show the same for the dual tautological bundle on the universal Grassmannian $B\GL_{d+1}$, i.e., let $\tautbun_{d+1}^*$ denote the dual of the tautological bundle on $B\GL_{d+1}$; we show that $e(\Sym^{n_1} \tautbun_{d+1}^*, \emW)=0$. By Ananyevskiy's splitting principle \cite[Theorem 6]{Ananyevskiy-SL_projective_bundle_thm} and its extension due to M. Levine \cite[Theorem 4.1]{Levine-Witt}, we may show the vanishing of the $\emW$-Euler class of $\Sym^{n_1} \tautbun_{d+1}^*$ after pullback to $$B\SL_2 \times B\SL_2 \times \ldots \times B\SL_2 \times B\SL_1$$ via the map classifying the external Whitney sum of the tautological bundles. Here we use that $d+1$ is odd. This pullback of $\Sym^{n_1} \tautbun_{d+1}^*$ contains the odd-rank summand $\Sym^{n_1} \tautbun_{1}^*$, and therefore its $\emW$-Euler class is $0$ as desired \cite[Lemma 3]{Ananyevskiy-SL_projective_bundle_thm} \cite[Lemma 4.3]{Levine-Witt}. 

{\bf $d$ is odd:} Let $k$ be a finite field whose order is prime to $2\prod_{i=1}^j (n_i)!$, congruent to $1$ mod $4$ (so $-1$ is a square), and such that $2$ is not a square. By Theorem \ref{two_values_e(bundlesoverZ[1/2])}, it suffices to show that the discriminant of $n(V,\Gr(d,n)) \in \W(k) \cong \GW(k)/\mathbb{Z}h$ is trivial, cf. Remark \ref{rmk:distinguishing_betwee_<2>_and_0_in_Euler_classes}. 

Define $r$ in $\Z$ so that $d=2r+1$. Let $\Stautbun_{d+1}^*$ denote the dual tautological bundle on $B\SL_{d+1}$ and let $p_1,\ldots,p_r,p_{r+1}$ and $e$ in $\emW^*(B\SL_{d+1}) $ denote its Pontryagin and Euler classes respectively. (Often one would let $p_i$ be the Pontryagin classes of the tautological bundle, not its dual, but this is more convenient here.) By Lemma \ref{e(Sym^ninZ[pie])}, $e(\Sym^{n_i} \Stautbun_{d+1}^*, \emW)$ is in the image of $\Z[p_1,\ldots,p_r,e] \to \emW^*(B\SL_{d+1})$. (Note that we have omitted $p_{r+1}$ as $p_{r+1} = e^2$ \cite[Corollary 3]{Ananyevskiy-SL_projective_bundle_thm}.) Therefore $e(V,\Gr(d,n))$ can be expressed as a polynomial with integer coefficients in the Pontryagin classes and Euler class of the dual tautological bundle $\tautbun^*$ on $\Gr(d,n)$. By Lemma \ref{discpi_*monomial=1}, it follows that $\disc n(V,\Gr(d,n))=1$ in $k^*/(k^*)^2$ as desired. 
\end{proof}  

M. Levine \cite{Levine-Witt} uses the normalizer $N$ of the standard torus of $\SL_{2}$ $$1 \to \left\{\begin{pmatrix}
t & 0  \\
0 & t^{-1} 
\end{pmatrix} \right\} \to N \to  \left\{\begin{pmatrix}
0 & 1  \\
-1 & 0 
\end{pmatrix} \right\} \to 1$$  and bundles $\tO(a)$ and $\tO^{-}(a)$ for $a$ in $\Z$ corresponding to the representations \begin{align*}\begin{pmatrix}
t & 0  \\
0 & t^{-1} 
\end{pmatrix} \mapsto \begin{pmatrix}
t^a & 0  \\
0 & t^{-a} 
\end{pmatrix} \\ 
\begin{pmatrix}
0 & 1  \\
-1 & 0 
\end{pmatrix} \mapsto \begin{pmatrix}
0 & 1  \\
(-1)^a & 0 
\end{pmatrix}
\end{align*} and \begin{align*}\begin{pmatrix}
t & 0  \\
0 & t^{-1} 
\end{pmatrix} \mapsto \begin{pmatrix}
t^a & 0  \\
0 & t^{-a} 
\end{pmatrix} \\ 
\begin{pmatrix}
0 & 1  \\
-1 & 0 
\end{pmatrix} \mapsto \begin{pmatrix}
0 & -1  \\
(-1)^{a+1} & 0 
\end{pmatrix},
\end{align*} respectively, to compute characteristic classes, and we use his technique. We will use the notation $\tOpom (a)$ to mean either $\tO(a)$ and $\tO^{-}(a)$ when a claim holds for both possibilities. We likewise use the $\emW$-Pontryagin (or Borel) classes of a vector bundle with trivialized determinant of Panin and Walter \cite{panin2010quaternionic}. See \cite[Introduction, Section 3]{Ananyevskiy-SL_projective_bundle_thm} or \cite[Section 3]{Levine-Witt} \cite[Section 2]{Wendt-oriented_schubert} for background on these classes.

Let $e_i \in \emW^*(N^{r+1})$ denote the pullback of $e(\Stautbun_{2}^* ,\emW)$ under the $i$th projection $BN^{r+1} \to BN$ composed with the canonical map $BN \to B\SL_2$.

Given vector bundles $V$ and $E$ on schemes $X$ and $Y$, respectively, let $V \boxtimes E$ denote the vector bundle on $X \times Y$ given by the tensor product of the pullbacks of $V$ and $E$.
\begin{lemma} \label{eboxtimesOa_is_in_Zei}
Suppose $r \geq 1$ and $a_1,\ldots,a_{r+1}$ are integers, and that our base scheme is a field $k$ with characteristic not dividing $2 \prod_{i=1}^{r+1} a_i$. The Euler class $e( \boxtimes_{i=1}^{r+1} \tOpom (a_i), \emW)$ and $\emW$-Pontryagin classes $p_j(\boxtimes_{i=1}^{r+1} \tOpom (a_i), \emW)$ for $j=1,\ldots, 2^r$ are in the image of the map $$\Z[e_1^2,\ldots, e_{r+1}^2] \to \emW^*(BN^{r+1}).$$ 
\end{lemma}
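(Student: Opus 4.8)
The plan is to reduce the statement to an elementary invariant-theoretic fact about polynomials in $e_1, \dots, e_{r+1}$, using the $\SL$-orientation splitting principle of Ananyevskiy and M.~Levine together with Levine's computation of the $\emW$-cohomology of $BN$. The inputs I would collect from \cite{Levine-Witt} and \cite{Ananyevskiy-SL_projective_bundle_thm} are: the ring $\emW^*(BN^{r+1})$ is controlled by the classes $e_i$; the $\emW$-Euler and $\emW$-Pontryagin classes of $\tOpom(a)$ on a single $BN$ are \emph{integral} polynomials in the corresponding Euler class, the Euler class itself being an integer multiple of it (here the characteristic hypothesis guarantees each $a_i$ is a unit); for a rank $2$ bundle $W$ with trivialized determinant one has $p_1(W, \emW) = e(W, \emW)^2$ by \cite[Corollary 3]{Ananyevskiy-SL_projective_bundle_thm}; and the $\emW$-Euler class and the total $\emW$-Pontryagin class are multiplicative along direct sums. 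Write $V := \boxtimes_{i=1}^{r+1}\tOpom(a_i)$, a bundle of rank $2^{r+1}$. Since $r \ge 1$ the integer $2^r$ is even, and since each $\det\tOpom(a_i)$ has order dividing $2$ and enters $\det V$ to the $2^r$-th power, $\det V$ is trivial, so $V$ has a well-defined $\emW$-Euler class and $\emW$-Pontryagin classes $p_1, \dots, p_{2^r}$.

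Next the splitting principle \cite[Theorem 6]{Ananyevskiy-SL_projective_bundle_thm}, \cite[Theorem 4.1]{Levine-Witt} allows us to work, after pullback along a morphism inducing a monomorphism on $\emW^*$-cohomology, with a decomposition $V \cong \bigoplus_{\alpha=1}^{2^r} W_\alpha$ into rank $2$ bundles with trivialized determinant, indexed by the sign patterns $\epsilon^{(\alpha)} \in \{\pm 1\}^{r+1}/\{\pm 1\}$. By naturality and the formulas for the classes of $\tOpom(a_i)$ recalled above, I would show
\[ e(W_\alpha, \emW) = L_\alpha := \sum_{i=1}^{r+1} \epsilon_i^{(\alpha)}\, b_i\, e_i \]
for suitable integers $b_1, \dots, b_{r+1}$ depending only on $a_1, \dots, a_{r+1}$ (whether $b_i$ equals $a_i$ or $a_i \bmod 2$ is immaterial below). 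Multiplicativity of Euler and Pontryagin classes then gives $e(V, \emW) = \prod_{\alpha=1}^{2^r} L_\alpha$, and $p_j(V, \emW)$ equal to the $j$-th elementary symmetric polynomial in $L_1^2, \dots, L_{2^r}^2$, for $j = 1, \dots, 2^r$.

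Finally I would observe that these expressions lie in $\Z[e_1^2, \dots, e_{r+1}^2]$. Each $L_\alpha$ is a $\Z$-linear form in $e_1, \dots, e_{r+1}$, and each sign change $\tau_i\colon e_i \mapsto -e_i$ permutes the tuple $(L_\alpha)_\alpha$ up to sign (it flips the $i$-th coordinate of the sign patterns). Hence $\tau_i$ genuinely permutes the multiset $\{L_\alpha^2\}_\alpha$, so every elementary symmetric polynomial in the $L_\alpha^2$ is a $\Z$-polynomial in $e_1, \dots, e_{r+1}$ invariant under all the $\tau_i$ and therefore lies in $\Z[e_1^2, \dots, e_{r+1}^2]$; this handles the Pontryagin classes. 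For the Euler class, $\tau_i$ sends $\prod_\alpha L_\alpha$ to $(-1)^{c}\prod_\alpha L_\alpha$ with $c \in \{0, 2^r\}$ even (again because $r \ge 1$), so $\prod_\alpha L_\alpha$ is invariant under every $\tau_i$ and lies in $\Z[e_1^2, \dots, e_{r+1}^2]$ as well. (This, together with the triviality of $\det V$ above, is where the hypothesis $r \ge 1$ is used.)

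The main obstacle is the middle step: extracting from Levine's framework the decomposition $V \cong \bigoplus_\alpha W_\alpha$ with the precise Euler classes $e(W_\alpha, \emW) = L_\alpha$. This requires carefully tracking the $\Z/2$-equivariance built into $N$ and the bookkeeping distinguishing $\tO(a)$ from $\tO^-(a)$ --- which only affects the signs $\epsilon_i^{(\alpha)}$ and the integers $b_i$, hence is harmless for the conclusion --- and, crucially, verifying that every coefficient occurring is a genuine integer rather than merely a class in $\W(k)$. This integrality is exactly what makes the lemma usable in the proof of Corollary~\ref{co:eoplussym}, and it falls out once the roots $L_\alpha$ are known to be $\Z$-linear in the $e_i$.
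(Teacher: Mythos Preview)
Your final symmetry argument is clean and correct, but the middle step---the claim that after applying the splitting principle the rank $2$ summands $W_\alpha$ have Euler classes equal to the specific $\Z$-linear forms $L_\alpha = \sum_i \epsilon_i^{(\alpha)} b_i e_i$---is a genuine gap, and I do not see how to close it along the lines you suggest. Ananyevskiy's splitting principle only tells you that \emph{some} pullback of $V$ decomposes as $\bigoplus_\alpha W_\alpha$ with $W_\alpha$ of rank $2$; it says nothing about what the individual classes $e(W_\alpha) \in \emW^*(Y)$ are. All you know is that their product equals $\pi^* e(V)$ and the elementary symmetric functions of their squares equal $\pi^* p_j(V)$. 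The $e(W_\alpha)$ live in $\emW^*(Y)$, not in $\emW^*(BN^{r+1})$, and there is no reason they lie in the image of $\pi^*$ at all, let alone coincide with the pullbacks of specific linear forms in the $e_i$. (The representation-theoretic picture over $BT^{r+1}$ that would naturally produce the sign-pattern indexing is not available here: pulling back to $BT^{r+1}$ kills $\emW^*$, since $\emW^*(B\mathbb{G}_m)$ is trivial.) So ``naturality and the formulas for the classes of $\tOpom(a_i)$'' does not do what you need.

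The paper sidesteps this entirely by induction on $r$. One applies the splitting principle only to the last $r$ factors, writing $\pi^*\bigl(\boxtimes_{i\ge 2}\tOpom(a_i)\bigr) \cong \bigoplus_{i=1}^{2^{r-1}} V_i$. The key input is then Levine's formula \cite[Proposition~9.1, Theorem~7.1]{Levine-Witt} for the Euler and first Pontryagin class of a tensor product of \emph{two} rank $2$ bundles, giving $e(\tOpom(a_1)\boxtimes V_i) = a_1^2 e_1^2 - e(V_i)^2$ and $p_1(\tOpom(a_1)\boxtimes V_i) = 2(a_1^2 e_1^2 + e(V_i)^2)$. Taking products and expanding, the Euler and Pontryagin classes of $V$ become $\Z$-polynomials in $e_1^2$ and the elementary symmetric functions of the $e(V_i)^2$; the latter are precisely the Pontryagin classes of $\boxtimes_{i\ge 2}\tOpom(a_i)$, which lie in $\Z[e_2^2,\dots,e_{r+1}^2]$ by induction. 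Note that the individual $e(V_i)$ never need to be identified---only the symmetric functions of their squares, which \emph{are} pullbacks from $BN^r$. If you tried to make your argument rigorous by proving directly that $e(V) = \prod_\alpha L_\alpha$ and $p_j(V) = \sigma_j(L_\alpha^2)$ as identities in $\emW^*(BN^{r+1})$, you would find yourself re-deriving exactly this induction.
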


\begin{proof}
Proceed by induction on $r$. By Ananyevskiy's splitting principle \cite[Theorem 6]{Ananyevskiy-SL_projective_bundle_thm}, there exists a map $\pi:Y \to BN^{r}$ such that the pullback of $\boxtimes_{i=2}^{r+1} \tOpom (a_i)$ is a direct sum of rank $2$ bundles $V_i$ on $Y$, $$\pi^* \boxtimes_{i=2}^{r+1} \tOpom (a_i) \cong \oplus_{i=1}^{2^{r-1}} V_i,$$ and the map $$\emW^*(BN \times BN^{r}) \to \emW^*(BN \times Y) $$ is injective. We pull back the vector bundle $\boxtimes_{i=1}^{r+1} \tOpom (a_i)$ along the map $1_{BN} \times \pi : BN \times Y \to BN^{r+1}$ and obtain an isomorphism \begin{equation}\label{split_boxtimes}(1_{BN} \times \pi)^* \boxtimes_{i=1}^{r+1} \tOpom (a_i) \cong \oplus_{i=1}^{2^{r-1}} (\tOpom (a_1) \boxtimes V_i).\end{equation}  By \cite[Theorem 7.1]{Levine-Witt}, and the equality $e(\Stautbun_{2},\emW)^2 = e(\Stautbun_{2}^*,\emW)^2 $ which follows from \cite[Theorem 11.1]{Levine-EC}, \begin{equation}\label{e(Opm(a))}
e(\tOpom (a_1),\emW)^2 = a_1^2 e(\Stautbun_{2}^*,\emW)^2. 
\end{equation}
Since $\tOpom (a_1)$ and $V_i$ both have rank $2$, \cite[Proposition 9.1]{Levine-Witt} and Equation~\eqref{e(Opm(a))} imply that \begin{equation}\label{eOaboxtimesVi}e(\tOpom (a_1) \boxtimes V_i ,\emW) = a_1^2 e_1^2 - e(V_i, \emW)^2\end{equation} and \begin{equation}\label{pOaboxtimesVi}p_1(\tOpom (a_1) \boxtimes V_i ,\emW) = 2(a_1^2 e_1^2 + e(V_i, \emW)^2).\end{equation} This establishes the claim when $r=1$, because $p_2=e^2$.

We now assume the claim holds for $r-1$. By Equation~\ref{split_boxtimes}, \begin{equation}\label{eindexboxtimes}e((1_{BN} \times \pi)^* \boxtimes_{i=1}^{r+1} \tOpom (a_i), \emW) = \prod_{i=1}^{2^{r-1}} e(\tOpom (a_1) \boxtimes V_i, \emW) \end{equation} and  \begin{equation}\label{pindexboxtimes}p((1_{BN} \times \pi)^* \boxtimes_{i=1}^{r+1} \tOpom (a_i), \emW) = \prod_{i=1}^{2^{r-1}} p(\tOpom (a_1) \boxtimes V_i, \emW),\end{equation} where $p$ denotes the total Pontryagin class. 

Combining Equations~\eqref{eOaboxtimesVi} and \eqref{eindexboxtimes} shows that $$ e((1_{BN} \times \pi)^* \boxtimes_{i=1}^{r+1} \tOpom (a_i), \emW) = \sum_{i=0}^{2^{r-1}} (a_1 e_1)^{2i} \sigma_{2^{r-1} - i} ( e(V_1, \emW)^2, \ldots,  e(V_{2^{r-1}}, \emW)^2),$$ where $\sigma_i$ denotes the $i$th elementary symmetric function. Since the $V_i$ have rank $2$, $$e(V_1, \emW)^2 = p_1(V_1, \emW),$$ and the Whitney sum formula for Pontryagin classes implies that $$ p_{i} (\oplus_{j=1}^{2^{r-1}} V_i, \emW) =\sigma_{i} ( e(V_1, \emW)^2, \ldots,  e(V_{2^{r-1}}, \emW)^2).$$ Since $\oplus_{j=1}^{2^{r-1}} V_i \cong \pi^* \boxtimes_{i=2}^{r+1} \tOpom (a_i)$, it follows by induction that $e( \boxtimes_{i=1}^{r+1} \tOpom (a_i), \emW)$ is in the image of $\Z[e_1^2,\ldots, e_{r+1}^2]$.

Combining equations~\eqref{pOaboxtimesVi} \eqref{eOaboxtimesVi} and \eqref{pindexboxtimes}, we have $$p((1_{BN} \times \pi)^* \boxtimes_{i=1}^{r+1} \tOpom (a_i), \emW) = \prod_{i=1}^{2^{r-1}} (1+ 2(a_1^2 e_1^2 + e(V_i, \emW)^2)+(a_1^2 e_1^2 - e(V_i, \emW)^2)^2 ).$$ Because the elementary symmetric polynomials generate all symmetric polynomials, it follows that $p((1_{BN} \times \pi)^* \boxtimes_{i=1}^{r+1} \tOpom (a_i), \emW) $ is in the image of $$\Z[e_1^2, \sigma_{i} ( e(V_1, \emW)^2, \ldots,  e(V_{2^{r-1}}, \emW)^2): i = 1,\ldots, 2^{r-1}].$$ As above, these elementary symmetric functions are the Pontryagin classes of the pullback of $\boxtimes_{i=2}^{r+1} \tOpom (a_i)$, finishing the proof by induction.
\end{proof}

 Let $p_1,\ldots,p_r$ and $e$ in $\emW^*(B\SL_{d+1}) $ denote the Pontryagin and Euler classes respectively of the dual tautological bundle $\Stautbun_{d+1}^*$ on $B\SL_{d+1}$.

\begin{lemma}\label{e(Sym^ninZ[pie])}
Let $d = 2r+1$ be an odd integer and let $n$ be a positive integer. Let $k$ be a field of characteristic not dividing $2n!$. Then $e(\Sym^{n} \Stautbun_{d+1}^*, \emW)$ is in the image of $\Z[p_1,\ldots,p_r,e] \to \emW^*(B\SL_{d+1})$.
\end{lemma}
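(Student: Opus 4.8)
The plan is to use the splitting principle to reduce everything to a product of copies of $BN$, where $\Sym^n\Stautbun_{d+1}^*$ decomposes into the rank-two bundles already handled by Lemma \ref{eboxtimesOa_is_in_Zei} together with some harmless square-trivial line bundles, and then to reassemble.

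First I would set up the reduction. Since $d+1 = 2(r+1)$, Ananyevskiy's splitting principle \cite[Theorem 6]{Ananyevskiy-SL_projective_bundle_thm} (with Levine's extension \cite[Theorem 4.1]{Levine-Witt}) furnishes a morphism $\pi\colon BN^{r+1} \to B\SL_{d+1}$ with $\pi^*\Stautbun_{d+1}^* \cong \bigoplus_{i=1}^{r+1} W_i$, where $W_i$ is the pullback under the $i$th projection of the dual tautological rank-two bundle $\Stautbun_2^*$ on $BN$, and such that $\pi^*\colon \emW^*(B\SL_{d+1}) \to \emW^*(BN^{r+1})$ is injective. As $W_i$ has rank two and trivial determinant, $p_1(W_i) = e(W_i)^2 = e_i^2$ and $W_i$ has no higher Pontryagin classes, so the Whitney formula yields $\pi^* p_j = \sigma_j(e_1^2,\dots,e_{r+1}^2)$ for $j \le r$ and $\pi^* e = \prod_{i=1}^{r+1} e_i$ (up to sign). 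Thus $\pi^*$ carries the image of $\Z[p_1,\dots,p_r,e]$ isomorphically onto the subring $R \subseteq \emW^*(BN^{r+1})$ generated by the $\sigma_j(e_\bullet^2)$ and $\prod_i e_i$; note that $R$ contains every symmetric polynomial in $e_1^2,\dots,e_{r+1}^2$ (since $(\prod_i e_i)^2 = \sigma_{r+1}(e_\bullet^2)$), together with $(\prod_i e_i)\cdot(\text{symmetric in the }e_i^2)$. By injectivity of $\pi^*$ it then suffices to show $\pi^* e(\Sym^n \Stautbun_{d+1}^*,\emW) \in R$, and $\pi^*\Sym^n\Stautbun_{d+1}^* \cong \bigoplus_{|\alpha|=n}\bigotimes_{i=1}^{r+1}\Sym^{\alpha_i}W_i$.

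Next I would decompose the symmetric powers representation-theoretically. For a rank-two bundle $W$ from the standard representation of $N$, pairing the torus weights $a,a-2,\dots,-a$ of $\Sym^a W$ exhibits it as a direct sum of rank-two bundles $\tOpom(c)$ (with $c$ running over $a,a-2,\dots$) plus exactly one square-trivial line bundle when $a$ is even (the middle weight space, trivial or the sign bundle). The hypothesis $\mathrm{char}\,k \nmid 2n!$ makes this valid for all $a \le n$. By \cite[Theorem 7.1]{Levine-Witt}, $e(\tOpom(c),\emW) = \pm c\cdot e(\Stautbun_2^*,\emW)$, so after splitting each $W_i$ further, each $\Sym^{\alpha_i}W_i$ becomes a sum of rank-two bundles with Euler classes in $\Z\cdot e_i$, plus (for $\alpha_i$ even) a square-trivial line bundle of vanishing Euler class. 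Distributing the tensor products, every direct summand of $\bigotimes_i\Sym^{\alpha_i}W_i$ is an external product $\boxtimes_{i\in A}\tOpom_i(c_i)\boxtimes(\text{square-trivial line bundles on the factors }\notin A)$ for some $A \subseteq\{1,\dots,r+1\}$. If $A=\emptyset$ the summand has odd rank, so its $\emW$-Euler class is $0$ by \cite[Lemma 3]{Ananyevskiy-SL_projective_bundle_thm} \cite[Lemma 4.3]{Levine-Witt}; if $|A|\ge 2$, Lemma \ref{eboxtimesOa_is_in_Zei} applied on $BN^{|A|}$ gives an Euler class in $\Z[e_i^2 : i\in A]$; and if $A=\{i\}$, absorbing the square-trivial twists via the $\SL^c$-orientation of $\emW$ (Proposition \ref{prop:SL-oriented-trick}) gives Euler class $\pm c_i e_i$.

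Finally I would reassemble: since $\emW$-Euler classes are multiplicative over direct sums, $\pi^* e(\Sym^n\Stautbun_{d+1}^*,\emW)$ is a product, over all these summands, of factors lying in $\Z[e_\bullet^2]$, or equal to some $\pm c_i e_i$, or equal to $0$. If $n$ is even, then $\alpha=(n,0,\dots,0)$ produces an $A=\emptyset$ summand, so the product vanishes, and $0\in R$. If $n$ is odd there is no $A=\emptyset$ summand, and by the $S_{r+1}$-symmetry of $\bigoplus_i W_i$ the number $m$ of summands with $A=\{i\}$ is the same for every $i$; hence the product equals $(\prod_i e_i)^m$ (up to sign) times a polynomial in the $e_i^2$, and since both the product and $(\prod_i e_i)^m$ are $S_{r+1}$-symmetric that remaining polynomial is symmetric, so the product lies in $R$. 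The hard part, I expect, is exactly this last bookkeeping together with the treatment of the summands with $|A|\le 1$ — keeping careful track of the square-trivial line bundles contributed by the even symmetric powers — where the essential inputs are the $\SL^c$-orientation of $\emW$ and the vanishing of $\emW$-Euler classes of odd-rank bundles.
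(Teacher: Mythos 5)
Your argument follows the paper's proof essentially step for step: pull back along $BN^{r+1}\to B\SL_{d+1}$ via the splitting principle, decompose $\pi^*\Sym^n\Stautbun_{d+1}^*$ into external tensor products of rank-$\le 2$ pieces using the weight decomposition of $\Sym^a$ of a rank-two bundle, handle the summands with at least two rank-two factors by Lemma~\ref{eboxtimesOa_is_in_Zei}, compute the single-rank-two-factor summands as integer multiples of $e_i$ (the paper packages this step via \cite[Theorem 8.1]{Levine-Witt} rather than \cite[Theorem 7.1]{Levine-Witt}), and conclude from the $S_{r+1}$-symmetry of the whole decomposition. The one detail you make more explicit than the paper is the case of $n$ even, where a summand with all $a_i$ even has odd rank and hence vanishing $\emW$-Euler class, forcing the total product to vanish --- a welcome clarification of a case the paper's write-up passes over silently.
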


\begin{proof}
Let $f$ denote the composite $$BN^{r+1} \to B\SL_{2}^{r+1} \to B\SL_{d+1}$$ of the $(r+1)$-fold product of the canonical map $BN \to B\SL_{2}$ with the map $ B\SL_{2}^{r+1} \to B\SL_{d+1}$ classifying the external direct sum $\oplus_{i=1}^{r+1} \Stautbun_2$. There is an isomorphism \begin{equation}\label{Sym^n_as_sum}f^* \Sym^{n} \Stautbun_{d+1}^* \cong  \bigoplus_{\substack{(a_1,\ldots,a_{r+1}) \in \Z_{\geq 0}^{r+1} \\ \sum_i a_i = n}} \boxtimes_{i=1}^r \Sym^{a_i}\tO(1).\end{equation} By inspection, the symmetric powers of the tautological bundle $\tO(1)$ on $BN$ split into a sum of bundles of rank $\leq 2$, cf. \cite[p. 38]{Levine-Witt}: \begin{equation}\label{SymtOa}
\Sym^{a}\tO(1) \cong
\begin{cases}
\oplus_{l = 0}^b \tO^{(-1)^l} (i-2l) , \text{ when } a= 2b+1 \text{ is odd}\\
\oplus_{l = 0}^{b-1} \tO^{(-1)^l} (i-2l) \oplus \calO  ,\text{ when } a = 2b \text{ is even and } b\text{ is even }\\
\oplus_{l = 0}^{b-1} \tO^{(-1)^l} (i-2l) \oplus \gamma  ,\text{ when } a = 2b \text{ is even and } b\text{ is odd },
\end{cases}
\end{equation} where $\gamma$ is the line bundle corresponding to the representation $N \to \GL_1$ sending the torus to $1$ and $\begin{pmatrix}
0 & 1  \\
-1 & 0 
\end{pmatrix}$ to $-1$.

Combining Equations~\eqref{Sym^n_as_sum} and \eqref{SymtOa}, we can decompose $f^* \Sym^{n} \Stautbun_{d+1}^*$ into a direct sum with summands which have various numbers of factors of rank $2$. Separate these summands into those with at least two rank $2$ factors and those with only one rank $2$ factor, if any of the latter sort appear. (This occurs when we can take all but one $a_i$ to be even.) The direct sum of the latter such terms can alternatively be expressed as a sum of pullbacks of $\Sym^{a_i} \tO(1)$ under some projection $N^{r+1} \to N$ tensored with some $\gamma$'s pulled back from other projections. We may ignore the factors of $\gamma$ by \cite[\S 10 p. 78 (2)]{Levine-EC} because $e(\gamma) = 0$ as $\gamma$ is a bundle of odd rank. Since $\tO$ is rank $2$, and the characteristic of $k$ does not divide $2 a_i$, we may apply \cite[Theorem 8.1]{Levine-Witt} and conclude that $e(\Sym^{a_i} \tO(1))$ is an integer multiple of a power of $e(\tO(1))$. Since the summands are symmetric under the permutation action of the symmetric group on $r+1$ letters on $B N^{r+1}$, it follows that the Euler class of these summands is an integer multiple of a power of $e$.

We now consider the Euler class of the rest of the summands. Namely, it suffices to show that the Euler class $\epsilon_1$ of the summands with at least two rank $2$ factors is also in the image of $\Z[p_1,\ldots,p_r,e]$. We may again ignore the factors of $\gamma$, as these do not change the Euler class. By Lemma \ref{eboxtimesOa_is_in_Zei}, $\epsilon_1$ is the image of an element of $\Z[e_1^2,\ldots, e_{r+1}^2]$.  Moreover, because each tuple $(a_1, \ldots, a_{r+1} )$ of the direct sum occurs in every permutation, we may choose an element of $\Z[e_1^2,\ldots, e_{r+1}^2] $ which is invariant under the permutation action of the symmetric group on $r+1$ letters and which maps to $\epsilon_1$. Thus, $\epsilon_1$ is in the image of the map $$\Z[\sigma_1(e_1^2,\ldots, e_{r+1}^2), \ldots, \sigma_{r+1}(e_1^2,\ldots, e_{r+1}^2)] \to \emW^*(B N^{r+1}),$$ where $\sigma_i$ denotes the $i$th elementary symmetric polynomial. Since $\sigma_i((e_1^2,\ldots, e_{r+1}^2)) $ is the pullback to $BN$ of $p_i(\Stautbun_{d+1}^* \to B\SL_{d+1}, \emW)$, we have that $\epsilon_1$ is in the image of $\Z[p_1,\ldots,p_r,e]$ as desired.

\end{proof}

The Pontryagin and Euler classes of $\tautbun_{d+1}^* \to \Gr(d,n)$ are pulled back from those of $\tautbun_{d+1}^* \to B\GL_{d+1}$. The $\emW^*$-cohomology and twisted cohomology of $B\GL_{d+1}$ injects into that of $B\SL_{d+1}$, $$\emW^*(B\GL_{d+1}) \oplus \emW^*(B\GL_{d+1}, \det \Gtautbun) \subseteq \emW^*(B\SL_{d+1}),$$ by \cite[Theorem 4.1]{Levine-Witt}. Under this injection, the Pontrygin and Euler classes of $\Gtautbun_{d+1}^* \to B\GL_{d+1}$ are sent to $p_1, \ldots, p_r, p_{r+1}$ and $e$, respectively, so we will let $p_i$ and $e$ denote the corresponding characteristic classes of $\Gtautbun^* \to B\GL_{d+1}$ and $\tautbun_{d+1}^* \to \Gr(d,n)$ as well. Let $\pi: \Gr(d,n) \to \Spec k$ denote the structure map, and $$\pi_*: \emW^*(\Gr(d,n), (\det \tautbun_{d+1}^*)^{\otimes -(n+1)})\to \W(k)$$ the induced push forward on $\emW^*$-cohomology.

\begin{lemma}\label{monomial_top_degree_as_cen-d}
Let $d = 2r+1$ be odd. For any non-negative integers $a_1, \ldots, a_{r+1},b$ such that $\sum (4i) a_i + b(d+1) = (d+1)(n-d)$ and $b \equiv n+1 \mod 2$, the monomial $e^b \prod_{i=1}^{r+1} p_i^{a_i}$ in $\emW^*(\Gr(d,n), (\det \tautbun_{d+1}^* )^{\otimes b})$ is in the image of $\Z[e]$, or in other words, there exists $c$ in $\Z$ such that $e^b \prod_{i=1}^{r+1} p_i^{a_i} = c e^{n-d}$.
\end{lemma}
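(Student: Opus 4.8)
The plan is to reduce the statement, by degree and twist bookkeeping, to a question about the top twisted Witt--sheaf cohomology of $\Gr(d,n)$, and then to pin down the relevant integer. Recall that for a rank $2m$ bundle $W$ with oriented determinant the $\emW$-Euler class lies in degree $2m$ and the Borel (Pontryagin) classes $p_i(W)$ lie in degree $4i$, and that $p_{r+1}=e^2$ since $\tautbun^*$ has rank $d+1=2(r+1)$. Hence, replacing $p_{r+1}^{a_{r+1}}$ by $e^{2a_{r+1}}$, the monomial $e^{b}\prod_{i=1}^{r+1}p_i^{a_i}$ becomes $e^{b'}\prod_{i=1}^{r}p_i^{a_i}$ with $b'=b+2a_{r+1}$, and the hypothesis $\sum(4i)a_i+b(d+1)=(d+1)(n-d)=\dim\Gr(d,n)$ says that this lies in $\emW^{\dim\Gr(d,n)}\!\big(\Gr(d,n),(\det\tautbun^*)^{\otimes b'}\big)$. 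Since $\emW$ is insensitive to twisting by squares of line bundles (Proposition \ref{prop:SL-oriented-trick}), and $b\equiv n+1\pmod 2$ with $d$ odd forces $b'\equiv n-d\equiv-(n+1)\pmod 2$, this twist agrees with $\omega_{\Gr(d,n)/k}=(\det\tautbun^*)^{\otimes-(n+1)}$ up to a square; the same holds for $e^{n-d}$. As $e^{k}$ has degree $(d+1)k$, the assertion ``in the image of $\Z[e]$'' is precisely the assertion that the monomial is an integer multiple of $e^{n-d}$ inside the single group $\emW^{\dim\Gr(d,n)}\!\big(\Gr(d,n),\omega_{\Gr(d,n)/k}\big)$.

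Next I would identify this group and the class $e^{n-d}$ in it. Grassmannians are cellular, so by Poincaré duality for Witt--sheaf cohomology together with $H^0(\Gr(d,n),\underline{\W})=\W(k)$, the pushforward $\pi_*$ identifies $\emW^{\dim\Gr(d,n)}(\Gr(d,n),\omega_{\Gr(d,n)/k})$ with $\W(k)$. It therefore suffices to show $\pi_*(\text{monomial})\in\Z\cdot\pi_*(e^{n-d})\subseteq\W(k)$, where $\Z$ denotes the image of $\Z\to\W(k)$. Now $e^{n-d}$ is the Euler class of $(\tautbun^*)^{\oplus(n-d)}$, a rank $(d+1)(n-d)$ bundle carrying a very non-degenerate section: a tuple $(\ell_1,\dots,\ell_{n-d})$ of general linear forms on $\P^n$ restricts to a section of $\tautbun^*$ on each factor, and these vanish simultaneously exactly at the unique $d$-plane $\{\ell_1=\dots=\ell_{n-d}=0\}$, transversally. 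Hence $\pi_*(e^{n-d})$ is the Euler number of a bundle whose zero locus over any field is a single reduced rational point, so its rank is $1$ and its signature (over any real embedding) is $\pm 1$; thus $\pi_*(e^{n-d})=\lra{\pm1}$ is a unit in $\W(k)$. Consequently $\pi_*(\text{monomial})=c\,\pi_*(e^{n-d})$ for a unique $c\in\W(k)$, and the problem reduces to showing $c$ lies in the image of $\Z$.

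Finally I would establish integrality of $c$. Everything in sight --- $\Gr(d,n)$, $\tautbun^*$, its characteristic classes, the monomial, and the identity $\pi_*(\text{monomial})=c\,\pi_*(e^{n-d})$ --- is defined over $\Z$, so by stability of $\emW$-cohomology and of the Gysin maps under base change (cf.\ Corollary \ref{cor:Euler-number-base-change}) the element $c$ is the image of a fixed class independent of $k$, and it suffices to compute it after real realization. For the cellular variety $\Gr(d,n)$ the real cycle class map of Hornbostel--Wendt--Xie--Zibrowius \cite{Hornbostel_real_cycle_class_map} is an isomorphism in the relevant degree and twist, carrying $\emW^{\dim}(\Gr_\RR,\omega)$ onto $H^{\dim}(\Gr_\RR(d+1,n+1),\Z(\omega))\cong\Z$, carrying $\pi_*$ onto the topological pushforward, and sending the monomial and $e^{n-d}$ to the corresponding monomial of topological Pontryagin/Euler classes of the real tautological bundle and to the topological Euler class of $(\tautbun^*_\RR)^{\oplus(n-d)}$ respectively. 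Both live in $\Z$ and the latter equals $\pm1$ by the explicit section, so the monomial is an integer multiple of it there; that integer is $c$ up to sign, whence $c\in\Z$. I expect the main obstacle to be exactly this last step: verifying that the Witt-valued characteristic number is genuinely pulled back from $\Z$, that the real cycle class map is an isomorphism in the needed range for the Grassmannian, and that the twists are matched so that $H^{\dim}(\Gr_\RR(d+1,n+1),\Z(\omega))$ is honestly $\Z$. An alternative that avoids real realization is to use that the rank $n-d$ quotient bundle $Q$ satisfies $\tautbun^*\oplus Q^*\cong\calO^{n+1}$, giving the Whitney relation $p(\tautbun^*)\,p(Q^*)=1$ with $p_j(Q^*)=0$ for $j>\lfloor(n-d)/2\rfloor$; expanding these relations in degree $\dim\Gr(d,n)$ rewrites each admissible monomial as an explicit integer multiple of $e^{n-d}$, which is the $\emW$-analogue of the classical fact that $\CH^{\dim}\!\big(\Gr(d,n)\big)=\Z\cdot e^{n-d}$.
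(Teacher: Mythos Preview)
Your main line has a genuine gap at the integrality step. Granting that $\pi_*:\emW^{\dim\Gr}(\Gr(d,n),\omega)\to\W(k)$ is an isomorphism and that $\pi_*(e^{n-d})=\langle\pm1\rangle$, you obtain $c\in\W(k)$ with $\text{monomial}=c\cdot e^{n-d}$; the problem is showing $c$ lies in the image of $\Z\to\W(k)$. Real realization does not accomplish this: over $\RR$ one has $\W(\RR)\cong\Z$ via signature, so $c_\RR$ is automatically an integer, but that integer is the \emph{signature} of whatever universal class $c$ descends from, not the class itself. If the universal class lived in $\W(\Z)\cong\Z$ you would be done, but making sense of $\emW$-cohomology of $\Gr(d,n)_\Z$ and proving the required base-change and Poincar\'e-duality statements over $\Spec\Z$ is exactly the sort of input the paper takes pains to avoid; and over $\Z[1/2]$, where the paper's machinery lives, $\W(\Z[1/2])$ is strictly larger than $\Z$ (it contains $\langle 2\rangle$), so the argument fails there. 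You rightly flag this as the obstacle, but the proposal does not surmount it.

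The alternative you sketch at the end is essentially the paper's approach, and it works precisely because it produces the integer $c$ \emph{before} passing to Witt cohomology, so no extraction from $\W(k)$ is needed. The paper's one additional move, which you leave implicit, is to observe that the Whitney-relation ring
\[
A=\Z[p_1,\dots,p_{r+1},p_1^\perp,\dots,p_s^\perp]\big/\big\langle(1+p_1+\cdots+p_{r+1})(1+p_1^\perp+\cdots+p_s^\perp)-1\big\rangle
\]
(with $s=\lfloor(n-d)/2\rfloor$) is canonically isomorphic to $H^*(\CC\Gr(r,r+s);\Z)$ via $p_i\mapsto c_i(\tautbun^*)$. In that ring the top degree is $\Z$ with generator $p_{r+1}^s$, so by pure topology any monomial of the correct degree satisfies $\prod p_i^{a_i'}=c\,p_{r+1}^s$ with $c\in\Z$; one then applies the ring map $\tau:A\to\emW^*(\Gr(d,n))$, uses $p_{r+1}\mapsto e^2$, and adjusts by the appropriate power of $e$ to conclude. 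So your ``alternative'' is the right argument, and the identification with the cohomology of a smaller complex Grassmannian is the key step that makes it go through.
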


\begin{proof}
Let $Q$ denote the quotient bundle on $\Gr(d,n)$, defined by the short exact sequence $$0 \to \tautbun_{d+1} \to \calO^{n+1} \to Q \to 0 .$$ In particular, the rank of $Q$ is $n-d$. The non-vanishing Pontryagin classes of $\tautbun_{d+1}$ are $p_1, \ldots, p_{r}, p_{r+1}$ with $e^2 =  p_{r+1}$. Define $s$ so that $n-d = 2s$ or $n-d=2s+1$ depending on whether $n-d$ is odd or even. Let $p_1^{\perp}, \ldots, p_s^{\perp} $ denote the non-vanishing Pontryagin classes of the dual to the quotient bundle $Q^*$ on $\Gr(d,n)$. By \cite[Lemma 15]{Ananyevskiy-SL_projective_bundle_thm}, $$(1+p_1+ \ldots + p_{r+1})(1+ p_1^{\perp} + \ldots + p_s^{\perp})=1 $$ in $W^*(\Gr(d,n))$. Setting the notation $A$ for the ring $$A =  \Z [p_1, \ldots, p_{r}, p_{r+1}, p_1^{\perp}, \ldots, p_s^{\perp} ]/\langle (1+p_1+ \ldots + p_{r} + p_{r+1})(1+ p_1^{\perp} + \ldots + p_s^{\perp}) - 1\rangle, $$ we therefore have a homomorphism $\tau: A \to \emW^*(\Gr(d,n).$ There is a canonical  isomorphism $$\H^*(\CC\Gr(r,r+s); \Z) \cong A, $$ where $\H^*(\CC\Gr(r,r+s); \Z)$ denotes the singular cohomology of the $\CC$-manifold associated to the $\CC$-points of the Grassmannian $\Gr(r,r+s)$, sending the $i$th Chern class of the dual tautological bundle to $p_i$. The top dimensional singular cohomology $\H^{(r+1)s}(\CC\Gr(r,r+s); \Z)$ is isomorphic to $\Z$ by Poincar\'e duality. Under our chosen isomorphism, the monomial $p_{r+1}^s$ corresponds to the top Chern class $c_{(r+1)s}(\tautbun_{r+1}^{\oplus s} \to \CC\Gr(r,r+s))$ of the direct sum of $s$-copies of the dual tautological bundle, which is a generator (with the usual $\CC$-orientations, $c_{(r+1)s}(\tautbun_{r+1}^{\oplus s} \to \CC\Gr(r,r+s))$ counts the number of linear subspaces of dimension $r$ in a complete intersection of $s$ linear hypersurfaces in $\CC\P^{r+s}$, and this number is $1$, cf. Remark \ref{arithmetic_count_d_planes_complete_intersection} and Lemma \ref{discpi_*monomial=1}). Therefore, for any monomial $\prod_{i=1}^{r+1} p_i^{a'_i}$ with $\sum_{i=1}^{r+1} ia'_i = (r+1)s$, there is an integer $c'$ such that \begin{equation}\label{CGrassmannian_generation} \prod_{i=1}^{r+1} p_i^{a'_i} = c' p_{r+1}^s\end{equation} in  $\H^*(\CC\Gr(r,r+s); \Z).$

Since $d$ is odd, $n+1 \equiv n-d \mod 2$, and therefore $b \equiv n-d \mod 2$. Note that if $n-d$ is odd, $b \geq 0$. We may then define a non-negative integer $b'$ by the rule $$b' = \begin{cases}
b/2, {\text if } n-d \equiv 0 \mod 2\\
(b-1)/2, {\text if } n-d \equiv 1 \mod 2 .
\end{cases} $$
With this notation, $n-d-b= 2(s-b')$. Thus $$\sum (4i) a_i = (d+1)(n-d-b) = (d+1)2(s-b') = 4(r+1)(s-b'),$$ whence $$ \sum i a_i = (r+1)(s-b').$$ By Equation~\eqref{CGrassmannian_generation}, there is an integer $c$ so that we have the equality $$ 
p_{r+1}^{b'}\prod_{i=1}^{r+1} p_i^{a_i} = c  p_{r+1}^s
$$ in  $\H^*(\CC\Gr(r,r+s); \Z).$ Applying $\tau$, we see that $$e^{2b'} \prod_{i=1}^{r+1} p_i^{a_i} = c'  e^{2s},$$ which implies the claim, either immediately in the case that $n-d$ is even, or by multiplying by $e$ if $n-d$ is odd.

\end{proof}

\begin{lemma}\label{discpi_*monomial=1}
Let $d$ be odd. Suppose $k$ is a finite field such that $-1$ is a square. For any non-negative integers $a_1,\ldots,a_{r+1},b$ such that $b(d+1)+\sum_{i=1}^{r+1} 4i a_i= (d+1)(n-d)$ and $b \equiv n+1 \mod 2$, the pushforward  $\pi_*(e^b \prod_{i=1}^{r+1} p_i^{a_i})$ has trivial discriminant. 
\end{lemma}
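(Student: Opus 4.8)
The plan is to collapse the monomial down to a single power of the Euler class and then compute. By Lemma~\ref{monomial_top_degree_as_cen-d} (whose hypotheses are exactly those of the present lemma), there is an integer $c$ with $e^{b}\prod_{i=1}^{r+1}p_i^{a_i}=c\,e^{n-d}$ in $\emW^{(d+1)(n-d)}(\Gr(d,n),(\det\tautbun_{d+1}^{*})^{\otimes b})$. Since $d$ is odd we have $b\equiv n+1\equiv n-d\pmod 2$, so, $\emW$ being $\SL^{c}$-oriented (Proposition~\ref{prop:SL-oriented-trick}), this group is canonically identified with the domain $\emW^{*}(\Gr(d,n),(\det\tautbun_{d+1}^{*})^{\otimes -(n+1)})$ of $\pi_{*}$, and $\pi_{*}(e^{b}\prod p_i^{a_i})=c\cdot\pi_{*}(e^{n-d})\in\W(k)$. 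Because $-1\in(k^{\times})^{2}$, every sum of classes $\langle\pm1\rangle$ is isometric to a sum of copies of $\langle1\rangle$ and hence has trivial discriminant, so it suffices to show $\pi_{*}(e^{n-d})=\langle1\rangle$ (or merely that it has trivial discriminant).

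For this, first note that Euler classes of ring spectra are multiplicative on direct sums, so $e^{n-d}=e\big((\tautbun^{*})^{\oplus(n-d)},\emW\big)$. The bundle $W:=(\tautbun^{*})^{\oplus(n-d)}$ on $\Gr(d,n)$ has rank $(d+1)(n-d)=\dim\Gr(d,n)$ and $\det W=\calO(n-d)$; since $d$ is odd, $\omega_{\Gr(d,n)}\otimes\det W=\calO(-d-1)$ is a square, so $W$ is relatively orientable and carries its canonical relative orientation $\rho$ (Remark~\ref{canonical_rel_orientation_sym}). Unwinding the twist identifications of \S\ref{subsec:yoga} one sees that $\pi_{*}(e^{n-d})$ is precisely the Euler number $n(W,\rho,\emW)\in\W(k)$.

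I would then compute this Euler number via the section $\sigma$ of $W$ given by the $n-d$ coordinate linear forms $x_{d+1},\dots,x_{n}$ on $\P^{n}$. Its zero scheme is the set of $d$-planes contained in $\{x_{d+1}=\dots=x_{n}=0\}$, i.e. the single reduced $k$-point $P$ corresponding to that plane itself. In the canonical affine chart around $P$ (identifying a neighbouring $d$-plane with the $(d+1)\times(n-d)$ matrix $A$ whose row space it is), $\sigma$ becomes the identity map $A\mapsto A$; so $\sigma$ is very non-degenerate with a single simple zero at $P$ and Jacobian $1$. By Remark~\ref{canonical_rel_orientation_sym} the canonical chart and the resulting trivialization of $W$ form a system of coordinates compatible with $\rho$ (Definition~\ref{def:coordinates}), so by Proposition~\ref{prop:meta-compute} and Example~\ref{F=0_is_one_point} the local index at $P$ is the Scheja--Storch form of $x_{d+1},\dots,x_{n}$, which equals $\langle1\rangle$ since the Jacobian is $1$. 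Hence $n(W,\rho,\emW)=\langle1\rangle$ by \eqref{eq:eGS-formula} (equivalently Meta-Corollary~\ref{cor:meta}), and therefore $\pi_{*}(e^{b}\prod p_i^{a_i})=c\langle1\rangle$, which has trivial discriminant.

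The reduction and the discriminant bookkeeping are routine; the one point deserving care is the last paragraph, namely checking that the canonical relative orientation, the canonical affine coordinates, and the induced trivialization of $(\tautbun^{*})^{\oplus(n-d)}$ really satisfy conditions (1)--(3) of Definition~\ref{def:coordinates}, so that Proposition~\ref{prop:meta-compute} applies and no unit of possibly non-square discriminant sneaks into the local index. (A cross-check: over $\CC$ and $\RR$ this count is $1$, matching the computation appearing inside the proof of Lemma~\ref{monomial_top_degree_as_cen-d}.)
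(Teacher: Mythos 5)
Your proof follows essentially the same route as the paper's: reduce via Lemma~\ref{monomial_top_degree_as_cen-d} to showing $\pi_*(e^{n-d})$ has trivial discriminant, identify $\pi_*(e^{n-d})$ with the Euler number of $(\tautbun^*)^{\oplus(n-d)}$, and compute that Euler number using the section given by the coordinate linear forms $x_{d+1},\dots,x_n$, whose zero scheme is a single rational point with unit Jacobian. The only slight difference is that the paper works over $\Z$ and only concludes that the Jacobian is a unit (hence $\pm 1$), then invokes the hypothesis that $-1$ is a square in $k$ to kill the sign ambiguity; you instead assert the Jacobian is exactly $1$ in the canonical coordinates, which is true but requires the sign-compatibility check that you rightly flag — the paper's less precise formulation sidesteps this entirely via the assumption on $k$.
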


\begin{remark}
The condition $b \equiv n+1 \mod 2$ ensures that $e^b \prod_{i=1}^{r+1} p_i^{a_i}$ lies in the appropriate twist of the Witt-cohomology of $\Gr(d,n)$, i.e. $e^b \prod_{i=1}^{r+1} p_i^{a_i}$ is in $\emW^*(\Gr(d,n),\omega_{\Gr(d,n)}/k)$, as opposed to $\emW^*(\Gr(d,n))$, so that we may apply $\pi_*$. The condition on the sum  $b(d+1)+\sum_{i=1}^{r+1} 4i a_i$ ensures that $e^b \prod_{i=1}^{r+1} p_i^{a_i}$ lies in the $(d+1)(n-d)$-degree $\emW^*$-cohomology of $\Gr(d,n)$, so the codomain of $\pi_*$ is $\W(k)$.
\end{remark}

\begin{proof}
By Lemma \ref{monomial_top_degree_as_cen-d}, it suffices to show that $\disc \pi_*e^{n-d}=1$. We may identify $\pi_*e^{n-d}$ with the Euler number of $\oplus_{j=1}^{n-d} \tautbun^*,$ $$\pi_*e^{n-d} = n(\oplus_{j=1}^{n-d} \tautbun^*, \emW).$$

Let $x_0, \ldots, x_n$ be coordinates on projective space $\P_k^n = \Proj k[x_0,\ldots, x_n]$. The Euler number $ n(\oplus_{j=1}^{n-d} \tautbun^*, \emW) $ can be calculated with the section $\sigma = \oplus_{i=d+1}^n x_i$ as in \S\ref{subsec:poincare-hopf-euler-number}. There is an analogous section $\sigma_{\Z} = \oplus_{i=d+1}^n x_i$ of $\oplus_{j=1}^{n-d} \tautbun^*_{\Z} \to \Gr(d,n)_{\Z}$ defined over $\Z$. The zero locus $\sigma_{\Z} = 0$ is the single $\Z$-point of the Grassmannian associated to the linear subspace of $\P_k^n$ given by $x_{d+1} = x_{d+2} = \ldots = x_n = 0$. 

The vanishing locus of $\sigma_{\Z}$ is the origin of the affine space $\A^{(d+1)(n-d)}_\Z = \Spec \Z[a_{ij}: i =0,\ldots, d, j=d+1,\ldots, n] \hookrightarrow \Gr(d,n)$ whose point $(a_{ij})$ corresponds to the row space of \begin{equation}\label{Grassmannian_coords_aij} 
\begin{pmatrix}
1&0&\cdots & 0&a_{0,d+1} & a_{0,d+2} & \cdots & a_{0,n} \\
0&1&\cdots & 0&a_{1,d+1} & a_{1,d+2} & \cdots & a_{1,n} \\
\vdots &\vdots &\ddots & \vdots &\vdots  & \vdots  & \ddots & \vdots  \\
0&0&\cdots & 1&a_{d,d+1} & a_{d,d+2} & \cdots & a_{d,n} 
\end{pmatrix} .\end{equation} See for example, \cite[Section 3.2]{EisenbudHarris}, for a description of these coordinates on this open affine of the Grassmannian. Let $e_0,\ldots, e_n$ denote the standard basis of the free module of rank $n+1$. Let $\tilde{e}_0,\ldots, \tilde{e}_n$ denote the basis consisting of the row space of  \eqref{Grassmannian_coords_aij} followed by $e_{d+1},e_{d+1},\ldots,e_{n}$. Let $\tilde{x}_0,\ldots, \tilde{x}_n$ denote the dual basis to $\tilde{e}_0,\ldots, \tilde{e}_n$. Over this $\A^{(d+1)(n-d)}_\Z$, the vector bundle $\tautbun^*_{\Z}$ is trivialized by the basis of sections $\{\tilde{x}_0,\ldots, \tilde{x}_d\}$. Then we may interpret $\sigma$ as a function $\A^{(d+1)(n-d)}_\Z \to \A^{(d+1)(n-d)}_\Z$. Namely, $$x_u ((a_{ij}))= \sum_{l=0}^{d} (x_u(a_{ij}))(\tilde{e_l})  \tilde{x}_l$$ for $$u = d+1, \ldots, n.$$ As a subscheme, $\sigma_{\Z} = 0$ is therefore the zero locus of $$ (x_u(a_{ij}))(\tilde{e_l}) = a_{l,u}$$ for $l=0,\ldots, d$ and $u = d+1, \ldots, n$. Thus the subscheme of $\Gr(d,n)_{\Z}$ given by $\{\sigma_{\Z} = 0\}$ is a section of the structure map $\Gr(d,n)_{\Z} \to \Spec \Z$. In particular, it is finite and \'etale of rank $1$. It follows that the Jacobian of $\sigma$ (which is described further at the beginning of Section \ref{subsection:arithmetic_count_d-planes_ci}) $$\Jac \sigma \in \Hom (\det T\Gr(d,n)\vert_{\{\sigma_{\Z} = 0\}}, \det \oplus_{j=1}^{n-d} \tautbun^*_{\Z}\vert_{\{\sigma_{\Z} = 0\}} )$$ is nowhere vanishing. Thus under the relative orientation $$\Hom (\det T\Gr(d,n), \det \oplus_{j=1}^{n-d} \tautbun^*_{\Z} ) \cong L^{\otimes 2},$$ we have that $\Jac \sigma$ is a nowhere vanishing section of the restriction of $L^{\otimes 2}$. Thus $\langle \Jac \sigma_{k}  \rangle$ is either $\langle -1 \rangle$ or $\langle 1 \rangle$; but $\lra{-1} = 1$ by assumption. Since $n(\oplus_{j=1}^{n-d} \tautbun_k^*, \emW) = \langle \Jac \sigma_{k}  \rangle$ (cf. Example \ref{F=0_is_one_point}), this proves the claim.

\end{proof}
\subsection{An arithmetic count of the $d$-planes on a complete intersection in projective space}\label{subsection:arithmetic_count_d-planes_ci}

A complete intersection of hypersurfaces $$X=\{F_1 =F_2 = \ldots = F_j=0\} \subset \mathbb{P}^n$$ with $F_i$ of degree $n_i$ gives rise to a section $\sigma$ of $V= \oplus_{i=1}^j \Sym^{n_i} \tautbun^*$, defined by $\sigma(\mathbb{P}L) = \oplus_{i=1}^j F_i\vert_L$, where $L$ is any $d+1$ dimensional linear subspace of $\mathbb{A}^{n+1}$ containing the origin, and $\mathbb{P}L$ denotes the corresponding point of $\Gr(d,n)$. The zeros of $\sigma$ are then precisely the $d$-planes in $X$. See for example \cite{Debarre_Manivel}. By \cite[Th\'eor\`eye 2.1]{Debarre_Manivel}, the closed subscheme $\{ \sigma = 0\}$ of $\Gr(d,n)$ is smooth for general $X$ and either of the expected dimension $(d+1)(n-d) - \Rrank V = (d+1)(n-d) - \sum_{i=1}^j {n_i + d \choose d}$ or empty, with the empty case occurring exactly when one or both of $(d+1)(n-d) - \Rrank V$ and $n-2r-j$ is less than $0$. In particular, when $(d+1)(n-d) - \Rrank V =0$, the zeros of $\sigma$ are isolated and \'etale over $k$ for a general complete intersection $X$. The canonical relative orientation (Remark \ref{canonical_rel_orientation_sym}) of $V$ determines an isomorphism $\Hom(\det T\Gr(d,n), \det V) \cong L^{\otimes 2}$ for a line bundle $L$ on $\Gr(d,n)$. The Jacobian determinant $\Jac \sigma$ at a zero $p$ of $\sigma$ is an element of the fiber of the vector bundle $\Hom(\det T\Gr(d,n), \det V)$ at $p$. Choosing any local trivialization of $L$, we have a well-defined element $\Jac \Sigma (p)$ in $k(p)/(k(p)^*)^2$, which can also be computed by choosing a local trivialization of $V$ and local coordinates of $\Gr(d,n)$ compatible with the relative orientation and computing $\Jac \Sigma (p) = \det \big( \frac{\partial \sigma_k}{\partial x_l} \big)$.

\begin{corollary}\label{arithmetic_count_d_planes_complete_intersection}
Let $X=\{F_1 =F_2 = \ldots = F_j=0\} \subset \mathbb{P}^n$ be a general complete intersection of hypersurfaces $F_i = 0$ of degree $n_i$ in $\mathbb{P}^n_k$ projective space over a field $k$. Suppose that $\sum_{i=1}^j {n_i + d \choose d}= (d+1)(n-d) $ and $\sum_{i=1}^j \frac{n_i}{d+1}{n_i + d \choose d} + n+1$ is even. Then $$ \sum_{d\text{-planes } \mathbb{P}L \text{ in }X} \tr_{k(\mathbb{P}L)/k} \langle \Jac~\sigma(\mathbb{P}L) \rangle  =  \frac{ n_{\CC} +n_{\RR} }{2} + \frac{n_{\CC} +  n_{\RR}}{2}\lra{-1} $$ where \begin{itemize}
\item $k(\mathbb{P}L)$ denotes the residue field of $\mathbb{P}L$ viewed as a point on the Grassmannian
\item  $n_{\CC}$ (respectively $n_{\RR}$) is the topological Euler number of the complex (respectively real) vector bundle associated to the algebraic vector bundle $V= \oplus_{i=1}^j \Sym^{n_i} \tautbun^*$ given the canonical relative orientation (\ref{canonical_rel_orientation_sym}) 
\item and $\Jac \sigma$ is the Jacobian determinant.
\end{itemize}
\end{corollary}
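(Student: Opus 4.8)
The plan is to identify the weighted count of $d$-planes with the Poincar\'e--Hopf Euler number $n^\PH(V,\sigma)$ of the bundle $V=\oplus_{i=1}^j\Sym^{n_i}\tautbun^*$ on $\Gr(d,n)$ for the section $\sigma$ cut out by $(F_1,\dots,F_j)$ as in \S\ref{subsection:arithmetic_count_d-planes_ci}, and then to evaluate that number by combining Theorem \ref{thm:introd-ePH=eGS} with Corollary \ref{co:eoplussym}. First I would note that the two numerical hypotheses $\sum_i\binom{n_i+d}{d}=(d+1)(n-d)$ and $\sum_i\tfrac{n_i}{d+1}\binom{n_i+d}{d}+n+1\equiv 0\pmod 2$ are exactly the conditions $\Rrank V=\dim\Gr(d,n)$ and ``$V$ relatively orientable'', so $V$ carries the canonical relative orientation $\rho$ of Remark \ref{canonical_rel_orientation_sym}, which is already defined over $\Z$. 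By the cited theorem of Debarre--Manivel, for a general $X$ the zero scheme $Z(\sigma)$ is zero-dimensional and \'etale over $k$; in particular $\sigma$ is very non-degenerate, each zero $\P L$ is a simple zero, and $k(\P L)/k$ is separable.

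Second, I would unwind the left-hand side. By Definition \ref{def:ePH}, $n^\PH(V,\sigma)=\sum_{\P L}\ind_{\P L}^\PH\sigma$, the sum running over the $d$-planes $\P L\subset X$. For each \'etale point $\P L$, the local coordinates and trivializations supplied by Remark \ref{canonical_rel_orientation_sym} form a system of coordinates in the sense of Definition \ref{def:coordinates} compatible with $\rho$, so $\ind_{\P L}^\PH\sigma$ is the Scheja--Storch form of $\sigma$ at $\P L$. Working over $k(\P L)$, where $\P L$ becomes a rational point, Example \ref{F=0_is_one_point} identifies this form with the rank-one form $\langle\Jac\sigma(\P L)\rangle$; pushing forward along the finite \'etale map $\Spec k(\P L)\to\Spec k$ (Lemma \ref{lemm:pushforward-functorial}, under which the coherent-duality trace is the field trace) turns $\langle\Jac\sigma(\P L)\rangle$ into the trace form $\tr_{k(\P L)/k}\langle\Jac\sigma(\P L)\rangle$. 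Hence the left-hand side of the corollary equals $n^\PH(V,\sigma)$.

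Third, I would evaluate $n^\PH(V,\sigma)$. By Theorem \ref{thm:introd-ePH=eGS} it equals $n^\GS(V)$, which is in particular independent of the general complete intersection $X$; and by Corollary \ref{co:eoplussym} (interpreting $n(V)$ as $n^\GS(V)$ over a field), $n^\GS(V)=\tfrac{n_\CC+n_\RR}{2}+\tfrac{n_\CC-n_\RR}{2}\langle-1\rangle$, where $n_\RR,n_\CC$ are the topological Euler numbers of the real and complex bundles associated to $V$ with its canonical orientation; these make sense because $\Gr(d,n)$ together with $(V,\rho)$ is defined over $\Z$ and $\Gr(d,n)$ is smooth and projective, and their interpretation as honest topological Euler numbers is Proposition \ref{nRRnCCaretopEnums}. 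Chaining the two computations proves the corollary.

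The content here is genuinely a matter of assembling the earlier results rather than a new hard argument --- the real work sits in Corollary \ref{co:eoplussym} and in the genericity statement of Debarre--Manivel. If there is a point requiring care, it is the consistent tracking of the \emph{one} canonical relative orientation $\rho$ of Remark \ref{canonical_rel_orientation_sym} through all three computations: it must be the orientation used to form $n^\GS(V)$ fed into Corollary \ref{co:eoplussym}, the one used (via Definition \ref{def:coordinates}) to pin down the square class $\Jac\sigma(\P L)\in k(\P L)^\times/(k(\P L)^\times)^2$, and the one with respect to which $V(\RR)$ and $V(\CC)$ are oriented so that $n_\RR,n_\CC$ are the genuine Euler numbers. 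Condition (3) of Definition \ref{def:coordinates} is exactly what removes the square-class ambiguity, and Remark \ref{canonical_rel_orientation_sym} is designed to deliver precisely this compatibility.
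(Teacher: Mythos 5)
Your proposal is correct and follows essentially the same outline as the paper's own proof: identify the left-hand side with $n^\PH(V,\sigma)$ using Debarre--Manivel genericity and the formula for the local index at a simple zero, then compute $n^\PH(V,\sigma)$ via Theorem~\ref{thm:introd-ePH=eGS} and Corollary~\ref{co:eoplussym}, and finally invoke Proposition~\ref{nRRnCCaretopEnums} for the topological interpretation of $n_\RR,n_\CC$.

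The one place where you diverge is the local-index step. The paper simply cites \cite[p.18, Proposition~34]{CubicSurface} for the identity $\ind_{\P L}^\PH\sigma = \tr_{k(\P L)/k}\lra{\Jac\,\sigma(\P L)}$, whereas you attempt to re-derive it from Example~\ref{F=0_is_one_point} by ``working over $k(\P L)$.'' As written this is slightly off: base-changing the (étale) zero scheme $Z=\Spec k(\P L)$ along $k\to k(\P L)$ does not produce a single rational point but rather a disjoint union $\Spec(k(\P L)\otimes_k k(\P L))$ of several, so Example~\ref{F=0_is_one_point} applies only to the diagonal component, and the pushforward you describe does not literally compute the original $\ind_{\P L}^\PH\sigma$ over $k$. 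The intended content is correct and can be seen directly from Theorem~\ref{thm:introd-2}: for $\varpi:\Spec k(\P L)\to\Spec k$ finite étale, the Scheja--Storch form equals the coherent-duality pushforward $\varpi_*(1)$, the trace map $\eta_\varpi$ is the field trace, and the trivialization of $\omega_\varpi$ coming from the coordinates is multiplication by $\Jac\,\sigma(\P L)$ (Remark~\ref{rmk:scheja-differentiation}); this gives $\tr_{k(\P L)/k}\lra{\Jac\,\sigma(\P L)}$ without any base change. Either phrase it this way or simply cite the result from \cite{CubicSurface} as the paper does. Your emphasis on consistently tracking the single canonical relative orientation $\rho$ of Remark~\ref{canonical_rel_orientation_sym} through all three identifications is the right thing to be careful about and is handled correctly.
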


\begin{proof}
By \cite[Th\'eor\`eme 2.1]{Debarre_Manivel}, the zeros of $\sigma$ are isolated and \'etale over $k$. It follows \cite[p.18, Proposition 34]{CubicSurface} that for a zero of $\sigma$ corresponding to the $d$-plane $\mathbb{P}L$, the local index is computed $\ind^{\PH}_{\mathbb{P}L} \sigma = \tr_{k(L)/k} \langle \Jac ~\sigma(\mathbb{P}L) \rangle$. See Section \ref{subsec:poincare-hopf-euler-number} for the definition of the notation $\ind^{\PH}$. Thus $ n^{\PH}(V, \sigma) = \sum_{d\text{-planes } \mathbb{P}L \text{ in }X} \tr_{k(L)/k} \langle \Jac ~\sigma(\mathbb{P}L) \rangle$ by Definition \ref{def:ePH}. Corollary \ref{co:eoplussym} computes  $ n^{\PH}(V, \sigma)$. Proposition \ref{nRRnCCaretopEnums} shows that $n_{\RR}$ and $n_{\CC}$ are the claimed topological Euler numbers.
\end{proof}

\begin{remark}
Note that Corollary \ref{arithmetic_count_d_planes_complete_intersection} is a weighted count of the dimension $d$ hyperplanes on the complete intersection $X$, depending only on $n_1,\ldots,n_j$ and not on the choice of polynomials $F_1,\ldots, F_j$ as long as these are chosen generally. 
\end{remark}

\begin{example}
Examples where Corollary \ref{arithmetic_count_d_planes_complete_intersection} applies include: i) lines on a degree $2n-1$ hypersurface of dimension $n$, ii) $3$-planes on a degree $d$ hypersurface of dimension $2 + \frac{1}{3}{d+3 \choose 3}$, when this is an integer. iii) Lines on a complete intersection of two degree $n-2$ polynomials in $\mathbb{P}^n$ for $n$ odd.
\end{example}

Matthias Wendt's oriented Schubert calculus shows that enriched intersections of Schubert varieties are determined by the $\RR$ and $\CC$ realizations in the same manner \cite[Theorem 8.6]{Wendt-oriented_schubert}, as well as giving enumerative applications \cite[Section 9]{Wendt-oriented_schubert}. 

For $d=j=1$ and $n_1=3$, Corollary \ref{co:eoplussym} is work of Kass and the second named author \cite{CubicSurface} over a general field. For $d=1$ and general $j$ and $n_i$, it is work of M. Levine \cite{Levine-Witt} over a perfect field either of characteristic $0$ or of characteristic prime to $2$ and the odd $n_i$s. Our result eliminates the assumption on the characteristic, and generalizes to arbitrary relatively orientable $d,n,n_1,\ldots,n_j$. 

In order to obtain an enumerative theorem whose statement is independent of $\A^1$-homotopy theory, one needs an arithmetic-geometric interpretation of the local indices:

\begin{question}
Can the local indices $\ind^{\PH}_{\mathbb{P}L} \sigma = \tr_{k(L)/k} \langle \Jac ~\sigma(\mathbb{P}L) \rangle$ be expressed in terms of the arithmetic-geometry of the $d$-plane $\mathbb{P}L$ on $X$?
\end{question}

Such expressions are available over $\mathbb{R}$ for $d=j=1$ \cite{FK-Segre_indices}, and over a field $k$ of characteristic not $2$, for lines on a cubic surface \cite{CubicSurface}, lines on a quintic $3$-fold \cite{Pauli-Quintic_3_fold}, and points on a complete intersection of hypersurfaces \cite{McKean-Bezout}. S. Pauli has interesting observations on such results for lines on the complete intersection of two cubics in $\P^5$. Dropping the assumption that the zeros of $\sigma$ are isolated, she can compute contributions from infinite families of lines on a quintic $3$-fold in some cases \cite{Pauli-Quintic_3_fold}. An alternative point of view in terms of (S)pin structures for $d=j=1$ and $n_1=5$, as well as computations of the real Euler number is discussed \cite[Example 1.6, Theorem 8.8]{solomon06}. 

\begin{example}\label{3-plane-counts}
The computations of the Euler classes of $\mathbb{C}$ and $\mathbb{R}$-points in Finashin and Kharlamov's paper \cite[p. 190]{Finashin-Kharlamov-3_planes} imply the following enriched counts of $3$-planes on hypersurfaces over any field $k$.

$ n( \Sym^3 \tautbun^* \to \Gr(3,8))= 160839 \lra{1}+ 160650 \lra{-1}$ corresponds to an enriched count of $3$-planes in a $7$-dimensional cubic hypersurface. Namely, for a general degree $3$ polynomial $F$ in $9$ variables, the corresponding cubic hypersurface $X \subset \mathbb{P}^8$ contains finitely many $3$-planes as discussed above and $$\sum_{3-\text{planes } P \subset X} \tr_{k(L)/k} \langle \Jac ~\sigma_F(P) \rangle=160839 \lra{1}+ 160650 \lra{-1},$$ where $\sigma_F$ is the section of  $\Sym^3 \tautbun^*$ defined by $\sigma_F[P] = F\vert_P$.

Similarly, $ n( \Sym^5 \tautbun^* \to \Gr(3,17))=$ $$32063862647475902965720976420325 \lra{1}+ 32063862647475902965683320692800 \lra{-1}$$ corresponds to an enriched count of $3$-planes in a $16$-dimensional degree $5$-hypersurface.
\end{example} 

\section{Indices of sections of vector bundles and $\A^1$-degrees} \label{sec:A1-degrees}
\subsection{$\A^1$-degrees} \label{subsec:A1-degree}
Recall the following.
\begin{definition}[local $\A^1$-degree]
Let $S$ be a scheme, $X \in \Sch_S$ and $F: X \to \A^n_S$ be a morphism.
We say that $F$ has an isolated zero at $Z \subset X$ if $Z$ is a clopen subscheme of $Z(F)$ such that $Z/S$ is finite.

Now let $X \subset \A^n_S$ be open and suppose that $F$ has an isolated zero at $Z \subset X$.
We define the \emph{local $\A^1$-degree of $F$ at $Z$} \[ \ideg_Z(F) \in [\P^n/\P^{n-1}, \P^n/\P^{n-1}]_{\SH(S)} \wequi [\1, \1]_{\SH(S)} \] as the morphism corresponding to the unstable map \[ \P^n/\P^{n-1} \to \P^n / \P^n \setminus Z \wequi X / X \setminus Z \hookrightarrow X/X \setminus Z(F) \xrightarrow{F} \A^n/\A^n \setminus 0 \wequi \P^n/\P^{n-1}. \]
Here we use that by assumption $Z(F) \wequi Z \amalg Z'$, and hence $X/X \setminus Z(F) \wequi X/X \setminus Z \amalg X/X \setminus Z'$.
\end{definition}

\begin{example}
If $S = Spec(k)$ is the spectrum of a field, then an isolated zero $z \in \A^n_k$ of $F: \A^n_k \to \A^n_k$ in the usual sense is also an isolated zero $\{z\} \subset \A^n_k$ in the above sense, and $\ideg_z(F) \in \GW(k)$ is the usual local $\A^1$-degree of \cite[Definition 11]{kass2016class}.
\end{example}

\begin{lemma} \label{lemm:voevodsky-transfers}
Let $X, Y \in \Sm_S$ and $(Z, U, \phi, g)$ be an equationally framed correspondence from $X$ to $Y$ \cite[Definition 2.1.2]{EHKSY}; in other words $Z \subset \A^n_X$, $U$ is an \'etale neighbourhood of $Z$, $g: U \to Y$ and $\phi: U \to \A^n$ is a framing of $Z$.
Then the following two morphisms are stably homotopic \[ T^n \wedge X_+ \wequi (\P^1)^{\wedge n} \wedge X_+ \to (\P^1)^{\times n}_X/(\P^1)^{\times n}_X \setminus Z \wequi U/U \setminus Z \xrightarrow{\phi,g} T^n \wedge Y_+ \] \[ T^n \wedge X_+ \wequi \P^n/\P^{n-1} \wedge X_+ \to \P^n_X / \P^n_X \setminus Z \wequi U/U \setminus Z \xrightarrow{\phi,g} T^n \wedge Y_+. \]
\end{lemma}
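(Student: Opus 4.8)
The plan is to reduce both maps to a common description in terms of the Voevodsky-style framed structure, exploiting that the only difference between the two statements is which model of $T^n$ one uses on the source (and target): the ``coordinatewise'' model $(\P^1)^{\wedge n}$ versus the ``flag'' model $\P^n/\P^{n-1}$. First I would recall that there is a canonical stable equivalence $(\P^1)^{\wedge n} \wequi \P^n/\P^{n-1}$, and more importantly that this equivalence is compatible with the collapse maps onto a Thom space of a trivialized rank-$n$ bundle: both $(\P^1)^{\times n}_X / (\P^1)^{\times n}_X \setminus Z$ and $\P^n_X / \P^n_X \setminus Z$ receive maps from the respective models of $T^n \wedge X_+$ (via the collapse onto a neighbourhood of $Z$), and both are identified with $U/U\setminus Z$ using the excision/localization equivalence (since $U \to \A^n_X$ resp.\ $U \to \P^n_X$ is an \'etale neighbourhood of $Z$). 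So the key point is that the square relating the two collapse maps through the equivalence $(\P^1)^{\wedge n} \wequi \P^n/\P^{n-1}$ commutes up to homotopy.

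The main step, then, is to verify that the diagram
\begin{equation*}
\begin{CD}
(\P^1)^{\wedge n} \wedge X_+ @>>> (\P^1)^{\times n}_X / (\P^1)^{\times n}_X \setminus Z \\
@VVV @VVV \\
\P^n/\P^{n-1} \wedge X_+ @>>> \P^n_X / \P^n_X \setminus Z
\end{CD}
\end{equation*}
commutes in $\SH(S)$, where the left vertical map is the standard comparison equivalence and the right vertical map comes from the (stable) identification of both quotients with $U/U\setminus Z$ — concretely, from a chosen open immersion $(\P^1)^{\times n} \hookrightarrow \P^n \times (\text{something})$ or, more simply, from the fact that near the zero locus $Z$ (which is affine over $X$, hence lies in a common affine chart) both $(\P^1)^{\times n}_X$ and $\P^n_X$ look like $\A^n_X$, and the two embeddings $\A^n_X \hookrightarrow (\P^1)^{\times n}_X$, $\A^n_X \hookrightarrow \P^n_X$ are the standard ones. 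Once this commutes, precomposing the two horizontal-then-$(\phi,g)$ composites gives exactly the two maps in the statement, so they are stably homotopic. I would phrase the verification using the localization/purity equivalences already recalled in \S\ref{subsec:gysin} (the localization sequence $j_\# j^* \to \id \to i_* i^*$) applied to the closed immersion $Z \hookrightarrow \A^n_X$ and its open complement, noting that both $(\P^1)^{\times n}_X$ and $\P^n_X$ are smooth compactifications of $\A^n_X$ restricting to the same open neighbourhood of $Z$.

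Concretely, the argument I would write is: since $Z/S$ is finite and $X/S$ smooth, after possibly shrinking we may assume $U$ is an \'etale $X$-scheme with a map $\varphi' = (\phi, \pi): U \to \A^n_X$ which is \'etale and sends $Z$ isomorphically to its image; then $U/U \setminus Z \wequi \A^n_X / \A^n_X \setminus \varphi'(Z)$ by \'etale excision \cite[Theorem 6.18]{hoyois-equivariant}. The standard open immersions $\A^n \hookrightarrow (\P^1)^{\times n}$ and $\A^n \hookrightarrow \P^n$ then identify this further with $(\P^1)^{\times n}_X / (\P^1)^{\times n}_X \setminus Z'$ and $\P^n_X / \P^n_X \setminus Z'$ respectively (writing $Z' = \varphi'(Z)$), again by excision, since $Z'$ is contained in the common affine chart $\A^n_X$. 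Under these identifications the two collapse maps from $T^n \wedge X_+$ become the respective Thom-collapse maps for the trivialized normal bundle of $\A^n_X$ in its two compactifications, which agree because both restrict to the identity on $\A^n_X/\A^n_X \setminus Z'$; this is exactly the compatibility of the purity equivalence with the two choices of ambient smooth scheme. Composing with $(\phi, g): U/U\setminus Z \to T^n \wedge Y_+$ then gives the claimed homotopy.

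I expect the main obstacle to be purely bookkeeping: making the identification $U/U\setminus Z \wequi \A^n_X/\A^n_X\setminus Z'$ and then tracking that, under the two embeddings into $(\P^1)^{\times n}_X$ and $\P^n_X$, the two ``collapse onto a neighbourhood of $Z$'' maps out of the two models of $T^n$ are carried to the \emph{same} map out of $\A^n_X/\A^n_X\setminus Z'$. This is ultimately the statement that the equivalence $(\P^1)^{\wedge n}\wequi \P^n/\P^{n-1}$ can be chosen compatibly with the standard open chart $\A^n$, which is classical (it is implicit in \cite{EHKSY2, EHKSY} and goes back to the identification of these as models for $S^{2n,n}$), but spelling out that the diagram above genuinely commutes — rather than commutes up to an unspecified automorphism of $T^n$ — requires a little care with orientations of the coordinates. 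I would handle this by choosing the equivalence $(\P^1)^{\wedge n} \wequi \P^n/\P^{n-1}$ to be the one induced by the standard inclusion of charts, so that compatibility holds on the nose on the affine part and hence after the excision equivalences.
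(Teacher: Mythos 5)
Your approach is genuinely different from the paper's, which makes no attempt to compare the two collapse maps directly. The paper observes that each of the two morphisms, desuspended by $T^n$, induces a transfer $E(Y)\to E(X)$ for every $E\in\SH(S)$, and then invokes \cite[Theorem 3.2.11]{EHKSY2}, which identifies the Voevodsky transfer built from the $(\P^1)^{\wedge n}$ model with a third construction, the \emph{fundamental transfer}. That proof goes through verbatim with $(\P^1)^{\wedge n}$ replaced by $\P^n/\P^{n-1}$, so both transfers agree with the fundamental transfer, hence with each other; the lemma then follows from the Yoneda lemma. This sidesteps exactly the bookkeeping that your argument must confront head-on.

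Your reduction to the commutativity of a comparison square is the right opening move, but that commutativity is the whole content of the lemma, and you have not actually established it. The remark that you could ``choose the equivalence $(\P^1)^{\wedge n}\wequi\P^n/\P^{n-1}$ to be the one induced by the standard inclusion of charts'' misses the point: the lemma fixes the equivalences $T^n\wedge X_+\wequi(\P^1)^{\wedge n}\wedge X_+$ and $T^n\wedge X_+\wequi\P^n/\P^{n-1}\wedge X_+$ as inputs (they are part of the framed-correspondence formalism), so there is no freedom of choice; what you must verify is that the composite of these two \emph{given} equivalences with the collapse quotients and excision equivalences commutes up to homotopy, not merely up to an automorphism of $T^n$. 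Two phrases in your proposed verification also do not parse as written. ``Both restrict to the identity on $\A^n_X/\A^n_X\setminus Z'$'' suggests the collapse $T^n\wedge X_+\to C_X/C_X\setminus Z$ (for $C$ a compactification of $\A^n$) is a quotient map visible on the affine chart; it is not, since $\A^n_X\setminus(0\times X)\not\subset\A^n_X\setminus Z$, and the chart only enters through the excision-plus-collapse zigzag defining the equivalence $T^n\wequi C/\partial C$. And ``the trivialized normal bundle of $\A^n_X$ in its two compactifications'' is not a thing: $\A^n_X$ is open, not closed, in $(\P^1)^{\times n}_X$ and $\P^n_X$, so it has no normal bundle there. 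The underlying claim is true and can be pushed through --- for instance by introducing a common smooth compactification $\overline{C}$ of $\A^n$ dominating both $(\P^1)^{\times n}$ and $\P^n$ over $\A^n$ and checking that both collapse zigzags factor through the one built from $\overline{C}$ --- but that is precisely the work the paper's shorter argument is designed to avoid.
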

\begin{proof}
For $E \in \SH(S)$, precomposition with the first morphism (desuspended by $T^n$) induces a map $E(Y) \to E(X)$ known as the \emph{Voevodsky transfer}.
Precomposition with the second map induces an ``alternative Veovedsky transfer''.
It suffices (by the Yoneda lemma) to show that these transfer maps have the same effect (even just on $\pi_0$), for every $E$.
In \cite[Theorem 3.2.11]{EHKSY2}, it is shown that the Voevodsky transfer coincides with a further construction known as the \emph{fundamental transfer}.
In that proof, all occurrences of $(\P^1)^{\wedge n}$ can be replaced by $\P^n/\P^{n-1}$; one deduces that the alternative Voevodsky transfer also coincides with the fundamental transfer.

The result follows.
\end{proof}

\begin{corollary} \label{cor:deg-vs-correspondence}
Let $U \subset \A^n_S$, $F: U \to \A^n$ have an isolated zero along $Z \subset U$.
Then $\deg_Z(F) \in [\1, \1]_S$ is the same as the endomorphism given by the equationally framed correspondence defined by $F$.
\end{corollary}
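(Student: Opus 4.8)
The plan is to unwind both constructions and observe they are literally computing the same stable map, modulo the standard identification of an equationally framed correspondence with an element of $[\1,\1]_S$. First I would recall that an equationally framed correspondence from $S$ to $S$ (which is what $F$ with its isolated zero provides, taking $X = Y = S$) determines an element of the group of framed correspondences, and under the identification $[\1,\1]_S \wequi \pi_0^{\fr}(S)$ — or more directly, via the recognition principle and the computation of $\Sigma^\infty_+ S$ in terms of framed correspondences — this element is represented by exactly the composite appearing in Lemma \ref{lemm:voevodsky-transfers} with $X = Y = S$. Concretely, the equationally framed correspondence attached to $(U, F, Z)$ is the tuple $(Z, U, \phi = F, g = \mathrm{pr}_S)$, whose associated stable map (after desuspending by $T^n$) is \[ T^n \to \P^n_S/\P^{n-1}_S \to \P^n_S/\P^n_S \setminus Z \wequi U/U \setminus Z \xrightarrow{F} T^n. \]

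Next I would compare this with the definition of $\deg_Z(F)$. By the definition of the local $\A^1$-degree given just above, $\deg_Z(F)$ is the desuspension of the unstable map \[ \P^n/\P^{n-1} \to \P^n/\P^n \setminus Z \wequi U/U \setminus Z \hookrightarrow U/U \setminus Z(F) \xrightarrow{F} \A^n/\A^n \setminus 0 \wequi \P^n/\P^{n-1}. \] Since $Z$ is clopen in $Z(F)$, we have $U/U\setminus Z(F) \wequi U/U\setminus Z \amalg U/U \setminus Z'$, and the composite $U/U\setminus Z \hookrightarrow U/U\setminus Z(F) \xrightarrow{F} \A^n/\A^n\setminus 0$ agrees with the map $U/U\setminus Z \xrightarrow{F} \A^n/\A^n\setminus 0$ one gets by restricting $F$ to (a neighbourhood of) $Z$ alone. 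So the two displayed composites are the same map, and hence define the same element of $[\1,\1]_S$.

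The one genuinely substantive input is that the element of $[\1,\1]_S$ one reads off from an equationally framed correspondence is indeed given by this recipe. I would cite this from the theory of framed motivic spaces — specifically, that the equivalence $\Sigma^\infty_+ S \wequi \Sigma^\infty_T \mathrm{Fr}(S)$ (or the description of $[\1,\1]_S$ via stable framed correspondences) sends a correspondence $(Z, U, \phi, g)$ to the composite $T^n \to \P^n_S/\P^{n-1}_S \to U/U\setminus Z \xrightarrow{(\phi, g)} T^n \wedge S_+$, which is exactly the ``alternative Voevodsky transfer'' form that Lemma \ref{lemm:voevodsky-transfers} shows is stably homotopic to the standard one using $(\P^1)^{\wedge n}$. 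Thus the only real work is bookkeeping: checking that the framing data of the correspondence ``defined by $F$'' is the tuple $(Z, U = X, \phi = F, g = \mathrm{id}_S)$ and that the resulting stable map matches the definition of $\deg_Z$ on the nose. The main obstacle — such as it is — is purely notational: making sure the compactification $\A^n \hookrightarrow \P^n$ is handled consistently on both sides (this is precisely what Lemma \ref{lemm:voevodsky-transfers} was proved for), and that the clopen decomposition of $Z(F)$ is used in the same way in both definitions. No serious geometric or homotopical difficulty arises; the corollary is essentially a translation between two notations for the same construction.
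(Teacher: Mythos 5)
Your argument is correct and matches the paper's: both reduce to Lemma \ref{lemm:voevodsky-transfers}, with $\deg_Z(F)$ being (by definition) precisely the ``alternative Voevodsky transfer'' form using $\P^n/\P^{n-1}$, and the equationally framed correspondence yielding the standard $(\P^1)^{\wedge n}$ form, so the lemma gives the identification immediately. One small caution on presentation: your opening paragraph asserts that the stable map read off from a framed correspondence is already the $\P^n/\P^{n-1}$ composite, which inverts the roles in the lemma (the standard construction from framed transfers uses $(\P^1)^{\wedge n}$, and Lemma \ref{lemm:voevodsky-transfers} is what bridges to $\P^n/\P^{n-1}$); you straighten this out in the final paragraph, and since the lemma is a two-sided identification the conclusion is unaffected, but as written the middle of the argument reads as if it could be circular.
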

\begin{proof}
By definition, $\deg_Z(F)$ is given by the second morphism of Lemma \ref{lemm:voevodsky-transfers}, whereas the equationally framed correspondence is given by the first morphism.
The result follows.
\end{proof}

Recall also the following.
\begin{proposition}[\cite{EHKSY2}, Theorem 3.3.10] \label{prop:corr-transfer}
Let $\alpha: S \xleftarrow{\varpi,\tau} Z \to S$ be a tangentially framed correspondence from $S$ to $S$ over $S$.
Then the trivialization $\tau$ of $L_{Z/S}$ induces a transfer map \[ \varpi_*: [\1, \1]_Z \to [\1, \1]_S, \] where $\varpi: Z \to S$ is the projection.
The endomorphism of $\1_S$ corresponding to $\alpha$ is given by $\varpi_*(1)$.
\end{proposition}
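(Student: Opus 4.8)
The plan is to build $\varpi_*$ from the six-functor Gysin formalism of \S\ref{sec:gysin} and then to identify $\varpi_*(1)$ with the framed endomorphism by exhibiting $\alpha$ as an equationally framed correspondence. For the first assertion: the data of a tangentially framed correspondence make $\varpi\colon Z\to S$ finite syntomic, hence proper and smoothable lci, so applying the construction of \S\ref{sec:gysin} with $E=\1_S$ and the trivial closed immersions $\mathrm{id}_Z$, $\mathrm{id}_S$ produces a Gysin map $[\1,\Sigma^{L_\varpi}\1]_{\SH(Z)}\to[\1,\1]_{\SH(S)}$. The trivialization $\tau$ of $L_\varpi$ gives, via the $J$-homomorphism, an equivalence $\Sigma^{L_\varpi}\1_Z\wequi\1_Z$, and composing yields the transfer $\varpi_*\colon[\1,\1]_Z\to[\1,\1]_S$.

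For the second assertion, recall that the endomorphism of $\1_S$ attached to $\alpha$ is computed, through the construction of the category of framed correspondences and the recognition principle of \cite{EHKSY2}, from any equationally framed model of $\alpha$. Since $\varpi$ is finite it is affine, so after replacing $S$ by the members of a Zariski cover---over which Gysin maps glue by Lemma \ref{lemm:gysin-compat}(2)---we may choose a closed immersion $j\colon Z\hookrightarrow\A^n_S$ over $S$, and, shrinking to an \'etale neighbourhood $U\subset\A^n_S$ of $Z$ if necessary, functions $\phi=(f_1,\dots,f_n)\colon U\to\A^n$ cutting out $Z$. As $\A^n_S/S$ is smooth and $Z/S$ is lci, $j$ is a regular immersion, and the cofiber sequence $j^*\Omega_{\A^n_S/S}\to L_\varpi\to L_j\wequi C_j[1]$ shows $[L_\varpi]=n-[C_j]$ in $K(Z)$; hence $\tau$ together with the standard trivialization of $\Omega_{\A^n_S/S}$ determines a stable trivialization of $C_j$, which is exactly the framing data realizing $\phi$. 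By Corollary \ref{cor:deg-vs-correspondence} (equivalently Lemma \ref{lemm:voevodsky-transfers}) the endomorphism attached to this equationally framed correspondence is the $T^n$-desuspension of
\[ T^n\wedge S_+\longrightarrow U/(U\setminus Z)\xrightarrow{\ \phi,\,g\ }T^n\wedge S_+, \]
where $g\colon Z\to S$ is the structure map.

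It then remains to identify this composite with $\varpi_*(1)$. By the definition of the purity equivalence, the collapse map $T^n\wedge S_+\to U/(U\setminus Z)$ followed by the normal-bundle framing represents $j_*(1)$ for the Gysin map $j_*\colon[\1,\1]_Z\to[\1,\1]_{\A^n_S}$ of the regular immersion $j$; composing with the projection $p\colon\A^n_S\to S$ gives $p_*j_*(1)$. Using functoriality of Gysin maps along $p\circ j=\varpi$ (Lemma \ref{lemm:gysin-compat}(3)), together with the fact that under the cofiber sequence $j^*L_p\to L_\varpi\to L_j$ the trivialization of $\Sigma^{L_\varpi}\1$ defining $\varpi_*$ agrees with the one assembled from the standard trivialization of $L_p=\Omega_{\A^n_S/S}$ and the framing of $C_j$, one gets $p_*j_*=\varpi_*$, whence the framed endomorphism of $\alpha$ equals $\varpi_*(1)$.

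I expect the main obstacle to be the recognition-principle input: that the endomorphism abstractly attached to a tangentially framed correspondence is computed by the collapse-map recipe once an equationally framed model is produced. This is the substance of \cite[Theorem 3.3.10]{EHKSY2}---and of the comparison between the several models of framed transfers developed there---so re-proving it in earnest means importing that machinery. The only other delicate point is the compatibility of the two Thom-isomorphism trivializations, which is a diagram chase using additivity of the $J$-homomorphism on cofiber sequences and Lemma \ref{lemm:gysin-compat}(3).
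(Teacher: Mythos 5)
The paper does not prove this proposition — it is an import, cited directly as \cite[Theorem 3.3.10]{EHKSY2} in the theorem header, with no proof environment following. Your sketch is a sensible reconstruction of the kind of argument that reference employs: Zariski-localize to produce an equationally framed model $(Z,U,\phi,g)$, convert to the collapse-map representation via Lemma \ref{lemm:voevodsky-transfers} / Corollary \ref{cor:deg-vs-correspondence}, and then identify the collapse composite with $p_*j_*(1)=\varpi_*(1)$ using the purity transformation and Lemma \ref{lemm:gysin-compat}(3). The trivialization bookkeeping via the cofiber sequence $j^*\Omega_{\A^n_S/S}\to L_\varpi\to L_j$ is also the right kind of check. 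But you correctly flag the gap yourself: the assertion that the endomorphism abstractly attached to a \emph{tangentially} framed correspondence $\alpha$ is computed from any \emph{equationally} framed model by the collapse recipe is precisely the content of \cite[Theorem 3.3.10]{EHKSY2}, together with the recognition principle $\SH^\fr(S)\wequi\SH(S)$. The paper's internal Corollary \ref{cor:deg-vs-correspondence} only tells you what a given equationally framed correspondence computes; it does not tell you that the equivalence $\SH^\fr\wequi\SH$ sends $\alpha$ to that endomorphism, nor that the answer is independent of the chosen equationally framed lift. So your proposal is an informed annotation of the cited result, not an independent proof — which, since the paper likewise treats this as an import, is the same stance the authors take.
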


\subsection{Main result}
Let $X/S$ be smooth, $V/X$ a relatively oriented vector bundle with very non-degenerate section $\sigma$ and zero scheme $Z$ (which is thus finite over $S$).
Let $Z' \subset Z$ be a clopen component and suppose there are coordinates $(\psi, \varphi, \sigma')$ for $(V, X, \sigma, \rho, Z')$ as in Definition \ref{def:coordinates}.
Then $\sigma' = (F_1, \ldots, F_d)$ determines a function $F: \A^d_S \to \A^d_S$, and $\varphi(Z')$ is an isolated zero of $F$.

\begin{theorem} \label{thm:identify-degree}
Assumptions and notations as above.
Let $E \in \SH(S)$ be an $\SL$-oriented ring spectrum with unit map $u: \1 \to E$.
Then \[ \ind_Z(V, \sigma, \rho, E) = u_*\deg_{\varphi(Z')}(F) \in E^0(S). \]
\end{theorem}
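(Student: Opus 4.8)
The plan is to compare the two sides through the six-functors description of both the local index in $E$ and the local $\A^1$-degree, reducing everything to the behaviour of the unit map $u: \1 \to E$ under pushforward. First I would invoke Corollary \ref{cor:meta-rep}, which identifies the refined Euler class $e(V|_{U}, \sigma|_U, E) \in E^{V^\dual}_{Z'}(U)$ with $i_*(1)$ for the regular closed immersion $i: Z' \hookrightarrow U$; the local index $\ind_{Z'}(V, \sigma, \rho, E)$ is then $\varpi_*(1)$ where $\varpi: Z' \to S$ is obtained by composing $i$ with $\pi|_U$ and twisting by the orientation. By Proposition \ref{prop:meta-compute}, this quantity is unchanged if we replace $(V, X, \sigma, \rho)$ by $(\scr O^n_{\A^n}, \A^n_S, \sigma', \rho_2)$, so we may assume from the outset that we are in the model situation: $X = U' \subset \A^n_S$ open, $V$ trivialized, and $\sigma' = F = (F_1, \dots, F_n)$ with zero scheme $\varphi(Z')$ finite over $S$, and $\rho_2$ the tautological (square-trivial, in fact trivial) relative orientation.

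Next I would make the target object explicit. In the model situation the pushforward $\varpi_*: E^0(\varphi(Z')) \to E^0(S)$ is, by the definition of Gysin maps in \S\ref{sec:gysin}, obtained by applying $[\1, -]$ to the composite of purity and counit natural transformations; concretely $\ind_{\varphi(Z')}(F, \rho_2, E) \in E^0(S) = [\1, \1 \wedge E]_S$ is represented by the map of motivic spectra
\[ \1_S \to \A^n_S/(\A^n_S \setminus \varphi(Z')) \wequi U'/(U' \setminus \varphi(Z')) \xrightarrow{F} \A^n/(\A^n \setminus 0) \wequi \1_S \xrightarrow{u} E, \]
where the first equivalence uses that $\P^n/\P^{n-1} \wequi \A^n/(\A^n \setminus 0)$ and the Thom-isomorphism bookkeeping attached to $\rho_2$ exactly cancels (this is where the square-triviality of $\rho_2$ is used, as in the proof of Proposition \ref{prop:euler-class-six-functors}). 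But the composite $\1_S \to \cdots \to \1_S$ appearing before $u$ is, by the very definition of the local $\A^1$-degree in \S\ref{subsec:A1-degree}, nothing but $\deg_{\varphi(Z')}(F) \in [\1, \1]_S$. Applying $u$ and using functoriality of $E$ in morphisms of $\SH(S)$ (the discussion of \S\ref{subsub:functoriality-E}) then gives $\ind_{\varphi(Z')}(F, \rho_2, E) = u_* \deg_{\varphi(Z')}(F)$.

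The main obstacle is the bookkeeping in the previous step: one must check carefully that the orientation data entering the definition of $\varpi_*$ as a Gysin map (the purity transformation twisted by $\L_\varpi$ and the identification $\L_{\varpi} \wequi -V^\dual|_{Z'} + L_\pi$, together with the $\SL^c$-Thom isomorphisms supplied by $\rho_2$ via Proposition \ref{prop:SL-oriented-trick} and Example \ref{ex:SL-thom-iso}) matches, on the nose, the unstably-defined map $\A^n/(\A^n \setminus 0) \xrightarrow{F} \A^n/(\A^n \setminus 0)$ used in the definition of $\deg$, with no residual unit or sign. This is precisely the kind of diagram-chase already carried out in the proof of Proposition \ref{prop:euler-class-six-functors} (identifying $z_*(1)$ with $e(p^*V, \sigma_0)$ via the tautological form of purity for a zero section) and in Proposition \ref{prop:meta-compute} (matching the two orientations); I would assemble those two arguments, noting that in the trivialized model situation the zero-section purity equivalence for $\scr O^n_{\A^n}$ over $\A^n$ is literally the collapse map $\A^n/(\A^n\setminus 0) \wequi \Sigma^{\scr O^n} \1$. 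Once that identification is in place the theorem is immediate. A cosmetic point worth stating explicitly is that $\ind_Z$ in the statement refers to $\ind_{Z'}$ for the chosen clopen component $Z'$, since the coordinates are only assumed to exist around $Z'$; if one wants the index over all of $Z(\sigma)$ one sums over clopen components and over the corresponding isolated zeros of $F$, using additivity of both sides along disjoint unions.
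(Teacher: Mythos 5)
Your high-level skeleton matches the paper's: reduce to the model situation $(\scr O^n_{\A^n},\A^n_S,\sigma',\rho_2)$ via Proposition~\ref{prop:meta-compute}, then identify the Gysin pushforward $\varpi_*(1)$ in the model case with $u_*\deg_{\varphi(Z')}(F)$, and finish by functoriality of $E$ along the unit $u:\1\to E$. Two remarks, one cosmetic and one substantive.

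Cosmetically, the appeal to Corollary~\ref{cor:meta-rep} at the start is not needed: $\ind_{Z'}(\sigma,\rho,E)$ is by definition (Definition~\ref{def:index-general}) already $\varpi_*(1)$, so there is no refined Euler class to unwind before you can say that.

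Substantively, the step you flag as ``the main obstacle'' is exactly where your proof is not yet a proof. You assert that the six-functors transfer $\varpi_*:E^0(\varphi(Z'))\to E^0(S)$, built from the purity transformation $\purity_\varpi$ and counits, is \emph{literally} represented by the unstable composite
\[ \1_S \to \A^n_S/(\A^n_S\setminus\varphi(Z')) \wequi U'/(U'\setminus\varphi(Z')) \xrightarrow{F} \A^n/(\A^n\setminus 0) \wequi \1_S \xrightarrow{u} E, \]
and propose to verify this by ``assembling'' the proofs of Proposition~\ref{prop:euler-class-six-functors} and Proposition~\ref{prop:meta-compute}. But Proposition~\ref{prop:euler-class-six-functors} only identifies the refined Euler class with the zero-section pushforward $z_*(1)$; it does not unwind the full Gysin transfer for the composite $Z'\hookrightarrow U'\to\A^n_S\to S$, which is what you need. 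That identification is a genuine theorem — it is precisely \cite[Theorem~3.3.10]{EHKSY2}, imported here as Proposition~\ref{prop:corr-transfer}, together with Corollary~\ref{cor:deg-vs-correspondence} to recognize the unstable composite as the (equationally, hence tangentially) framed correspondence determined by $F$. The paper's proof is short exactly because it invokes this framed-transfer package; your proposal would amount to re-deriving that input by hand, and the diagram chase you allude to is not actually carried out. So the route is not wrong, but the central identification remains a citation rather than an argument, and the proof as written has a genuine gap there.
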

\begin{proof}
By Corollary \ref{cor:deg-vs-correspondence}, Proposition \ref{prop:corr-transfer} and \S\ref{subsub:functoriality-E} we have $u_*\deg_{\varphi(Z')}(F) = \ind_Z(\sigma',o',E)$, where $o'$ is the canonical relative orientation of $\scr O_{\A^n}^n/\A^n$.
The result now follows from Proposition \ref{prop:meta-compute}.
\end{proof}

\begin{example}
Suppose $S = Spec(k)$ is a field.
Then for well-chosen $E$ (e.g. $E = \KO$ or $E = \H\tilde\Z$), the unit map \[ u_*: \GW(k) \wequi [\1, \1]_{\SH(k)} \to E^0(k) \] is an isomorphism.
We deduce that $\ind_z(V, \sigma, \rho, E)$ is essentially the same as $\deg_{\varphi(z)}(F).$
\end{example}

\section{Euler numbers in $\KO$-theory and applications}
\label{sec:Euler-KO}
As explained in \S\ref{subsec:13} and \S\ref{subsec:refined-euler}, we have the motivic ring spectrum $\KO$ related to Hermitian $K$-theory made homotopy invariant, and associated theories of Euler classes and Euler numbers.
Using (for example) the construction in \S\ref{app:KO}, we can define $\KO$ even if $1/2 \not\in S$.
There is still a map $\GW(S) \to \KO(S)$, however we do not know if this is an equivalence, even if $S$ is regular (but we do know this if $S$ is regular and $1/2 \in S$).

\begin{example}\label{KO-Euler_class_Koszul}
Let $V \to X$ be a vector bundle and $\sigma$ a section.
Suppose $1/2 \in S$.
Then the refined Euler class of $V$ in $\KO$-theory is given by $e(V, \sigma, \KO) = [K(V, \sigma)]$, the class of the Koszul complex.
Indeed via Remark \ref{rmk:refined-euler-levine} it suffices to show the analogous result for Thom classes, which is stated on \cite[p. 34]{levine2018motivic}.
\end{example}

Recall that for any lci morphism $f$ we put $\widetilde \omega_f = \widetilde \det L_f$ and $\omega_f = \det L_f$.
We show in Proposition \ref{prop:f!-Lf} that for $f: X \to Y$ an lci morphism, we have $f^!(\scr O) \wequi \widetilde \omega_f$.
Via Proposition \ref{prop:f!-Lf}, coherent duality (i.e. the adjunction $f_* \dashv f^!$) thus supplies us with a canonical \emph{trace map} \[ \eta_f: f_* \widetilde\omega_f \wequi f_*f^! \scr O_Y \to \scr O_Y \in D(Y), \] provided that $f$ is also \emph{proper}.
One expect that this can be used to build a map \[ \hat f_*: \GW(X, f^! \scr L) \to \GW(Y, \scr L), \] and moreover that the following diagram commutes
\begin{equation*}
\begin{CD}
\GW(X, f^! \scr L) @>>> \KO(X, f^! \scr L) \\
@V{\hat{f}_*}VV                 @V{f_*}VV \\
\GW(Y, \scr L) @>>> \KO(Y, \scr L).
\end{CD}
\end{equation*}
If we assume that $1/2 \in S$, replace $\GW$ by $\W$ and $\KO$ by $\KW$, then maps $\hat f_*$ can be defined and studied using the ideas from \S\ref{subsec:local-indices}; see also \cite{calmes2008push}.
Levine-Raksit \cite{levine2018motivic} show that the (modified) diagram commutes provided that $X$ and $Y$ are smooth over a common base with $1/2 \in S$.

If instead we assume that $f$ is finite syntomic, then the analogous result is proved (for $\GW \to \KO$ and without $1/2 \in S$) in Corollary \ref{cor:KO-transfer}.
This is the only case that we shall use in the rest of this section.
Recall the construction of the Scheja-Storch form $\SSform$ from Definition \ref{def:SSform}.

\begin{corollary} \label{cor:KO-scheja}
Let $X \in \Sm_S$, $V/X$ a relatively oriented vector bundle with a very non-degenerate section $\sigma$, and $Z$ a clopen component of the zero scheme $Z(\sigma)$.
Suppose there exists coordinates $(\psi,\varphi,\sigma')$ around $Z$, as in Definition \ref{def:coordinates}.

Then \[ \ind_Z(\sigma, \rho, \KO) = [\SSform(\varphi(U), \sigma', S)] \in \KO^0(S). \]
\end{corollary}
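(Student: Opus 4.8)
The plan is to assemble the statement from three pieces already in place: the general identification of $\SL^c$-oriented local indices in coordinates (Proposition \ref{prop:meta-compute}), the comparison of the $\KO$-index with the coherent-duality index, and the explicit evaluation of the coherent-duality index as a Scheja--Storch form (Theorem \ref{thm:introd-2}, Definition \ref{def:SSform}). Concretely, since $\KO$ (defined as in \S\ref{app:KO}, so without the hypothesis $1/2 \in S$) is an $\SL$-oriented ring spectrum, hence $\SL^c$-oriented by Proposition \ref{prop:SL-oriented-trick}, it has Euler classes (Definition \ref{def:taut-euler-class}) and pushforwards along proper lci morphisms (\S\ref{sec:gysin}); and by Corollary \ref{cor:meta-rep} the Meta-Theorem applies to it. So Proposition \ref{prop:meta-compute} gives
\[ \ind_Z(\sigma, \rho, \KO) = \ind_Z(\sigma', \rho', \KO) \in \KO^0(S), \]
where $\sigma' = (F_1, \ldots, F_n): \A^n_S \supset \varphi(U) \to \A^n$ is the coordinatized section and $\rho'$ is the canonical relative orientation of $\scr O_{\A^n}^n/\A^n$. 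This reduces the problem to the affine-space situation with standard coordinates, i.e. to identifying the right-hand index with $[\SSform(\varphi(U), \sigma', S)]$.

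For that identification, the key input is Corollary \ref{cor:KO-transfer}: for a finite syntomic morphism $f$, the coherent-duality pushforward $\hat f_*: \GW(X, f^!\scr L) \to \GW(Y, \scr L)$ is compatible with the $\KO$-theoretic pushforward $f_*$, i.e. the square with the comparison maps $\GW \to \KO$ commutes. Here $\varpi: Z \to S$ is finite locally free (Remark \ref{rmk:SS-regular}, using that $\sigma'$ is very non-degenerate) and syntomic (it is a regular immersion into a smooth scheme followed by a smooth morphism), so Corollary \ref{cor:KO-transfer} applies. Tracing through the definition of $\ind_Z(\sigma', \rho', \KO) = \varpi_*(1)$: the relevant twist is identified via the chain $\KO^0(Z) \wequi \KO^0(Z, i^*\scr L^{\otimes 2}) \xrightarrow{\rho'} \KO^0(Z, \det V^\dual \otimes \omega_{X/S}|_Z) \wequi \KO^{L_\varpi}(Z) \xrightarrow{\varpi_*} \KO^0(S)$, exactly as in the paragraph preceding Definition \ref{def:index-general}, and by Corollary \ref{cor:KO-transfer} this agrees with the image under $\GW \to \KO$ of the coherent-duality pushforward $\hat\varpi_*(1) \in \GW(S)$, computed with respect to the trivialization $\varpi^!\scr O_S \wequi \widetilde\det L_\varpi \wequi \scr O_Z$ coming from the coordinates $(T_i) = $ standard coordinates on $\A^n$ and the $(f_i) = (F_i)$ (this is precisely the trivialization appearing after Theorem \ref{thm:introd-2}). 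But $\hat\varpi_*(1)$ with respect to that trivialization is, by Theorem \ref{thm:introd-2} together with Definition \ref{def:SSform}, exactly the Scheja--Storch form $\SSform(\varphi(U), \sigma', S)$. Chaining these equalities gives $\ind_Z(\sigma, \rho, \KO) = [\SSform(\varphi(U), \sigma', S)]$.

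The step I expect to be the main obstacle is the careful bookkeeping of \emph{which} trivialization of $\varpi^!\scr O_S$ (equivalently, which identification of the twisting line bundle) is used on each side, and checking that the one implicit in Proposition \ref{prop:meta-compute} and Definition \ref{def:index-general} matches the one used in Theorem \ref{thm:introd-2}. This is the same subtlety flagged in the proof of Theorem \ref{thm:introd-ePH=eGS} (``one needs to be careful about the trivializations used in defining the various pushforward maps; this is ensured precisely by the condition that the tautological section is a square''), so I would handle it exactly as there: condition (3) in Definition \ref{def:coordinates} guarantees that the discrepancy between the orientation-induced trivialization and the coordinate-induced one is a square of a section of $\scr L|_Z$, hence acts trivially after passing through the $\SL^c$-structure, so the two pushforwards genuinely coincide. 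Everything else --- that $\varpi$ is finite syntomic, that $\KO$ has the needed structure, that Corollary \ref{cor:KO-transfer} and Theorem \ref{thm:introd-2} say what is quoted --- is either immediate or already proved in the text.
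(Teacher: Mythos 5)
Your proposal is correct and follows essentially the same route as the paper's own proof: reduce to standard coordinates via Proposition \ref{prop:meta-compute}, then combine Corollary \ref{cor:KO-transfer} (identifying the $\KO$-transfer with the coherent-duality transfer) with Theorem \ref{thm:introd-2} (identifying the coherent-duality form with the Scheja--Storch form). The extra care you take about the trivialization of $\varpi^!\scr O_S$ is a genuine point, but the paper delegates exactly this bookkeeping to Proposition \ref{prop:meta-compute}, just as you do.
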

\begin{proof}
By Proposition \ref{prop:meta-compute}, we may assume that $\psi = \id$ and so on; so in particular $X \subset \A^n_S$.
The result now follows from the identification of the transfers in Corollary \ref{cor:KO-transfer} (telling us that the index is given by the trace form from coherent duality) and Theorem \ref{thm:introd-2} (identifying the coherent duality form with the Scheja-Storch form).
\end{proof}

\begin{corollary} \label{cor:framed-endo}
Let $S$ be regular semilocal scheme over a field $k$ of characteristic $\ne 2$.
\begin{enumerate}
\item Let $\varpi: S' \to S$ be a finite syntomic morphism, and $\tau$ a trivialization of $L_{\varpi} \in K(S')$.
  Then the associated endomorphism of the sphere spectrum over $S$ is given under the isomorphism $[\1, \1]_S \wequi GW(S)$ of \cite[Theorem 10.12]{bachmann-norms} by the symmetric bilinear form \[ \varpi_*(\scr O_{S'}) \otimes \varpi_*(\scr O_{S'}) \to \varpi_*(\scr O_{S'}) \stackrel{\det(\tau)}{\wequi} \varpi_*(\omega_{S'/S}) \xrightarrow{\eta_{\varpi}} \scr O_S. \]
\item Let $U \subset \A^n_S$ be open, $F: U \to \A^n$ have an isolated zero along $Z \subset U$.
  Then \[ \deg_Z(F) = [\SSform(U, F, S)] \in [\1, \1]_S \wequi GW(S). \]
\end{enumerate}
\end{corollary}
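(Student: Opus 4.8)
The plan is to deduce both statements from the already-established machinery of $\SL$-oriented pushforwards together with the identification of the $\KO$-transfer with the coherent-duality trace. Since $1/2 \in k$ and $S$ is regular semilocal, the canonical map $\GW(S) \to \KO^0(S)$ is an isomorphism, and by \cite[Theorem 10.12]{bachmann-norms} the unit map $[\1,\1]_S \wequi \GW(S)$ is also an isomorphism; moreover the $\KO$-theory spectrum $\KO$ is strongly $\SL$-oriented (Corollary \ref{cor:KO-orient}), so Theorem \ref{thm:identify-degree} and Corollary \ref{cor:KO-scheja} are available. For part (1), I would first observe that a finite syntomic $\varpi: S' \to S$ together with a trivialization $\tau$ of $L_\varpi$ is precisely the data of a tangentially framed correspondence from $S$ to $S$, so by Proposition \ref{prop:corr-transfer} the associated endomorphism of $\1_S$ is $\varpi_*(1)$, where $\varpi_*: [\1,\1]_{S'} \to [\1,\1]_S$ is the framed transfer. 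The key input is then Corollary \ref{cor:KO-transfer}, which identifies (after applying the unit map $\1 \to \KO$) this transfer with the coherent-duality pushforward $\KO^0(S') \wequi \KO^{L_\varpi}(S') \to \KO^0(S)$, whose underlying symmetric bilinear form is exactly the trace form $\varpi_*(\scr O_{S'}) \otimes \varpi_*(\scr O_{S'}) \to \varpi_*(\scr O_{S'}) \stackrel{\det\tau}{\wequi} \varpi_*(\omega_{S'/S}) \xrightarrow{\eta_\varpi} \scr O_S$; here one uses that under the trivialization $\tau$ the identification $\widetilde\omega_\varpi \wequi \scr O_{S'}$ in Proposition \ref{prop:f!-Lf} becomes $\det\tau$, so that the trace $\eta_\varpi: \varpi_* \widetilde\omega_\varpi \to \scr O_S$ unwinds to the stated composite.

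For part (2), I would reduce to part (1). Given $F: U \to \A^n$ with $U \subset \A^n_S$ open and an isolated zero along $Z \subset U$, the vanishing scheme $Z = Z(F)$ is finite over $S$, and by Remark \ref{rmk:SS-regular} (or Remark \ref{rmk:non-degeneracy-crit}) the section $(F_1,\dots,F_n)$ is a regular sequence, so $\varpi: Z \to S$ is finite locally free, hence finite syntomic. The standard coordinates $(T_i)$ on $\A^n$ and the functions $F = (f_1,\dots,f_n)$ supply, as in the discussion preceding Theorem \ref{thm:introd-2}, a trivialization of $\widetilde\omega_{Z/S} \wequi \omega_{Z/U} \otimes \omega_{U/S}|_Z \wequi \scr O_Z$, i.e. a framing $\tau$ of $L_\varpi$. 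By Corollary \ref{cor:deg-vs-correspondence}, $\deg_Z(F) \in [\1,\1]_S$ is the endomorphism given by the equationally framed correspondence defined by $F$, which is the tangentially framed correspondence $S \xleftarrow{\varpi,\tau} Z \to S$. Applying part (1), $\deg_Z(F)$ is the trace form $\varpi_*(\scr O_Z) \otimes \varpi_*(\scr O_Z) \to \varpi_*(\scr O_Z) \stackrel{\det\tau}{\wequi} \varpi_*(\omega_{Z/S}) \xrightarrow{\eta_\varpi} \scr O_S$. Finally Theorem \ref{thm:introd-2} identifies this coherent-duality trace form with the explicit Scheja--Storch form $\SSform(U, F, S)$ of Definition \ref{def:SSform}, which gives $\deg_Z(F) = [\SSform(U,F,S)]$.

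The main obstacle is the bookkeeping of compatibility of orientations and trivializations: one must check that the framing $\tau$ arising from the coordinates $(T_i)$ and functions $(f_i)$ in the definition of the equationally framed correspondence agrees, under the canonical identification $\widetilde\omega_\varpi \wequi \scr O_Z$ of Proposition \ref{prop:f!-Lf}, with the trivialization used to build the coherent-duality pushforward — this is the same subtlety that appears in the proof of Proposition \ref{prop:meta-compute} and in Theorem \ref{thm:introd-2}, and it is precisely why the hypothesis that the tautological section corresponds to a square is needed there. Here, with the \emph{standard} coordinates and no extra twisting line bundle, the relevant section is literally $1$, so the compatibility is automatic once the identifications are written out carefully; still, this is where essentially all the care must go. Everything else is a matter of invoking Corollary \ref{cor:KO-transfer}, Corollary \ref{cor:deg-vs-correspondence}, Proposition \ref{prop:corr-transfer}, and Theorem \ref{thm:introd-2} in sequence.
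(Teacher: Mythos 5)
Your proof is correct and follows essentially the same route as the paper's: Proposition \ref{prop:corr-transfer} to express the framed endomorphism as a transfer of $1$, Corollary \ref{cor:KO-transfer} (plus the compatibility of transfers with $u:\1\to\KO$ from Lemma \ref{lemm:gysin-compat}(1), which you invoke implicitly by ``applying the unit map'') to identify it with the coherent-duality trace form for part~(1), and then Corollary \ref{cor:deg-vs-correspondence} plus Theorem \ref{thm:introd-2} to reduce part~(2) to part~(1) and to the Scheja--Storch form. Your concluding remark about orientation bookkeeping is addressed by the fact that the isomorphism $\varpi^!\scr O_S \wequi \scr O_Z$ in Theorem \ref{thm:introd-2} is \emph{defined} using the same sections $(T_i)$ and $(f_i)$ that constitute the equational framing, so the two trivializations agree by construction rather than by a further argument.
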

\begin{proof}
The proof of \cite[Theorem 10.12]{bachmann-norms} shows that the unit map $u: \1 \to \KO$ induces the isomorphism $[\1, \1]_S \wequi GW(S)$.
(1) is an immediate consequence of Proposition \ref{prop:corr-transfer}, Lemma \ref{lemm:gysin-compat}(1) and Corollary \ref{cor:KO-transfer}.
Via Corollary \ref{cor:deg-vs-correspondence} and Theorem \ref{thm:introd-2}, (2) is a special case of (1).
This concludes the proof.
\end{proof}

\begin{corollary}
Let $X$ be essentially smooth over a field $k$ of characteristic $\ne 2$.
Let $\varpi: X' \to X$ be a finite syntomic morphism, and suppose given an orientation $\omega_{X'/X} \stackrel{\rho}{\wequi} \scr L^{\otimes 2}$.
Consider the induced transfer \[ \varpi_*: \H\tilde\Z^0(X') \stackrel{\rho}{\wequi} \H\tilde\Z^0(X', \omega_{X'/X}) \to \H\tilde\Z^0(X) \wequi \ul{GW}(X). \]
Then $\varpi_*(1)$ is given by the image in $\ul{GW}(X)$ of the symmetric bilinear form on \[ \varpi_*(\scr L) \otimes \varpi_*(\scr L) \to \varpi_*(\scr L^{\otimes 2}) \stackrel{\rho}{\wequi} \varpi_*(\omega_{X'/X}) \xrightarrow{\eta_{\varpi}} \scr O_X. \]
\end{corollary}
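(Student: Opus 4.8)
The plan is to reduce this statement about $\H\tilde\Z$ to the corresponding statement about $\KO$, which is the content of Corollary~\ref{cor:framed-endo}(1) and the surrounding discussion. First I would recall that over an essentially smooth $k$-scheme $X$ with $1/2 \in k$ we have the span of $\SL$-oriented ring spectra $\H\tilde\Z \leftarrow \tilde f_0 \KO \to \KO$, exactly as used in the proof of Corollary~\ref{co:pieBM=nPH} and Proposition~\ref{prop:application-bc}; all maps in this span induce isomorphisms on $\pi_0(\ph)(X) \wequi \ul{GW}(X)$ (using that $X$ is essentially smooth over a field, so the sheaf-theoretic identifications $\H\tilde\Z^0(X) \wequi \ul{GW}(X)$ and $\KO^0(X) \wequi \ul{GW}(X)$ both hold and are compatible). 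By Lemma~\ref{lemm:gysin-compat}(1), the transfer $\varpi_*$ along the finite syntomic morphism $\varpi: X' \to X$ (built via the purity transformation and the $\SL$-orientation, using $L_\varpi \wequi -[\text{(conormal)}]$ together with the chosen $\rho$) is natural in the ring spectrum. Hence it suffices to prove the assertion for $E = \KO$ in place of $\H\tilde\Z$.

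Next I would invoke Corollary~\ref{cor:KO-transfer}, which identifies the $\KO$-theoretic pushforward $\varpi_*$ along a finite syntomic morphism with the coherent-duality trace construction: the orientation $\rho: \omega_{X'/X} \wequi \scr L^{\otimes 2}$ together with the identification $\varpi^!(\scr O_X) \wequi \widetilde\omega_\varpi \wequi \omega_{X'/X}$ of Proposition~\ref{prop:f!-Lf} turns $\varpi_*(1) \in \KO^0(X)$ into the class of the symmetric bilinear form
\[
\varpi_*(\scr L) \otimes \varpi_*(\scr L) \to \varpi_*(\scr L^{\otimes 2}) \stackrel{\rho}{\wequi} \varpi_*(\omega_{X'/X}) \xrightarrow{\eta_\varpi} \scr O_X,
\]
where $\eta_\varpi$ is the coherent trace. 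This is precisely the form in the statement. Transporting back along the span of ring spectra (which identifies all the relevant $\pi_0$'s with $\ul{GW}(X)$ compatibly with the product structures, so that isomorphism classes of symmetric bilinear forms are matched), we obtain the claim for $\H\tilde\Z$.

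The main obstacle is making sure the identification of the various pushforward maps is genuinely compatible: one must check that the $\SL^c$-orientation machinery of \S\ref{sec:coh-theories} applied to $\H\tilde\Z$, the motivic-spectrum-level Gysin map of \S\ref{sec:gysin}, and the coherent-duality pushforward $\hat\varpi_*$ all produce the \emph{same} trivialization of $\varpi^!(\scr O_X)$ up to the ambiguity controlled by squares of line bundles — this is exactly the kind of bookkeeping carried out in the proof of Proposition~\ref{prop:meta-compute} and in Corollary~\ref{cor:KO-transfer}, and the point is that the condition built into the relative orientation (that the canonical section is a tensor square) kills precisely this ambiguity. Once that compatibility is in hand, the argument is a formal consequence of naturality in $E$ plus the explicit $\KO$-computation; no further calculation is required.
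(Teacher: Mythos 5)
Your plan runs into a genuine gap when you invoke Corollary~\ref{cor:KO-transfer}. That corollary (like Corollary~\ref{cor:framed-endo}(1), which is what the paper actually uses) requires a trivialization $\tau\colon L_\varpi \wequi 0 \in K(X')$ of the cotangent complex in $K$-theory — an honest $\SL$-orientation. The hypothesis in the present corollary only gives an $\SL^c$-orientation $\rho\colon \omega_{X'/X} \wequi \scr L^{\otimes 2}$. These are not the same thing, and there is no canonical way to pass from $\rho$ to a $\tau$ with $\det\tau = \rho$ over a general $X$: for one thing $\scr L$ need not be trivial, and for another, even if it were, lifting an isomorphism $\omega_{X'/X} \wequi \scr O$ to a point of $K(X')$ requires surjectivity of the determinant $K_1(X') \to \scr O^\times(X')$. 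Your closing remark that ``the condition built into the relative orientation kills precisely this ambiguity'' does not produce the required $\tau$; the $\SL^c$-compatibility condition controls the ambiguity in the Thom isomorphism \emph{once a $\tau$ exists}, but it does not construct one.

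The paper's proof closes this gap by first reducing to the case $X = \operatorname{Spec}$ of a field, using unramifiedness of the sheaf $\ul{GW}$. Over a field, $X'$ is semilocal, so $\scr L \wequi \scr O$ and $\rho$ becomes $\rho'\colon \omega_{X'/X} \wequi \scr O$; then $L_{X'/X}$ has constant rank $0$ hence is zero in $K_0(X')$, and the surjectivity of $\det\colon K_1(X') \to \scr O^\times(X')$ over a semilocal ring lets one choose $\tau$ with $\det\tau = \rho'$. With that $\tau$ in hand, Corollary~\ref{cor:framed-endo}(1) applies directly; the diagram in the paper then identifies the $\rho$-form with the $\tau$-form. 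The unramifiedness reduction and the $K_1 \to \scr O^\times$ lifting step are essential and are exactly what your proposal omits. The additional reduction from $\H\tilde\Z$ to $\KO$ via the span $\H\tilde\Z \leftarrow \tilde f_0 \KO \to \KO$ that you suggest is not needed if one uses Corollary~\ref{cor:framed-endo}(1) (which works at the level of endomorphisms of the sphere and hence applies to $\H\tilde\Z$ after composing with the unit), though it could be made to work; the real missing content is the field reduction and the lift of $\rho$ to $\tau$.
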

\begin{proof}
Using unramifiedness of $\ul{GW}$ \cite[Lemma 6.4.4]{morel2005stable}, we may assume that $X$ is the spectrum of a field.
Then $X'$ is semilocal, so $\scr L \wequi \scr O$ and we obtain (up to choosing such an isomorphism) $\omega_{X'/X} \stackrel{\rho'}{\wequi} \scr O$.
Since $L_{X'/X}$ has constant rank (namely $0$), it follows from \cite[Lemma 1.4.4]{bruns1998cohen} that $L_{X'/X} \wequi 0 \in K(X')$.
The set of homotopy classes of such trivializations is given by $K_1(X')$, which maps surjectively (via the determinant) onto $\scr O^\times(X')$.
It follows that there exists a trivialization $\tau: 0 \wequi L_{X'/X} \in K(X')$ such that $\det(\tau) = \rho'$.
Hence we have a commutative diagram
\begin{equation*}
\begin{CD}
\varpi_*(\scr L) \otimes \varpi_*(\scr L) @>>> \varpi_*(\scr L^{\otimes 2}) @>{\rho}>> \varpi_*(\omega_{X'/X}) @>{\eta_\varpi}>> \scr O \\
@|                                                @|                                   @|                                      @|    \\
\varpi_*(\scr O) \otimes \varpi_*(\scr O) @>>> \varpi_*(\scr O) @>{\det \tau}>> \varpi_*(\omega_{X'/X}) @>{\eta_\varpi}>> \scr O. \\
\end{CD}
\end{equation*}
It follows from Corollary \ref{cor:framed-endo} that the bottom row is the form $\varpi_*(1)$ arising from the orientation $\rho$; by what we just said this is the same as the top row, which is the form we were supposed to obtain.

This concludes the proof.
\end{proof}

We also point out the following variant.
\begin{corollary}
Let $l/k$ be a finite extension of fields, $1/2 \in k$.
Them Morel's absolute transfer \cite[\S5.1]{A1-alg-top} $\tr_{l/k}: GW(l, \omega_{l/k}) \to GW(k)$ is given as follows.
Let $\phi: V \otimes_l V \to l$ be an element of $GW(l)$, $\alpha \in \omega_{l/k}^\times$.
Then \[ \tr_{l/k}(\phi \otimes \alpha) = [V \otimes_k V \to V \otimes_l V \xrightarrow{\phi} l \stackrel{\alpha}{\wequi} \omega_{l/k} \xrightarrow{\eta_{l/k}} k], \] where $\eta_{l/k}: \omega_{l/k} \to k$ is the ($k$-linear) trace map of coherent duality (see \S\ref{subsec:preliminaries-cotangent}).
\end{corollary}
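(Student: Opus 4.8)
The plan is to deduce this from the preceding corollary by identifying Morel's absolute transfer with the six-functors transfer along $\varpi\colon \Spec l \to \Spec k$. First note that $\varpi$ is finite syntomic: any finite extension of fields admits a filtration by monogenic — hence hypersurface, hence lci — subextensions, and over a field everything is flat and of finite presentation. Thus Proposition~\ref{prop:f!-Lf} applies and supplies $\varpi^!\calO_k \wequi \widetilde\omega_{l/k}$ together with the coherent trace $\eta_{l/k}\colon \varpi_*\widetilde\omega_{l/k} \to \calO_k$, so the right-hand side of the asserted formula is literally the coherent-duality pushforward $\hat\varpi_*\colon \GW(l,\omega_{l/k}) \to \GW(k)$ of \S\ref{subsec:local-indices}, applied to the bilinear complex concentrated in degree $0$ given by $V$ with the form $\phi$, twisted by $\alpha$. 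The content is therefore the identity $\tr_{l/k} = \hat\varpi_*$.

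First I would establish this on the unit, where it is exactly the preceding corollary. Taking $X = \Spec k$, $X' = \Spec l$ and $\scr L = \scr O$ there, for each $\alpha \in \omega_{l/k}^\times$ — equivalently each orientation $\rho\colon \omega_{l/k} \wequi \scr O$, the two being exchanged by $\alpha = \rho^{-1}(1)$ — the transfer $\varpi_*$ sends $1 \in \H\tilde\Z^0(\Spec l) = \ul{\GW}(l)$ to the class of $l\otimes_k l \to l \xrightarrow{\alpha} \omega_{l/k} \xrightarrow{\eta_{l/k}} k$, which is precisely the formula in the statement for $V = l$ and $\phi = \lra{1}$. The extension to arbitrary $\phi$ is then purely formal: since $\operatorname{char} k \neq 2$, over the field $l$ we may diagonalize $\phi = \sum_i \epsilon_i\lra{a_i}$ with $\epsilon_i \in \{\pm 1\}$ and $a_i \in l^\times$; both $\hat\varpi_*$ and the $\H\tilde\Z$-transfer are additive; and $\lra{a_i}\otimes\alpha = \lra{1}\otimes(a_i\alpha)$ in $\GW(l,\omega_{l/k})$, so each summand reduces to the case already treated, and reassembling recovers the claimed $k$-bilinear form on $V$.

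It remains to match $\tr_{l/k}$ with $\varpi_*$, which is the one genuinely nontrivial point. Morel's absolute transfer \cite[\S5.1]{A1-alg-top} is built, via an embedding of $\Spec l$ into an affine space over $k$ and a Pontryagin--Thom collapse, essentially as the ($\omega_{l/k}$-twisted) endomorphism of $\1_k$ associated to the resulting equationally framed correspondence; by Lemma~\ref{lemm:voevodsky-transfers}, Corollary~\ref{cor:deg-vs-correspondence} and Proposition~\ref{prop:corr-transfer} this is the same datum as the framed transfer $\varpi_*(1)$, and hence — after applying the unit map $\1 \to \H\tilde\Z$ and using \S\ref{subsub:functoriality-E} together with Corollary~\ref{cor:framed-endo} — as the $\H\tilde\Z$-transfer used above. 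For a non-monogenic extension one reduces along a tower of monogenic subextensions, using that both Morel's transfer and the six-functors transfer are compatible with composition (Lemma~\ref{lemm:gysin-compat}(3)). I expect this comparison — in particular handling the inseparable case cleanly, where $\omega_{l/k}$ is genuinely nontrivial and $L_{l/k}$ is not concentrated in a single degree — to be the main obstacle; the two reductions above are routine once it is in place. As elsewhere, $\operatorname{char} k \neq 2$ enters both through the $\KO$/$\H\tilde\Z$ machinery of the preceding corollaries and through the diagonalizability of forms over $l$.
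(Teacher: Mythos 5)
Your proof identifies the right ingredients but leaves the essential step unproved: you never establish that Morel's absolute transfer $\tr_{l/k}$ coincides with the six-functors (Gysin) transfer $\varpi_*$, and you explicitly flag this as ``the main obstacle.'' That is not a detail to be filled in later --- it is the entire content. The paper disposes of it by citing \cite[Proposition 4.3.17]{EHKSY2}, which proves precisely that Morel's transfer from \cite[\S5.1]{A1-alg-top} agrees with the Gysin transfer of \S\ref{sec:gysin}. Your proposed route through Lemma~\ref{lemm:voevodsky-transfers}, Corollary~\ref{cor:deg-vs-correspondence} and Proposition~\ref{prop:corr-transfer} presupposes that Morel's construction is \emph{already} given as an equationally framed correspondence via a Pontryagin--Thom collapse; but Morel's absolute transfer is built sheaf-theoretically (from transfers on $\ul{\K}^{\mathrm{MW}}_*$ via the cohomological machinery of unramified sheaves), so identifying it with the framed/Gysin transfer is a genuine comparison theorem, not a matter of unwinding definitions.

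Once that comparison is granted, the remainder of your argument --- diagonalizing $\phi$, additivity, and absorbing $\lra{a_i}$ into $\alpha$ --- is correct but a detour. The paper instead invokes Corollary~\ref{cor:KO-transfer} directly: that result identifies the $\KO$ Gysin transfer $\varpi_*$ with the coherent-duality pushforward $\hat\varpi_*$ for arbitrary symmetric bilinear forms, not merely rank-one ones, so no diagonalization is required, and since $\KO^0(k)\wequi GW(k)$ the formula falls out at once. You needed the diagonalization only because you routed through the preceding $\H\tilde\Z$-corollary, which computes $\varpi_*(1)$ alone; going via Corollary~\ref{cor:KO-transfer} avoids it entirely.
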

\begin{proof}
Immediate from \cite[Proposition 4.3.17]{EHKSY2} (telling us that Morel's transfer coincides with the one from \S\ref{sec:gysin}) and Corollary \ref{cor:KO-transfer}.
\end{proof}

\begin{remark} \label{rmk:KW-reprove}
Corollary \ref{cor:framed-endo}(2) generalizes the main result of \cite{kass2016class}, at least for fields of characteristic $\ne 2$.
\end{remark}

\begin{remark}
We expect that all of the results in this section extend to fields of characteristic $2$ as well.
This should be automatic as soon as $\KO$ is shown to represent $\GW$ in this situation (over regular bases, say).
\end{remark}

\appendix

\section{$\KO$ via framed correspondences} \label{app:KO}
In this section we will construct a strong orientation on $\KO$, and identify some of the transfers.
We would like to thank M. Hoyois for communicating these results to us.
For another approach to parts of the results in this section see \cite{lopez2017ring}.

We shall make use of the technology of framed correspondences  \cite{EHKSY}.
We write $\Cor^\fr(S)$ for the symmetric monoidal $\infty$-category of smooth $S$-schemes and tangentially framed correspondences.
Denote by $\FSynor \in \CAlg(\PSh_\Sigma(\Cor^\fr(S)))$ the stack of finite syntomic schemes $Y/X$ together with a choice of trivialization $\det L_{Y/X} \wequi \scr O$, with its standard structure of framed transfers.
This is constructed in \cite[Example 3.3.4]{EHKSY3}.

Write $\Bil \in \CAlg(\PSh_\Sigma(\Sch_S))$ for the presheaf sending $X$ to the $1$-groupoid of pairs $(V, \phi)$ with $V \to X$ a vector bundle and $\phi: V \otimes V \to \scr O_X$ a non-degenerate, symmetric bilinear form.
The commutative monoid structure is given by $\otimes$.
If $f: X \to Y$ is a finite syntomic morphism then a choice of trivialization $\widetilde\det L_f \wequi \omega_f \wequi \scr O_X$ induces an additive map $\hat f_*: \Bil^\wequi(X) \to \Bil^\wequi(Y)$; see e.g. \S\ref{subsec:local-indices}.

\begin{theorem}
There exists a lift $\Bil \in \CAlg(\PSh_\Sigma(\Cor^\fr(S)))$ with the transfers given by maps of the form $\hat f_*$, together with a morphism $\FSynor \to \Bil \in \CAlg(\PSh_\Sigma(\Cor^\fr(S)))$.
\end{theorem}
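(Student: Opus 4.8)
The plan is to deduce everything from the coherent-duality pushforward $\hat f_*$ on bilinear forms already exploited in \S\ref{subsec:local-indices}, and then to organise the resulting transfers into a framed presheaf by invoking the span-category description of $\Cor^\fr(S)$.

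\emph{Constructing and analysing $\hat f_*$.} For a finite syntomic morphism $f\colon Y\to X$ — so $f$ is finite locally free, $f_*$ preserves vector bundles, and $L_f$ is a perfect complex of virtual rank $0$ — and a trivialization $\tau$ of $\widetilde\omega_f=\widetilde\det L_f$ (in practice recorded as a trivialization of $L_f\in K(Y)$, from which $\tau$ is obtained by applying $\widetilde\det$), I would define $\hat f_*(V,\phi)$ to be the form
\[ f_*V\otimes f_*V\to f_*(V\otimes V)\xrightarrow{f_*\phi}f_*\scr O_Y\xrightarrow{\tau}f_*\omega_f\xrightarrow{\eta_f}\scr O_X, \]
where $\eta_f$ is the coherent-duality trace. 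Non-degeneracy and symmetry are exactly the content of coherent duality along the finite morphism $f$, namely that $\eta_f$ exhibits $f_*\omega_f\wequi f_*f^!\scr O_X$ as a dualizing object for $f_*$. Two facts then have to be established: (i) \emph{functoriality along composites}, $\widehat{(fg)}_*\wequi\hat f_*\widehat g_*$, compatibly with the canonical decomposition $\widetilde\omega_{fg}\wequi\widetilde\omega_g\otimes g^*\widetilde\omega_f$ and with composition of trivializations — this is Lemma \ref{lemm:pushforward-functorial} (i.e.\ \cite[Theorem 5.1.9]{calmes2009tensor}), whose heart is the identity $\eta_f\circ f_*(\eta_g)=\eta_{fg}$; and (ii) \emph{base change}: for any $X'\to X$ the evident square relating $\hat f_*$ and $\widehat{f'}_*$ along pullback commutes, which holds because $f$ is flat (so the base change is tor-independent), $f_*$ commutes with flat pullback, and $\eta_f$ is stable under tor-independent base change. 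The monoidal structure $\otimes$ on $\Bil$ is compatible with these pushforwards via the projection formula for $f_*$.

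\emph{Upgrading $\Bil$ to a framed $\CAlg$-presheaf.} By the formalism of \cite{EHKSY3} (see \S3.3 there), the $\infty$-category $\Cor^\fr(S)$ is built from spans of smooth $S$-schemes whose left legs are finite syntomic by adjoining the framing datum, and a $\Sigma$-presheaf on it amounts to a $\Sigma$-presheaf on $\Sm_S$ equipped with a coherently associative, base-change-compatible system of transfers along finite-syntomic-plus-framing maps. The pushforwards of the previous step — which factor through $\widetilde\det L_f$, hence depend only on the framing — furnish precisely this data, giving $\Bil\in\PSh_\Sigma(\Cor^\fr(S))$ in which the transfer along a framed correspondence $X\xleftarrow{p}Z\xrightarrow{g}Y$ is $\Bil(Y)\xrightarrow{g^*}\Bil(Z)\xrightarrow{\hat p_*}\Bil(X)$. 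Compatibility of $\otimes$ with all transfers (again the projection formula) refines this to $\Bil\in\CAlg(\PSh_\Sigma(\Cor^\fr(S)))$.

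\emph{The morphism $\FSynor\to\Bil$ and the main obstacle.} A section of $\FSynor$ over $X$ is by definition a finite syntomic $f\colon Y\to X$ with a trivialization of $\det L_f$; send it to $\hat f_*(1_Y)$, where $1_Y=(\scr O_Y,\mathrm{mult})$. Naturality in $X$ is base change, the commutative-monoid compatibility is the Künneth identity $\widehat{(f\times_X f')}_*(1)=\hat f_*(1)\otimes\widehat{f'}_*(1)$, and compatibility with framed transfers unwinds as follows: the standard transfer on $\FSynor$ along $p\colon Z\to X$ carries $(Y_Z\to Z)$ to $(Y_Z\to X)$ with composed framing, and under our map this equals $\widehat{(Y_Z\to X)}_*(1)=\hat p_*\,\widehat{(Y_Z\to Z)}_*(1)$ by the functoriality of $\hat{(-)}_*$ — so the framed-transfer compatibility is just $\widehat{(pq)}_*=\hat p_*\hat q_*$. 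The genuinely technical point is the middle step: promoting the $1$-categorical data ``pushforwards with associativity and base change'' to a coherent $\infty$-functor out of $\Cor^\fr(S)$, which requires either citing a recognition principle for $\PSh_\Sigma(\Cor^\fr(S))$ or constructing the functor by hand from the span formalism and verifying the higher coherences of the coherent-duality traces; one must also thread through the passage from a $K$-theory trivialization of $L_f$ to the determinant trivialization so that the transfers on $\Bil$ genuinely descend along the map to $\FSynor$ as asserted.
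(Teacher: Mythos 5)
You correctly identify the overall shape of the argument: define $\hat f_*$ via the coherent‐duality trace, check it respects composition and base change, organize this into a framed $\Sigma$-presheaf, and map $\FSynor$ in by $f\mapsto \hat f_*(1)$. But you explicitly flag — and then leave open — what is in fact the only non-routine step, namely promoting the $1$-categorical pushforwards to a coherent functor out of the $\infty$-category $\Cor^\fr(S)$. The paper does not paper over this: it resolves it with two devices you do not use, and without them your middle step is a genuine gap rather than a proof.

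First, the paper observes that $\Bil$ takes values in $1$-groupoids, i.e.\ is $1$-truncated, so producing the lift requires only a \emph{finite} amount of coherence data; this is what makes ``by hand'' construction tractable at all, and is nowhere invoked in your argument. Second, rather than mapping out of $\Cor^\fr(S)$ directly, the paper factors through the auxiliary category $\Cor^\fr((\Sm_S)_{/K^\circ})$ of \cite[\S B]{EHKSY3}, whose objects are pairs $(X,\xi)$ with $\xi$ a rank-$0$ $K$-theory point. Defining $\Bil(X,\xi)$ as forms for the twisted duality $\iHom(\ph,\det\xi)$ turns the framing/twist bookkeeping that you are handling by hand (passing from a trivialization of $L_f$ in $K$-theory to a trivialization of $\widetilde\det L_f$, then to $\eta_f$) into structure already encoded in the objects, so that the transfer becomes the bare finite-syntomic span with no extra datum to carry. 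The required lift over $\Cor^\fr(S)$ is then simply $\delta^*(\Bil)$ for the symmetric monoidal functor $\delta\colon\Cor^\fr(S)\to\Cor^\fr((\Sm_S)_{/K^\circ})$, and the morphism $\FSynor\to\Bil$ is obtained not by checking the transfer-compatibility of $f\mapsto\hat f_*(1)$ directly but by constructing a map $K'\to\gamma^*(\Bil)$ (with $K'$ the rank-$0$ part of $K^{\SL}$ and $\gamma K'\wequi\FSynor$), again needing only finitely many coherences. So: your proposal has the right ingredients and the right target, but at the point where you write ``which requires either citing a recognition principle\dots or constructing the functor by hand\dots'' the paper actually does the work, and the two ideas that make it possible — $1$-truncation of $\Bil$, and the twisted framed-correspondence category — are exactly what your proposal is missing.
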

The morphism $\FSynor \to \Bil$ is informally described as follows: a pair ($f: X \to Y$ finite syntomic, $\omega_f \wequi \scr O$) is sent to $f_*(\scr O)$, where $\scr O \in \Bil(X)$ denotes the vector bundle $\scr O_X$ with its canonical symmetric bilinear pairing $\scr O_X \otimes \scr O_X \to \scr O_X$.
\begin{proof}
Denote by $K^\circ \in \PSh_\Sigma(\Sm_S)$ the rank $0$ part of the $K$-theory presheaf and by $\Cor^\fr((\Sm_S)_{/K^\circ})$ the subcategory of the category constructed in \cite[\S B]{EHKSY3} on objects $(X, \xi)$ with $X \in \Sm_S$, $\xi$ of rank $0$, and morphisms those spans whose left leg is finite syntomic.
There are symmetric monoidal functors $\gamma: (\Sm_S)_{/K^\circ} \to \Cor^\fr((\Sm_S)_{/K^\circ})$ and $\delta: \Cor^\fr(S) \to \Cor^\fr((\Sm_S)_{/K^\circ})$.

We first lift $\Bil$ to $\PSh_\Sigma(\Cor^\fr((\Sm_S)_{/K^\circ}))$; we let $\Bil(X, \xi)$ be the $1$-groupoid of vector bundles with a symmetric bilinear form for the duality $\iHom(\ph, \det \xi)$.
Since $\Bil$ is $1$-truncated, we only need to specify a finite amount of coherence homotopies, so this can be done by hand.
Since $\delta$ is symmetric monoidal $\delta^*$ is lax symmetric monoidal and hence $\delta^*(\Bil)$ produces the desired lift.

Let $K' \to K^\circ$ denote the fiber of the determinant map; in other words this is the rank $0$ part of $K^\SL$.
This defines an object of $\CAlg(\PSh_\Sigma(\Sm_S)_{/K^\circ}) \wequi \CAlg(\PSh_\Sigma((\Sm_S)_{/K^\circ}))$ and $\gamma K' \wequi \FSynor$ \cite[Example 3.3.4 and after Example 3.3.6]{EHKSY3}.
To conclude the proof it hence suffices to construct a map $K' \to \gamma^*(\Bil) \in \CAlg(\PSh_\Sigma(\Sm_S)_{/K^\circ})$.
Again this only needs a finite amount of coherences; the desired map sends $(\xi, \phi)$ with $\xi \in K^\circ(X)$ and $\phi: \det(\xi) \wequi \scr O_X$ to $(\scr O, \phi')$ where $\phi': \scr O \otimes \scr O \wequi \scr O \stackrel{\phi}{\wequi} \det \xi$.
\end{proof}

Since group-completion and Zariski localization commute with the forgetful functor $\PSh_\Sigma(\Cor^\fr_S) \to \mathrm{CMon}(\PSh_\Sigma(\Sm_S)) \to \PSh_\Sigma(\Sm_S)$ \cite[Proposition 3.2.14]{EHKSY}, we deduce that $(L_\mathrm{Zar} \Bil^{gp})(X) \wequi \KO(X)$ \cite[Definitions 1.5 and 2.2]{hornbostel2005a1}; we denote this presheaf by $\GW$.
There is thus a canonical Bott element $\beta \in [T^4, \GW]$.
\begin{proposition} \label{prop:framed-KO}
There is a canonical equivalence $(\Sigma^\infty_\fr \GW)[\beta^{-1}] \wequi \KO$, at least if $1/2 \in S$.
\end{proposition}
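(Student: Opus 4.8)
The plan is to proceed in two steps: identify $\Sigma^\infty_\fr \GW$ with the very effective cover $\tilde f_0 \KO$ of Hermitian $K$-theory, and then show that inverting the Bott element recovers $\KO$ itself.

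For the first step I would use the motivic recognition principle of Elmanto--Hoyois--Khan--Sosnilo--Yakerson \cite{EHKSY, EHKSY3}, which realizes the very effective subcategory $\SH(S)^{\mathrm{veff}}$ as the $\infty$-category of grouplike, motivically local framed presheaves of spaces via the adjunction $\Sigma^\infty_\fr \dashv \Omega^\infty_\fr$. In particular $\Omega^\infty_\fr \Sigma^\infty_\fr F \wequi L_{\mathrm{mot}}(F^{gp})$ for any framed presheaf $F$, every very effective $E$ satisfies $E \wequi \Sigma^\infty_\fr \Omega^\infty_\fr E$, and $\Omega^\infty_\fr E \wequi \Omega^\infty_\fr \tilde f_0 E$ for arbitrary $E$ (by adjunction, since $\Sigma^\infty_\fr$ takes values in very effective spectra). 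Applied to the framed presheaf $\GW = L_\Zar \Bil^{gp}$ constructed above, it therefore suffices to produce an equivalence of framed presheaves $L_{\mathrm{mot}}\GW \wequi \Omega^\infty_\fr \tilde f_0 \KO$: given this, $\Sigma^\infty_\fr \GW \wequi \Sigma^\infty_\fr \Omega^\infty_\fr \tilde f_0 \KO \wequi \tilde f_0 \KO$, compatibly with the evident ring structures. On underlying motivic spaces this equivalence is exactly the representability theorem for Hermitian $K$-theory, $\Omega^\infty \KO \wequi L_{\mathrm{mot}} \GW$, due to Hornbostel \cite{hornbostel2005a1} using the geometric model of Schlichting--Tripathi \cite{schlichting2015geometric}; it is here that the hypothesis $1/2 \in S$ enters, being needed for homotopy invariance of Hermitian $K$-theory.

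The main obstacle is to upgrade this to an equivalence of framed presheaves, i.e. to match the framed transfer structure that the previous theorem placed on $\GW$ via $\FSynor \to \Bil$ --- whose transfer along a finite syntomic $f$ is the coherent-duality pushforward $\hat f_*$ of \S\ref{subsec:local-indices} --- with the canonical framed transfer structure on $\Omega^\infty_\fr \tilde f_0 \KO$, given by the six-functor Gysin maps of \S\ref{sec:gysin}. Since a map of grouplike framed presheaves is pinned down by compatible data on framed correspondences, this reduces to the statement that for finite syntomic $f$ the trace $\eta_f$ of coherent duality computes the six-functor pushforward $f_*$ in $\KO$-cohomology; I would extract this from the pushforward comparison of Levine--Raksit \cite{levine2018motivic}, or argue it by hand, using that $\Bil$ is $1$-truncated so that only a bounded amount of coherence is at stake. (It is precisely this comparison that, once the proposition is established, is recorded for general use as Corollary \ref{cor:KO-transfer}.)

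For the second step, granting $\Sigma^\infty_\fr \GW \wequi \tilde f_0 \KO$: the Bott element $\beta$ of the excerpt, viewed in $[T^4, \GW] \wequi \pi_{8,4} \tilde f_0 \KO$, maps to the genuine Bott class of $\KO$, by which $\KO$ is $(8,4)$-periodic (again using $1/2 \in S$), and the structure map $\tilde f_0 \KO \to \KO$ intertwines multiplication by the two classes; hence it induces $\Sigma^\infty_\fr \GW[\beta^{-1}] \to \KO[\beta^{-1}] \wequi \KO$. To see this is an equivalence I would compute homotopy sheaves: $\ul\pi_{a,b}(\tilde f_0 \KO[\beta^{-1}]) = \colim_n \ul\pi_{a+8n, b+4n} \tilde f_0 \KO$, and for $n \gg 0$ the bidegree $(a+8n, b+4n)$ becomes arbitrarily connective in the effective homotopy $t$-structure, so $\tilde f_0 \KO \to \KO$ is an isomorphism there, while the transition maps on $\ul\pi_{*,*}\KO$ are isomorphisms by periodicity; the colimit is thus $\ul\pi_{a,b}\KO$ and the map is an equivalence.
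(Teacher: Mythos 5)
Your proposal takes a genuinely different route from the paper's and has two gaps. First, you lean on the motivic recognition principle (e.g. $\Omega^\infty_\fr\Sigma^\infty_\fr F\wequi L_{\mot}(F^{gp})$ and the counit equivalence $\Sigma^\infty_\fr\Omega^\infty_\fr E\wequi E$ for very effective $E$), which is proved in \cite{EHKSY} only over a perfect field; the proposition is asserted, and the paper proves it, over an arbitrary base $S$ with $1/2\in S$. Second and more seriously, the ``main obstacle'' you identify --- matching the coherent-duality transfers on $\Bil$ with the six-functor Gysin transfers on $\Omega^\infty_\fr\KO$ --- is precisely the content of Corollary \ref{cor:KO-transfer}, whose proof in the paper \emph{invokes} Proposition \ref{prop:framed-KO}. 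So ``arguing by hand'' here is not a small coherence check but rather the hardest part of \S\ref{app:KO}, and the appeal to Levine--Raksit \cite{levine2018motivic} does not close it either: their comparison is over a field of characteristic $\ne 2$ and concerns a different class of pushforwards, not finite syntomic transfers in $\KO$.

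The paper's proof avoids both issues by working directly with prespectra: $(\Sigma^\infty_\fr\GW)[\beta^{-1}]$ is modeled by the framed $T^4$-prespectrum $(\GW,\GW,\dots)$ with bonding maps multiplication by $\beta$; the equivalence $\SH^\fr(S)\wequi\SH(S)$ of \cite{hoyois2018localization} simply forgets the framed transfers on underlying prespectra; and the resulting $T^4$-prespectrum in $\SH(S)$ is Hornbostel's definition of $\KO$ \cite{hornbostel2005a1}. Because it never has to \emph{match} two transfer structures, this argument requires neither the recognition principle nor the transfer comparison, which is instead \emph{deduced} afterwards as Corollary \ref{cor:KO-transfer}. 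Your proposal has this logical dependency backwards.
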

\begin{proof}
The spectrum $(\Sigma^\infty_\fr \GW)[\beta^{-1}] \in \SH^\fr(S)$ can be modeled by the framed $T^4$-prespectrum $(\GW, \GW, \dots)$ with the bonding maps given by multiplication by $\beta$.
Under the equivalence $\SH^\fr(S) \wequi \SH(S)$ \cite{hoyois2018localization}, this corresponds to the same prespectrum with transfers forgotten.
This is $\KO$ by definition.
\end{proof}

In particular we have constructed an $\scr E_\infty$-structure on $\KO$.

\begin{corollary} \label{cor:KO-orient}
There is a morphism $\MSL \to \KO \in \CAlg(\SH(S))$, at least if $1/2 \in S$.
\end{corollary}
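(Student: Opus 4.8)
The plan is to deduce Corollary~\ref{cor:KO-orient} directly from the constructions in this appendix. Recall that by the preceding theorem we have a morphism $\FSynor \to \Bil \in \CAlg(\PSh_\Sigma(\Cor^\fr(S)))$, and that after group-completing and Zariski-sheafifying we obtain $\GW = L_\Zar \Bil^{gp}$ with its framed transfers, hence (by Proposition~\ref{prop:framed-KO}) a presentation $\KO \wequi (\Sigma^\infty_\fr \GW)[\beta^{-1}]$ as a commutative algebra in $\SH(S)$, valid when $1/2 \in S$. So the task reduces to producing a map $\MSL \to \Sigma^\infty_\fr \GW$ (then invert $\beta$, using that $\MSL$ receives no obstruction since we may first build the map to the Bott-inverted target) of $\scr E_\infty$-rings compatibly with the framed structure.

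First I would recall, from \cite[Theorem 16.13, Example 16.34 or the framed-model discussion in \cite{EHKSY3}]{bachmann-norms}, that $\MSL$ is modeled in $\SH^\fr(S)$ as $\Sigma^\infty_\fr$ applied to the motivic $\scr E_\infty$-space $\FSynor$ (the stack of finite syntomic schemes with a trivialization of the determinant of the cotangent complex): concretely $\MSL \wequi \Sigma^\infty_\fr(\FSynor^{gp})$, using that $\FSynor \wequi \gamma K'$ where $K'$ is the rank-zero part of $K^{\SL}$. Under the equivalence $\SH^\fr(S) \wequi \SH(S)$ of \cite{hoyois2018localization}, the $\scr E_\infty$-ring map of presheaves with framed transfers $\FSynor \to \Bil$ induces, after group completion and Zariski localization (both of which commute with the forgetful functor by \cite[Proposition 3.2.14]{EHKSY}), a map $\FSynor^{gp} \to \GW$ in $\CAlg(\PSh_\Sigma(\Cor^\fr(S)))$. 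Applying the lax symmetric monoidal functor $\Sigma^\infty_\fr$ yields $\MSL \wequi \Sigma^\infty_\fr(\FSynor^{gp}) \to \Sigma^\infty_\fr \GW \in \CAlg(\SH(S))$.

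It remains to pass to the Bott-inverted target. Since $\KO \wequi (\Sigma^\infty_\fr \GW)[\beta^{-1}]$ and localization at $\beta$ is a symmetric monoidal functor on $\CAlg(\SH(S))$, composing the displayed map with the localization map $\Sigma^\infty_\fr \GW \to (\Sigma^\infty_\fr \GW)[\beta^{-1}] = \KO$ gives the desired morphism $\MSL \to \KO \in \CAlg(\SH(S))$. (Alternatively one checks the Bott element lifts: the orientation class in $\MSL^4(T^4)$ maps to $\beta$, so $\MSL \to \KO$ factors appropriately — but the clean argument is just to compose with localization.)

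The main obstacle I anticipate is purely bookkeeping: matching the $\scr E_\infty$-ring structure on $\MSL$ coming from its Thom-spectrum description in \cite{bachmann-norms} with the framed-correspondence model $\Sigma^\infty_\fr(\FSynor^{gp})$, i.e. invoking the comparison between the motivic Thom spectrum functor and $\Sigma^\infty_\fr$ applied to the relevant $K$-theoretic stacks — this is where all the coherence data lives, though it is available in \cite{EHKSY3} and \cite{bachmann-norms}. Everything else (group completion, Zariski localization, and $\Sigma^\infty_\fr$ commuting with the relevant operations, and $\beta$-inversion being monoidal) is formal. Note the hypothesis $1/2 \in S$ is needed only through Proposition~\ref{prop:framed-KO}.
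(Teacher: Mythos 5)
Your proposal is correct and matches the paper's argument: both take the $\scr E_\infty$-ring map $\FSynor \to \Bil$, apply $\Sigma^\infty_\fr$, compose with $\Sigma^\infty_\fr\Bil \to \Sigma^\infty_\fr\GW \to (\Sigma^\infty_\fr\GW)[\beta^{-1}] \wequi \KO$, and identify the source with $\MSL$ via the framed model (the paper cites \cite[Theorem 3.4.3(i)]{EHKSY3} directly for $\Sigma^\infty_\fr\FSynor \wequi \MSL$; your $\FSynor^{gp}$ is harmless since $\Sigma^\infty_\fr$ lands in a stable category and factors through group completion). The parenthetical about ``obstructions'' and the alternative Bott-lifting remark are unnecessary; composing with the localization map $\Sigma^\infty_\fr\GW \to \KO$ is the whole argument.
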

\begin{proof}
Take the morphism $\Sigma^\infty_\fr \FSynor \to \Sigma^\infty_\fr \Bil \to \Sigma^\infty_\fr \GW \to \KO$ and use that $\Sigma^\infty_\fr \FSynor \wequi \MSL$ \cite[Theorem 3.4.3(i)]{EHKSY3}.
\end{proof}

\begin{corollary} \label{cor:KO-transfer}
Let $f: Z \to S$ be a finite syntomic morphism, with $1/2 \in S$, $\tau: L_{Z/S} \wequi 0 \in K(Z)$ a trivialization, $\phi: V \otimes V \to \scr O_Z$ be a non-degenerate, symmetric bilinear form (defining an element $[\phi] \in \KO^0(Z)$).
Then \[ [\hat f_*(\phi)] = f_*([\phi]) \in \KO^0(S). \]
Here $f_*$ denotes the transfer arising from the six functors formalism, and $\hat f_*$ denotes the transfer constructed above using coherent duality.
\end{corollary}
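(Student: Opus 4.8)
The plan is to trace both transfers back to the framed model of $\KO$ built in this appendix. Recall from Proposition~\ref{prop:framed-KO} that $\KO \wequi (\Sigma^\infty_\fr \GW)[\beta^{-1}]$ inside $\SH(S) \wequi \SH^\fr(S)$ (this is where $1/2 \in S$ enters), where $\GW = L_\Zar \Bil^{gp}$, and that the framed transfers of $\Bil$ along finite syntomic maps with trivialized cotangent complex are, by the Theorem above constructing the framed refinement of $\Bil$, precisely the coherent-duality pushforwards $\hat f_*$. Group completion, Zariski localization, $\Sigma^\infty_\fr$ and Bott inversion are all operations on framed objects, so $\Bil \to \GW \to \Sigma^\infty_\fr\GW \to \KO$ respects framed transfers. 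Consequently the framed transfer on $\KO^0$ induced by $(f,\tau)$ sits in a commutative square with $\hat f_*\colon \Bil^\wequi(Z) \to \Bil^\wequi(S)$ on top and the natural maps $\Bil^\wequi(Z) \to \KO^0(Z)$ and $\Bil^\wequi(S) \to \KO^0(S)$ on the sides. Since $[\phi]$ is by definition the image of the form $\phi$ under the left vertical map, evaluating this square at $\phi$ shows that the framed transfer sends $[\phi]$ to $[\hat f_*(\phi)]$.

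It then remains to identify this framed transfer $\KO^0(Z) \to \KO^0(S)$ with the six-functors Gysin transfer $f_*$ of~\S\ref{sec:gysin}. For the unit class this is exactly Proposition~\ref{prop:corr-transfer}: the tangentially framed correspondence $S \xleftarrow{f,\tau} Z \xrightarrow{f} S$ has, as its associated endomorphism of $\1_S$ after applying $\Sigma^\infty_\fr$, the class $f_*(1)$. To upgrade this from the unit to an arbitrary coefficient class, I would use that the framed transfer is already defined at the level of $\Cor^\fr$ by a morphism that is insensitive to the coefficient theory: for a general framed spectrum $\mathcal{E}$ the framed transfer is obtained from the sphere-level transfer by smashing with $\mathcal{E}$, so the sphere case of Proposition~\ref{prop:corr-transfer} together with functoriality of $\Sigma^\infty_\fr(-)[\beta^{-1}]$ yields the equality $f_* = (\text{framed transfer})$ of maps $\KO^0(Z) \to \KO^0(S)$. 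Alternatively one reduces Nisnevich-locally to the situation where $f$ is presented by an equationally framed correspondence and invokes Corollary~\ref{cor:deg-vs-correspondence} together with the excision identification of cohomology with supports, exactly as in Lemma~\ref{lemm:voevodsky-transfers}. Combining the two paragraphs gives $f_*([\phi]) = [\hat f_*(\phi)]$.

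The genuinely load-bearing point, and the one I expect to take the most care, is this passage from the unit class to a general bilinear form: Proposition~\ref{prop:corr-transfer} is phrased only for the endomorphism of $\1_S$, whereas here $[\phi]$ need not lie in the image of $\KO^0(S)$ in $\KO^0(Z)$, so one cannot simply invoke it verbatim. The clean way around this is to observe that the framed transfer, being induced by a map in $\Cor^\fr$ (equivalently, by the associated spectrum-level transfer that knows nothing about $\KO$), is compatible with the lax monoidal structure and hence with smashing with $\KO$. Everything else needed — that $\hat f_*$ really is the framed transfer on $\GW$, that $\GW \to \KO$ preserves transfers, and that $1/2 \in S$ suffices for the identification $\KO \wequi (\Sigma^\infty_\fr \GW)[\beta^{-1}]$ — has already been assembled above in this appendix and only needs to be cited.
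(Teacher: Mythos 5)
The proposal's first two paragraphs correctly capture the structure of the argument: $\hat f_*$ is the framed transfer on $\Bil$, the maps $\Bil \to \GW \to \KO$ preserve framed transfers, and the remaining task is to match the framed transfer with the six-functors Gysin transfer. You also correctly identify the load-bearing step. But your proposed workaround for that step has a genuine gap.

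The ``smashing with $\KO$'' argument does not do what you need. The framed correspondence $\alpha\colon S \xleftarrow{f,\tau} Z \to S$ produces an endomorphism of $\1_S$, which after smashing with $\KO$ gives a self-map of $\KO^0(S)$. That is a map $\KO^0(S) \to \KO^0(S)$, not a map $\KO^0(Z) \to \KO^0(S)$. The class $[\phi]$ lives in $\KO^0(Z)$ and generally does \emph{not} lift to $\KO^0(S)$, so nothing in Proposition~\ref{prop:corr-transfer} or its smash products touches it. Likewise, the ``commutative square'' you write down with ``the framed transfer on $\KO^0$'' as a vertical arrow presupposes that such a transfer $\KO^0(Z)\to\KO^0(S)$ is already defined as a framed transfer — but $Z$ is not in $\Sm_S$, so the framed structure on $\Bil$ (a presheaf on $\Cor^\fr(S)$ over \emph{smooth} $S$-schemes) gives no such map directly. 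Constructing it and identifying it with the Gysin transfer \emph{is} the content of the corollary.

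What is actually needed, and what the paper does, is two further inputs. First, the version of \cite[Theorem 3.3.10]{EHKSY2} used is stronger than Proposition~\ref{prop:corr-transfer} as stated: it handles correspondences $S \xleftarrow{\tau} Z \to Y$ with arbitrary smooth target $Y$, and says that the framed transfer of $\psi\in\Bil(Y)$ agrees with the Gysin transfer of $p^*[\psi]$ along $f$, where $p\colon Z\to Y$. Your argument only invokes the $Y=S$ case. Second, one must still reduce a general $\phi\in\Bil(Z)$ to a pullback $p^*\psi$ with $Y\in\Sm_S$. Over an affine base this is always possible (by \cite[Proposition A.0.4, Example A.0.6(5)]{EHKSY3}), which gives a short proof; in general the paper builds a Kan-extension comparison $f^*\Bil_S \to \Bil_Z$, shows this is a Zariski equivalence, and uses Nisnevich locality of $f_*\Omega^\infty f^*\KO$ and $\Omega^\infty\KO$ to invert the resulting map and splice the two triangles into the desired square. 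Your alternative sketch (``reduce Nisnevich-locally to an equationally framed correspondence and use Lemma~\ref{lemm:voevodsky-transfers}'') gestures in the right direction but conflates reducing $\phi$ to a pullback with reducing $f$ to an equationally framed presentation, and does not set up the comparison diagram that makes the reduction legitimate. Without one of these two mechanisms the proof does not close.
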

\begin{proof}
We first give a simplified proof assuming that $S$ is affine.
Denote by $\KO^\fr \in \SH^\fr(S)$ a lift of $\KO$.
By \cite[Theorem 3.3.10]{EHKSY2}, for any morphism $p: Z \to Y$ with $Y \in \Sm_S$ and a form $\psi$ on $Y$, the six functors transfer of $p^*([\psi])$ along $f$ coincides with the framed transfer of $[\psi]$ along the correspondence $S \xleftarrow{\tau} Z \to Y$.
By Proposition \ref{prop:framed-KO} we can take $\KO^\fr = \Sigma^\infty_\fr \GW[\beta^{-1}]$; it follows that there is a map $\Bil \to \Omega^\infty_\fr \KO^\fr \in \PSh_\Sigma(\Cor^\fr)$.
We thus deduce that \[ \hat f_*(p^* \psi) = f_*(p^*[\psi]) \in \KO^0(S). \]
The result would follow if there exist $p$, $\psi$ with $p^* \psi = \phi$.
Under our simplifying assumption that $S$ is affine, this is always the case; see \cite[Proposition A.0.4 and Example A.0.6(5)]{EHKSY3}.

To deal with the general case, we begin with some constructions.
Given $F \in \PSh(\Sm_S)$, denote by $f^* F \in \PSh(\Sm_Z)$ the left Kan extension.
If $F$ comes from a presheaf with framed transfers, then the transfers along (base changes of) $f$ assemble to a map $f_*f^* F \to F$.
Given $E \in \SH(S)$, we obtain $\Omega^\infty E \in \PSh(\Sm_S)$ and can apply this construction.
On the other hand there is a map $f_* \Omega^\infty f^* E \to \Omega^\infty E$ coming from the six functors transfer as well as a map $f^* \Omega^\infty E \to \Omega^\infty f^* E$, and \cite[Theorem 3.3.10]{EHKSY2} implies that the following diagram commutes
\begin{equation*}
\begin{tikzcd}
f_* f^*  \Omega^\infty E \ar[r] \ar[d] & f_* \Omega^\infty f^* E \ar[dl] \\
\Omega^\infty E.
\end{tikzcd}
\end{equation*}

Now we continue with the proof.
We have a commutative diagram
\begin{equation*}
\begin{tikzcd}
f_*f^* \Bil_S \ar[r] \ar[d] & f_* \Bil_Z \ar[dl] \\
\Bil_S;
\end{tikzcd}
\end{equation*}
here $\Bil_T \in \PSh(\Sm_T)$ denotes the stack of symmetric bilinear forms, the map $f_*f^* \Bil_S \to \Bil_S$ comes from the above construction with $F = \Bil_S$, the map $f_* \Bil_Z \to \Bil_S$ is the transfer $\hat f_*$, and commutativity holds by construction.
Applying the above construction with $E = \KO$ and using its naturality in $F$, we obtain all in all the following commutative diagram
\begin{equation*}
\begin{tikzcd}
f_* \Bil_Z \ar[dr] & f_*f^* \Bil_S \ar[r] \ar[l, "w"] \ar[d] & f_* f^* \Omega^\infty \KO \ar[r] \ar[d] & f_* \Omega^\infty f^* \KO \ar[dl] \\
 & \Bil_S \ar[r] & \Omega^\infty \KO.
\end{tikzcd}
\end{equation*}
It follows from \cite[Proposition A.0.4 and Example A.0.6(5)]{EHKSY3} that $f^* \Bil_S \to \Bil_Z$ is a Zariski equivalence, and hence the map labelled $w$ is a Nisnevich equivalence ($f$ being finite).
Since $f_* \Omega^\infty f^* \KO$ and $\Omega^\infty \KO$ are Nisnevich local, we may invert $w$ above to obtain the following commutative diagram
\begin{equation*}
\begin{CD}
f_* \Bil_Z @>>> f_* f^* \Omega^\infty \KO \\
@V{\hat f_*}VV         @V{f_*}VV  \\
\Bil_S @>>> \Omega^\infty \KO.
\end{CD}
\end{equation*}
The result follows.
\end{proof}

\begin{remark}
If $1/2 \not\in S$, then we could \emph{define} $\KO$ as $(\Sigma^\infty_\fr \GW)[\beta^{-1}]$.
Then Corollaries \ref{cor:KO-orient} and \ref{cor:KO-transfer} (as well as Proposition \ref{prop:framed-KO}) remain true.
The problem is that we no longer know what theory $\KO$ represents.
\end{remark}

\section{Miscellaneous}
We collect some results which we believe are well-known, but for which we could not find convenient references.

\subsection{Cotangent complexes and dualizing complexes}
\label{subsec:preliminaries-cotangent}
For any morphism of schemes $f: X \to Y$, there is the cotangent complex $L_f \in D(X)$ (see e.g. \cite[Tag 08P5]{stacks-project}).
If $f$ is lci, then $L_f$ is perfect \cite[Tag 08SH]{stacks-project} and hence defines a point $L_f \in K(X)$.
Consequently in this case we can make sense of the graded determinant $\widetilde\det L_f \in Pic(D(X))$; this is (locally) a shift of a line bundle.

\begin{proposition} \label{prop:f!-Lf}
Let $f: X \to Y$ be lci.
Then there is a canonical isomorphism $f^! \scr O_Y \wequi \widetilde\det L_f$.
\end{proposition}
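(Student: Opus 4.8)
The plan is to reduce the statement to two classical cases via factorization, and then to glue. Since $f$ is lci, it factors Zariski-locally on $X$ as $f = p \circ i$, where $i: X \to P$ is a regular immersion and $p: P \to Y$ is smooth (this is the definition of lci, or at least holds locally; for a global factorization one would use that $f$ is separated of finite type over a noetherian base, so one can at least work locally and then check independence of choices). The cotangent complex then sits in a triangle $i^* L_p \to L_f \to L_i$, giving $\widetilde\det L_f \wequi i^*\widetilde\det L_p \otimes \widetilde\det L_i = i^*\omega_p \otimes \widetilde\det(C_i[1]) = i^*\omega_p \otimes \det(C_i)^{-1}[-n]$, where $n$ is the codimension of $i$ and $C_i$ the conormal bundle (recall $L_i \wequi C_i[1]$ and $L_p \wequi \Omega_p$ from the conventions section). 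On the other hand, $f^! \scr O_Y \wequi i^! p^! \scr O_Y$, so it suffices to identify $p^! \scr O_Y \wequi \omega_p$ for $p$ smooth and $i^! \scr M \wequi \iHom(\det C_i, \scr M)[-n] \wequi \det(C_i)^{-1}[-n] \otimes \scr M$ for $i$ a regular immersion and $\scr M$ a line bundle (applied to $\scr M = i^*\omega_p$, compatibly).

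The smooth case $p^! \scr O_Y \wequi \omega_p = \det \Omega_p$ is the standard fact that the dualizing complex of a smooth morphism of relative dimension $d$ is $\Omega_p^d[d]$; this is in \cite[Tag 0BRT]{stacks-project} or \cite{hartshorne1966residues}, and I would simply cite it. The regular immersion case is the computation of $i^!$ for a Koszul-regular closed immersion: locally $i$ is cut out by a regular sequence $f_1, \dots, f_n$, the Koszul complex $K(f)^\bullet$ resolves $i_*\scr O_X$, and one computes $\Ext^\bullet_P(i_*\scr O_X, \scr M)$ to find it concentrated in degree $n$ with value $i_*(\det(C_i)^{-1} \otimes i^*\scr M)$. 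This is exactly the content of \cite[Tag 0BCZ, Tag 0AU3]{stacks-project} or \cite[Proposition III.7.2]{hartshorne1966residues}; again I would cite rather than reprove. The composite isomorphism $i^! p^! \scr O_Y \wequi i^*\omega_p \otimes \det(C_i)^{-1}[-n] \wequi \widetilde\det L_f$ is then the desired canonical equivalence.

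The main obstacle — and the only real content beyond citation — is \emph{canonicity}: showing the isomorphism is independent of the chosen factorization $f = p \circ i$, so that the local isomorphisms glue to a global one on $X$. The clean way to handle this is to observe that both $f \mapsto f^!\scr O_Y$ and $f \mapsto \widetilde\det L_f$ are, in an appropriate sense, compatible with composition: for the left side via the canonical equivalence $(gf)^! \wequi f^! g^!$ and the trace/base-change formalism, and for the right side via the cofiber sequence $f^* L_g \to L_{gf} \to L_f$ which induces $\widetilde\det L_{gf} \wequi f^*\widetilde\det L_g \otimes \widetilde\det L_f$. One checks that the identification constructed above is natural with respect to composing with a further smooth morphism or a further regular immersion, and that any two factorizations of a given $f$ are dominated by a common refinement (e.g. replace $P$ by $P \times_Y P'$ and use that the graph of a map to a smooth scheme is a regular immersion). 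A cocycle/coherence check then shows the local isomorphisms agree on overlaps. Since the paper treats $D(X)$ here as a $1$-category for the purposes of these arguments (cf. the footnote in \S\ref{subsec:serr-duality-euler-number}), the coherence data required is finite and can be verified directly; I would present this as the heart of the proof while relegating the two building-block computations to citations.
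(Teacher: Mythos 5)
Your proposal follows the paper's proof essentially exactly: factor $f = p \circ i$ locally, cite the classical identifications of $p^!\scr O_Y$ for $p$ smooth and $i^!$ for $i$ a regular immersion, combine via the composition formula for $(\ph)^!$ and the cofibre sequence $i^*L_p \to L_f \to L_i$, and then reduce the independence-of-factorization question to a common refinement $V' \times_Y V$ followed by an explicit check for a smooth morphism with a section. One small point of difference: rather than appealing to $D(X)$ being treated as a $1$-category (that convention is stated only in \S\ref{subsec:serr-duality-euler-number}, not in the appendix where this result lives), the paper makes the gluing rigorous by observing that $A := f^!\scr O_Y \otimes (\widetilde\det L_f)^{-1}$ is $0$-truncated, so exhibiting an isomorphism is the same as exhibiting a global section of an ordinary sheaf — a Zariski-local problem in which no higher coherence needs to be tracked. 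The paper also makes explicit the final reduction that you leave implicit: the check boils down to the assertion that the coherent-duality isomorphism $\omega_{X/V} \otimes i^*\omega_{V/X} \wequi \scr O_X$, for $i$ a section of a smooth $p: V \to X$, agrees with the isomorphism coming from the second fundamental exact sequence of Kähler differentials.
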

Recall that by our conventions, $X$ and $Y$ are separated and of finite type over some noetherian scheme $S$; in particular they are themselves noetherian.
We strongly believe that these assumptions are immaterial.
\begin{proof} 
Both sides are compatible with passage to open subschemes of $X$.
Locally on $X$, $f$ factors as a regular immersion followed by a smooth morphism, say $f=pi$.
For either a regular immersion or a smooth morphism $g$, we have isomorphisms \begin{equation} \label{eq:f!-triv} g^! \scr O_Y \wequi \widetilde\det L_g, \end{equation} as desired \cite[Definition III.2, Proposition 7.2]{hartshorne1966residues}.
For composable lci morphisms $p, i$, we have \begin{equation} \label{eq:gf!} (pi)^!(\scr O) \wequi i^*p^!(\scr O) \otimes i^!(\scr O).\end{equation}
Similarly we have a canonical cofiber sequence $i^* L_p \to L_{pi} \to L_i$ and hence \begin{equation} \label{eq:detLgf} \widetilde \det L_{pi} \wequi i^* \widetilde \det L_p \otimes \widetilde \det L_i.\end{equation}
Combining \eqref{eq:f!-triv}, \eqref{eq:gf!} and \eqref{eq:detLgf}, we thus obtain an isomorphism \[ \alpha_{p,i}: f^! \scr O_X \wequi i^!p^! \scr O_X \wequi \widetilde \det L_f. \]

We have thus shown that $f^! \scr O_Y$ is locally isomorphic to $\widetilde\det L_f$ (via $\alpha_{p,i}$), and hence that $A := f^! \scr O_Y \otimes (\widetilde\det L_f)^{-1}$ is an $\scr O_X$-module concentrated in degree $0$.
Exhibiting an isomorphism as claimed is the same as exhibiting $A \wequi \scr O_X$, or equivalently a section $a \in \Gamma(X, A)$ which locally on $X$ corresponds to an isomorphism.
Since $A$ is $0$-truncated, we may construct $a$ locally.
In other words, we need to exhibit a cover $\{U_n\}_n$ of $X$ and isomorphisms $\alpha_n: f^! \scr O_Y|_{U_n} \wequi \widetilde\det L_f|_{U_n}$ such that on $U_n \cap U_m$ we have $\alpha_n \wequi \alpha_m$.
Hence, since we are claiming to exhibit a \emph{canonical} isomorphism, we may do so locally on $X$.
We may thus assume that $f$ factors as $pi$, for a smooth morphism $p: V \to Y$ and a regular immersion $i: X \to V$.
We have already found an isomorphism in this situation, namely $\alpha_{p,i}$.
What remains to do is to show that this isomorphism is independent of the choice of factorization $f = pi$.

Thus let $i': X \to V'$ and $p': V' \to Y$ be another such factorization.
We need to show that $\alpha_{p,i} = \alpha_{p',i'}$.
By considering $V' \times_Y V$, we may assume given a commutative diagram
\begin{equation*}
\begin{CD}
X @>i'>> V' @>p'>> Y \\
@|   @VqVV        @| \\
X @>i>> V   @>p>>  Y,
\end{CD}
\end{equation*}
where $q$ is smooth.
If in \eqref{eq:detLgf} both $f$ and $g$ are smooth, then the isomorphism arises from the first fundamental exact sequence of Kähler differentials \cite[Proposition III.2.2]{hartshorne1966residues}, and hence is the same as the isomorphism \eqref{eq:gf!}.
It follows that we may assume that $p=\id$.
The isomorphism $i'^! q^! \wequi i^!$ is explained in \cite[Proposition III.8.2]{hartshorne1966residues} and reduces via formal considerations (that apply in the same way to $\widetilde \det L_{\ph}$) to the case of a smooth morphism with a section.

We are thus reduced to the following problem.
Let $p: V \to X$ be smooth and $i: X \to V$ a regular immersion which is a section of $p$.
The coherent duality formalism provides us with an isomorphism $\omega_{X/V} \otimes i^* \omega_{V/X} \wequi \scr O_X$; we need to check that this is the same as the isomorphism $\det L_i \otimes i^* \det L_p \wequi \det L_{\id} = \scr O_X$ coming from \eqref{eq:detLgf}.
By \cite[Lemma III.8.1, Definition III.1.5]{hartshorne1966residues}, the first isomorphism arises from the second fundamental exact sequence of Kähler differentials.
This is the same as the second isomorphism.

This concludes the proof.
\end{proof}
\subsection{Grothendieck--Witt rings and Witt rings}
Let $R$ be a commutative ring.
A \emph{symmetric space over $R$} means a finitely generated projective $R$-module $M$ together with a non-degenerate, symmetric bilinear form $\varphi: M \times M \to R$.
The Grothendieck group on the semiring of isomorphism classes of symmetric spaces over $R$ (with operations given by direct sum and tensor product) is denoted $\GW(R)$ and called the \emph{Grothendieck--Witt ring of $R$}.
A symmetric space $(M, \varphi)$ is called \emph{metabolic} if there exists a summand $N \subset M$ with $N = N^\perp$.
The quotient of $\GW(R)$ by the subgroup (which is an ideal) generated by metabolic spaces is denoted $\W(R)$ and called the \emph{Witt ring of $R$}.

\begin{lemma} \label{lemm:metabolic-witt}
Let $R$ be a commutative ring and $(M, \varphi)$ a symmetric space.
The following are equivalent:
\begin{enumerate}
\item $M$ is metabolic
\item $M$ contains an isotropic subspace of half rank: there exists a summand $N \subset M$ such that $\dim M = 2 \dim N$ and $\varphi|_N = 0$.
\end{enumerate}

Now suppose that all finitely generated projective $R$-modules are free of constant rank, e.g. $R$ a local ring.
In this case the image of $M$ in $\GW(R)$ is given by $nh$, where $\dim M = 2n$ and $h$ denotes the hyperbolic plane, corresponding to the matrix $\begin{pmatrix} 0 & 1 \\ 1 & 0 \end{pmatrix}$, which also satisfies $h = 1 + \lra{-1} \in \GW(R)$.
Moreover we have a pullback square
\begin{equation*}
\begin{CD}
\GW(R) @>>> \Z \\
@VVV      @VVV \\
\W(R) @>>> \Z/2,
\end{CD}
\end{equation*}
where the horizontal maps are given by rank.
In particular $\GW(R) \to \W(R) \times \Z$ is injective.
\end{lemma}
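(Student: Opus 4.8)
The plan is to prove Lemma \ref{lemm:metabolic-witt} in two stages: first the equivalence (1) $\Leftrightarrow$ (2) over an arbitrary commutative ring $R$, then the additional structural claims under the hypothesis that all finitely generated projectives are free of constant rank.

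\textbf{Equivalence of (1) and (2).} The implication (1) $\Rightarrow$ (2) is immediate: a Lagrangian $N = N^\perp$ is in particular isotropic, and $N = N^\perp$ forces $\dim M = \dim N + \dim N^\perp = 2\dim N$ by non-degeneracy (using that $N$ is a summand, so the natural sequence $0 \to N \to M \to N^\dual \to 0$ identifies $N^\perp$ with the kernel of $M \to N^\dual$, which has rank $\dim M - \dim N$). For (2) $\Rightarrow$ (1), suppose $N \subset M$ is an isotropic summand with $2\dim N = \dim M$. Then $\varphi$ induces an injection $N \hookrightarrow (M/N)^\dual$, and comparing ranks (both are $\dim N$, using that $N$ is a summand so $M/N$ is projective of the complementary rank) shows it is an isomorphism; this exactly says $N^\perp = N$, i.e. $M$ is metabolic. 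I would be slightly careful to phrase the rank comparisons so that they are valid Zariski-locally and hence globally, since ``half rank'' should be read componentwise on $\Spec R$ if $R$ is not connected. This part should be routine given \cite[Lemma I.3.1]{milnor1973symmetric} and standard facts about summands of projective modules.

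\textbf{The structural claims.} Now assume every finitely generated projective $R$-module is free of constant rank. First, if $(M,\varphi)$ is metabolic, it has a free Lagrangian $N$ of rank $n$ with $\dim M = 2n$; choosing a basis of $N$ and extending via a complementary free summand, non-degeneracy lets one find a dual basis exhibiting $(M,\varphi) \cong nh$ (this is the standard ``hyperbolic on a Lagrangian'' normal form; cf.\ \cite[Lemma I.3.1]{milnor1973symmetric} applied inductively, or a direct matrix argument). The identity $h = 1 + \lra{-1}$ is the diagonalization $\begin{pmatrix} 0 & 1 \\ 1 & 0 \end{pmatrix} \cong \begin{pmatrix} 1 & 0 \\ 0 & -1 \end{pmatrix}$, valid since $1/2$ — wait, this actually holds over any $R$: the change of basis $e_1 \mapsto e_1 + e_2$, $e_2 \mapsto \tfrac12(e_1 - e_2)$ needs $1/2$, so instead I would note it holds in $\GW(R)$ because $h$ and $\lra{1}+\lra{-1}$ become isomorphic after the substitution $(e_1, e_2) \mapsto (e_1+e_2, e_1)$ followed by... actually the cleanest route: $\lra{a} + \lra{-a} \cong h$ for any unit $a$ is \cite[Lemma I.?]{milnor1973symmetric}; take $a=1$. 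I would simply cite Milnor--Husemoller for this rather than belabor it.

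\textbf{The pullback square.} For the final claim, consider the commutative square with both horizontal maps given by rank (mod $2$ on the bottom). By definition $\W(R)$ is the quotient of $\GW(R)$ by the ideal generated by metabolic forms, and we have just seen every metabolic form equals $nh$ in $\GW(R)$ with $h$ of rank $2$, so the kernel of $\GW(R) \to \W(R)$ is exactly $\Z h$, on which rank takes the value $2n$; hence the square commutes and $\GW(R) \to \W(R) \times \Z$ has kernel $\Z h \cap \ker(\mathrm{rank}) = 0$, giving injectivity. To upgrade commutativity to a pullback (fiber product) square, I must check: given $w \in \W(R)$ and $m \in \Z$ with $\mathrm{rank}(w) \equiv m \pmod 2$, there is a unique $g \in \GW(R)$ with image $w$ and rank $m$. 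Lift $w$ to any $g_0 \in \GW(R)$; then $\mathrm{rank}(g_0) \equiv m \pmod 2$, so $m - \mathrm{rank}(g_0) = 2j$ for some $j \in \Z$, and $g := g_0 + j h$ works; uniqueness follows from the already-established injectivity of $\GW(R) \to \W(R)\times\Z$. The main obstacle, such as it is, is purely bookkeeping: making sure the rank function is well-defined and additive (it is, since it factors through $K_0(R) \to \Z$ under the constant-rank hypothesis) and that ``metabolic'' in the definition of $\W(R)$ matches the ``$=nh$'' description — this is where I would be most careful, but no genuine difficulty arises.
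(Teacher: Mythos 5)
Your route is essentially the paper's (equivalence of the two characterizations, ``metabolic $= nh$'' in $\GW(R)$, $h = 1 + \lra{-1}$, then the pullback square), but the step identifying $h$ with $1 + \lra{-1}$ in $\GW(R)$ has a genuine unresolved gap that you noticed and did not close. Over a ring without $1/2$ the forms $h$ and $\lra{1} \oplus \lra{-1}$ need \emph{not} be isomorphic: over $\FF_2$ an isometry $P = \begin{pmatrix} a & b \\ c & d\end{pmatrix}$ would satisfy $P^T \operatorname{diag}(1,-1) P = \begin{pmatrix} 0 & 1 \\ 1 & 0 \end{pmatrix}$, forcing $a = c$ and $b = d$, whence $ab - cd = 0 \ne 1$. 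So whatever Milnor--Husemoller lemma you intend to cite cannot give an isomorphism of forms in the generality the paper requires --- note that Corollary \ref{cor:GW(F2)} applies this lemma over $\FF_2$. The identity $h = 1 + \lra{-1}$ holds only stably, i.e.\ as an equality in $\GW(R)$ rather than an isometry. The paper proves it by exhibiting the explicit orthogonal basis $f_1 = e_1 + e_2$, $f_2 = e_1 - e_3$, $f_3 = e_3 - e_1 - e_2$ of $\lra{1} \oplus h$, giving $\lra{1} \oplus h \cong \lra{1} \oplus \lra{1} \oplus \lra{-1}$ as forms over \emph{any} $R$, and then cancels in the Grothendieck group. (Alternatively, having shown that a metabolic form of rank $2n$ equals $nh$ in $\GW(R)$, you could simply observe that $\lra{1} \oplus \lra{-1}$ is metabolic with Lagrangian $R(e_1+e_2)$ --- but you do not make this argument.)

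There is also a smaller gap in your (2) $\Rightarrow$ (1): an injection of projective modules of the same rank is not automatically an isomorphism (cf.\ $\Z \xrightarrow{\cdot 2} \Z$). You need that $N \hookrightarrow N^\perp$ is a \emph{split} injection, so that $N^\perp/N$ is projective of rank zero and hence zero. This is easy to supply --- a splitting $M = N \oplus K$ restricts, using isotropy of $N$, to $N^\perp = N \oplus (K \cap N^\perp)$ --- but your phrasing elides the key point. The paper sidesteps the whole issue by citing Knebusch directly. Your argument for the pullback square at the end is correct and matches the paper's.
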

\begin{proof}
The equivalence of (1) and (2) is \cite[Corollary I.3.2]{knebusch-bilinear}.
The fact that the image in $\GW(R)$ is given by $nh$ follows from \cite[Proposition I.3.2 and Corollary I.3.1]{knebusch-bilinear}.

Consider the symmetric space $M=\lra{1} \oplus h$.
Thus $M \wequi R^3$ has basis $e_1, e_2, e_3$ with $\lra{e_1,e_1} = 1$, $\lra{e_1, e_2} = 0$, and so on.
Direct computation shows that $f_1 = e_1 + e_2$, $f_2 = e_1 - e_3$, $f_3 = e_3 - e_1 - e_2$ is an orthogonal basis exhibiting $M \wequi \lra{1} \oplus \lra{1} \oplus \lra{-1}$.
Thus $h = 1 + \lra{-1} \in \GW(R)$, as claimed.

We have $\W(R) = \GW(R)/J$, where the ideal $J$ is generated by metabolic spaces.
By the previous assertion, $J=\Z\cdot h$, and so the rank homomorphism maps $J$ isomorphically onto $2\Z$.
The pullback square follows formally.
\end{proof}

\begin{lemma} \label{lemm:GW-gens}
Let $R$ be a local ring.
\begin{enumerate}
\item Let $(M,\varphi)$ be a symmetric space over $R$.
  Then $M$ admits an orthogonal basis if and only if there exists $m \in M$ with $\varphi(m,m) \in R^\times$.
\item $\GW(R)$ is generated by elements of the form $\langle a \rangle$, with $a \in R^\times$.
\end{enumerate}
\end{lemma}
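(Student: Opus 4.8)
The plan is to prove (1) directly by induction on the rank and then obtain (2) from it by adjoining a single copy of $\langle 1\rangle$.

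\emph{Part (1).} The direction ($\Rightarrow$) is immediate: an orthogonal basis $e_1,\dots,e_n$ presents $\varphi$ by a Gram matrix $\operatorname{diag}(a_1,\dots,a_n)$ with $a_i=\varphi(e_i,e_i)$, and non-degeneracy forces $\prod_i a_i=\det\varphi\in R^\times$, so each $a_i\in R^\times$ because $R$ is local; take $m=e_1$. For ($\Leftarrow$) I would induct on $\operatorname{rk}M$, the cases $\operatorname{rk}M\le 1$ being trivial (a non-degenerate rank-$1$ space is automatically $\langle a\rangle$ with $a\in R^\times$). Given $m_0$ with $\varphi(m_0,m_0)=a\in R^\times$, note that $m_0$ is unimodular (otherwise $\varphi(m_0,m_0)\in\mathfrak m$), so $Rm_0\cong\langle a\rangle$ is a non-degenerate free direct summand and $M=Rm_0\oplus m_0^\perp$ orthogonally, with $m_0^\perp$ non-degenerate of rank $\operatorname{rk}M-1$. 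To close the induction I need $m_0^\perp$, if nonzero, to again represent a unit — and this is \emph{not} automatic for an arbitrary choice of $m_0$ once $2\notin R^\times$ (for instance $\left(\begin{smallmatrix}2&1\\1&0\end{smallmatrix}\right)$ over $\Z/4$ represents no unit). So the content of the step is to choose the splitting vector correctly.

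\emph{Key step.} Reducing modulo $\mathfrak m$ and invoking Nakayama, it suffices to find a vector $\bar v$ in the residue-field space $\bar M=M\otimes_R R/\mathfrak m$ with $\bar\varphi(\bar v,\bar v)\ne 0$ and with the orthogonal complement $\bar v^\perp$ of $\bar v$ in $\bar M$ not totally isotropic (unless $\bar v^\perp=0$); lifting $\bar v$ to a unimodular $m\in M$ one has $\overline{m^\perp}=\bar v^\perp$ inside $\bar M$, so $m^\perp$ again represents a unit, and the induction proceeds. Over the field $k=R/\mathfrak m$ the required choice is elementary: start from any $\bar v_0$ with $\bar\varphi(\bar v_0,\bar v_0)=a\ne 0$; if $\bar v_0^\perp$ happens to be totally isotropic and nonzero, replace $\bar v_0$ by $\bar v_0+\bar w$ for any $0\ne\bar w\in\bar v_0^\perp$. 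Then $\bar\varphi(\bar v_0+\bar w,\bar v_0+\bar w)=a\ne 0$, and a short computation (using that $\bar v_0^\perp=\langle a\rangle^\perp$ has trivial radical, since $\bar M$ is non-degenerate) shows $(\bar v_0+\bar w)^\perp$ is no longer totally isotropic. Iterating the resulting orthogonal splitting of $M$ yields an orthogonal basis.

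\emph{Part (2).} Let $M$ be any symmetric space of rank $n$. The orthogonal direct sum $M\oplus\langle 1\rangle$ contains the vector $(0,1)$ with self-pairing $1\in R^\times$, so by part (1) it admits an orthogonal basis; hence $[M\oplus\langle 1\rangle]=\langle a_1\rangle+\dots+\langle a_{n+1}\rangle$ in $\GW(R)$ for suitable $a_i$, which are units by the determinant argument of part (1). Therefore $[M]=\sum_{i=1}^{n+1}\langle a_i\rangle-\langle 1\rangle$ lies in the subgroup generated by the rank-one classes, and since $\GW(R)$ is by construction generated as an abelian group by the classes $[M]$ of symmetric spaces, the claim follows. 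The main obstacle is exactly the inductive step in (1) — arranging the splitting $M=\langle a\rangle\oplus m_0^\perp$ so that the summand $m_0^\perp$ can be split again; the naive induction fails when $2$ is not invertible, and its only repair (reduction to the residue field together with the adjustment $\bar v_0\mapsto\bar v_0+\bar w$) is the single non-formal ingredient.
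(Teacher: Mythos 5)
Your proof is correct, but it takes a genuinely different route from the paper's. Both arguments agree on the trivial direction of (1) and on deducing (2) by adjoining $\langle 1\rangle$; the difference is in the non-trivial direction of (1), which is exactly where the residual characteristic $2$ difficulty lives. You induct on the rank, and repair the failure of ``split off the unit vector and recurse'' by passing to the residue field $k=R/\mathfrak m$, observing that a lift $m$ of a suitable $\bar v\in\bar M$ has $\overline{m^\perp}=\bar v^\perp$, and then arranging (via the replacement $\bar v_0\mapsto\bar v_0+\bar w$) that the $k$-complement $\bar v^\perp$ carries a nonzero value of the quadratic form; your unstated ``short computation'' does go through (take $\bar u=c\bar v_0+\bar z$ with $\bar z\in\bar v_0^\perp$, $\bar\varphi(\bar z,\bar w)\neq 0$, and $c=\bar\varphi(\bar z,\bar w)/a$). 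The paper instead works entirely over $R$ and uses a maximality argument: given a maximal orthogonal splitting $M\simeq Re_1\oplus\dots\oplus Re_n\oplus N$ with $N\neq 0$, every $x\in N$ has $\varphi(x,x)\in\mathfrak m$ (by maximality), and one picks $y,z\in N$ with $\varphi(y,z)=1$ and checks that $e_1'=e_1+y$ and $f=e_1-\varphi(e_1,e_1)z$ are orthogonal with unit self-pairings, contradicting maximality. The paper's argument is shorter, avoids Nakayama/lifting bookkeeping, and treats all residual characteristics uniformly with a single linear-algebra trick; your argument is perhaps closer to the textbook case over a field and localises the difficulty cleanly to $\bar M$. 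One small caution: you use ``totally isotropic'' to mean that the associated quadratic form $w\mapsto\bar\varphi(w,w)$ vanishes on $\bar v_0^\perp$ rather than that the bilinear form itself vanishes (the latter would force $\bar v_0^\perp=0$ by non-degeneracy), and it is worth making that explicit; also, the final sentence claiming the residue-field adjustment is the ``only repair'' is refuted by the paper's direct argument.
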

\begin{proof}
(1) If $e_1, \dots, e_n$ is an orthogonal basis then $\varphi(e_i, e_i) = a_i$ whereas $\varphi(e_i, e_j) = 0$ for $i \ne j$; it follows now from non-degeneracy that $a_1 \in R^\times$, and so the condition is necessary.
Now suppose that $m \in M$ with $\varphi(m,m) \in R^\times$.
By \cite[Theorem I.3.2]{milnor1973symmetric} we get $M \simeq Rm \oplus (Rm)^\perp$.
Consider an isomorphism $M \simeq Re_1 \oplus \dots \oplus Re_n \oplus N$, where $n$ is maximal.
We wish to show that $N=0$.
We know that $n > 0$ (by existence of $m$) and if $x \in N$ then $(*)$ $\varphi(x,x) \not\in R^\times$ (because else we could split off $Rx$ as before, contradicting maximality).
Replacing $M$ by $e_1 R \oplus N$, we may assume that $n=1$.
Since $\varphi|_N$ is non-degenerate  and $R$ is local, if $N \ne 0$ there exist $y,z \in N$ with $\varphi(y,z) = 1$.
Set $e_1' = e_1+y$ and $f = e_1 + \lambda z$ (with $\lambda \in R$).
Then $\varphi(e_1',e_1') = \varphi(e_1,e_1) + \varphi(y,y) \in R^\times$ by $(*)$ (and using that $R$ is local), and similarly $\varphi(f,f) \in R^\times$.
On the other hand $\varphi(e_1',f) = \varphi(e_1,e_1) + \lambda$, and hence there exists a (unique) value of $\lambda$ such that $f \in (Re_1')^\perp$.
It follows that $M \simeq Re_1' \oplus Rf \oplus N'$, in contradiction of maximality of $n$.

(2) If $M$ is an inner product space then $M \oplus \langle 1 \rangle$ admits an orthogonal basis by (1), and hence $[M] = [M \oplus \langle 1 \rangle] - \langle 1 \rangle \in GW(R)$ can be expressed in terms of elements of the form $\langle a \rangle$.
\end{proof}

\begin{corollary} \label{cor:GW(F2)}
We have $G\W(\FF_2) = \Z$.
\end{corollary}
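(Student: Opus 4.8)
The statement will follow almost immediately from Lemma \ref{lemm:GW-gens} together with the structural results of Lemma \ref{lemm:metabolic-witt}. The first step is to observe that $\FF_2$ is local with $\FF_2^\times = \{1\}$, so that the only symbol available is $\langle 1 \rangle$. By Lemma \ref{lemm:GW-gens}(2), $\GW(\FF_2)$ is generated as an abelian group by the classes $\langle a \rangle$ with $a \in \FF_2^\times$; hence $\GW(\FF_2)$ is the cyclic group generated by the single element $\langle 1 \rangle$.

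The second step is to pin down this cyclic group using the rank. The rank homomorphism $\Rrank \colon \GW(\FF_2) \to \Z$ is a group homomorphism sending $\langle 1 \rangle \mapsto 1$, hence is surjective. A cyclic group admitting a surjection onto $\Z$ must be infinite cyclic, and a surjection $\Z \to \Z$ is an isomorphism; therefore $\Rrank \colon \GW(\FF_2) \xrightarrow{\sim} \Z$, which is the claim.

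There is essentially no obstacle: every input has been established earlier in the excerpt. If one prefers to route the argument through the Witt ring, an alternative (slightly longer) path would be to invoke the injectivity of $\GW(R) \to \W(R) \times \Z$ from Lemma \ref{lemm:metabolic-witt}, after noting that over $\FF_2$ the form $h$ is generated by $\langle 1\rangle$ since $h = 1 + \lra{-1} = 1 + \lra{1}$, so that $\W(\FF_2)$ is a quotient of $\Z/2$; combined with surjectivity of $\Rrank$ this again forces $\GW(\FF_2) \cong \Z$. I would present the short cyclic-group argument as the main proof and omit the Witt-ring detour.
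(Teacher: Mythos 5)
Your main argument is correct and essentially the same as the paper's: both use Lemma \ref{lemm:GW-gens}(2) to see that $\GW(\FF_2)$ is cyclic on $\langle 1\rangle$ (since $\FF_2^\times = \{1\}$), and then use the rank map to conclude. The paper phrases it as ``the rank is a retraction of the surjection $\Z \to \GW(\FF_2)$, hence that surjection is injective,'' while you phrase it as ``a cyclic group surjecting onto $\Z$ is $\Z$''; these are the same observation.
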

\begin{proof}
It is immediate from Lemma \ref{lemm:GW-gens} that $\Z \to \GW(\FF_2)$ is surjective.
Since the rank provides a retraction, the map is also injective, hence an isomorphism.
\end{proof}

\subsection{Regular sequences}
\begin{lemma} \label{lemm:automatic-regularity}
Let $S$ be a scheme, $X \to S$ a smooth morphism, $f_1, \dots, f_n \in \scr O_X(X)$ and put $Z=Z(f_1, \dots, f_n)$.
Assume that for all $s \in S$, either $Z_s$ is empty or else $\dim Z_s \le \dim X_s - n$.
Then $Z \to S$ is flat and for each $z \in Z$, $f_1, \dots, f_n \in \scr O_{Z,z}$ is a strongly regular sequence.
In particular $f_1, \dots, f_n$ is a regular sequence and $Z \to X$ is a regular immersion.
\end{lemma}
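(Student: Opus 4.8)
The plan is to reduce the global statement to a purely local computation about regular sequences in Cohen--Macaulay local rings, and then invoke standard commutative algebra. First I would observe that all the assertions are local on $X$ (and on $S$), so I may assume $S = \Spec(A)$ and $X = \Spec(R)$ affine, fix a point $z \in Z$ lying over $s \in S$, and work in the local ring $R_z$. Since $X \to S$ is smooth, $R_z$ is flat over $A$ and the fiber ring $R_z \otimes_A k(s)$ is regular (in particular Cohen--Macaulay) of dimension $\dim X_s$ localized at $z$. The hypothesis $\dim Z_s \le \dim X_s - n$ says exactly that, modulo the maximal ideal of $A$, the images $\bar f_1, \dots, \bar f_n$ cut the dimension of the fiber ring down by $n$, i.e. they form a system of parameters for a quotient of the expected dimension.

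The key step is then the following chain of implications in the fiber. In a Cohen--Macaulay local ring $\overline{R} := R_z \otimes_A k(s)$ of dimension $d := \dim X_s$ (localized), elements $\bar f_1, \dots, \bar f_n$ with $\dim \overline{R}/(\bar f_1, \dots, \bar f_n) \le d - n$ automatically form a (strongly) regular sequence: this is the standard fact that in a Cohen--Macaulay local ring, a sequence is regular if and only if it is part of a system of parameters, equivalently if and only if it drops dimension by the expected amount (see \cite[Theorem 2.1.2(c) and Corollary 2.1.4]{bruns1998cohen} or \cite[Tag 00LF]{stacks-project}). Next I would promote this from the fiber to $R_z$ itself using flatness over $A$: the local criterion for flatness, in the form ``if $R_z$ is $A$-flat and $\bar f_1, \dots, \bar f_n$ is a regular sequence on $R_z/\mathfrak{m}_A R_z$, then $f_1, \dots, f_n$ is a regular sequence on $R_z$ and $R_z/(f_1, \dots, f_n)$ is $A$-flat'' (this is \cite[Tag 00ME]{stacks-project}, the ``local criterion via regular sequences''). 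Applying this successively (or all at once) yields that $f_1, \dots, f_n$ is a strongly regular sequence in $R_z$ and that $\scr O_{Z,z}$ is $A$-flat. Since this holds at every $z \in Z$, $Z \to S$ is flat. Finally, a strongly regular sequence is a Koszul-regular sequence (\cite[Tag 062F]{stacks-project}, recalled in the conventions), so $Z \hookrightarrow X$ is a regular immersion in the sense fixed in the introduction.

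I expect the only subtlety — hardly an obstacle — to be bookkeeping about which local rings one is working in: one must be careful that ``$\dim Z_s \le \dim X_s - n$'' is a statement about the whole fiber $Z_s$, and one needs it localized at $z$, which is fine because $\dim \scr O_{Z_s, z} \le \dim Z_s$. One also wants the fiber $\dim X_s$ to be computed correctly at $z$; here smoothness guarantees $\scr O_{X_s, z}$ is regular local of dimension equal to the local dimension of the fiber, and since $X \to S$ is smooth with $X$ finite type the relevant dimensions behave as expected. A minor point: if some fiber $Z_s$ is empty there is nothing to check over that $s$, which is why the hypothesis is phrased disjunctively. No appeal to noetherian hypotheses beyond what is already in force (the conventions assume everything is finite type over a noetherian base) is needed.
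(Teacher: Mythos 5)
Your argument is correct and arrives at the same conclusions, but it takes a genuinely different route from the paper's. The paper first uses the local structure of smooth morphisms (\cite[Tag 01V4]{stacks-project}) to factor $X \to S$ étale-locally through $\A^d_S$, observes via Krull's principal ideal theorem that the fiber dimension is then exactly $d-n$, and invokes the packaged theory of \emph{relative global complete intersections} (\cite[Tags 00SP, 00SW, 00SV]{stacks-project}) to get flatness and strong regularity in one stroke. You instead unpack the argument: you work directly in the local ring, use the Cohen--Macaulay dimension characterization of regular sequences to show the fiber sequence $\bar f_1, \dots, \bar f_n$ is regular, and then invoke the local criterion for flatness in its ``regular sequence'' form to propagate this to $\scr O_{X,z}$ and obtain flatness of $\scr O_{Z,z}$ over $\scr O_{S,s}$. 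At bottom the two routes rest on the same ingredients (the proof of the relative-global-complete-intersection flatness statement is itself the local flatness criterion), but your presentation is more elementary and self-contained, whereas the paper's is shorter to write down. Two small points worth flagging: (i) the passage from the global fiber dimension hypothesis $\dim Z_s \le \dim X_s - n$ to the local inequality $\dim \scr O_{Z_s,z} \le \dim \scr O_{X_s,z} - n$ requires knowing that the local dimension of $X_s$ at $z$ is not strictly smaller than $\dim X_s$; the paper handles this implicitly by first shrinking to an open on which $X$ has constant relative dimension $d$, and you should do the same rather than relying on ``the relevant dimensions behave as expected.'' (ii) For the final claim that $Z \hookrightarrow X$ is a regular immersion, the paper also checks the Koszul complex is acyclic at points $x \in X \setminus Z$ (where some $f_i$ is a unit); this costs nothing but is needed if one wants $(f_1, \dots, f_n)$ to be Koszul-regular as a global sequence rather than only near $Z$.
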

\begin{proof}
For $x \in X$ there exist affine open subschemes $x \in U \subset X$, $S' \subset S$ and a factorization $U \to \A^d_{S'} \to S' \to S$ of $U \to S$ with $U \to \A^n_{S'}$ étale \cite[Tag 01V4]{stacks-project}.
Then $Z' := Z \cap U \to S'$ still has fibers of dimension $\le d-n$ and hence equal to $d-n$ by Krull's principal ideal theorem \cite[Tag 0BBZ]{stacks-project}.
This implies that $Z' \to S$ is a relative global complete intersection (in the sense of \cite[Tag 00SP]{stacks-project}) \cite[Lemma 2.1.15]{EHKSY}, whence flat \cite[00SW]{stacks-project}, and $(f_1, \dots, f_n)$ is a strongly regular sequence in $\scr O_{Z',z'}$ for any $z' \in Z'$ \cite[Tag 00SV(1)]{stacks-project}.
Since $x$ was arbitrary, it follows that $Z \to S$ is flat and $(f_1, \dots, f_n)$ form a strongly regular sequence in $\scr O_{Z,z}$ for any $z\in Z$.
For the last statement, we note that if $x \in X \setminus Z$ then $f_i \in \scr O_{X,x}$ is a unit for some $i$, and hence $K(f_\bullet)_x \wequi 0 \wequi (\scr O_Z)_x$, whereas if $x \in Z$ then $K(f_\bullet)_x \wequi \scr O_{Z,z}$ by strong regularity \cite[Tag 062F]{stacks-project}.
\end{proof}

Competing interests: The authors declare none.

\bibliographystyle{alpha}
\bibliography{bibliography}

\end{document}